\newtheorem{thm}{Theorem}[section]
\newtheorem{lem}{Lemma}
\newtheorem{false statement}{False statement}
\theoremstyle{definition}
\newtheorem{definition}{Definition}
\newtheorem{claim}{Claim}
\newtheorem{prob}{Problem}
\def\pf{\noindent {\emph{Proof.}}~~}
\def\qee{ \hfill $\square$}
\newcounter{mathitem}
  {\begin{list}{{$(\roman{mathitem})$}}{
   \setcounter{mathitem}{0}
   \usecounter{mathitem}
   \setlength{\topsep}{0pt plus 2pt minus 0pt}
   \setlength{\parskip}{0pt plus 2pt minus 0pt}
   \setlength{\partopsep}{0pt plus 2pt minus 0pt}
   \setlength{\parsep}{0pt plus 2pt minus 0pt}
   \setlength{\leftmargin}{35pt}
   \setlength{\itemsep}{0pt plus 2pt minus 0pt}}}
  {\end{list}}
\begin{document}

\title{\bf Graph operations and a unified method for kinds of Tur\'an-type problems on paths, cycles and matchings}

\date{}

\author{Jiangdong Ai\thanks{School of Mathematical Sciences and LPMC, Nankai University, Tianjin 300071, P.R.
China. Email: jd@nankai.edu.cn. Partially supported
by the Fundamental Research Funds for the Central Universities, Nankai University. },~ Hui Lei\thanks{School of Statistics and Data Science, LPMC and KLMDASR, Nankai University, Tianjin 300071, P.R.
China. Email: hlei@nankai.edu.cn. Partially supported
by the NSFC grant (No.\ 12371351).},~ Bo
Ning\thanks{Corresponding author. College of Computer Science, Nankai University, Tianjin 300350, P.R.
China. Email: bo.ning@nankai.edu.cn. Partially supported
by the NSFC grant (No.\ 11971346).},~ Yongtang Shi\thanks{Center for Combinatorics and LPMC, Nankai University, Tianjin 300071, P.R.
China. Email: shi@nankai.edu.cn. Partially supported
by the NSFC grant (No.\ 12161141006).}}
\maketitle

\begin{center}
\begin{minipage}{140mm}
\small\noindent{\bf Abstract:}
Let $G$ be a connected graph and $\mathcal{P}(G)$ a graph parameter. We say that $\mathcal{P}(G)$ is feasible if $\mathcal{P}(G)$ satisfies the following properties:
(I) $\mathcal{P}(G)\leq \mathcal{P}(G_{uv})$, if $G_{uv}=G[u\to v]$ for any $u,v$, where $G_{uv}$ is the graph obtained by applying Kelmans Operation from $u$ to $v$;
(II) $\mathcal{P}(G) <\mathcal{P}(G+e)$ for any edge $e\notin E(G)$. Let $P_k$ be a path of order $k$, $\mathcal{C}_{\geq k}$ the set of all cycles of length at least $k$ and $M_{k+1}$ a matching of $k+1$ independent edges. In this paper, we mainly prove the following three results:\\
(i) Let $n\geq k\geq 5$ and let $t=\left\lfloor\frac{k-1}{2}\right\rfloor$.
Let $G$ be a $2$-connected
$n$-vertex $\mathcal{C}_{\geq k}$-free graph with the maximum $\mathcal{P}(G)$ where $\mathcal{P}(G)$ is feasible. Then, $G\in \mathcal{G}^1_{n,k}=\{W_{n,k,s}=K_{s}\vee ((n-k+s)K_1\cup K_{k-2s}): 2\leq s\leq t\}$.\\
(ii) Let $n\geq k\geq 4$ and let $t=\left\lfloor\frac{k}{2}\right\rfloor-1$.
Let $G$ be a connected
$n$-vertex $P_{k}$-free graph with the maximum $\mathcal{P}(G)$ where $\mathcal{P}(G)$ is feasible. Then, $G\in \mathcal{G}^2_{n,k}=\{W_{n,k-1,s}=K_{s}\vee ((n-k+s+1)K_1\cup K_{k-2s-1}): 1\leq s\leq t\}.$\\
(iii)
Let $G$ be a connected
$n$-vertex $M_{k+1}$-free graph with the maximum $\mathcal{P}(G)$ where $\mathcal{P}(G)$ is feasible. Then, $G\cong K_n$ when $n=2k+1$ and $G\in \mathcal{G}^3_{n,k}=\{K_s\vee  ((n-2k+s-1)K_1\cup K_{2k-2s+1}):1\leq s\leq k\}$ when $n\geq 2k+2$. 

Directly derived from these results, we obtain a series of applications in Tur\'an-type problems, generalized Tur\'an-type problems, powers of graph degrees in extremal graph theory, and problems related to spectral radius and signless Laplacian spectral radius in spectral graph theory. Our results generalize classical results on cycles and matchings due to Kopylov and Erd\H{o}s-Gallai, respectively, and provide a positive resolution to an open problem originally proposed by Nikiforov. We improve and extend the spectral extremal results on paths due to Nikiforov, and due to Nikiforov and Yuan. We also offer a comprehensive solution to a connected version of a problem on the degree power sum of a graph containing no $P_k$, a topic initially studied by Caro and Yuster.    

\smallskip
\noindent{\bf Keywords: }
Feasible; Kelmans operation; Tur\'an-type; Spectral radius; Matching\\
\smallskip
\noindent{\bf AMS classification: 05C50 }
\end{minipage}
\end{center}


\section{Introduction}\label{S1}
The main goal of this paper is to develop a method that provides a unified approach for solving some Tur\'an-type and generalized Tur\'an-type problems, degree power problems, and extremal spectra problems (mainly under spectral radius conditions and signless Laplacian spectral radius conditions) on paths, cycles, and matchings. Before our work, all topics mentioned here have almost exclusively been studied independently and through distinct approaches. In the following, we will give a brief review of each of these topics.

\subsection{Tur\'an-type and generalized Tur\'an-type theorems on paths, cycles and matchings}
A fundamental result in graph theory asserts that any graph with $n$ vertices and $m\geq n$ edges has a cycle. Strengthening this fact, a cornerstone result attributed to Erd\H{o}s and Gallai \cite{EG59} says that
if an $n$-vertex graph has at least $m\geq n$ edges then there is a cycle of length at least $\frac{2m}{n-1}$. Given a family of graphs $\mathcal{H}$, we denote the \emph{Tur\'{a}n number} of $\mathcal{H}$ by $ex(n,\mathcal{H})$, that is the maximum number of edges in an $n$-vertex graph which contains no $H$ as a subgraph for each $H\in \mathcal{H}$. When $\mathcal{H}=\{H\}$, we use $ex(n,H)$ instead of $ex(n,\mathcal{H})$. In this language, Erd\H{o}s-Gallai Theorem is equivalent to that $ex(n,{\mathcal{C}}_{\geq k})=\frac{(k-1)(n-1)}{2}$, where $\mathcal{C}_{\geq k}$ is the set of all cycles of lengths at least $k$, $3\leq k\leq n$.

Since the extremal graph contains cut-vertices,
the classical Erd\H{o}s-Gallai Theorem can be improved
if we assume that $G$ is $2$-connected. Under this setting, among the improvements due to Woodall \cite{WD1976}, Lewin \cite{LM75}, Faudree and Schelp \cite{FS75}, and Kopylov \cite{K77}, the following theorem on cycles due to Kopylov stands out as the most robust one in certain aspects.

Define $W_{n,k,s}$ to be a graph on $n$ vertices, in which its vertex set can be partitioned into three subsets $X,Y,Z$, in which
$|X|=s$, $|Y|=k-2s$, $|Z|=n-(k-s)$, and the edge set consists of all possible edges between $X$ and $Z$ and all edges in $X\cup Y$. The graphs $W_{n,k,2}$ and $W_{n,k,t}$ show that Kopylov's theorem is sharp.

\begin{thm}[Kopylov \cite{K77}]\label{K77}
Let $n\geq k\geq 5$ and let $t=\left\lfloor\frac{k-1}{2}\right\rfloor$.
If $G$ is a 2-connected
$n$-vertex graph with $e(G)>\max\{e(W_{n,k,2}),e(W_{n,k,t})\}$,
then $G$ has a cycle of length at least $k$.
\end{thm}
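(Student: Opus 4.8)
The plan is to prove the contrapositive: if $G$ is a $2$-connected $n$-vertex graph with no cycle of length at least $k$, i.e.\ its circumference satisfies $c(G)\le k-1$, then $e(G)\le \max\{e(W_{n,k,2}),e(W_{n,k,t})\}$. Writing $h(n,k,a):=\binom{k-a}{2}+a(n-k+a)$, a direct count gives $e(W_{n,k,s})=h(n,k,s)$, and viewing $h(n,k,a)$ as a function of a real variable $a$ shows it is convex (the coefficient of $a^2$ is $\tfrac32$). Hence its maximum over the interval $2\le a\le t$ is attained at an endpoint, so it suffices to bound $e(G)$ by $h(n,k,s)$ for some integer $s$ with $2\le s\le t$. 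I would also first replace $G$ by an edge-maximal graph with the same property; this does not decrease $e(G)$ and supplies extra structure to exploit.

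The main engine is a disintegration (low-degree peeling) argument in the spirit of Kopylov. Fix the threshold $a=t$ and repeatedly delete any vertex of current degree at most $t$. If this process deletes every vertex, then $G$ is $t$-degenerate, so $e(G)\le tn-\binom{t+1}{2}\le h(n,k,t)$ and we are done. Otherwise the process halts at a nonempty subgraph $H$ with minimum degree $\delta(H)\ge t+1$. Since $H\subseteq G$ it still has circumference at most $k-1$, while the Dirac-type bound that every $2$-connected graph contains a cycle of length at least $\min\{|V|,2\delta\}$, applied block by block, forces the dense part of $H$ to be small: because $2(t+1)\ge k$ one concludes that the relevant clique-like core has at most $k-s$ vertices for a suitable $s\in\{2,\dots,t\}$. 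Splitting $e(G)$ into the edges within this core and the edges meeting the peeled vertices, bounding the former by $\binom{k-s}{2}$ and the latter using the attachment structure dictated by $2$-connectivity and the maximality of the longest cycle, the bookkeeping is designed to yield $e(G)\le\binom{k-s}{2}+s(n-k+s)=h(n,k,s)$, which is at most the required maximum by the convexity observation.

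The principal difficulty — and the genuine content of Kopylov's theorem — is making the core analysis exact rather than merely asymptotic, together with the interplay between the peeled vertices and the core under $2$-connectivity. Specifically, the halting subgraph $H$ need not itself be $2$-connected, so the circumference bound must be applied to its blocks and then recombined with the global $2$-connectivity of $G$ to exclude a long cycle formed by a cycle inside one dense block together with disjoint paths routed through the peeled vertices; it is exactly this extension argument that both certifies $c(G)\le k-1$ and controls how the peeled vertices may attach. One must also verify that the surviving core is essentially a complete graph of the precise order $k-s$, since this is what pins the constant $\binom{k-s}{2}$ and identifies the extremal graphs as precisely the $W_{n,k,s}$, and one must check the parity boundary at $t=\lfloor (k-1)/2\rfloor$, where the inequality $2(t+1)\ge k$ is tight for even $k$. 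I expect this coupling of the peeling with $2$-connectivity to be the main obstacle; once it is in place, the endpoint optimization of the convex function $h(n,k,\cdot)$ immediately delivers the stated bound $\max\{e(W_{n,k,2}),e(W_{n,k,t})\}$.
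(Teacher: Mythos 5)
Your proposal has the right silhouette (edge-maximality, degree peeling, convexity of $h(n,k,a)=\binom{k-a}{2}+a(n-k+a)$ and endpoint optimization), but the two steps you defer are precisely the content of the theorem, and the one concrete mechanism you offer in their place does not close. First, a single disintegration at threshold $t$ cannot yield the bound. If the peeling halts at a core $H$ of order $r$, your count gives $e(G)\le\binom{r}{2}+t(n-r)$; taking $k$ odd, $k=2t+1$, $t\ge 3$ and $r=t+2$, this equals $\binom{t+1}{2}+t(n-t-1)+1=h(n,k,t)+1$, which exceeds $\max\{h(n,k,2),h(n,k,t)\}$ for large $n$. Kopylov's argument (and the paper's proof of the generalization, Theorem~\ref{Thm:Main1}, from which the present statement follows since the edge number is feasible) therefore uses \emph{two} disintegrations: first at threshold $t$ to produce a core $H$ which is shown to be a clique of order $r$ with $t+2\le r\le k-2$, and then a \emph{second} disintegration at the adaptive threshold $k-r$. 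Only in the case $H(G;k-r)=H$ does one count edges, and then each peeled vertex contributes at most $k-r$ edges, giving $e(G)\le\binom{r}{2}+(k-r)(n-r)=h(n,k,k-r)$ with $2\le k-r\le t$, which convexity then bounds by the stated maximum.

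Second, the engine that makes all of this exact is not a Dirac-type bound applied block-by-block to $H$; it is the interaction of edge-maximality with Kopylov's path-to-cycle lemma (Lemma~\ref{Lem_Kopylov} of the paper): for non-adjacent $x,y$, maximality of $G$ forces an $(x,y)$-path $P$ of order at least $k$, and choosing $x,y$ in the core with such a path as long as possible forces $N_H(x),N_H(y)\subseteq V(P)$, whence $G$ contains a cycle of length at least $\min\{k,\,d_P(x)+d_P(y)\}\ge\min\{k,\,2(t+1)\}\ge k$, a contradiction. This single device is what proves $H$ is a clique, what caps $r\le k-2$, and, applied to a non-adjacent pair $x\in H$, $y\in H(G;k-r)\setminus H$ (with endpoint degrees at least $r-1$ and $k-r+1$), what kills the case $H(G;k-r)\ne H$. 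Your block decomposition of $H$ plus ``routing through peeled vertices'' gives neither the clique structure of the core nor the exact constant $\binom{k-s}{2}$; since you yourself flag this coupling as the unresolved obstacle, the proposal as it stands is a plan rather than a proof, and its accounting step is demonstrably insufficient without the second, adaptive-threshold peeling.
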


The theorem of path version is similar to Theorem \ref{K77}, where equality case was determined by Balister,  Gy\H{o}ri, Lehel, and Schelp
in \cite{BGLS08}.

\begin{thm}[Kopylov \cite{K77}, Balister,  Gy\H{o}ri, Lehel, and Schelp \cite{BGLS08}]\label{K77-2}
Let $n\geq k\geq 4$ and $t=\left\lfloor\frac{k}{2}\right\rfloor-1$.
If $G$ is a connected graph on $n$ vertices with
$$
e(G)\geq \max\left\{\binom{k-2}{2}+n-k+2,\binom{\lceil\frac{k}{2}\rceil}{2}+(\left\lfloor\frac{k}{2}\right\rfloor-1)\left(n-\left\lceil\frac{k}{2}\right\rceil\right)\right\},
$$
then $G$ contains a path of order $k$, unless $G$ is either $W_{n,k-1,1}$
or $W_{n,k-1,t}$.
\end{thm}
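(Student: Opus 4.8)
The plan is to reduce this connected path problem to the $2$-connected cycle problem of Theorem~\ref{K77}. Given a connected $P_k$-free graph $G$ on $n$ vertices, I would form $\widehat{G}=G\vee K_1$ by adjoining a vertex $w$ joined to every vertex of $G$. Two structural facts are then immediate. First, $\widehat{G}$ is $2$-connected: $w$ dominates $V(G)$ and $G$ is connected, so no single vertex disconnects $\widehat{G}$. Second, and this is the heart of the reduction, $\widehat{G}$ has no cycle of length at least $k+1$: such a cycle either avoids $w$, in which case it already contains $P_{k+1}\supseteq P_k$ inside $G$, or passes through $w$, in which case deleting $w$ leaves a path on at least $k$ vertices in $G$; either way $G$ would contain $P_k$. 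Thus $\widehat{G}$ is a $2$-connected $(n+1)$-vertex $\mathcal{C}_{\ge k+1}$-free graph, and $k\ge 4$ is exactly what makes $k+1\ge 5$, so Theorem~\ref{K77} applies with parameters $(n+1,k+1)$.

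Next I would carry out the edge bookkeeping. Since $e(\widehat{G})=e(G)+n$ and $\widehat{G}$ has no cycle of length $\ge k+1$, Theorem~\ref{K77} (with $t$ replaced by $t'=\lfloor k/2\rfloor$) gives
\[
e(G)+n=e(\widehat{G})\le \max\{e(W_{n+1,k+1,2}),\,e(W_{n+1,k+1,t'})\}.
\]
A direct computation shows that subtracting $n$ turns the two terms on the right into exactly $\binom{k-2}{2}+n-k+2$ and $\binom{\lceil k/2\rceil}{2}+(\lfloor k/2\rfloor-1)(n-\lceil k/2\rceil)$, so the hypothesis $e(G)\ge\max\{\dots\}$ forces equality throughout; in particular $e(\widehat{G})$ meets the Kopylov bound. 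The same identities, read structurally, line up the extremal graphs: $W_{n,k-1,s}\vee K_1=W_{n+1,k+1,s+1}$, and conversely deleting one apex vertex of $W_{n+1,k+1,s+1}$ restores $W_{n,k-1,s}$; thus $s=1$ pairs with $W_{n+1,k+1,2}$ and $s=t$ pairs with $W_{n+1,k+1,t'}$. I would also verify the two target graphs are connected and $P_k$-free by bounding their longest path: if a path uses $a$ vertices of the independent part, then $a$ is at most one more than the apex size, and the resulting short case analysis caps every path at $k-1$ vertices.

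The main obstacle is the ``unless'' clause, i.e.\ the extremal characterization, since Theorem~\ref{K77} is phrased with strict inequality and so says nothing about graphs that merely meet the bound. To close this gap I would invoke the uniqueness refinement of Kopylov's cycle theorem: the only $2$-connected $\mathcal{C}_{\ge k}$-free graphs attaining $\max\{e(W_{n,k,2}),e(W_{n,k,t})\}$ edges are $W_{n,k,2}$ and $W_{n,k,t}$ themselves. This strengthening is implicit in Kopylov's original disintegration argument---iteratively delete vertices whose degree drops below a threshold of order $k/2$, charge the deleted edges, and compare against the dense $2$-connected core that survives---and re-deriving it cleanly, while tracking the two extremal configurations, is where the real work lies. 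Granting it, $\widehat{G}\in\{W_{n+1,k+1,2},W_{n+1,k+1,t'}\}$. Because the independent part is nonempty (as $n\ge k$), the apex set is exactly the set of universal vertices of each of these graphs; since $w$ is universal by construction, $w$ lies in the apex set, and hence $G=\widehat{G}-w$ is $W_{n,k-1,1}$ or $W_{n,k-1,t}$, as claimed.

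Finally I would settle the bookkeeping of which extremal graph dominates. For $k\in\{4,5\}$ one has $t=1$, so $W_{n,k-1,1}$ and $W_{n,k-1,t}$ coincide (and for $k=4$ this common graph is the star $K_{1,n-1}$), a case worth checking on its own. For $k\ge6$ the two are distinct, with apex sizes $1$ and $t\ge2$; their edge counts cross over as $n$ grows, the larger-apex graph winning for large $n$ and the larger-clique graph for $n$ close to $k$, so both must appear in the conclusion. An alternative to the equality-Kopylov input, closer to the paper's own framework, would be to argue directly: repeatedly apply the Kelmans operation and add edges---moves that preserve extremality without creating a $P_k$---to drive $G$ to a canonical shape forced to be some $W_{n,k-1,s}$, and then maximize the edge count over $s$, which again selects $s\in\{1,t\}$.
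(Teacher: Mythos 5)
Your reduction itself is sound, and it is in fact the classical route: joining an apex vertex $w$ to a connected $P_k$-free graph $G$ produces a $2$-connected $\mathcal{C}_{\geq k+1}$-free graph $\widehat{G}$ on $n+1$ vertices, the identity $W_{n,k-1,s}\vee K_1=W_{n+1,k+1,s+1}$ is correct, and your edge bookkeeping (each of the two Kopylov bounds exceeds the corresponding path bound by exactly $n$) checks out, as does the final step that $w$ must lie in the apex set because the independent part is nonempty. Note that the paper does not prove this statement at all --- it is quoted from Kopylov and Balister--Gy\H{o}ri--Lehel--Schelp --- and the paper's concluding remarks explicitly record that this join trick is how Kopylov deduced the path version from the cycle version.

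The genuine gap is the one you flag and then assume away: the ``unless'' clause. Theorem~\ref{K77} is stated with a strict inequality, so after your reduction it yields only $e(G)\le\max\{e(W_{n,k-1,1}),e(W_{n,k-1,t})\}$ and says nothing about which graphs attain equality. The uniqueness statement you invoke --- that the only $2$-connected $\mathcal{C}_{\geq k+1}$-free graphs of maximum size are $W_{n+1,k+1,2}$ and $W_{n+1,k+1,\lfloor k/2\rfloor}$ --- is precisely the nontrivial content of the theorem (for paths, this equality analysis is the Balister--Gy\H{o}ri--Lehel--Schelp contribution), and ``granting it'' means the hardest part is assumed rather than proved; asserting it is ``implicit in Kopylov's disintegration argument'' is not a proof, since extracting equality cases from that argument is exactly the work that had to be done. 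The good news is that the missing input is available inside this paper: the size of a connected graph is a feasible parameter, and $\widehat{G}$ attains the maximum size among $2$-connected $\mathcal{C}_{\geq k+1}$-free graphs on $n+1$ vertices (your equality chain shows this), so Theorem~\ref{Thm:Main1} applied to $\widehat{G}$ with $\mathcal{P}=e$ places $\widehat{G}$ in $\{W_{n+1,k+1,s}:2\le s\le\lfloor k/2\rfloor\}$, and strict convexity of $s\mapsto e(W_{n+1,k+1,s})$ then forces $s\in\{2,\lfloor k/2\rfloor\}$; your apex argument finishes from there. Even more directly, Theorem~\ref{Thm:Main2} with $\mathcal{P}=e$ (the direction you sketch in your last sentence) gives the whole statement with no join trick at all. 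With either citation substituted for the ``granted'' uniqueness, your proof becomes complete.
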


Another important theorem in extremal graph theory is Erd\H{o}s-Gallai's matching theorem as follows, in which $M_{k}$ is a matching of $k$ independent edges, and ``$\vee$" is the join operation. Akiyama and Frankl \cite{AF85} gave a short proof of Erd\H{o}s-Gallai Theorem by the shifting method. 
\begin{thm}[Erd\H{o}s-Gallai \cite{EG59}]
If $G$ is an $n$-vertex graph with
$$
e(G)\geq \max \left\{\binom{2k+1}{2}, \binom{n}{2}-k(n-k)\right\},
$$
then $G$ contains an $M_{k+1}$, unless $G=K_{2k+1}$ or $G=K_k\vee ((n-k)K_1)$.
\end{thm}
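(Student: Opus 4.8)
The plan is to rephrase the conclusion in terms of the matching number and attack it through the Tutte--Berge formula. Since $G$ contains $M_{k+1}$ precisely when its matching number $\nu(G)$ is at least $k+1$, it suffices to show that an $M_{k+1}$-free graph, i.e. one with $\nu(G)\le k$, satisfies $e(G)\le \max\{\binom{2k+1}{2},\binom{k}{2}+k(n-k)\}$, with equality only for the two graphs listed; the hypothesis $e(G)\ge\max\{\cdots\}$ then forces equality. By the Tutte--Berge formula the deficiency $n-2\nu(G)$ equals $\max_{S\subseteq V}\big(o(G-S)-|S|\big)$, where $o$ counts odd components, so $\nu(G)\le k$ yields a set $S$ with $|S|=s$ and $o(G-S)\ge s+n-2k=:q$. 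Note $q\ge 0$ forces $s\le k$, which confines the optimization to $0\le s\le k$.

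First I would bound the edges by the three parts $e(G)\le \binom{s}{2}+s(n-s)+e(G-S)$, using that every edge inside $S$, and every edge between $S$ and $V\setminus S$, is already counted by the first two (complete) terms. The remaining task is to maximize $e(G-S)$ over all graphs on $n-s$ vertices having at least $q$ odd components. Making every component complete and invoking the convexity of $x\mapsto\binom{x}{2}$, the optimum concentrates the vertices into a single clique while still exhibiting $q$ odd components: take $q-1$ isolated vertices together with one clique of size $(n-s)-(q-1)=2k-2s+1$, which is odd and hence supplies the $q$-th odd component. This gives $e(G-S)\le\binom{2k-2s+1}{2}$ and the one-parameter upper bound $e(G)\le h(s):=\binom{s}{2}+s(n-s)+\binom{2k-2s+1}{2}$. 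It is reassuring that this extremal configuration is exactly $K_s\vee\big((n-2k+s-1)K_1\cup K_{2k-2s+1}\big)$, the family $\mathcal{G}^3_{n,k}$ appearing in result (iii).

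It then remains to optimize $h$ over the integers $0\le s\le k$. A short computation shows $h(s)=\tfrac32 s^2+(\text{linear in }s)$ has positive leading coefficient, so $h$ is convex and attains its maximum at an endpoint; since $h(0)=\binom{2k+1}{2}$ and $h(k)=\binom{k}{2}+k(n-k)$, this yields exactly $\max\{\binom{2k+1}{2},\binom{k}{2}+k(n-k)\}$, with the two endpoints corresponding to $K_{2k+1}$ (the remaining vertices isolated) and to $K_k\vee\big((n-k)K_1\big)$. The main obstacle is the equality analysis: strict convexity already rules out interior $s$, but to conclude that equality forces $G$ to be one of the two named graphs I must verify that each inequality used is tight only in the claimed structure, namely that $S$ is complete and completely joined to $V\setminus S$, that $G-S$ is a disjoint union of cliques, and that the odd-component constraint is met with no slack. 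An alternative, and the route hinted at in the text, is the Akiyama--Frankl shifting method: one applies compressions $S_{ij}$ that neither decrease $e(G)$ nor create an $M_{k+1}$, and then verifies the bound on the resulting shifted graph, whose restricted structure makes both the count and the extremal characterization transparent.
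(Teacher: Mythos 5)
Your proof is correct in outline and takes a genuinely different route from the paper. The paper does not prove this statement from first principles: it derives it as a corollary of its main matching theorem (Theorem \ref{Thm:Matching}), whose proof runs through Kelmans operations, threshold graphs, Kopylov's disintegration, and the Bondy--Chv\'atal-type closure lemma; the edge count is then a feasible parameter, so the connected extremal graphs lie in $\mathcal{G}^3_{n,k}$, convexity of $e(W_{n,2k+1,s})$ in $s$ reduces to $s=1$ and $s=k$, an explicit comparison of the functions $f_1,f_2,f_3$ eliminates $s=1$, and a separate component-merging argument disposes of disconnected graphs. You instead argue directly from the Tutte--Berge formula: a deficiency set $S$ of size $s$, the trivial bound $e(G)\le\binom{s}{2}+s(n-s)+e(G-S)$, the convexity bound $e(G-S)\le\binom{2k-2s+1}{2}$ for graphs with at least $q=s+n-2k$ components, and strict convexity of $h(s)$. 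This is more elementary and self-contained, it handles disconnected graphs with no extra case analysis, and it recovers the paper's family $W_{n,2k+1,s}=K_s\vee((n-2k+s-1)K_1\cup K_{2k-2s+1})$ as the intermediate extremal configurations, a nice sanity check. What the paper's heavier route buys is generality: the same argument simultaneously yields the result for every feasible or weakly-feasible parameter, which is the point of the paper.

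Two repairs are needed before your argument is complete. First, your justification ``$q\ge 0$ forces $s\le k$'' is backwards: $q\ge 0$ only says $s\ge 2k-n$. The correct reason is that $G-S$ has at most $n-s$ components, so $s+n-2k\le o(G-S)\le n-s$, which gives $s\le k$. Second, the equality analysis is described but not executed; it is routine in your setup (for $s=k$ the constraint $o(G-S)\ge n-k$ forces $G-S$ to consist of isolated vertices, so tightness of the first two terms gives $K_k\vee((n-k)K_1)$; for $s=0$ tightness in the convexity step forces exactly $q$ components, one clique $K_{2k+1}$ and isolated vertices), but as written it is a promissory note rather than a proof, and it is precisely the uniqueness clause that distinguishes this theorem from a pure edge bound. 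Finally, note that the displayed threshold $\binom{n}{2}-k(n-k)$ in the statement is presumably a slip for $\binom{n}{2}-\binom{n-k}{2}=\binom{k}{2}+k(n-k)$; your proof targets the latter value, which is the one that makes the equality clause for $K_k\vee((n-k)K_1)$ meaningful.
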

Rather than determine the maximum number of edges, Alon and Shikhelman \cite{AS16} studied the function $ex(n,F,H)$ which means the maximum number of copies of $F$ in an $H$- free $n$-vertex graph. Such type of problem is called \emph{Generalized Tur\'{a}n Problem}. Note that when $F=K_2$, we are back to the classic \emph{Tur\'{a}n Problem}.
Let $N_s(G)$ denote the maximum number of unlabeled copies of $K_s$ in $G$. By using Kopylov's technique, Luo \cite{L18} extended Kopylov's theorem on cycles and paths to its clique versions. Gy\H{o}r, Salia, Tompkins, and Zamora \cite{GSTZ19} obtained some extensions by counting stars. As we see later, the numbers of such certain subgraphs are defined as weakly-feasible parameters by us in this paper, we can give a unified method to deal with them.   

Very recently, Theorem \ref{K77} has received more and more attentions, see \cite{L21,L18,MN20}. The stability form of Kopylov's theorem seems to play an important role in solving numerous conjectures and problems from different aspects of graph theory, for example, classical Anti-Ramsey conjecture and problems in spectral graph theory, see \cite{LL21, LN23, Y21+}.

\subsection{Degree power problems on paths and cycles}
Given a graph $G$ of order $n$ with degree sequence $d_1,d_2\dots d_n$, we use $\sum_{i=1}^{n}d_i^p$ where $p\geq 1$ to denote the \emph{degree power} of $G$. Note that when $p=1$, $\sum_{i=1}^{n}d_i=2e(G)$. People 
are captivated by the exploration of the maximum degree power of graphs that involve forbidden subgraphs. Our emphasis is specifically directed towards restraining long paths and cycles in this investigation. 

Caro and Yuster~\cite{CY00} demonstrated the maximum degree power in graphs that are $P_k$-free assuming that $n$ is considerably larger than $k$, where $P_k$ denotes the path of order $k$. They also provided a characterization of the extremal graphs. To obtain the concrete magnitude relationship between $n$ and $k$ seems to be difficult. They guessed that $n=\Omega(k^2)$. In this paper, we give a solution to the connected version of Caro-Yuster's problem.

\subsection{Spectral extrema on paths and cycles}
Let $G$ be a graph and $A:=A(G)$ be its adjacency matrix. Let $\lambda_1\geq \lambda_2\geq... \geq \lambda_n$ be all eigenvalues of $A$. The spectral radius of $G$, denoted by $\lambda(G)$, is defined to be $\max\{|\lambda_i|:1\leq i\leq n\}$. The signless Laplacian spectral radius of $G$, denoted by $q(G)$, is the largest eigenvalue of $A(G)+D(G)$, where $D(G)$ is the degree matrix of $G$. For a given graph $H$, we denote by $spex(n,H)$ ($spex_{c}(n,H)$, $spex_{2-con}(n,H)$) the maximum spectral radius of a (connected, 2-connected) graph on $n$ vertices which contains no $H$ as a subgraph. We use $sspex(n,H)$ ($sspex_c(n,H)$)
to denote the maximum signless spectral radius of a (connected) graph on $n$ vertices which contains no $H$ as a subgraph. 

In contrast to classical extremal graph theory, spectral extremal graph theory is a relatively young and active branch. For a survey on its developments and open problems, we refer to \cite{N11}.  
Till now, very little is known about complete pictures of certain classes of graphs, especially for some classes in which the order can turn to infinity. 

In this paper, we focus on the spectra extrema of graphs without a given length of paths or with a given circumference.  We denote $K_k\vee (n-k)K_1$ by $S_{n,k}$, and $K_k\vee ((n-k-2)K_1\cup K_2)$ by $S^+_{n,k}$ for simplicity. This problem can be dated back at least to \cite{N10}.  Nikiforov \cite{N10} proved that $spex(n, P_{2k+2})=\lambda(S_{n,k})$ 
and $spex(n, P_{2k+3})=\lambda(S^+_{n,k})$ for $n\geq 2^{4k}$ and $k\geq 1$. Improving Nikiforov's result, Gao and Hou \cite{GH19}
proved that $spex(n,\mathcal{C}_{\geq 2k+1})=\lambda(S_{n,k})$
and $spex(n,\mathcal{C}_{\geq 2k+2})=\lambda(S^+_{n,k})$ for $k\geq 2$ and $n\geq13k^2$. One may wonder about the case when $k=\Theta(n)$. In particular, when $G$ is spanning or nearly spanning,
what do we know? For this case, Fiedler and Nikiforov \cite{FN10} proved that $spex(n,P_{n})=\lambda(K_{n-1}\cup K_1)$. A natural problem is to determine all the values of $spex(n, P_k)$ for any $4\leq k\leq n$, which was suggested by Prof. V. Nikiforov in an email to the third author when he was a Ph.D. student several years ago.
As we know, this difficult problem is wide open till now. 

In this paper, we study the following two problems. The first one includes Nikiforov's problem.

\begin{prob}\label{prob:spex_c}
Let $k\geq 4$ and $n\geq k$  be two integers.
Determine the function $spex_{c}(n,P_{k})$ and $spex(n,P_k)$.
\end{prob}

\begin{prob}\label{prob:spex_c_2con}
Let $k\geq 5$ and $n\geq k$  be two integers. Determine the function $spex_{2-con}(n,\mathcal{C}_{\geq k})$.
\end{prob}
We solved these two problems above completely, i.e., our results cover all possible values of $n$ and $k$. We would like to point out that this is the first time introducing the Kopylov-style technique to spectral graph theory. In addition, we also prove an analog for graphs with a given matching number.

For the signless Laplacian spectral version of problems on paths, Nikiforov and Yuan \cite{NY14} proved that for $k\geq 1$, $n\geq 7k^2$ and an $n$-vertex graph $G$, if $q(G)>q(S_{n,k})$, then $G$ contains a $P_{2k+2}$; if $q(G)>q(S^+_{n,k})$, then $G$ contains a $P_{2k+3}$. In this paper, we also give solutions to the signless Laplacian spectral radius versions of Problems \ref{prob:spex_c} and \ref{prob:spex_c_2con}.

\subsection{Terminology and notation}
Let $G=G(V,E)$ be a graph and $X\subseteq V$. The subgraph of $G$ induced by $X$ is denoted by $G[X]$. The vertices in $N_G(v)$ are neighbors of $v$. Then, $d_G(v)=|N_G(v)|$. Denote $N_G(v)\cup \{v\}$ by $N_G[v]$. We may omit the subscripts sometimes when it is clear from the context.
Let $G_1$ and $G_2$ be two graphs. We use $G_1\cup G_2$ to denote the disjoint union of $G_1$ and $G_2$, and $G_1\vee G_2$ to denote the join of $G_1$ and $G_2$, i.e., besides the edges in $G_1\cup G_2$, it contains all possible edges from $G_1$ to $G_2$. We use $K_1$ to denote an isolated vertex, and $kK_1$ to denote
an isolated set of $k$
vertices. For a subset $S\subseteq V(G)$, we denote by $G-S$ the subgraph of $G$ induced by $V(G)\backslash V(S)$.

We use $c(G)$ and $\nu(G)$ to denote the length of a longest cycle and the size of a maximum matching, in $G$, respectively.
Given two sequences $d=(d_1,d_2,\dots,d_n)$ and $c=(c_1,c_2,\dots,c_n)$ in decreasing order, i.e, in which $d_i\geq d_j$ and $c_i\geq c_j$ when $i<j$. We call $d>c$ if there exists a $k\in [n]=\{1,2,\dots,n\}$ such that $d_k> c_k$ and $d_i=c_i$ for all $1\leq i<k$.

\subsection{Outline}
This paper is organized as follows. In the first section, we provide a concise overview of the background of the relevant issues. Subsequently, in Section \ref{Sec:2}, we will briefly outline the contributions made by our research. In Section~\ref{fsec} and Section~\ref{imsec}, we will elaborate on applications of our research results across various problem scenarios. Specifically, we systematically elucidate the unified representation of different graph parameters under the conditions of prohibiting long cycles or restricting long paths. In Section~\ref{pfsec}, we present the proofs of the main theorems along with necessary lemmas and tools. In the final section, we provide some comments.

\section{Our contributions}\label{Sec:2}

All the graph parameters mentioned in Section~\ref{S1} are indeed applicable to a connected graph. Surprisingly, we have observed similar behaviors among these parameters from certain perspectives. Therefore, we aim to abstract them into a general type of graph parameters and analyze them systematically.
The members in this type of graph parameters, called feasible graph parameters, include edge number, spectral radius, signless Laplacian spectral radius, degree power, and the number of $s$-cliques or $K_{1,r}$, etc.

Our new concept depends on Kelmans Operation, which was introduced by Kelmans in \cite{K81}. We write the definition as follows.
(The original form of Kelmans Operation does not care about the adjacency of two given vertices. However, we also divide it into two types of operations according to the adjacency for the sake of use later.)

\begin{definition}
(i) Let $G$ be a graph and $xy\in E(G)$. The graph after a Kelmans Operation (KO in short) of $G$ from $x$ to $y$, is denoted by $G'=G[x\rightarrow y]$, if $G'$ satisfies $V(G')=V(G)$ and $E(G')=(E(G)\backslash \{wx:w\in N_G(x)\backslash N_G[y]\})\cup \{wy:w\in N_G(x)\backslash N_G(y)\}$;\\
(ii) Let $G$ be a graph and $xy\notin E(G)$.
The graph after an extended Kelmans Operation (EKO in short) of $G$ from $x$ to $y$, is denoted by $G'=G[x\rightarrow y]$, if $G'$ satisfies $V(G')=V(G)$ and $E(G')=(E(G)\backslash \{wx:w\in N_G(x)\backslash N_G[y]\})\cup \{wy:w\in N_G(x)\backslash N_G(y)\}$.
\end{definition}

Kelmans Operation plays an important role in solving open problems on long paths and cycles. With the help of Klemans Operation, Li and the third author \cite{LN21} confirmed Woodall's conjecture in 1975 which states that every $2$-connected graph on $n$ vertices has a cycle of length at least $2k$ if the number of vertices with degree at least $k$ is at least $\frac{n}{2}+k$, and also characterzied the extremal graphs which have the maximum spectral radius among all non-Hamiltonian graphs with sufficiently larger order $n$ and given minimum degree in \cite{LN16}.
Inspired by these two works, we introduce the following new concept aimed at unifying various phenomenons on paths, cycles, and matchings in both extremal and spectral graph theory.

A \emph{graph parameter} is a function $\phi:\mathbb{G}\to \mathbb{R}$ where $\mathbb{G}$ is the set of finite graphs and $\mathbb{R}$ is the set of real numbers.

\begin{definition}\label{basic}
Let $G$ be a connected graph and $\mathcal{P}(G)$ a graph parameter. We say that $\mathcal{P}(G)$ is feasible, if $\mathcal{P}(G)$ satisfies the following properties:\\ 
(I) $\mathcal{P}(G)\leq \mathcal{P}(G_{uv})$, if $G_{uv}=G[u\to v]$ for any vertices $u,v$;\\
(II) $\mathcal{P}(G) <\mathcal{P}(G+e)$, if a new edge $e\notin E(G)$ but $V(e)\cap V(G)\neq \emptyset$ (here the edge $e$ maybe contains some new vertex other than $V(G)$).
\end{definition}
If $H$ is a connected proper subgraph of $G$, then $\mathcal{P}(H)<\mathcal{P}(G)$ as we can apply (II) of Definition~\ref{basic} repeatedly. 

By combining the Kopylov-type technique with Kelmans Operation (together with some structural analysis), we prove the following three main results.
The proofs are postponed to Section \ref{pfsec}.

\begin{thm}\label{Thm:Main1}
Let $n\geq k\geq 5$ and let $t=\left\lfloor\frac{k-1}{2}\right\rfloor$.
Let $G$ be a $2$-connected
$n$-vertex $\mathcal{C}_{\geq k}$-free graph with the maximum $\mathcal{P}(G)$ where $\mathcal{P}(G)$ is feasible. Then, $G\in \mathcal{G}^1_{n,k}=\{W_{n,k,s}=K_{s}\vee ((n-k+s)K_1\cup K_{k-2s}): 2\leq s\leq t\}$. 
\end{thm}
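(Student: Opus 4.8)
The plan is to combine property (II) to reduce to a saturated extremal graph, property (I) to normalise its structure via Kelmans operations, and the Kopylov disintegration technique to read off the three-part form of $W_{n,k,s}$. Let $G$ be a $2$-connected $n$-vertex $\mathcal{C}_{\geq k}$-free graph attaining the maximum value of $\mathcal{P}$. First I would observe that $G$ is \emph{saturated}: adding any edge to a $2$-connected graph keeps it $2$-connected, so if $G+e$ were still $\mathcal{C}_{\geq k}$-free for some non-edge $e$, then property (II) would give $\mathcal{P}(G+e)>\mathcal{P}(G)$, contradicting maximality. Hence for every non-edge $e$ the graph $G+e$ contains a cycle of length at least $k$; in particular $G$ is edge-maximal within the class.

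Next I would invoke the key lemma (to be established separately in Section~\ref{pfsec}) that a Kelmans operation never increases the circumference, so $G[u\to v]$ is again $\mathcal{C}_{\geq k}$-free, while property (I) ensures $\mathcal{P}(G[u\to v])\ge \mathcal{P}(G)$. Applying such operations therefore keeps us inside the class of $\mathcal{C}_{\geq k}$-free graphs with non-decreasing $\mathcal{P}$, and the aim is to drive $G$ to a Kelmans-stable representative (stable up to isomorphism) in which the neighbourhoods are as concentrated as possible, producing a dominating set that will become the core clique $X$. The principal difficulty is that a Kelmans operation can destroy $2$-connectivity, since the vertex $u$ may lose every neighbour outside $N[v]$. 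I would circumvent this either by performing only operations that provably preserve $2$-connectivity, or by showing that a loss of $2$-connectivity, combined with saturation, already forces $G$ into the target family.

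With a sufficiently normalised $G$ in hand I would run the Kopylov-type disintegration: iteratively delete vertices of degree below the relevant threshold to expose a dense $2$-connected core $H$ in which every remaining vertex has large degree. Because $H$ is $\mathcal{C}_{\geq k}$-free its order is controlled, the vertices adjacent to all of $V(G)$ form a clique $X$ with $|X|=s$, the vertices peeled off during disintegration form an independent set $Z$ whose neighbourhoods lie inside $X$, and the rest of the core is a clique $Y$. Saturation then forces $X\cup Y$ to be complete and each $z\in Z$ to be joined to all of $X$, which is precisely $W_{n,k,s}=K_{s}\vee((n-k+s)K_1\cup K_{k-2s})$.

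Finally I would pin down the range: $s\ge 2$ because $W_{n,k,1}$ has the single vertex of $X$ as a cut vertex and so fails $2$-connectivity, whereas $s\le t=\lfloor (k-1)/2\rfloor$ is the condition keeping $|Y|=k-2s\ge 1$ and the circumference equal to $k-1<k$; values outside this range either disconnect the graph or create a cycle of length at least $k$. I expect the genuine obstacle to be the Kelmans step above: reconciling the operation with $2$-connectivity and proving that the normalised, saturated, $\mathcal{C}_{\geq k}$-free graph is rigid enough that the disintegration yields exactly the nested three-part structure for an \emph{arbitrary} feasible $\mathcal{P}$, rather than only for the edge count handled by Theorem~\ref{K77}.
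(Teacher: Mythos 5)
Your outline assembles the right ingredients---saturation from property (II), circumference-monotonicity of the Kelmans operation combined with property (I), and Kopylov-style disintegration---and your opening saturation step is exactly how the paper's proof begins. But the two places where you defer the difficulty are precisely where the proof has to do its work, and your proposal resolves neither. First, the $2$-connectivity obstruction: you offer two possible circumventions (restrict to operations that preserve $2$-connectivity, or argue that a loss of connectivity plus saturation forces the target family), but neither is developed, and the first is problematic because a restricted set of operations need not terminate in a threshold graph. The paper resolves this in a different and concrete way: it proves that the extremal graph $G$ \emph{coincides} with its threshold graph $\Gamma$, so no Kelmans operation ever actually changes the graph and $2$-connectivity is never in danger. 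The mechanism is that long paths pull back through Kelmans operations (Lemma~\ref{LiNing}) while circumference never increases (Lemma~\ref{gao2019}); hence if some operation in the sequence genuinely altered $G$, the $(x,y)$-paths of order at least $k$ guaranteed by saturation between nonadjacent vertices would close into a cycle of length at least $k$ in $G$ itself, a contradiction. This identity $G\cong\Gamma$ is the technical heart that your sketch is missing.

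Second, your structural endgame is asserted rather than proved. You claim that after a single disintegration the peeled vertices ``form an independent set $Z$ whose neighbourhoods lie inside $X$'' and that saturation then forces the shape $W_{n,k,s}$; but this is essentially the conclusion of the theorem, and nothing in your sketch derives it. The actual argument needs: (i) Kopylov's path--cycle lemma (Lemma~\ref{Lem_Kopylov}, giving a cycle of length at least $\min\{m+1,\,d_P(x)+d_P(y)\}$) together with an extremal choice of a nonadjacent pair $x,y$ in the core to show that $H=H(\Gamma;t)$ is a clique; (ii) a \emph{second} disintegration $H'=H(\Gamma;k-r)$ at the complementary level, where $r=|V(H)|$: if $H'=H$, then every vertex outside has degree at most $k-r$, and the EKO-normalization lemma (Lemma~\ref{Ai-Ning}) places $\Gamma$ inside $K_{k-r}\vee((n-r)K_1\cup K_{2r-k})$, with strictness of (II) forcing equality, i.e.\ $G\in\mathcal{G}^1_{n,k}$; while if $H'\neq H$, picking $x\in V(H)$ and $y\in V(H')\setminus V(H)$ and applying Kopylov's lemma yields $d_P(x)+d_P(y)\geq (r-1)+(k-r+1)=k$, hence a cycle of length at least $k$---the contradiction that closes the proof. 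The degenerate case $H(\Gamma;t)=\emptyset$ (which genuinely occurs: for odd $k$ the extremal graph $W_{n,k,t}=S_{n,t}$ disintegrates to nothing) must also be handled separately and yields $G=S_{n,s-1}$. Without the two-level disintegration and Kopylov's lemma, your single-pass ``read off $X$, $Y$, $Z$'' step has no justification; supplying one would amount to reconstructing the chain of claims in the paper's proof.
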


\begin{thm}\label{Thm:Main2}
Let $n\geq k\geq 4$ and let $t=\left\lfloor\frac{k}{2}\right\rfloor-1$.
Let $G$ be a connected
$n$-vertex $P_{k}$-free graph with the maximum $\mathcal{P}(G)$ where $\mathcal{P}(G)$ is feasible. Then, $G\in \mathcal{G}^2_{n,k}=\{W_{n,k-1,s}=K_{s}\vee ((n-k+s+1)K_1\cup K_{k-2s-1}): 1\leq s\leq t\}.$
\end{thm}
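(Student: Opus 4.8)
The plan is to exploit the interplay between the two clauses of Definition~\ref{basic} and the rigidity of graphs that are stable under Kelmans Operation, reducing the whole problem to a longest-path analysis on threshold-type graphs. Fix a connected $n$-vertex $P_k$-free graph $G$ achieving the maximum of $\mathcal{P}$. Two consequences of feasibility are immediate. By property (II), $G$ is \emph{saturated}: if some non-edge $e$ had $G+e$ still $P_k$-free, then $\mathcal{P}(G+e)>\mathcal{P}(G)$ would contradict maximality (edge addition preserves connectivity automatically). By property (I), every Kelmans Operation weakly increases $\mathcal{P}$, so as long as an operation $G[u\to v]$ stays connected and $P_k$-free, maximality forces $\mathcal{P}(G[u\to v])=\mathcal{P}(G)$ and keeps the image extremal. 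The enabling technical step, which I would prove first, is an \emph{invariance lemma}: $\mathrm{KO}$ and $\mathrm{EKO}$ preserve the edge count, never increase the length of a longest path (so they preserve $P_k$-freeness), and — with the care that $\mathrm{EKO}$ can isolate a vertex — preserve connectivity on the graphs relevant here. This is where I expect the only genuinely delicate case-checking on paths to occur.

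The core is a structural reduction. I would iterate Kelmans Operations on $G$, which progressively nest the closed neighborhoods, until reaching a threshold graph $H$; by the invariance lemma and the observations above, $H$ is connected, $P_k$-free, has $e(H)=e(G)$, and satisfies $\mathcal{P}(H)=\mathcal{P}(G)$. Its nested adjacency pattern means $H=K_s\vee R$ for a dominating clique $K_s$ and a nested graph $R$ on $n-s$ vertices. A longest-path computation on this shape, together with saturation of $H$, pins $R$ down: the non-dominating part must be an independent set together with a single residual clique, giving $H=K_s\vee((n-k+s+1)K_1\cup K_{k-2s-1})=W_{n,k-1,s}$. The $P_k$-freeness constrains the residual clique size and thereby yields the range $1\le s\le t=\lfloor k/2\rfloor-1$; as a consistency check, the edge count of $H$ must match the extremal value in Theorem~\ref{K77-2}, whose two extremal graphs $W_{n,k-1,1}$ and $W_{n,k-1,t}$ are exactly the endpoints of this family.

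It remains to transfer the conclusion from the stable graph $H$ back to $G$. Here I would use the rigidity built into the family: each $W_{n,k-1,s}$ is itself saturated and Kelmans-stable, and along the chain $G=G_0\to\cdots\to G_m=H$ every step preserves both $\mathcal{P}$ (at its maximum) and the edge count. Examining the equality case of property (I) — when $\mathcal{P}(G_i)=\mathcal{P}(G_i[u\to v])$ is forced to give $G_i\cong G_i[u\to v]$ — together with saturation of $G$, should force $G\cong W_{n,k-1,s}$, hence $G\in\mathcal{G}^2_{n,k}$. I expect the main obstacle to be precisely this classification-plus-transfer: proving that the Kelmans-stable, saturated, connected $P_k$-free graphs are \emph{exactly} the $W_{n,k-1,s}$ with $1\le s\le t$ (and that no graph of equal edge count escapes the list), and that undoing the Kelmans reductions loses no structural information about $G$. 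The small-$k$ boundary cases, and the routine verification that each $W_{n,k-1,s}$ in the stated range is genuinely $P_k$-free, would be checked separately.
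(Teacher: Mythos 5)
Your opening moves (property (II) gives edge-maximality, then a chain of Kelmans Operations to a stable ``threshold'' graph with $\mathcal{P}$ preserved at its maximum) do match the paper's setup, but the core of your argument --- the claim that a connected, $P_k$-free, \emph{saturated} graph with nested neighborhoods must be some $W_{n,k-1,s}$ --- is false, and this is exactly where the theorem's real difficulty lies. Take $k=8$, $n\geq 8$, and $H=K_2\vee\left(K_2\cup K_2\cup(n-6)K_1\right)$. Along every edge of $H$ the closed neighborhoods are nested, so $H$ has no bad pairs and is its own threshold graph in the paper's sense; deleting the two dominating vertices leaves components of order at most $2$, only two of which contain an edge, so any path has order at most $2+2+1+2=7$ and $H$ is $P_8$-free; and one checks directly that adding any missing edge (inside the independent set, from it to one of the $K_2$'s, or between the two $K_2$'s) creates a path of order $8$, so $H$ is saturated. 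Yet $H$ is not any $W_{n,7,s}$. The point is that saturation is strictly weaker than $\mathcal{P}$-maximality: $H$ has $2n-1$ edges while $W_{n,7,2}$ has $2n$, and indeed $H$ can be strictly improved by one EKO followed by an edge addition. Ruling out such graphs requires comparing the threshold graph against \emph{other} $P_k$-free graphs, which is what the paper's machinery is for: the EKO-based comparison lemmas (Lemmas~\ref{Ai-Ning} and~\ref{koless}), Kopylov's disintegration $H(\Gamma;t)$, and the two-disjoint-paths lemma (Lemma~\ref{Ning-Ai}). None of this appears in your outline, and ``a longest-path computation pins $R$ down'' cannot replace it.

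There is a second gap in your transfer-back step: it relies on an equality-rigidity in property (I) that does not exist. Feasibility only demands $\mathcal{P}(G)\leq\mathcal{P}(G[u\to v])$; when $\mathcal{P}$ is the edge count, \emph{every} KO yields equality, so $\mathcal{P}(G_i)=\mathcal{P}(G_i[u\to v])$ can never force $G_i\cong G_i[u\to v]$. The paper transfers the conclusion combinatorially instead (Claim 2 in its proof of Theorem~\ref{Thm:Main2}): assuming the threshold graph $\Gamma$ equals some $W_{n,k-1,s}$, it analyzes the last KO step $\Gamma=G_{h-1}[u,v]$ and exhibits a $P_k$ in $G_{h-1}$ unless $G_{h-1}\cong\Gamma$, then propagates this back along the whole KO chain. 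An argument of this kind is unavoidable; the equality case of (I) gives you nothing.
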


\begin{thm}\label{Thm:Matching}
Let $G$ be a connected
$n$-vertex $M_{k+1}$-free graph with the maximum $\mathcal{P}(G)$ where $\mathcal{P}(G)$ is feasible. Then, $G\cong K_n$ when $n=2k+1$ and $G\in \mathcal{G}^3_{n,k}=\{W_{n,2k+1,s}=K_s\vee  ((n-2k+s-1)K_1\cup K_{2k-2s+1}):1\leq s\leq k\}$ when $n\geq 2k+2$.
\end{thm}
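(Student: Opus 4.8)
The plan is to combine three ingredients: the consequence of property~(II) that an extremal graph must be \emph{edge-maximal} $M_{k+1}$-free, the fact that the Kelmans Operation does not increase the matching number $\nu$, and the Berge--Tutte/Gallai--Edmonds description of edge-maximal graphs with bounded matching number. First I would dispose of the case $n=2k+1$. Here $K_{2k+1}$ is $M_{k+1}$-free since $\nu(K_{2k+1})=k$, so from any connected $M_{k+1}$-free $G$ on $2k+1$ vertices one may keep adding edges, staying $M_{k+1}$-free, until reaching $K_{2k+1}$; by property~(II) every addition strictly increases $\mathcal{P}$, so the unique maximizer is $G\cong K_{2k+1}$. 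Assume $n\ge 2k+2$ from now on.

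Next I would record the key monotonicity lemma: if $G'=G[u\to v]$ then $\nu(G')\le\nu(G)$. The proof is a short exchange argument: take a maximum matching $M'$ of $G'$; at most one edge of $M'$ meets $v$, and the edge of $M'$ at $u$ (if any) uses a neighbour lying in $N_G(u)\cap N_G[v]$, hence present in $G$. One then replaces the (at most two) edges of $M'$ at $u,v$ by edges of $G$ spanning the same vertex set, turning $M'$ into a matching of $G$ of the same size. Combined with property~(I), this shows that performing any $G[u\to v]$ keeps the graph $M_{k+1}$-free while not decreasing $\mathcal{P}$; this is the matching analogue of the circumference/longest-path monotonicity underlying Theorems~\ref{Thm:Main1} and~\ref{Thm:Main2}.

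For the structural core, let $G$ be extremal; by property~(II) it is edge-maximal. Applying the Gallai--Edmonds decomposition $(D,A,C)$ and using edge-maximality forces each component of $G[D]$ to be a complete graph of odd order, $G[C]$ to be a complete graph of even order, and $A\cup C$ to be a clique completely joined to $D$. Writing $X=A$ and $s=|X|$, this gives $G=K_s\vee(K_{m_1}\cup\cdots\cup K_{m_t}\cup C)$ with all $m_i$ odd; connectivity forces $s\ge 1$, and the Berge--Tutte formula $\nu(G)=\tfrac12\left(n-\max_{S}\bigl(o(G-S)-|S|\bigr)\right)$, evaluated at the barrier $S=X$, yields $\nu(G)=s+\sum_i\tfrac{m_i-1}{2}+\tfrac{|C|}{2}=k$. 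I would then reduce to a single clique: if two of the $m_i$ exceed $1$, or if $C\neq\emptyset$, then $G$ is not extremal. Merging two odd cliques $K_{m_i}\cup K_{m_j}$ into $K_{m_i+m_j-1}\cup K_1$, and absorbing $C=K_{2c}$ into one odd clique via $K_m\to K_{m+2c}$, preserves both $n$ and the number of odd components of $G-X$, hence keeps $\nu=k$ by Berge--Tutte; each such merge is realized by a sequence of Kelmans Operations followed by completing the enlarged clique through edge additions. The Kelmans steps do not decrease $\mathcal{P}$ by~(I), while the edge additions strictly increase it by~(II), so the net effect strictly increases $\mathcal{P}$ without leaving the $M_{k+1}$-free class, contradicting extremality. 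Thus $G=K_s\vee(K_{2a+1}\cup pK_1)$ with $s+a=k$, and solving $2a+1=2k-2s+1$, $p=n-2k+s-1$ identifies $G=W_{n,2k+1,s}$ with $1\le s\le k$ (upper bound from $a\ge0$, lower bound from connectivity).

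The step I expect to be the main obstacle is this last reduction. One must check that the intermediate Kelmans Operations remain inside the $M_{k+1}$-free class and keep $G$ connected, that the subsequent edge additions really keep $\nu=k$ (equivalently, that the barrier $X$ and the parity and count of odd components of $G-X$ are preserved throughout), and---because property~(I) provides only a non-strict inequality---that the strict gain needed to contradict extremality genuinely originates from the edge-completion step via property~(II) rather than from the Kelmans steps. Controlling the even clique $C$ and ruling out that some intermediate graph produced by a Kelmans step is itself non-extremal will require the most care.
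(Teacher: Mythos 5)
Your proposal is correct in substance, but it takes a genuinely different route from the paper. The paper runs the same Kopylov-style machine it uses for cycles and paths: Kelmans operations are applied until a threshold graph $\Gamma$ is reached, the $k$-disintegration $H(\Gamma;k)$ is analysed, a Bondy--Chv\'atal-type closure lemma for matchings shows $H$ is a clique of size between $k+2$ and $2k$, and a contradiction is extracted by comparing $H$ with $H(\Gamma;2k+1-r)$; a separate and rather delicate claim is then needed to transfer the conclusion from $\Gamma$ back to $G$. You instead exploit what is special about matchings: property (II) forces edge-maximality, Gallai--Edmonds/Berge--Tutte theory pins down the global structure $K_s\vee(\mbox{disjoint odd cliques})$, and a local improvement (one extended Kelmans operation peeling a vertex off one odd clique, followed by edge-completions merging the rest into a bigger odd clique, which preserves $n$ and the number of odd components, hence keeps $\nu=k$) contradicts extremality unless at most one clique is nontrivial. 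What your approach buys is a shorter, more conceptual proof and the elimination of the paper's transfer claim; what it gives up is uniformity, since the paper's method is identical for $\mathcal{C}_{\geq k}$, $P_k$ and $M_{k+1}$, while yours leans on structure theory available only for matchings. Note also that your exchange lemma (a Kelmans operation does not increase $\nu$) is used \emph{implicitly} by the paper as well --- it asserts $\mu(G)\geq\mu(\Gamma)$ without stating a lemma --- so you have supplied a detail the paper glosses over.

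One imprecision should be repaired. Edge-maximality does not force ``$A\cup C$ to be a clique completely joined to $D$''; that statement conflicts both with the Gallai--Edmonds decomposition (which has no $D$--$C$ edges) and with your own deficiency formula $\nu=s+\sum_i\tfrac{m_i-1}{2}+\tfrac{|C|}{2}$, which presumes $C$ is \emph{not} joined to $D$. The correct statement is stronger: since $n\geq 2k+2$ gives $D\neq\emptyset$, and adding a $D$--$C$ edge would not increase $\nu$ (evaluate Berge--Tutte at $S=A$: an odd component merges with an even one, so the count of odd components is unchanged), edge-maximality forces $C=\emptyset$. Your ``absorption of $C$'' step is exactly this computation, so stating $C=\emptyset$ up front both removes the inconsistency and makes that step unnecessary; the rest of your argument then goes through as written.
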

Note that for each main theorem above, if we replace the condition that ``$\mathcal{P}(G)$ is feasible" with ``$\mathcal{P}(G)$ is weakly-feasible" (one can find this concept in Section \ref{fsec}), then we can only say that there exists at least one extremal graph in $\mathcal{G}^i_{n,k}$ ($1\leq i\leq 3$) but not all. In other words, we can just determine the corresponding extremal values. 

Based on these theorems, we can easily give alternative proofs of some known results mentioned in Section~\ref{S1}. In addition, it fully or partially solved some open problems. We believe this represents the essence of a certain type of problems.

\section{On feasible parameters of graphs}\label{fsec}
In this section, we show that several classical graph parameters of a connected graph, including size, degree power, spectral radius, and signless Laplacian spectral radius, are feasible.



\begin{thm}\label{dpf}
Let $G$ be a connected graph
and let $p\geq 2$ be a real. The degree power $\sum_{v\in V(G)}d^p(v)$ of $G$
is feasible.
\end{thm}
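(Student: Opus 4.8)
The plan is to verify the two defining properties of feasibility directly for $\mathcal{P}(G)=\sum_{v\in V(G)}d_G^{\,p}(v)$. Property (II) is immediate: adding an edge $e\notin E(G)$ (possibly incident to a brand-new vertex) raises the degree of each of its endpoints by one (a newly added vertex being counted with initial degree $0$) and leaves every other degree unchanged, so the strict monotonicity of $t\mapsto t^p$ on the nonnegative integers forces $\mathcal{P}(G)<\mathcal{P}(G+e)$. All the work lies in property (I).

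For (I) I would first pin down exactly how degrees change under $G'=G[u\to v]$. Writing $R=N_G(u)\setminus N_G[v]$ and $s=|R|$, the operation deletes the $s$ edges $\{wu:w\in R\}$ and inserts the $s$ edges $\{wv:w\in R\}$; no $w\in R$ is already adjacent to $v$, so no multi-edge appears, and $R\cap\{u,v\}=\emptyset$. Consequently every vertex of $R$ keeps its degree (it trades the neighbor $u$ for the neighbor $v$), every vertex outside $\{u,v\}$ is untouched, and
$$
d_{G'}(u)=d_G(u)-s,\qquad d_{G'}(v)=d_G(v)+s .
$$
Thus $\mathcal{P}(G')-\mathcal{P}(G)=\big(d_G(u)-s\big)^p+\big(d_G(v)+s\big)^p-d_G(u)^p-d_G(v)^p$: the total degree of the pair is preserved, so the question reduces to whether transferring $s$ units from $u$ to $v$ raises the sum of $p$-th powers.

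The main obstacle — and the only genuine point — is controlling the direction of this transfer, since moving degree away from a high-degree vertex could a priori shrink the sum of $p$-th powers. The resolution is an inequality forced by shared neighbors, which I would obtain through the case split in the definition. Let $c=|N_G(u)\cap N_G(v)|$. If $uv\in E(G)$, then $N_G(u)$ partitions into $\{v\}$, the $c$ common neighbors, and $R$, whence $d_G(u)=1+c+s$ and $d_{G'}(u)=1+c$; since $v$ is adjacent to $u$ and to all $c$ common neighbors, $d_G(v)\ge 1+c=d_{G'}(u)$. If $uv\notin E(G)$, then $N_G(u)$ partitions into the $c$ common neighbors and $R$, so $d_{G'}(u)=c$, while $d_G(v)\ge c=d_{G'}(u)$. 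In both cases
$$
d_G(u)-s=d_{G'}(u)\le d_G(v)\le d_G(v)+s=d_{G'}(v).
$$

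Finally I would conclude by majorization. The two new values $d_{G'}(u)\le d_{G'}(v)$ have the same sum as the old pair, while the larger new value $d_{G'}(v)=d_G(v)+s$ dominates both $d_G(u)$ (by the displayed inequality) and $d_G(v)$; hence $(d_{G'}(v),d_{G'}(u))$ majorizes the decreasing rearrangement of $(d_G(u),d_G(v))$. Convexity of $t\mapsto t^p$ for $p\ge 2$ (Karamata's inequality) then gives $\big(d_{G'}(u)\big)^p+\big(d_{G'}(v)\big)^p\ge d_G(u)^p+d_G(v)^p$, and since every other degree is unchanged this yields $\mathcal{P}(G)\le\mathcal{P}(G')$, establishing (I) and completing the verification that the degree power is feasible.
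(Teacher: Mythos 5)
Your proof is correct and takes essentially the same route as the paper's: both reduce the Kelmans operation to a transfer of $s=|N_G(u)\setminus N_G[v]|$ units of degree from $u$ to $v$ (all other degrees unchanged) and then invoke convexity of $t\mapsto t^p$, the paper via a weighted Jensen inequality on the decomposition $x,y,z$, you via the majorization inequality after noting $d_{G'}(u)\le d_G(v)\le d_{G'}(v)$. If anything, your handling of the case $uv\in E(G)$ is slightly more careful: the paper's displayed formula $D(G_{uv})=D(G)-((x+z)^p+(y+z)^p)+((x+y+z)^p+z^p)$ is exact only when $uv\notin E(G)$ (for adjacent $u,v$ the new degrees are $z+1$ and $x+y+z-1$), whereas your uniform description $d_{G'}(u)=d_G(u)-s$, $d_{G'}(v)=d_G(v)+s$ is exact in both cases; this imprecision does not affect the paper's conclusion, since the corrected pair is still majorized in the same way.
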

\begin{proof}
 Denote $\sum_{v\in V(G)}d^p(v)$ of $G$ by $D(G)$. Given two vertices $u$ and $v$. Let $x=|N(u)\backslash N(v)|$, $y=|N(v)\backslash N(u)|$ and $z=|N(u)\cap N(v)|$. Let $G_{uv}:=G[u\rightarrow v]$. We have that $$D(G_{uv})=D(G)-((x+z)^p+(y+z)^p)+((x+y+z)^p+z^p).$$ Here, we assume $x+y>0$. Otherwise, we can easily get $D(G_{uv})=D(G)$.
 Let $f(x)=x^p$. Then $f(x)$ is a strictly convex function. By Jensen Inequality, we have that 
 \begin{align}
   \frac{x}{x+y}f(z)+\frac{y}{x+y}f(x+y+z)\geq&f(y+z).\label{in1}\\  
\frac{y}{x+y}f(z)+\frac{x}{x+y}f(x+y+z)\geq&f(x+z).\label{in2}  
 \end{align}
Summing (\ref{in1}) and (\ref{in2}), we have $$f(z)+f(x+y+z)\geq f(x+z)+f(y+z).$$
Thus, $D(G_{uv})\geq D(G)$. Obviously, $D(G+e)>D(G)$ for any $e\notin E(G)$. So, $\sum_{v\in V(G)}d^p(v)$ is feasible.
\end{proof}

\begin{thm}
The size of a connected graph is feasible.
\end{thm}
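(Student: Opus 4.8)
The plan is to verify the two conditions of Definition~\ref{basic} directly, writing $e(G)$ for the number of edges of $G$. Condition (II) is immediate: if $e\notin E(G)$ is any edge meeting $V(G)$, then $G+e$ has exactly one more edge than $G$, so $e(G)<e(G+e)$; this holds whether or not $e$ introduces a new vertex. Hence all the real content lies in condition (I), namely that a single Kelmans Operation (or extended Kelmans Operation) from $u$ to $v$ never decreases the size. In fact I expect to prove the stronger statement that it \emph{preserves} the size exactly, $e(G_{uv})=e(G)$.

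For condition (I), I would track edge-by-edge which edges are destroyed and which are created. By the definition of $G_{uv}=G[u\to v]$, the deleted edges are exactly those of the form $wu$ with $w\in N_G(u)\setminus N_G[v]$, and the created edges are exactly those of the form $wv$ with the same index set $w\in N_G(u)\setminus N_G[v]$ (the apparent discrepancy between $N_G[v]$ and $N_G(v)$ in the two clauses of the definition concerns only the vertex $v$ itself, which contributes no genuine edge). Three checks then finish the count: each deleted edge $wu$ is a genuine edge of $G$ since $w\in N_G(u)$; each created edge $wv$ is a genuine non-edge of $G$ since $w\notin N_G[v]$ forces $wv\notin E(G)$ and $w\neq v$; and no deleted edge equals a created one, because deleted edges are incident to $u$ while created edges are incident to $v$, and the case $u=v$ is vacuous. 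The map $w\mapsto w$ is thus a bijection between the deleted and the created edges, so the two sets have equal cardinality and $e(G_{uv})=e(G)$, giving $e(G)\le e(G_{uv})$ as required.

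The argument is uniform for both the KO case ($uv\in E(G)$) and the EKO case ($uv\notin E(G)$): in each case the index set of moved neighbors is $N_G(u)\setminus N_G[v]$, and the edge $uv$, when present, is untouched. I do not anticipate any real obstacle here; the only point demanding care is the bookkeeping around the vertex $v$ in the definition of the operation, so that the deletion and creation sets are matched correctly and no self-loop or repeated edge is counted. Once this is set up, feasibility of the size follows, and in fact with equality in (I) rather than mere inequality.
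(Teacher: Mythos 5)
Your proof is correct and follows essentially the same route as the paper, whose entire argument is the observation that a Kelmans Operation preserves the number of edges while adding an edge strictly increases it. Your edge-by-edge bijection merely makes explicit the ``obvious'' preservation step that the paper asserts without detail, including the correct handling of the spurious $w=v$ term and of the untouched edge $uv$.
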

\begin{proof}
It is obviously Kelmans operation keeps the number of edges in $G$, and $e(G+uv)>e(G)$
for any new edge $uv$. This proves the theorem.
\end{proof}

The following two lemmas are from Zhan \cite{Z13}.
\begin{lem}[Zhan \cite{Z13}]
Let $A$ and $B$ be two nonnegative square matrices. If $B<A$ and $A$ is irreducible,
then $\lambda(B) <\lambda(A)$.  
\end{lem}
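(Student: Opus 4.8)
The plan is to prove this via Perron--Frobenius theory, using the irreducibility of $A$ only at the very end to upgrade a weak inequality to a strict one. (Here $B<A$ means $B\le A$ entrywise with $B\ne A$.) First I would record the relevant facts. Since $A$ is nonnegative and irreducible, Perron--Frobenius gives that $\rho:=\lambda(A)$ is a positive eigenvalue and, because $A^{T}$ is also irreducible, that there is a strictly positive left eigenvector $y>0$ with $y^{T}A=\rho\,y^{T}$. For the (possibly reducible) nonnegative matrix $B$, Perron--Frobenius still guarantees that $\mu:=\lambda(B)$ is an eigenvalue of $B$ admitting some nonnegative eigenvector $z\ge 0$, $z\ne 0$, i.e.\ $Bz=\mu z$.

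Next I would derive the weak inequality by pairing this right eigenvector of $B$ against the positive left eigenvector of $A$. Since $B\le A$ entrywise, $z\ge 0$ gives $Bz\le Az$ entrywise, and then $y>0$ yields
\[
\mu\,(y^{T}z)=y^{T}(Bz)\le y^{T}(Az)=(y^{T}A)z=\rho\,(y^{T}z).
\]
Because $y>0$ and $z\ge 0$ with $z\ne 0$, the scalar $y^{T}z$ is strictly positive, so dividing gives $\mu\le\rho$, that is $\lambda(B)\le\lambda(A)$.

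It remains to rule out equality, and this is where I expect the only real work to lie. Suppose $\mu=\rho$. Then the chain above collapses to $y^{T}(A-B)z=0$. Writing this as $\sum_{j}\big(\sum_{i}y_{i}(A-B)_{ij}\big)z_{j}=0$ with every summand nonnegative (recall $A-B\ge 0$, $y>0$, $z\ge 0$), each index $j$ in the support $S=\{j:z_{j}>0\}$ must satisfy $\sum_{i}y_{i}(A-B)_{ij}=0$, and since $y>0$ this forces $(A-B)_{ij}=0$ for all $i$ and all $j\in S$. Consequently, using $z_{j}=0$ off $S$, we get $(Az)_{i}=\sum_{j\in S}A_{ij}z_{j}=\sum_{j\in S}B_{ij}z_{j}=(Bz)_{i}=\rho z_{i}$ for every $i$, so $Az=\rho z$ with $z\ge0$, $z\ne0$. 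Thus $z$ is a Perron eigenvector of the irreducible matrix $A$, which by Perron--Frobenius must be strictly positive; hence $S$ is the full index set, and the vanishing of all columns of $A-B$ forces $A=B$, contradicting $B<A$. Therefore $\mu<\rho$, i.e.\ $\lambda(B)<\lambda(A)$. The crux of the argument is precisely the step where irreducibility of $A$ eliminates the possibility that the Perron vector $z$ of $B$ has zero coordinates on which the difference $A-B$ could be concentrated.
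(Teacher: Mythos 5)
Your proof is correct. Note, however, that the paper does not prove this lemma at all: it is quoted verbatim from Zhan's textbook \cite{Z13} and used as a black box, so there is no in-paper argument to compare yours against. What you have written is the standard Perron--Frobenius proof (and essentially the textbook one): pair a nonnegative Perron eigenvector $z$ of $B$ against a strictly positive left Perron eigenvector $y$ of the irreducible matrix $A$ to get $\lambda(B)\,y^{T}z \le \lambda(A)\,y^{T}z$ with $y^{T}z>0$, then analyze the equality case. Your handling of that equality case is the right one and is complete: from $y^{T}(A-B)z=0$ you correctly deduce that the columns of $A-B$ indexed by the support of $z$ vanish, hence $Az=\lambda(A)z$, and then irreducibility of $A$ forces $z>0$ (a nonnegative eigenvector of an irreducible nonnegative matrix is strictly positive), which collapses the whole difference $A-B$ to zero and contradicts $B\neq A$. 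The one point worth making explicit in a written version is the convention that $B<A$ means $B\le A$ entrywise with $B\ne A$, which you did state, and the standard fact that a general (possibly reducible) nonnegative matrix attains its spectral radius as an eigenvalue with a nonnegative eigenvector, which you correctly invoked for $B$.
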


\begin{lem}[Zhan \cite{Z13}]
Let $A$ be a nonnegative square matrix. If $B$ is a principal submatrix of $A$, then $\lambda(B)\leq \lambda(A)$. If $A$ is irreducible and $B$ is a proper principal submatrix of A, then $\lambda(B)<\lambda(A)$.
\end{lem}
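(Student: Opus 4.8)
The plan is to reduce both claims to the monotonicity of the spectral radius under entrywise domination, combined with a zero-padding trick so that $B$ and $A$ can be compared as matrices of the same size. Write $B=A[S,S]$ for the index set $S$ on which the principal submatrix is taken, and let $\tilde B$ be the $n\times n$ matrix with $\tilde B_{ij}=A_{ij}$ when $i,j\in S$ and $\tilde B_{ij}=0$ otherwise. Permuting the rows and columns so that the indices of $S$ come first shows that $\tilde B$ is similar, via a permutation matrix, to the block-diagonal matrix $\mathrm{diag}(B,\mathbf 0)$, whose spectrum is that of $B$ together with extra zeros. Since $B$ is nonnegative we have $\lambda(B)\ge 0$, and therefore $\lambda(\tilde B)=\lambda(B)$.

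First I would establish the inequality $\lambda(B)\le\lambda(A)$. By construction $0\le \tilde B\le A$ entrywise, because $A$ is nonnegative while $\tilde B$ agrees with $A$ on $S\times S$ and vanishes elsewhere. Entrywise domination of nonnegative matrices is preserved under multiplication, so $0\le \tilde B^m\le A^m$ for every $m\ge 1$; applying the monotone norm $\|M\|=\sum_{i,j}|M_{ij}|$ gives $\|\tilde B^m\|\le\|A^m\|$, and Gelfand's formula $\lambda(M)=\lim_{m\to\infty}\|M^m\|^{1/m}$ then yields $\lambda(\tilde B)\le\lambda(A)$. Combining this with $\lambda(\tilde B)=\lambda(B)$ finishes the first part.

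For the strict inequality, assume $A$ is irreducible and $B$ is a proper principal submatrix, so $S\subsetneq\{1,\dots,n\}$ and $n\ge 2$. An irreducible nonnegative matrix of order at least two has no zero row, whereas $\tilde B$ has an all-zero row at each index outside $S$, and at least one such index exists. Hence $\tilde B\le A$ but $\tilde B\ne A$, i.e.\ $\tilde B<A$ in the entrywise sense, with $A$ irreducible. The first lemma of Zhan stated just above then applies directly and gives $\lambda(\tilde B)<\lambda(A)$, whence $\lambda(B)=\lambda(\tilde B)<\lambda(A)$.

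The only genuine content is the non-strict monotonicity $\lambda(\tilde B)\le\lambda(A)$; once the strict-monotonicity lemma stated immediately before is available, the strict case costs nothing beyond the observation that irreducibility forbids a zero row. The main points requiring care are the spectral identity $\lambda(\tilde B)=\lambda(B)$ coming from the block-diagonal reduction, and the verification that the chosen norm is genuinely monotone on nonnegative matrices, both of which are routine rather than obstructive.
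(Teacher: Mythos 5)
The paper never proves this lemma; it is quoted directly from Zhan's textbook \cite{Z13}, so there is no internal proof to compare against, and the only question is whether your argument is sound. It is. The zero-padding step is correct: $\tilde B$ is permutation-similar to the block matrix $\mathrm{diag}(B,\mathbf 0)$, and since $\lambda(B)\geq 0$ for a nonnegative matrix this gives $\lambda(\tilde B)=\lambda(B)$; the weak inequality then follows from $0\leq \tilde B^m\leq A^m$ together with Gelfand's formula, which holds for any matrix norm by equivalence of norms in finite dimension; and the strict case correctly reduces to the first Zhan lemma once you observe that an irreducible nonnegative matrix of order at least two has no zero row while $\tilde B$ does. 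The one point worth flagging is your implicit reading of ``$B<A$'' in that first lemma as ``$B\leq A$ entrywise and $B\neq A$'' rather than strict inequality in every entry: under the strict-entrywise reading your application would break down, because $\tilde B$ agrees with $A$ on all of $S\times S$. But the weaker reading is the standard one and is also the one the paper itself must intend, since it derives the statement that $\lambda(G+xy)>\lambda(G)$ for connected $G$ by applying that lemma to adjacency matrices differing in only two entries. So your proof is complete and consistent with the paper's conventions; it is in effect the standard textbook argument, with the mild novelty that you obtain the strict case as a corollary of the dominance lemma rather than proving both independently via Perron--Frobenius theory.
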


Let $G$ be a graph on $n$ vertices, and let $A(G)$ be the adjacency matrix of $G$. Thus, $G$ is connected if and only if $A(G)$ is irreducible.

The following are basic facts which can be deduced from above.

\begin{lem}\label{Lem:lambda+q_addedge}
Let $G$ be a connected graph and $xy\notin E(G)$. Suppose $G+xy$ is connected.
Then (i) $\lambda(G+xy)>\lambda(G)$; and
(ii) $q(G+xy)>q(G)$.
\end{lem}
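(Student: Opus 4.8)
The plan is to derive both parts directly from the first lemma of Zhan quoted above, namely that a strict entrywise inequality $B < A$ between nonnegative matrices with $A$ irreducible forces $\lambda(B) < \lambda(A)$. The two parts differ only in which nonnegative matrix plays the role of $A$: the adjacency matrix in part (i) and the signless Laplacian in part (ii). In both cases the core idea is that adding the edge $xy$ perturbs the relevant matrix in a purely additive, entrywise-nonnegative, and nonzero way, and that the perturbed matrix remains irreducible because $G+xy$ is connected.

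For part (i), I would write $A(G+xy) = A(G) + E_{xy}$, where $E_{xy}$ has entry $1$ in positions $(x,y)$ and $(y,x)$ and zeros elsewhere. Then $A(G) < A(G+xy)$ entrywise (they agree everywhere except the two added positions), both matrices are nonnegative, and $A(G+xy)$ is irreducible because $G+xy$ is connected; recall that connectivity of a graph is equivalent to irreducibility of its adjacency matrix, as noted just before the lemma. Zhan's lemma then yields $\lambda(G) < \lambda(G+xy)$.

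For part (ii), the key observation is that adding $xy$ changes the signless Laplacian additively: $Q(G+xy) = Q(G) + E_{xy} + F$, where $F$ is the diagonal matrix with $1$'s in positions $(x,x)$ and $(y,y)$, accounting for the increase of $d(x)$ and $d(y)$ by one each. Hence $Q(G) < Q(G+xy)$ entrywise, and both are nonnegative symmetric matrices, so their largest eigenvalues coincide with their spectral radii, i.e.\ $q(G)=\lambda(Q(G))$ and $q(G+xy)=\lambda(Q(G+xy))$. Since diagonal entries do not affect irreducibility, $Q(G+xy)$ is irreducible exactly when $G+xy$ is connected, which holds by hypothesis. Applying Zhan's lemma once more gives $q(G) < q(G+xy)$.

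The argument is essentially a bookkeeping exercise, so the only point that demands care — and the step I would flag as the main (mild) obstacle — is verifying the hypotheses of Zhan's lemma in the signless Laplacian case: confirming that $q(\cdot)$ is genuinely the Perron/spectral radius of the nonnegative matrix $Q(\cdot)$, so that the conclusion $\lambda(Q(G)) < \lambda(Q(G+xy))$ really translates into $q(G) < q(G+xy)$, and that the diagonal shift $F$ does not disturb irreducibility. Both facts are immediate once stated, and no structural analysis of $G$ beyond its connectedness is needed.
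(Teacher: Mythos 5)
Your proposal is correct and follows essentially the same route as the paper: the paper presents this lemma as an immediate consequence of Zhan's lemma on nonnegative matrices (strict domination plus irreducibility of the larger matrix forces a strict spectral radius inequality), applied to $A(G)\le A(G+xy)$ and $Q(G)\le Q(G+xy)$, with irreducibility supplied by the connectedness of $G+xy$. Your added care in part (ii) — checking that $q(\cdot)$ is the Perron root of the nonnegative symmetric matrix $Q(\cdot)$ and that the diagonal increment does not affect irreducibility — is exactly the bookkeeping the paper leaves implicit.
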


\begin{lem}\label{Lem:lambda+q_proper}
Let $G$ be a connected graph and let $H$ be a proper subgraph
of $G$.
Then (i) $\lambda(G)>\lambda(H)$; and 
(ii) $q(G)>q(H)$.
\end{lem}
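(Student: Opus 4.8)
The plan is to deduce both inequalities from the two lemmas of Zhan quoted above, exploiting that the connectivity of $G$ makes $A(G)$ irreducible and, equally, makes the signless Laplacian $Q(G)=A(G)+D(G)$ irreducible (its off-diagonal zero pattern is exactly that of $A(G)$, and $D(G)$ alters only the diagonal). Throughout I would index all matrices by $V(G)$ and view $H$ as a subgraph carried on a vertex subset $V(H)\subseteq V(G)$, splitting into two cases: (a) $V(H)$ is a proper subset of $V(G)$; and (b) $V(H)=V(G)$ while $E(H)$ is a proper subset of $E(G)$. Since $H$ need not be connected, I deliberately avoid the edge-addition route of Lemma~\ref{Lem:lambda+q_addedge} and argue entirely at the matrix level.

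For (i), I would introduce the induced subgraph $G[V(H)]$ as an intermediary. Since $H$ and $G[V(H)]$ share the vertex set $V(H)$ and $E(H)\subseteq E(G[V(H)])$, the entrywise bound $A(H)\le A(G[V(H)])$ holds, and monotonicity of the spectral radius for nonnegative matrices gives $\lambda(H)\le\lambda(G[V(H)])$. Now $A(G[V(H)])$ is a principal submatrix of the irreducible matrix $A(G)$, so Zhan's second lemma yields $\lambda(G[V(H)])\le\lambda(G)$, strictly in case (a) because the submatrix is then proper. In case (b) one has $G[V(H)]=G$ while $A(H)<A(G)$ (a missing edge produces a symmetric pair of off-diagonal zeros), so Zhan's first lemma gives $\lambda(H)<\lambda(G)$ at once. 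Either way $\lambda(H)<\lambda(G)$.

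Part (ii) runs along the identical skeleton with $Q$ replacing $A$. The one substantive change is that I must compare $Q(H)$ with the principal submatrix $Q(G)[V(H)]$ rather than with $Q(G)$ directly: off the diagonal these record the edges of $H$ and of $G$ respectively, while on the diagonal they record $d_H(v)$ and $d_G(v)$. Because $H$ is a subgraph of $G$ we have both $A(H)\le A(G)$ and $d_H(v)\le d_G(v)$ for every $v\in V(H)$, hence the entrywise domination $Q(H)\le Q(G)[V(H)]$ and so $q(H)\le\lambda(Q(G)[V(H)])$. Zhan's second lemma then forces strict inequality in case (a), while Zhan's first lemma, applied to $Q(H)<Q(G)$ using the irreducibility of $Q(G)$, handles case (b).

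The only place demanding care is precisely this diagonal bookkeeping in part (ii): since the signless Laplacian carries degrees on its diagonal, one has to confirm that restricting to $V(H)$ and then deleting edges never raises a diagonal entry, i.e. that $d_H(v)\le d_G(v)$ throughout, so that $Q(H)\le Q(G)[V(H)]$ genuinely holds entrywise. This is immediate from $H\subseteq G$, but it is the step where the argument for $q$ parts company with that for $\lambda$ and where a sign slip would be easiest to make.
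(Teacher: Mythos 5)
Your proposal is correct and follows exactly the route the paper intends: the paper states this lemma as a ``basic fact'' deduced from Zhan's two lemmas, and your argument is precisely that deduction, carried out carefully via the case split on $V(H)\subsetneq V(G)$ versus $E(H)\subsetneq E(G)$, with the right attention to the irreducibility of $Q(G)$ and the diagonal (degree) entries in part (ii). The only ingredient beyond the two quoted lemmas is the weak monotonicity $\lambda(B)\le\lambda(A)$ for nonnegative matrices $B\le A$ without irreducibility, which is a standard Perron--Frobenius fact and entirely in the spirit of what the paper takes for granted.
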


\begin{lem}(\cite{C09})\label{Lem:C09}
Let $G$ be a connected graph and $u,v\in V(G)$ (maybe $uv\notin E(G)$). Let $G_{uv}:=G[u\rightarrow v]$. Then
$\lambda(G_{uv})\geq \lambda(G)$.
\end{lem}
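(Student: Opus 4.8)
The plan is to prove this through the Rayleigh quotient combined with the Perron--Frobenius eigenvector of $G$. Write $A(G')$ and $A(G)$ for the two adjacency matrices, set $\lambda=\lambda(G)$, and let $\mathbf{x}>0$ be a Perron eigenvector of $G$ (strictly positive since $G$ is connected). Because $\lambda(G')$ is the largest eigenvalue of the nonnegative matrix $A(G')$, for every nonzero vector $\mathbf{z}$ one has $\lambda(G')\ge \mathbf{z}^{\top}A(G')\mathbf{z}/(\mathbf{z}^{\top}\mathbf{z})$; hence it suffices to exhibit a trial vector $\mathbf{z}$ with $\mathbf{z}^{\top}A(G')\mathbf{z}\ge \lambda\,\mathbf{z}^{\top}\mathbf{z}$. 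Note that no connectivity of $G'$ is needed for this lower bound.

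First I would record exactly which edges change. Partition the remaining vertices into $A=N_G(u)\setminus N_G[v]$ (the private neighbours of $u$), $B=N_G(v)\setminus N_G[u]$, and $C=N_G(u)\cap N_G(v)$. The operation $G[u\to v]$ deletes the edges from $u$ to $A$ and creates the edges from $v$ to $A$, while leaving every other edge — and in particular the adjacency status of the pair $uv$ — unchanged. Writing $S(T)=\sum_{w\in T}x_w$, a direct expansion of $\mathbf{x}^{\top}A(G')\mathbf{x}-\mathbf{x}^{\top}A(G)\mathbf{x}$ collapses to the single term $2(x_v-x_u)S(A)$.

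Consequently, if $x_v\ge x_u$ then $\mathbf{x}$ itself already serves as the trial vector and the inequality follows at once. The main obstacle is the opposite case $x_u>x_v$, where this naive first-order term has the wrong sign (a small tree example shows $\lambda$ can still strictly increase even when that term is negative, so the single-vector estimate genuinely fails). To handle it I would take the trial vector $\mathbf{z}$ obtained from $\mathbf{x}$ by swapping the two coordinates at $u$ and $v$, i.e.\ placing the larger value $\max(x_u,x_v)$ on $v$, which is natural because in $G'$ every neighbour of $u$ other than $v$ is also a neighbour of $v$. Since $\mathbf{z}$ is merely a coordinate permutation of $\mathbf{x}$ we have $\mathbf{z}^{\top}\mathbf{z}=\mathbf{x}^{\top}\mathbf{x}$, and the analogous expansion now yields $\mathbf{z}^{\top}A(G')\mathbf{z}-\mathbf{x}^{\top}A(G)\mathbf{x}=2(x_u-x_v)S(B)\ge 0$.

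Combining the two cases — equivalently, always assigning $\max(x_u,x_v)$ to $v$ — gives $\lambda(G')\ge\lambda$ in every situation. The only points requiring care are the bookkeeping of the pair $uv$ (it is untouched by the operation, so its contribution cancels identically in both expansions, which is exactly why the argument also covers the extended operation with $uv\notin E(G)$ verbatim) and the routine observation that the operation produces no genuine loop at $v$. I expect the case split to be the crux: the clean one-vector Rayleigh estimate only works when the eigenvector weight is already heavier at the target vertex $v$, and the coordinate swap is precisely the device that rescues the remaining case.
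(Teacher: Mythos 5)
Your proof is correct. The paper itself gives no argument for this lemma --- it is quoted from Csikv\'ari \cite{C09} --- and your argument (Rayleigh quotient with the Perron vector $\mathbf{x}$ of $G$, the expansion $\mathbf{x}^{\top}A(G')\mathbf{x}-\mathbf{x}^{\top}A(G)\mathbf{x}=2(x_v-x_u)S(A)$, and the coordinate swap at $u,v$ giving $2(x_u-x_v)S(B)\ge 0$ in the remaining case) is essentially the standard proof from that reference, carried out accurately, including the observation that the edge $uv$ is untouched so the argument covers the extended operation as well.
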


\begin{lem}(\cite{LN16})\label{Lem:LN16}
Let $G$ be a connected graph and $u,v\in V(G)$ (maybe $uv\notin E(G)$). Let $G_{uv}:=G[u\rightarrow v]$. Then
$q(G_{uv})\geq q(G)$.
\end{lem}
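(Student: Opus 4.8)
The plan is to prove $q(G_{uv}) \geq q(G)$ through the variational characterization of the signless Laplacian spectral radius together with positivity of the Perron vector. The key tool I would use is that for $Q(G) = D(G) + A(G)$ and any real vector $\mathbf{x} = (x_w)_{w \in V(G)}$ one has the edge-sum identity
$$\mathbf{x}^{T} Q(G)\mathbf{x} = \sum_{ij \in E(G)} (x_i + x_j)^2,$$
so that $q(G) = \max_{\|\mathbf{x}\| = 1}\sum_{ij\in E(G)}(x_i+x_j)^2$. Since $G$ is connected, $Q(G)$ is nonnegative and irreducible, hence by Perron--Frobenius the unit eigenvector $\mathbf{x}$ attaining $q(G)$ can be chosen strictly positive. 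Note that $G_{uv}$ and $G$ share the same vertex set, so the same $\mathbf{x}$ may be tested against both, and the Rayleigh bound $q(G_{uv}) \geq \mathbf{x}^{T} Q(G_{uv})\mathbf{x}$ needs no connectivity assumption on $G_{uv}$.

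First I would pin down exactly which edges change under $G_{uv} = G[u\to v]$. Writing $A = N_G(u)\setminus N_G[v]$ for the private neighbors of $u$, the operation deletes precisely the edges $\{wu : w\in A\}$ and inserts precisely the edges $\{wv : w\in A\}$, while the edge $uv$, the common neighbors, and the private neighbors of $v$ are all left untouched. Evaluating the quadratic form on the fixed Perron vector $\mathbf{x}$ of $Q(G)$ then gives
$$\mathbf{x}^{T}Q(G_{uv})\mathbf{x} - \mathbf{x}^{T}Q(G)\mathbf{x} = \sum_{w\in A}\left[(x_w + x_v)^2 - (x_w + x_u)^2\right] = (x_v - x_u)\sum_{w\in A}(2x_w + x_u + x_v).$$
Because $\mathbf{x} > 0$, the trailing sum is positive, so whenever $x_v \geq x_u$ this difference is nonnegative; combined with the Rayleigh bound this yields $q(G_{uv}) \geq \mathbf{x}^{T}Q(G)\mathbf{x} = q(G)$.

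The main obstacle is the case $x_u > x_v$, where the displayed difference is negative and this direct test vector fails. I would resolve it using the symmetry of the Kelmans operation: $G[u\to v]$ and $G[v\to u]$ are isomorphic, since in each of them one of the two vertices acquires the union neighborhood $N_G(u)\cup N_G(v)$ while the other retains the intersection $N_G(u)\cap N_G(v)$, and swapping the labels $u\leftrightarrow v$ carries one graph to the other. As $q$ is invariant under isomorphism, $q(G[u\to v]) = q(G[v\to u])$. Running the identical computation for the direction $v\to u$ (where the relevant private set is $B = N_G(v)\setminus N_G[u]$) produces the factor $(x_u - x_v)$, which is nonnegative in this case, so $q(G[v\to u]) \geq q(G)$ and therefore $q(G_{uv}) \geq q(G)$ as well. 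In short, once the favorable direction is selected by symmetry, the whole argument collapses to the one-line estimate above, and I anticipate no genuinely hard step beyond correct bookkeeping of the edge changes and the invocation of the isomorphism.
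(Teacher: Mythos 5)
Your proof is correct. There is nothing in the paper to compare it against: the paper states this lemma as a quotation from \cite{LN16} and gives no proof, so your argument fills a gap rather than paralleling one. What you wrote is essentially the standard proof from that reference, i.e.\ Csikv\'ari's symmetric argument for the adjacency spectral radius \cite{C09} transplanted to the signless Laplacian: the edge-sum identity $\mathbf{x}^{T}Q(G)\mathbf{x}=\sum_{ij\in E(G)}(x_i+x_j)^2$, the strictly positive Perron vector of $Q(G)$ (valid since the off-diagonal support of $Q$ equals that of $A$, so connectivity of $G$ gives irreducibility), the Rayleigh bound for $G_{uv}$ (correctly noted to require no connectivity of $G_{uv}$, which can indeed be disconnected), and the selection of the favorable direction via the isomorphism $G[u\to v]\cong G[v\to u]$. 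The one spot worth tightening is that isomorphism claim when $uv\in E(G)$: saying one vertex ``acquires $N_G(u)\cup N_G(v)$'' while the other ``retains $N_G(u)\cap N_G(v)$'' ignores the edge $uv$ itself, which is preserved in both graphs and is fixed by the transposition $u\leftrightarrow v$; once that closed-neighborhood bookkeeping is added, the swap argument is airtight in both the adjacent and nonadjacent cases.
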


Thus, by the above lemmas, we have the following. 
\begin{thm}\label{srf}
The spectral radius of a connected graph is feasible.
\end{thm}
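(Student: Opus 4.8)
The statement to prove is Theorem~\ref{srf}: the spectral radius of a connected graph is feasible. Recall (Definition~\ref{basic}) that feasibility means two things: property (I), that $\lambda(G)\leq\lambda(G[u\to v])$ for any vertices $u,v$; and property (II), that $\lambda(G)<\lambda(G+e)$ whenever we add an edge $e\notin E(G)$ meeting $V(G)$ (possibly introducing one new vertex), provided the result is still a connected graph.

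My plan is to verify the two properties separately, invoking the lemmas already established in this section so that essentially no new computation is needed. For property (I), I would simply cite Lemma~\ref{Lem:C09}, which states exactly that $\lambda(G_{uv})\geq\lambda(G)$ for $G_{uv}=G[u\to v]$ and any $u,v\in V(G)$ (with no adjacency assumption). This is the content of property (I), so there is nothing further to do. For property (II), I would split into the two shapes an added edge $e$ can take. If both endpoints of $e$ already lie in $V(G)$, say $e=xy$ with $xy\notin E(G)$, then since $G$ is connected $G+xy$ is connected, and Lemma~\ref{Lem:lambda+q_addedge}(i) gives $\lambda(G+xy)>\lambda(G)$. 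If instead $e$ introduces a genuinely new vertex $w$ joined to some $x\in V(G)$, then $G$ is a proper (induced, connected) subgraph of the connected graph $G+e$, so Lemma~\ref{Lem:lambda+q_proper}(i) yields $\lambda(G)<\lambda(G+e)$. Either way property (II) holds.

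Since both properties hold, the spectral radius is feasible, completing the proof. The only point requiring a little care is the second case of property (II): Definition~\ref{basic}(II) explicitly allows the new edge to contribute a fresh vertex outside $V(G)$, so I must not forget that the relevant inequality there is the proper-subgraph statement (Lemma~\ref{Lem:lambda+q_proper}) rather than the add-an-edge statement (Lemma~\ref{Lem:lambda+q_addedge}), which is phrased for two vertices already in $G$. This is a bookkeeping distinction rather than a genuine obstacle; indeed the whole proof is a short assembly of the three cited lemmas, and I expect the ``hard part'' to be purely a matter of quoting the correct lemma for each case rather than any real difficulty, all the analytic work having been front-loaded into the Perron--Frobenius-type Lemmas~\ref{Lem:lambda+q_addedge}--\ref{Lem:C09} drawn from Zhan~\cite{Z13} and~\cite{C09}.
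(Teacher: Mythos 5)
Your proof is correct and follows exactly the paper's own route: the paper's proof of Theorem~\ref{srf} is a one-line citation of Lemmas~\ref{Lem:C09}, \ref{Lem:lambda+q_addedge}(i), and \ref{Lem:lambda+q_proper}(i), which are precisely the three lemmas you invoke. Your only addition is to make explicit the case split in property (II) (edge within $V(G)$ versus edge to a new vertex), which the paper leaves implicit but which is handled by the same pair of lemmas you quote.
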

\begin{proof}
The theorem follows from Lemmas~\ref{Lem:C09},~\ref{Lem:lambda+q_addedge}(i) and \ref{Lem:lambda+q_proper}(i).
\end{proof}

\begin{thm}\label{lsrf}
The signless Laplacian spectral radius of a connected graph is feasible.
\end{thm}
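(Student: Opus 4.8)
The plan is to mirror the structure of the proof of Theorem~\ref{srf} for the spectral radius, substituting the corresponding signless Laplacian facts in place of the adjacency ones. Recall that a graph parameter $\mathcal{P}(G)$ is feasible precisely when it satisfies property (I), that $\mathcal{P}(G)\leq \mathcal{P}(G[u\to v])$ for every pair of vertices $u,v$, and property (II), that $\mathcal{P}(G)<\mathcal{P}(G+e)$ whenever $e$ is a new edge meeting $V(G)$. So I need to verify each of these two properties for $\mathcal{P}(G)=q(G)$, the signless Laplacian spectral radius.

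First I would verify property (I). This is exactly the content of Lemma~\ref{Lem:LN16}, which states that for a connected graph $G$ and any $u,v\in V(G)$, the graph $G_{uv}=G[u\to v]$ obtained by the Kelmans Operation satisfies $q(G_{uv})\geq q(G)$. Thus property (I) holds immediately, with no further work.

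Next I would verify property (II). Here I split into two sub-cases according to whether the new edge $e$ is internal to $V(G)$ or introduces a new vertex. If both endpoints of $e=xy$ already lie in $V(G)$, then since $G$ is connected, $G+xy$ is also connected, and Lemma~\ref{Lem:lambda+q_addedge}(ii) gives $q(G+xy)>q(G)$, as required. If instead $e$ attaches a new vertex to some vertex of $G$, then $G$ is a proper subgraph of $G+e$, and Lemma~\ref{Lem:lambda+q_proper}(ii) yields $q(G+e)>q(G)$. In either case the strict inequality of property (II) holds.

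Combining the two properties establishes that $q(G)$ is feasible, and the proof is complete. I do not anticipate any genuine obstacle here: all the heavy lifting has already been packaged into the four preliminary lemmas (the monotonicity of $q$ under the Kelmans Operation, under edge addition, and under passage to a proper subgraph), so the argument is just an assembly of those statements exactly parallel to the adjacency-matrix case treated in Theorem~\ref{srf}. The only point requiring a little care is ensuring that the connectivity hypothesis needed for Lemma~\ref{Lem:lambda+q_addedge}(ii) is met in property (II), which is why I separate the ``new vertex'' case and route it through Lemma~\ref{Lem:lambda+q_proper}(ii) instead.
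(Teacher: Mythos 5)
Your proof is correct and follows exactly the paper's argument: the paper's own proof of Theorem~\ref{lsrf} is precisely the assembly of Lemma~\ref{Lem:LN16} (property (I)), Lemma~\ref{Lem:lambda+q_addedge}(ii) and Lemma~\ref{Lem:lambda+q_proper}(ii) (the two cases of property (II)). Your version merely spells out the case split for property (II) that the paper leaves implicit.
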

\begin{proof}
The theorem follows from Lemmas~\ref{Lem:LN16},~\ref{Lem:lambda+q_addedge}(ii) and \ref{Lem:lambda+q_proper}(ii).
\end{proof}

\begin{definition}
Let $G$ be a connected graph and $\mathcal{P}(G)$ a graph parameter. We say that $\mathcal{P}(G)$ is \emph{weakly-feasible}, if $\mathcal{P}(G)$ satisfies the following properties:\\ 
(I) $\mathcal{P}(G)\leq \mathcal{P}(G_{uv})$, if $G_{uv}=G[u\to v]$ for any vertices $u,v\in V(G)$;\\
(II) $\mathcal{P}(G) \leq\mathcal{P}(G+e)$, if a new edge $e\notin E(G)$ but $V(e)\cap V(G)\neq \emptyset$. 
\end{definition}

\begin{thm}\label{Thm:s-clique}
The number of $s$-cliques in a connected graph is weakly-feasible.
\end{thm}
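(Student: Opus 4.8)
The plan is to verify the two defining clauses of weak-feasibility directly for the clique-count, which I will write $N_s(\cdot)$. Clause (II) costs almost nothing: inserting a new edge $e$ (possibly incident to a brand-new vertex) retains every old edge of $G$, so no existing copy of $K_s$ is destroyed and hence $N_s(G+e)\ge N_s(G)$. All the substance therefore lies in clause (I), namely that a Kelmans operation $G_{uv}=G[u\to v]$ cannot decrease the number of $s$-cliques.

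To attack (I) I would first record the effect of the operation on neighborhoods. Setting $A=N_G(u)\setminus N_G[v]$, $B=N_G(v)\setminus N_G[u]$ and $C=N_G(u)\cap N_G(v)$, which are pairwise disjoint subsets of $V\setminus\{u,v\}$, the operation deletes exactly the edges from $u$ to $A$ and inserts exactly the edges from $v$ to $A$, while leaving every edge inside $V\setminus\{u,v\}$ and the pair $uv$ untouched. Hence in $G_{uv}$ the vertex $v$ is adjacent to $A\cup B\cup C$, whereas $u$ is adjacent only to $C$ (together with $v$, if $uv\in E(G)$).

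Next I would partition the $s$-cliques of each of $G$ and $G_{uv}$ according to their intersection with $\{u,v\}$, into those meeting neither, those through $v$ only, those through $u$ only, and those through both. The "neither" type uses only unchanged edges, and a "both" type clique $\{u,v\}\cup S$ forces $S\subseteq C$, which is preserved since $u,v$ keep their edges to $C$ and $uv$ is untouched; so these two types are in bijection between $G$ and $G_{uv}$ and cancel. The comparison thus collapses to the $u$-only cliques against the $v$-only cliques. A $u$-only clique $\{u\}\cup S$ (with $S$ a clique in $V\setminus\{u,v\}$) is destroyed precisely when $S\cap A\ne\emptyset$, and survives otherwise; symmetrically the created $v$-only cliques are exactly the $\{v\}\cup S$ with $S$ a clique and $S\cap A\ne\emptyset$. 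The map $\{u\}\cup S\mapsto\{v\}\cup S$ is injective and, since $v$ is adjacent to $A\cup B\cup C\supseteq S$ in $G_{uv}$, it sends each destroyed $u$-clique to a genuinely new $v$-clique (genuinely new because $S\cap A\ne\emptyset$ while $N_G(v)\cap A=\emptyset$, so $\{v\}\cup S$ was not already present in $G$). As $A\cup C\subseteq A\cup B\cup C$, every destroyed $u$-clique is accounted for, whence $N_s(G_{uv})\ge N_s(G)$.

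The only delicate point is the bookkeeping around the two boundary conventions in the definition of the operation — the appearance of $N_G[v]$ (rather than $N_G(v)$) in the deletion set and the possibility that $uv\in E(G)$ — together with checking that the rerouting map really lands among the newly created cliques and does not collide with pre-existing ones. Once the four-type partition is laid out cleanly, each of these becomes a routine verification, so I expect no serious obstacle; unlike the feasibility proof for degree power, no convexity input is needed here, which is precisely why one only obtains the weak inequality and hence weak-feasibility.
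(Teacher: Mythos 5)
Your proof is correct, and at its core it is the same replace-$u$-by-$v$ injection that the paper uses; however, your finer bookkeeping is not cosmetic---it actually repairs a defect in the paper's own argument. The paper's map reroutes \emph{every} $s$-clique containing $u$ but not $v$ to its image under the swap $u\mapsto v$, while keeping every clique containing $v$ but not $u$ fixed, and then only checks injectivity within the class of $u$-only cliques. That map is not injective across classes: if $S\subseteq N_G(u)\cap N_G(v)$ is an $(s-1)$-clique, then $\{u\}\cup S$ and $\{v\}\cup S$ are distinct $s$-cliques of $G$ with the common image $\{v\}\cup S$. (Concretely, let $G$ be a triangle $uvc$ with a pendant vertex $a$ attached to $u$, and $s=2$: the paper's map sends both $uc$ and $vc$ to $vc$, while the edge $uc$, which survives in $G_{uv}$, is never used as an image.) Your version avoids exactly this collision: you keep the surviving $u$-cliques (those with $S\cap A=\emptyset$, where $A=N_G(u)\setminus N_G[v]$) fixed, reroute only the destroyed ones ($S\cap A\neq\emptyset$), and observe that their images are genuinely new in $G_{uv}$ because $N_G(v)\cap A=\emptyset$, so they cannot coincide with any pre-existing $v$-clique. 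Together with the unchanged treatment of the ``neither'' and ``both'' classes, this yields an honest injection and hence $n_s(G)\leq n_s(G_{uv})$; clause (II) is immediate, as you note. In short, your proposal is the corrected form of the paper's argument, and the conclusion of the theorem stands.
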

\begin{proof}
Let $G$ be a connected graph and $n_s(G)$ denote the number of $s$-cliques. For any two vertices $u,v\in V(G)$, let $G_{uv}:=G[u\to v]$. For any $s$-clique $K$,
if $K$ contains no $\{u,v\}$ or contains both $u,v$, denote by $K_{uv}=K$; if $K$
contains $u$ but not $v$,
denote by $K_{uv}=G_{uv}[(K\setminus \{u\})\cup\{v\}]$, which is an $s$-clique in $G_{uv}$.

Observe that if $K$ is an $s$-clique not containing $u$ and $v$ in $G$, then $K$ is still an $s$-clique in $G_{uv}$; if $K$ is an $s$-clique containing both $u$ and $v$ in $G$, then $K$ is still an $s$-clique in $G_{uv}$; if $K$ is an $s$-clique containing $u$ but not $v$ in $G$, then $K_{uv}$ is an $s$-clique in $G_{uv}$. For any two distinct $s$-cliques $K,K'\subseteq G$ containing $u$ but not $v$, as $K\neq K'$, $V(K)\neq V(K')$.
Then we can see 
$V(K_{uv})\neq V(K'_{uv})$. This means that $K_{uv}\neq K'_{uv}$ for this case. If $K$ is an $s$-clique containing $v$ but not $u$ in $G$, then $K$ is still an $s$-clique in $G_{uv}$.
Thus, we find a bijection $f: K\rightarrow K_{uv}$,
and so
$n_s(G)\leq n_s(G_{uv})$. 
\end{proof}
Analogously, we can get the following result.
 \begin{thm}\label{star}
The number of $K_{1,r}$ in a connected graph is weakly-feasible.
\end{thm}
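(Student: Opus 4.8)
The plan is to count copies of $K_{1,r}$ by their centers and thereby reduce the claim to the same convexity phenomenon that drove the proof of Theorem~\ref{dpf}. For $r\ge 2$ every copy of $K_{1,r}$ in a graph has a unique center (the only vertex of degree $r$ inside the star), so the number of copies equals $\mathcal{P}(G)=\sum_{w\in V(G)}\binom{d_G(w)}{r}$; for $r=1$ this quantity is just $e(G)$, a parameter already handled, so I would assume $r\ge 2$. Property~(II) is then immediate: adding a new edge (possibly meeting a new vertex) never decreases any degree and never deletes a vertex, so each summand $\binom{d(w)}{r}$ is nondecreasing and any new vertex contributes a nonnegative term, giving $\mathcal{P}(G)\le\mathcal{P}(G+e)$.

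For property~(I) I would reuse the degree bookkeeping already carried out for Theorem~\ref{dpf}. With $x=|N(u)\setminus N(v)|$, $y=|N(v)\setminus N(u)|$ and $z=|N(u)\cap N(v)|$, the Kelmans operation $G_{uv}=G[u\to v]$ leaves the degree of every vertex other than $u,v$ unchanged (each $w\in N(u)\setminus N[v]$ merely trades its edge to $u$ for an edge to $v$), while the multiset of degrees at $\{u,v\}$ changes from $\{x+z,\,y+z\}$ to $\{x+y+z,\,z\}$. Hence all but two summands of $\mathcal{P}$ cancel, and it suffices to prove
\[
\binom{x+y+z}{r}+\binom{z}{r}\ \ge\ \binom{x+z}{r}+\binom{y+z}{r}.
\]

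This inequality is the crux, and it is exactly the discrete analogue of the convexity step $f(z)+f(x+y+z)\ge f(x+z)+f(y+z)$ used for $f(t)=t^p$ in Theorem~\ref{dpf}. The one point deserving care, and the reason I would not simply invoke Jensen's inequality on the reals, is that $g(t)=\binom{t}{r}$ fails to be convex on $[0,\infty)$ when $r\ge 3$ (for instance $\binom{t}{3}$ has second derivative $t-1<0$ for $t<1$); the correct tool is discrete convexity on the integers. Using $\binom{t+1}{r}-\binom{t}{r}=\binom{t}{r-1}$, I obtain
\[
\binom{x+y+z}{r}-\binom{y+z}{r}=\sum_{i=0}^{x-1}\binom{y+z+i}{r-1}\ \ge\ \sum_{i=0}^{x-1}\binom{z+i}{r-1}=\binom{x+z}{r}-\binom{z}{r},
\]
where the inequality holds term by term because $y\ge 0$ makes $\binom{\,\cdot\,}{r-1}$ no smaller at the larger argument. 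Rearranging gives the displayed inequality and settles property~(I). I would close by noting that both~(I) and~(II) can hold with equality (e.g.\ whenever the relevant degrees stay below $r$), which is precisely why the parameter is only \emph{weakly}-feasible rather than feasible, in agreement with the statement.
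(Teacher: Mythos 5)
Your proof is correct, but it takes a genuinely different route from the paper's. The paper proves Theorem~\ref{star} ``analogously'' to Theorem~\ref{Thm:s-clique}: it builds an injection from the copies of $K_{1,r}$ in $G$ to those in $G_{uv}$ by replacing $u$ with $v$ in every star containing $u$ but not $v$, exactly as was done for $s$-cliques, with no arithmetic at all. You instead count stars by their centers, reduce property (I) to the inequality $\binom{x+y+z}{r}+\binom{z}{r}\geq\binom{x+z}{r}+\binom{y+z}{r}$, and prove it by telescoping with Pascal's rule; this parallels the paper's proof of Theorem~\ref{dpf} rather than of Theorem~\ref{Thm:s-clique}. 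What your route buys: a closed-form argument, plus the genuinely valuable observation that $\binom{t}{r}$ fails to be convex as a real function for $r\geq 3$, so a naive imitation of the Jensen step in Theorem~\ref{dpf} would be flawed and discrete convexity is the correct tool. What the paper's route buys: generality and brevity, since the shifting injection works verbatim for counting many fixed subgraphs without any case analysis on degrees. One small imprecision in your write-up: the claim that the degrees at $\{u,v\}$ change from $\{x+z,\,y+z\}$ to $\{x+y+z,\,z\}$ is literally true only when $uv\notin E(G)$ (EKO); when $uv\in E(G)$ the new degrees are $z+1$ and $x+y+z-1$. Since in that case $x\geq 1$ and $y\geq 1$, your inequality applied to $(x-1,y-1,z+1)$ covers it, so nothing breaks --- but the caveat deserves a sentence (the paper's own proof of Theorem~\ref{dpf} glosses over the same point).
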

\section{Implications}\label{imsec}
All corollaries in this section are assumed Theorems \ref{Thm:Main1}, \ref{Thm:Main2} and \ref{Thm:Matching}, whose proofs are postponed to Section \ref{pfsec}. From the proofs of our main theorems, it is not hard to check that if we replace `feasible' with `weakly-feasible', then we could get the following results accordingly.
\begin{thm}\label{Thm:Main1-2}
Let $n\geq k\geq 5$ and let $t=\left\lfloor\frac{k-1}{2}\right\rfloor$.
Let $G$ be a $2$-connected
$n$-vertex $\mathcal{C}_{\geq k}$-free graph with the maximum $\mathcal{P}(G)$ where $\mathcal{P}(G)$ is weakly-feasible. Then, $\mathcal{P}(G)\leq\max \{\mathcal{P}(W_{n,k,s})=\mathcal{P}(K_{s}\vee ((n-k+s)K_1\cup K_{k-2s})): 2\leq s\leq t\}$. 
\end{thm}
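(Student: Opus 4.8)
The plan is to reduce Theorem~\ref{Thm:Main1-2} to the already-established Theorem~\ref{Thm:Main1} by a perturbation trick, rather than re-running the entire Kopylov-type structural analysis. The idea is that a weakly-feasible parameter fails to be feasible only because its edge-addition inequality is non-strict; so I would repair this by adding a tiny multiple of the edge count, which the paper has already shown to be (strictly) feasible. Concretely, for $\varepsilon>0$ define $\mathcal{P}_\varepsilon(H):=\mathcal{P}(H)+\varepsilon\, e(H)$. Since $\mathcal{P}$ is weakly-feasible it satisfies property~(I) and the non-strict form of~(II); since $e(\cdot)$ is preserved by every Kelmans operation and strictly increases under edge addition, the sum $\mathcal{P}_\varepsilon$ satisfies property~(I) (a sum of two non-decreasing quantities) and the \emph{strict} property~(II) (the $\varepsilon\,e(\cdot)$ term breaks ties). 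Hence $\mathcal{P}_\varepsilon$ is feasible for every $\varepsilon>0$, and Theorem~\ref{Thm:Main1} applies to it.

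Next I would exploit Theorem~\ref{Thm:Main1} as a black box. It tells us that a $2$-connected $n$-vertex $\mathcal{C}_{\geq k}$-free graph maximizing $\mathcal{P}_\varepsilon$ lies in $\mathcal{G}^1_{n,k}$; since each $W_{n,k,s}$ with $2\le s\le t$ is itself such a graph, the maximum value of $\mathcal{P}_\varepsilon$ over this class equals $\max_{2\le s\le t}\mathcal{P}_\varepsilon(W_{n,k,s})$. Thus for the extremal graph $G$ of the theorem (or indeed for any $2$-connected $n$-vertex $\mathcal{C}_{\geq k}$-free graph),
\[
\mathcal{P}(G)+\varepsilon\, e(G)=\mathcal{P}_\varepsilon(G)\le \max_{2\le s\le t}\bigl(\mathcal{P}(W_{n,k,s})+\varepsilon\, e(W_{n,k,s})\bigr).
\]
The right-hand side is a maximum of finitely many affine functions of $\varepsilon$, hence continuous, and as $\varepsilon\to 0^{+}$ it converges to $\max_{2\le s\le t}\mathcal{P}(W_{n,k,s})$ while the left-hand side converges to $\mathcal{P}(G)$. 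Passing to the limit yields $\mathcal{P}(G)\le\max\{\mathcal{P}(W_{n,k,s}):2\le s\le t\}$, which is exactly the asserted bound; in fact equality holds because the $W_{n,k,s}$ are themselves admissible competitors.

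I do not expect a genuine obstacle here, and that is the point: the only things to verify are the feasibility of $\mathcal{P}_\varepsilon$ (routine, from the two additivity checks above) and the legitimacy of the $\varepsilon\to 0$ passage (routine, since the outer maximum is over a finite index set). The perturbation argument also transparently explains the paper's remark that weak feasibility can determine only the extremal \emph{value} and not the extremal \emph{graph}: the strictness that pins down membership in $\mathcal{G}^1_{n,k}$ lives entirely in the $\varepsilon$-term, and it evaporates in the limit, so a proper subgraph or a graph outside $\mathcal{G}^1_{n,k}$ may tie for the maximum of $\mathcal{P}$ itself. As an alternative to the perturbation, one could instead audit the proof of Theorem~\ref{Thm:Main1} directly and observe that every step preceding the final characterization uses only property~(I) and the non-strict monotonicity $\mathcal{P}(H)\le\mathcal{P}(H+e)$, recasting each former ``strict increase contradicts maximality'' step as a harmless reduction to the edge-saturated case; but the $\mathcal{P}_\varepsilon$ route is shorter and keeps Theorem~\ref{Thm:Main1} as a single reusable ingredient.
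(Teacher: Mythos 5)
Your proposal is correct, and it proves the theorem by a genuinely different route than the paper. The paper offers no self-contained proof of Theorem~\ref{Thm:Main1-2}: it merely remarks that the proof of Theorem~\ref{Thm:Main1} goes through after replacing ``feasible'' by ``weakly-feasible'', which in practice means re-auditing every step where strictness of property (II) was invoked (``$G$ maximizes $\mathcal{P}$, hence $G$ is edge-maximal'', ``a proper subgraph of an admissible graph gives a strict decrease, a contradiction'', etc.) and downgrading each such contradiction to a non-strict comparison --- which is exactly why only the extremal value, not the extremal graph, survives. Your perturbation $\mathcal{P}_\varepsilon=\mathcal{P}+\varepsilon\, e(\cdot)$ sidesteps that audit entirely: both KO and EKO preserve the edge count and adding an edge increases it by exactly one, so $\mathcal{P}_\varepsilon$ inherits property (I) and acquires the strict property (II), hence Theorem~\ref{Thm:Main1} applies to it as a black box for every $\varepsilon>0$; since each $W_{n,k,s}$ ($2\le s\le t$) is itself a $2$-connected $\mathcal{C}_{\geq k}$-free competitor, a maximizer of $\mathcal{P}_\varepsilon$ exists and must lie in $\mathcal{G}^1_{n,k}$, so $\mathcal{P}(G)+\varepsilon\,e(G)\le\max_{2\le s\le t}\bigl(\mathcal{P}(W_{n,k,s})+\varepsilon\,e(W_{n,k,s})\bigr)$ for every admissible $G$, and the passage $\varepsilon\to 0^{+}$ is legitimate because the right-hand side is a maximum of finitely many affine functions of $\varepsilon$. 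What your route buys: a fully rigorous, modular argument (the identical two lines prove Theorems~\ref{Thm:Main2-2} and~\ref{Thm:Matching-2}), plus a transparent explanation of why weak feasibility pins down only the value --- the tie-breaking strictness lives entirely in the $\varepsilon$-term and evaporates in the limit, and the existence of an extremal graph inside $\mathcal{G}^1_{n,k}$ still follows since the $W_{n,k,s}$ attain the bound. What the paper's route would buy, if written out: it stays within a single proof framework and could in principle track which graphs outside the family can tie for the maximum; but as it stands it is only a remark, and your argument is the more solid of the two.
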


\begin{thm}\label{Thm:Main2-2}
Let $n\geq k\geq 4$ and let $t=\left\lfloor\frac{k}{2}\right\rfloor-1$.
Let $G$ be a connected
$n$-vertex $P_{k}$-free graph with the maximum $\mathcal{P}(G)$ where $\mathcal{P}(G)$ is weakly-feasible. Then, $\mathcal{P}(G)\leq\max \{\mathcal{P}(W_{n,k-1,s})=\mathcal{P}(K_{s}\vee ((n-k+s+1)K_1\cup K_{k-2s-1})): 1\leq s\leq t\}.$
\end{thm}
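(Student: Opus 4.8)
The final statement to prove is Theorem \ref{Thm:Main2-2}, the weakly-feasible analogue for $P_k$-free graphs.

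\medskip

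The plan is to reduce the weakly-feasible statement to the already-established feasible result, Theorem \ref{Thm:Main2}, by exploiting the structural conclusion of the latter. The key observation is that a weakly-feasible parameter satisfies the monotonicity properties (I) and (II) with non-strict inequalities, whereas a feasible parameter requires property (II) to be strict. Since Theorem \ref{Thm:Main2} pins down the extremal graph exactly (it lies in $\mathcal{G}^2_{n,k}$), the natural approach is to argue that the \emph{value} of any weakly-feasible parameter on an extremal $P_k$-free connected graph cannot exceed the maximum value attained over the family $\mathcal{G}^2_{n,k}$, even though the extremal graph itself need no longer be unique or lie in that family.

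\medskip

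First I would let $G$ be a connected $n$-vertex $P_k$-free graph maximizing a weakly-feasible $\mathcal{P}$. I would then run the same reduction used in the proof of Theorem \ref{Thm:Main2}: repeatedly apply Kelmans Operations (which by property (I) never decrease $\mathcal{P}$) and add edges wherever this keeps the graph $P_k$-free and connected (which by property (II) never decreases $\mathcal{P}$). The combinatorial heart of the Kopylov-style argument — showing that after such operations one can always reach a graph of the form $W_{n,k-1,s}$ with $1\leq s\leq t$ without creating a $P_k$ — is identical to the feasible case and is precisely what Theorem \ref{Thm:Main2} supplies. The only difference is bookkeeping on inequalities: because both (I) and (II) are now merely non-strict, each step gives $\mathcal{P}(G)\leq \mathcal{P}(G')$ rather than equality or strict increase, so the chain of operations yields some $W_{n,k-1,s}\in\mathcal{G}^2_{n,k}$ with $\mathcal{P}(G)\leq \mathcal{P}(W_{n,k-1,s})$. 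Taking the maximum over the family then gives
\[
\mathcal{P}(G)\leq \max\{\mathcal{P}(W_{n,k-1,s}): 1\leq s\leq t\},
\]
which is exactly the asserted bound.

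\medskip

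The main obstacle, and the reason this is not entirely automatic, is ensuring that the structural reduction from the feasible proof survives verbatim under the weaker hypotheses. In the feasible setting the strictness of (II) is what forces the extremal graph to be edge-maximal and hence to coincide with a specific member of $\mathcal{G}^2_{n,k}$; dropping strictness means we lose uniqueness of the extremal graph, so the correct conclusion is only the inequality on the parameter value, not membership in the family. I would therefore take care to phrase every step as a value comparison, never invoking uniqueness, and verify that none of the structural lemmas feeding into Theorem \ref{Thm:Main2} secretly used the strict version of (II) in a way that affects the reachability of $W_{n,k-1,s}$ (as opposed to its uniqueness). Since the Kelmans-plus-edge-addition reduction only needs $\mathcal{P}$ to be non-decreasing along the way, the argument goes through, and the weakly-feasible corollaries for the concrete parameters of Theorems \ref{Thm:s-clique} and \ref{star} follow immediately.
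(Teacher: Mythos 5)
Your proposal is correct and is essentially the paper's own argument: the paper obtains Theorem \ref{Thm:Main2-2} precisely by rerunning the proof of Theorem \ref{Thm:Main2} with the strict inequality in property (II) relaxed to a weak one, so that every Kelmans operation and edge addition becomes a non-decreasing value comparison (edge-maximality now being arranged by construction rather than forced by strictness), and the conclusion correspondingly weakens from membership in $\mathcal{G}^2_{n,k}$ to the bound $\mathcal{P}(G)\leq\max\{\mathcal{P}(W_{n,k-1,s}):1\leq s\leq t\}$. The one imprecision worth fixing is your phrase that reaching some $W_{n,k-1,s}$ is ``precisely what Theorem \ref{Thm:Main2} supplies'': it is supplied by the \emph{proof} of that theorem (which you do propose to rerun), not by its statement used as a black box, since a weakly-feasible parameter need not satisfy the hypotheses of Theorem \ref{Thm:Main2}.
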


\begin{thm}\label{Thm:Matching-2}
Let $G$ be a connected
$n$-vertex $M_{k+1}$-free graph with the maximum $\mathcal{P}(G)$ where $\mathcal{P}(G)$ is weakly-feasible. Then, $\mathcal{P}(G)\leq\max \{\mathcal{P}(W_{n,2k+1,s})=\mathcal{P}(K_s\vee  ((n-2k+s-1)K_1\cup K_{2k-2s+1})):1\leq s\leq k,\mathcal{P}(K_{2k+1})\}$.
\end{thm}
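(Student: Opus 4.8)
The plan is to deduce Theorem~\ref{Thm:Matching-2} from the already-established feasible version, Theorem~\ref{Thm:Matching}, by a perturbation argument, thereby avoiding any re-examination of the original proof. Fix $\epsilon>0$ and define the perturbed parameter $\mathcal{P}_{\epsilon}(H)=\mathcal{P}(H)+\epsilon\, e(H)$, where $e(H)$ is the size of $H$. I claim $\mathcal{P}_{\epsilon}$ is feasible in the sense of Definition~\ref{basic}. For property~(I): the Kelmans Operation preserves the number of edges (shown in Section~\ref{fsec}), so $e(G)=e(G_{uv})$, and weak feasibility of $\mathcal{P}$ gives $\mathcal{P}(G)\leq\mathcal{P}(G_{uv})$; adding $\epsilon\,e(\cdot)$ to both sides yields $\mathcal{P}_{\epsilon}(G)\leq\mathcal{P}_{\epsilon}(G_{uv})$. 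For property~(II): for any admissible new edge $e$ we have $e(G+e)=e(G)+1$, whence
$$\mathcal{P}_{\epsilon}(G+e)=\mathcal{P}(G+e)+\epsilon\,e(G+e)\geq\mathcal{P}(G)+\epsilon\bigl(e(G)+1\bigr)>\mathcal{P}(G)+\epsilon\,e(G)=\mathcal{P}_{\epsilon}(G),$$
where the first inequality uses weak feasibility of $\mathcal{P}$. Thus $\mathcal{P}_{\epsilon}$ satisfies both axioms and is feasible.

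Next I would apply Theorem~\ref{Thm:Matching} to $\mathcal{P}_{\epsilon}$. Let $G$ be a connected $M_{k+1}$-free graph on $n$ vertices maximizing the weakly-feasible $\mathcal{P}$, and let $G_{\epsilon}$ maximize the feasible $\mathcal{P}_{\epsilon}$ over the same (finite) class. By Theorem~\ref{Thm:Matching}, $G_{\epsilon}\cong K_{2k+1}$ (when $n=2k+1$) or $G_{\epsilon}\in\mathcal{G}^{3}_{n,k}$ (when $n\geq 2k+2$); in either case $G_{\epsilon}$ lies in the finite family $\mathcal{F}:=\{W_{n,2k+1,s}:1\leq s\leq k\}\cup\{K_{2k+1}\}$. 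Since $e(G)\geq 0$ and $G_{\epsilon}$ maximizes $\mathcal{P}_{\epsilon}$,
$$\mathcal{P}(G)\leq\mathcal{P}_{\epsilon}(G)\leq\mathcal{P}_{\epsilon}(G_{\epsilon})=\mathcal{P}(G_{\epsilon})+\epsilon\,e(G_{\epsilon})\leq\max_{W\in\mathcal{F}}\mathcal{P}(W)+\epsilon\binom{n}{2}.$$
Because $\mathcal{F}$ is finite and independent of $\epsilon$, letting $\epsilon\to 0^{+}$ gives $\mathcal{P}(G)\leq\max_{W\in\mathcal{F}}\mathcal{P}(W)$, which is exactly the asserted bound (with $\mathcal{P}(K_{2k+1})$ appearing as one of the terms of the maximum).

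I do not expect a serious obstacle along this route: the only content is the routine verification that $\mathcal{P}_{\epsilon}$ is feasible and the elementary limit, both resting solely on results already in the excerpt. The price paid for weak feasibility—the loss of strictness in property~(II)—manifests exactly as the weakening of the conclusion from ``$G$ is isomorphic to a canonical graph'' to ``$\mathcal{P}(G)$ is bounded by the maximum of $\mathcal{P}$ over the canonical graphs,'' since the limiting argument can certify the extremal value but cannot pin down the extremal graph. As an alternative (matching the remark that the conclusion follows by re-reading the proof of Theorem~\ref{Thm:Matching}), one could instead re-run that proof verbatim, replacing each strict edge-addition step by the non-strict inequality $\mathcal{P}(G)\leq\mathcal{P}(G+e)$; there the only points needing care are confirming that connectivity and $M_{k+1}$-freeness survive every Kelmans step (via the fact that the Kelmans Operation does not increase the matching number) and that the reduction always terminates inside $\mathcal{F}$.
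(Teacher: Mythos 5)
Your proposal is correct, but it is a genuinely different route from the paper's. The paper gives no explicit proof of Theorem~\ref{Thm:Matching-2}: at the start of Section~\ref{imsec} it simply asserts that the statement follows by re-reading the proof of Theorem~\ref{Thm:Matching} with each strict use of property (II) weakened to a non-strict one, so that every step which formerly ended in a contradiction instead ends in an upper bound on $\mathcal{P}(G)$ by the $\mathcal{P}$-value of a graph in the canonical family; the delicate point there is that a weakly-feasible maximizer need not be edge-maximal, which is exactly what the first lines of that proof rely on. Your argument instead uses Theorem~\ref{Thm:Matching} as a black box: since the Kelmans Operation (in both its KO and EKO forms, as property (I) of Definition~\ref{basic} quantifies over arbitrary pairs $u,v$) preserves the number of edges, and adding an admissible edge increases it by exactly one --- facts the paper itself records when proving that size is feasible --- the tie-breaking parameter $\mathcal{P}_{\epsilon}=\mathcal{P}+\epsilon\,e(\cdot)$ satisfies (I) and satisfies (II) strictly, hence is feasible; Theorem~\ref{Thm:Matching} then confines every $\mathcal{P}_{\epsilon}$-maximizer to the finite family $\mathcal{F}=\{W_{n,2k+1,s}:1\leq s\leq k\}\cup\{K_{2k+1}\}$, and since $\mathcal{F}$ and the error term $\epsilon\binom{n}{2}$ do not depend on $\epsilon$, letting $\epsilon\to 0^{+}$ gives the stated bound. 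What your route buys: no re-inspection of the long proof of Theorem~\ref{Thm:Matching} is needed, and the identical reduction proves Theorems~\ref{Thm:Main1-2} and~\ref{Thm:Main2-2} verbatim; moreover, for $n\geq 2k+2$ it even yields the slightly sharper bound $\max_{1\leq s\leq k}\mathcal{P}(W_{n,2k+1,s})$ (without the $\mathcal{P}(K_{2k+1})$ term), and, by pigeonhole on the finite family as $\epsilon\to 0^{+}$, it recovers the paper's remark in Section~\ref{Sec:2} that at least one extremal graph lies in $\mathcal{G}^{3}_{n,k}$. What the paper's route buys is only economy of exposition: it introduces no auxiliary parameter and no limit, at the cost of having to certify, claim by claim, that the original argument survives the loss of strictness.
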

\subsection{Tur\'an-type results and generalized Tur\'an-type results}
Based on our main theorems, we can give an alternative proof for each of the following known results.
\begin{thm}[Erd\H{o}s-Gallai \cite{EG59}, Akiyama-Frankl \cite{AF85}]
Let $n\geq 2k+1$. If $G$ is a graph with maximum number of edges such that $G$ is $M_{k+1}$-free, then $G\cong K_{2k+1}\cup (n-2k-1)K_1$ or $G\cong S_{n,k}$.
\end{thm}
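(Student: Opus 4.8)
The plan is to run the machinery for the feasible parameter $\mathcal{P}(G)=e(G)$, the number of edges, which was shown above to be feasible, and to split the analysis according to whether an extremal graph is connected. If a graph $G$ attaining $ex(n,M_{k+1})$ is connected, then it also maximizes $e$ among \emph{connected} $M_{k+1}$-free graphs, so Theorem~\ref{Thm:Matching} applies and gives $G\cong K_{2k+1}$ when $n=2k+1$ and $G\in\mathcal{G}^3_{n,k}$ when $n\geq 2k+2$. It then remains to optimize the edge count inside $\mathcal{G}^3_{n,k}$. A direct count gives
$$
e(W_{n,2k+1,s})=\binom{s}{2}+s(n-s)+\binom{2k-2s+1}{2}=\tfrac{3}{2}s^2+\left(n-4k-\tfrac{3}{2}\right)s+2k^2+k,
$$
which is a convex quadratic in $s$, so its maximum over $1\leq s\leq k$ is attained at an endpoint. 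Since $W_{n,2k+1,k}=S_{n,k}$, the connected optimum is either $S_{n,k}$ or $W_{n,2k+1,1}=K_1\vee((n-2k)K_1\cup K_{2k-1})$.

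The disconnected case I expect to be the main obstacle, precisely because Theorem~\ref{Thm:Matching} is stated only for connected graphs and must be supplemented by hand. Suppose an extremal $G$ is disconnected. Extremality forces $\nu(G)=k$ and, moreover, every edge joining two distinct components must raise the matching number to $k+1$ (otherwise we could add it and strictly increase $e$). Consequently, in each nontrivial component every vertex is missed by some maximum matching, so by the Gallai--Edmonds structure theorem each nontrivial component is factor-critical, hence of odd order $2a_i+1$ with matching number $a_i$. Completing such a component to a clique leaves its matching number unchanged, so by edge-maximality each nontrivial component is a complete graph $K_{2a_i+1}$ with $\sum_i a_i=k$. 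Since $\binom{2a+1}{2}+\binom{2b+1}{2}<\binom{2(a+b)+1}{2}$ whenever $a,b\geq 1$, the edge count is maximized by a single clique, forcing $G\cong K_{2k+1}\cup(n-2k-1)K_1$.

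Finally I would compare the two candidate families, so that the global maximum of $e$ equals $\max\{e(S_{n,k}),\binom{2k+1}{2}\}$, and rule out the remaining members of $\mathcal{G}^3_{n,k}$. Writing $f(s)=e(W_{n,2k+1,s})$, the convexity established above reduces intermediate values to the endpoints, and a short computation shows $f(1)\geq f(k)$ only when $n\leq\tfrac{5k}{2}$ while $f(1)\geq\binom{2k+1}{2}$ only when $n\geq 4k$; since $\tfrac{5k}{2}<4k$ these ranges are disjoint, so $f(1)<\max\{f(k),\binom{2k+1}{2}\}$ for all $k\geq 2$ (the case $k=1$ being immediate, as $\mathcal{G}^3_{n,1}=\{S_{n,1}\}$). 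Hence $W_{n,2k+1,s}$ with $1\leq s\leq k-1$ never attains the maximum, and the only extremal graphs are $S_{n,k}$ (dominating for $n\geq\tfrac{5k+3}{2}$) and $K_{2k+1}\cup(n-2k-1)K_1$ (dominating for $n\leq\tfrac{5k+3}{2}$), with both extremal at the crossover $n=\tfrac{5k+3}{2}$. This is exactly the asserted characterization.
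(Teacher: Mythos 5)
Your proposal is correct, and its connected half plus the closing numerical comparison is essentially the paper's own argument: the paper likewise applies Theorem~\ref{Thm:Matching} with $\mathcal{P}=e$, uses convexity of $e(W_{n,2k+1,s})$ in $s$ to reduce to the endpoints $s=1$ and $s=k$, and eliminates $W_{n,2k+1,1}=K_1\vee((n-2k)K_1\cup K_{2k-1})$ by exactly your two threshold computations (it can beat $S_{n,k}$ only when $n\leq \tfrac{5k}{2}$ and can beat $K_{2k+1}$ only when $n\geq 4k$, and these ranges are disjoint). Where you genuinely diverge is the disconnected case. The paper argues by edge counts on components: it observes that each nontrivial component satisfies $n_i\geq 2a_i+1$ (otherwise a cross-component edge could be added without raising $\nu$), implicitly classifies each component via the connected theorem as having $f_1(n_i,a_i)=e(S_{n_i,a_i})$ or $f_3(2a_i+1,a_i)=e(K_{2a_i+1})$ edges, and then uses the merging inequalities $f_1+f_1<f_1$, $f_3+f_3<f_3$, $f_1+f_3<f_1$ together with monotonicity of $f_1(n,k)$ in $n$ to force a single nontrivial component. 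You instead use matching structure theory: since every cross-component edge must raise $\nu$, every vertex of a nontrivial component is missed by some maximum matching of that component, so by Gallai's lemma (Gallai--Edmonds) each nontrivial component is factor-critical, hence by edge-maximality a clique of odd order $2a_i+1$; a single clique-merging inequality $\binom{2a+1}{2}+\binom{2b+1}{2}<\binom{2(a+b)+1}{2}$ then yields $K_{2k+1}\cup(n-2k-1)K_1$. Your route gives a cleaner structural conclusion: it never needs the connected classification inside components or the $f_2$-elimination at smaller matching numbers, and it rules out $S$-type components in a disconnected extremal graph directly (their clique vertices are covered by every maximum matching, contradicting edge-maximality). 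The cost is importing Gallai--Edmonds, an external tool, whereas the paper stays self-contained with elementary edge-count calculus; the benefit is that the paper's version leaves the per-component classification implicit, while yours makes the disconnected structure explicit.
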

\begin{proof}
Suppose that $G$ is connected. If $n\geq 2k+2$, then $|E(G)|=\max\{(n-2k+s-1)s+\binom{2k-s+1}{2}: 1\leq s\leq k\}$ by Theorem~\ref{Thm:Matching}. Let $f(s)=(n-2k+s-1)s+\binom{2k-s+1}{2}$. Observe that $f(s)$ is a convex function of $s$ in $[1,k]$, so $G\cong S_{n,k}$ or $G\cong K_1\vee((n-2k)K_1\cup K_{2k-1})$. Observe that if $k=1$ we have $S_{n,k}\cong K_1\vee((n-2k)K_1\cup K_{2k-1})$. So, we may assume $k\ne 1$. Now, let $f_1(n,t)=|E(S_{n,t})|=nt-\frac{t^2}{2}-\frac{t}{2}$, $f_2(n,t)=|E(K_1\vee((n-2t)K_1\cup K_{2t-1}))|=n+2t^2-3t$ and $f_3(2t+1,t)=|E(K_{2t+1})|=2t^2+t$. By elementary calculus, we have when $2k+1\leq n< 4k$, $f_2(n,k)<f_3(n,k)$; when $n>\frac{5k}{2}$, $f_2(n,k)<f_1(n,k)$; and when $n>\frac{5k}{2}+\frac{3}{2}$, $f_1(n,k)>f_3(n,k)$, which means $|E(K_1\vee((n-2k)K_1\cup K_{2k-1}))|<\max\{|E(K_{2k+1})|,|E(S_{n,k})|\}$ for $n\geq 2k+1$. 

Now, assume that $G_i$ are two connected components of $G$ with order $n_i$ and the matching number $\nu(G_i)=a_i>0$ for $i=1,2$. Since $G$ is edge-maximal, we have $n_i\geq 2a_i+1$; otherwise, we can add one edge $e$ between $G_1$ and $G_2$ without increasing the matching number of $(G_1\cup G_2)+e$. Elementary calculus gives that $f_1(n_1,a_1)+f_1(n_2,a_2)<f_1(n_1+n_2,a_1+a_2)$, $f_3(2a_1+1,a_1)+f_3(2a_2+1,a_2)<f_3(2a_1+2a_2+1,a_1+a_2)$, and $f_1(n_1,a_1)+f_3(2a_2+1,a_2)<f_1(n_1+2a_2+1,a_1+a_2)$. Also, note that if $k$ is fixed, $f_1(n,k)$ will increase as $n$ increases. Now, this completes the proof.
\end{proof}

In the following, for a graph $G$, we use $n_s(G)$ and $s_r(G)$ to denote the number of $s$-cliques and the number of $K_{1,r}$ in $G$, respectively.
\begin{thm}[Luo \cite{L18}]
Let $n\geq k\geq 5$ and let $t=\lfloor\frac{k-1}{2}\rfloor$. If $G$ is a 2-connected $n$-vertex graph with circumference less than $k$, then
$n_s(G)\leq \max\{f_s(n, k,2), f_s(n, k, t)\}$,
where $f_s(n,k,a):=\binom{k-a}{s}+(n-k+a)\binom{a}{s-1}$. 
\end{thm}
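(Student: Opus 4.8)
The plan is to deduce Luo's theorem directly from Theorem~\ref{Thm:Main1} together with Theorem~\ref{Thm:s-clique}. First I would invoke Theorem~\ref{Thm:s-clique} to note that the number of $s$-cliques $n_s(G)$ is a \emph{weakly-feasible} parameter. Applying the weakly-feasible version of the cycle result, namely Theorem~\ref{Thm:Main1-2}, to the parameter $n_s$ immediately gives that for any $2$-connected $n$-vertex $\mathcal{C}_{\geq k}$-free graph $G$,
\begin{equation*}
n_s(G)\leq \max\{n_s(W_{n,k,a}) : 2\leq a\leq t\}.
\end{equation*}
(Having circumference less than $k$ is the same as being $\mathcal{C}_{\geq k}$-free, so the hypotheses match exactly.) The whole problem then reduces to two routine tasks: computing $n_s(W_{n,k,a})$ in closed form, and showing that the maximum over $a\in[2,t]$ is attained at one of the two endpoints $a=2$ or $a=t$.

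For the first task I would count $s$-cliques in $W_{n,k,a}=K_a\vee((n-k+a)K_1\cup K_{k-2a})$ according to how they distribute among the three parts $X$ (the $K_a$, a universal set of size $a$), $Y$ (the clique $K_{k-2a}$), and $Z$ (the independent set of size $n-k+a$). Since every vertex of $X$ is adjacent to everything, $X\cup Y$ spans a clique $K_{k-a}$, while each vertex of $Z$ is adjacent only to $X$. An $s$-clique either lies entirely inside $X\cup Y$, or uses exactly one vertex of $Z$ (it cannot use two, as $Z$ is independent) together with $s-1$ vertices of $X$ (the only common neighbors available inside the clique are in $X$). This yields precisely
\begin{equation*}
n_s(W_{n,k,a})=\binom{k-a}{s}+(n-k+a)\binom{a}{s-1}=f_s(n,k,a),
\end{equation*}
matching the claimed formula. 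This is the bookkeeping step and presents no real difficulty once the adjacency structure is fixed.

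The genuine work is the second task: proving that $f_s(n,k,a)$, as a function of the integer variable $a$ on the interval $[2,t]$, attains its maximum at an endpoint, so that the bound $\max\{n_s(W_{n,k,a}):2\le a\le t\}$ collapses to $\max\{f_s(n,k,2),f_s(n,k,t)\}$. The natural approach is a convexity argument: I would show that $f_s(n,k,a)$ is a convex function of $a$, from which a maximum over an interval must occur at $a=2$ or $a=t$. Writing $f_s$ in terms of binomial coefficients, the first term $\binom{k-a}{s}$ is convex in $a$ (as a polynomial in $a$ it is an alternating-type product, and one checks the discrete second difference is nonnegative), while the second term $(n-k+a)\binom{a}{s-1}$ is a product of a linear factor and the convex, increasing $\binom{a}{s-1}$; I would verify convexity through the discrete second difference $f_s(n,k,a+1)-2f_s(n,k,a)+f_s(n,k,a-1)\ge 0$ rather than by differentiating, since $a$ is integer-valued. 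This discrete convexity estimate is where I expect the only real obstacle to lie, because the two terms pull in different directions as $a$ grows (the $\binom{k-a}{s}$ term decreases while $(n-k+a)\binom{a}{s-1}$ increases), so one must check that their combined second difference stays nonnegative uniformly in the admissible range of $n,k,s,a$; controlling the cross-behavior, particularly near the boundary where $k-a$ and $s$ are comparable, will require care. Once convexity is established the conclusion is immediate, completing the alternative proof of Luo's theorem.
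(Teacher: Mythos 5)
Your proposal is correct and takes essentially the same route as the paper's proof, which likewise invokes Theorem~\ref{Thm:s-clique} for weak feasibility, applies Theorem~\ref{Thm:Main1-2} to get $n_s(G)\leq \max\{f_s(n,k,a):2\leq a\leq t\}$, and then uses convexity of $f_s(n,k,a)$ in $a$ to collapse the maximum to the endpoints $a=2$ and $a=t$. The convexity step you flag as the main obstacle is in fact routine, since the two terms are \emph{separately} discretely convex: the second difference of $\binom{k-a}{s}$ is $\binom{k-a-1}{s-2}\geq 0$ and that of $(n-k+a)\binom{a}{s-1}$ is $(n-k+a-1)\binom{a-1}{s-3}+2\binom{a}{s-2}\geq 0$ (using $n\geq k$), so there is no cross-behavior to control.
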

\begin{proof}
By Theorem~\ref{Thm:s-clique}, the number of $s$-cliques in $G$ is weakly-feasible. Thus, $n_s(G)\leq \max\{f_s(n,k,a):2\leq a\leq t\}$. As $f_s(n,k,a)$ is convex, $n_s(G)\leq \max\{f_s(n,k,2),f_s(n,k,t)\}$ by Theorem \ref{Thm:Main1-2}. This proves the theorem.
\end{proof}
Similarly, we can prove the following.
\begin{thm}[Luo \cite{L18}]
Let $n\geq k\geq 4$ and let $t=\left\lfloor\frac{k}{2}\right\rfloor-1$.
If $G$ is a connected
$n$-vertex graph with $n_s(G)>\max\{n_s(W_{n,k-1,1}),n_s(W_{n,k-1,t})\}$,
then $G$ has a path of order $k$.
\end{thm}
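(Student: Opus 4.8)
The plan is to mirror the argument used for the cycle analogue in the immediately preceding theorem, replacing the $2$-connected / $\mathcal{C}_{\geq k}$ machinery by its connected / $P_k$ counterpart. By Theorem~\ref{Thm:s-clique}, the parameter $n_s(\cdot)$ (the number of $s$-cliques) is weakly-feasible, so it is exactly the kind of parameter governed by Theorem~\ref{Thm:Main2-2}. I would therefore argue by contraposition: assume $G$ is a connected $n$-vertex graph that is $P_k$-free, and show that $n_s(G)\le \max\{n_s(W_{n,k-1,1}),n_s(W_{n,k-1,t})\}$; the stated implication is then immediate.

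First I would apply Theorem~\ref{Thm:Main2-2} with $\mathcal{P}=n_s$. Since $n_s$ is weakly-feasible and $G$ is connected and $P_k$-free, that theorem yields
\[
 n_s(G)\le \max\{\, n_s(W_{n,k-1,a}) : 1\le a\le t\,\},
\]
reducing the whole statement to an analysis of the finite family $\{W_{n,k-1,a}\}_{1\le a\le t}$. The next step is to evaluate $n_s(W_{n,k-1,a})$ explicitly. Writing $W_{n,k-1,a}=K_a\vee\big((n-k+1+a)K_1\cup K_{k-2a-1}\big)$ and noting that $X\cup Y$ spans a clique $K_{k-a-1}$ while each of the $n-k+1+a$ independent vertices is joined only to the $a$ vertices of $X$, every $s$-clique either lies inside $K_{k-a-1}$ or consists of one independent vertex together with $s-1$ vertices of $X$. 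Hence
\[
 n_s(W_{n,k-1,a})=\binom{k-a-1}{s}+(n-k+1+a)\binom{a}{s-1}.
\]

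It remains to show that this quantity, as a function of the integer parameter $a\in[1,t]$, attains its maximum at an endpoint, i.e.\ that it is discretely convex in $a$; this is the single step requiring genuine work, and is where I expect the main (if routine) obstacle to lie. I would verify it through second-difference identities for binomial coefficients: composing the convex map $m\mapsto\binom{m}{s}$ with the linear substitution $m=k-a-1$ keeps the first summand convex, while rewriting $(n-k+1+a)\binom{a}{s-1}=(n-k+s)\binom{a}{s-1}+s\binom{a}{s}$ exhibits the second summand as a nonnegative combination (using $n\ge k$, so $n-k+s\ge 0$) of the convex functions $\binom{a}{s-1}$ and $\binom{a}{s}$. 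Convexity then gives $\max_{1\le a\le t} n_s(W_{n,k-1,a})=\max\{n_s(W_{n,k-1,1}),n_s(W_{n,k-1,t})\}$, and combining this with the displayed bound contradicts the hypothesis $n_s(G)>\max\{n_s(W_{n,k-1,1}),n_s(W_{n,k-1,t})\}$. Therefore $G$ cannot be $P_k$-free, which is precisely the desired conclusion.
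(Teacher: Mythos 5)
Your proposal is correct and follows essentially the same route as the paper: the paper likewise deduces the bound from the weak feasibility of $n_s$ (Theorem~\ref{Thm:s-clique}) combined with Theorem~\ref{Thm:Main2-2}, and then invokes convexity of $n_s(W_{n,k-1,a})$ in $a$ to push the maximum to the endpoints $a=1$ and $a=t$. The only difference is that you spell out the clique count $\binom{k-a-1}{s}+(n-k+1+a)\binom{a}{s-1}$ and verify its discrete convexity via the identity $(n-k+1+a)\binom{a}{s-1}=(n-k+s)\binom{a}{s-1}+s\binom{a}{s}$, details the paper leaves implicit with ``Similarly.''
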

 Theorem~\ref{star} and Theorem~\ref{Thm:Main2-2}, together with some similar discussions give us the following result, which is a connected version of Gy\H{o}ri-Salia-Tompkins-Zamora's result~\cite{GSTZ19}.
\begin{thm}[Gy\H{o}ri-Salia-Tompkins-Zamora \cite{GSTZ19}]
Let $n\geq k\geq 4$ and $t=\left\lfloor\frac{k}{2}\right\rfloor-1$.
If $G$ is a connected
$n$-vertex graph with $s_r(G)>\max\{s_r(W_{n,k-1,s}:1\leq s\leq t\}$,
then $G$ has a path of order $k$.
\end{thm}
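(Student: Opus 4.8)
The plan is to mirror exactly the structure used for the two preceding corollaries on $n_s(G)$, since the number of $K_{1,r}$ subgraphs is declared weakly-feasible in Theorem~\ref{star} and the target extremal family is again the Kopylov-type family $\{W_{n,k-1,s}: 1\le s\le t\}$ governing $P_k$-free connected graphs. The key observation is that once we know the parameter is weakly-feasible, Theorem~\ref{Thm:Main2-2} immediately caps its value on any connected $P_k$-free $n$-vertex graph by $\max\{s_r(W_{n,k-1,s}): 1\le s\le t\}$. So the corollary is really just the contrapositive of Theorem~\ref{Thm:Main2-2} specialized to $\mathcal{P}=s_r$.

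Concretely, I would argue as follows. Suppose for contradiction that $G$ is a connected $n$-vertex graph with $s_r(G)>\max\{s_r(W_{n,k-1,s}):1\le s\le t\}$ but $G$ is $P_k$-free. By Theorem~\ref{star}, the parameter $s_r(\cdot)$, counting copies of $K_{1,r}$, is weakly-feasible. Applying Theorem~\ref{Thm:Main2-2} with $\mathcal{P}=s_r$ to the connected $P_k$-free graph $G$ yields
\[
s_r(G)\le \max\{s_r(W_{n,k-1,s}): 1\le s\le t\},
\]
which contradicts the hypothesis on $G$. Hence $G$ must contain a copy of $P_k$, proving the theorem. No convexity reduction analogous to the $n_s$ case is needed here, because the statement keeps the full maximum over $1\le s\le t$ rather than collapsing it to the two endpoints $s=1$ and $s=t$; this is presumably because $s_r(W_{n,k-1,s})$ need not be a convex function of $s$, so one cannot in general discard the interior values.

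The only genuine content beyond this one-line application is verifying that the definition of $s_r$ as ``the number of $K_{1,r}$ in $G$'' matches the weakly-feasible framework, which is precisely what Theorem~\ref{star} asserts, so I would simply invoke it. The main (and essentially sole) obstacle to watch for is a bookkeeping one: ensuring that the indexing of the extremal family $W_{n,k-1,s}$ and the range $1\le s\le t$ with $t=\lfloor k/2\rfloor-1$ in this corollary coincides exactly with the family and range appearing in Theorem~\ref{Thm:Main2-2}, so that the cited bound is applied to the correct graphs. Since both use the identical family $\mathcal{G}^2_{n,k}=\{W_{n,k-1,s}: 1\le s\le t\}$, the match is immediate, and the proof reduces to the short contradiction argument above.
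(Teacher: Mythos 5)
Your proposal is correct and matches the paper's own argument: the paper proves this result exactly by combining Theorem~\ref{star} (weak feasibility of the $K_{1,r}$ count) with Theorem~\ref{Thm:Main2-2}, the same contrapositive application you spell out. Your remark that no convexity reduction is needed (since the statement retains the full maximum over $1\leq s\leq t$) is also consistent with how the paper states this result, in contrast with its treatment of the clique-count corollaries.
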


\subsection{Degree power of graphs}
We have some results on the degree power of 2-connected graphs without cycles of length at least $k$.

\begin{thm}
Let $n\geq k\geq 5$ and let $t=\left\lfloor\frac{k-1}{2}\right\rfloor$.
Let $G$ be a $2$-connected
$n$-vertex $\mathcal{C}_{\geq k}$-free graph with the maximum degree power $\sum_{v\in V(G)}d^p(v)$ where $p\geq 2$, Then, $G\in \mathcal{G}^1_{n,k}=\{W_{n,k,s}=K_{s}\vee ((n-k+s)K_1\cup K_{k-2s}): 2\leq s\leq t\}$. 
\end{thm}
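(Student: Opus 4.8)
The plan is to combine the two results that have already been established in this excerpt. Theorem~\ref{dpf} shows that for any real $p\geq 2$, the degree power $\sum_{v\in V(G)}d^p(v)$ of a connected graph is feasible. Theorem~\ref{Thm:Main1} classifies the $2$-connected $n$-vertex $\mathcal{C}_{\geq k}$-free graphs that maximize an arbitrary feasible parameter $\mathcal{P}(G)$, concluding that any such extremal graph lies in $\mathcal{G}^1_{n,k}$. The statement to be proved is literally the specialization of Theorem~\ref{Thm:Main1} to the particular feasible parameter $\mathcal{P}(G)=\sum_{v\in V(G)}d^p(v)$.

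Concretely, I would proceed as follows. First I would invoke Theorem~\ref{dpf} to record that $\mathcal{P}(G):=\sum_{v\in V(G)}d^p(v)$ is a feasible graph parameter on connected graphs, for the fixed real $p\geq 2$ in the hypothesis. Second, I would observe that $G$ is assumed to be a $2$-connected $n$-vertex $\mathcal{C}_{\geq k}$-free graph attaining the maximum value of this particular $\mathcal{P}$, with $n\geq k\geq 5$ and $t=\lfloor\frac{k-1}{2}\rfloor$; these are exactly the hypotheses of Theorem~\ref{Thm:Main1} applied to this $\mathcal{P}$. Third, I would simply apply Theorem~\ref{Thm:Main1} to conclude that $G\in\mathcal{G}^1_{n,k}=\{W_{n,k,s}=K_{s}\vee((n-k+s)K_1\cup K_{k-2s}):2\leq s\leq t\}$, which is the desired conclusion.

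There is essentially no obstacle here beyond confirming that the degree power genuinely falls under the feasibility framework, and that verification is precisely the content of Theorem~\ref{dpf}, which has already been proved. The one point worth noting is that feasibility (rather than merely weak-feasibility) is needed to obtain the full structural conclusion $G\in\mathcal{G}^1_{n,k}$ for \emph{every} extremal $G$; the strict inequality $\mathcal{P}(G)<\mathcal{P}(G+e)$ for degree power is clear since adding an edge strictly increases two of the degrees and hence the sum $\sum_{v}d^p(v)$. Thus the result is a direct corollary, and the proof is a one-line deduction: degree power is feasible by Theorem~\ref{dpf}, so Theorem~\ref{Thm:Main1} applies verbatim.
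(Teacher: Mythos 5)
Your proposal is correct and is exactly the paper's intended argument: the theorem appears in the ``Implications'' section as a direct corollary, obtained by combining Theorem~\ref{dpf} (degree power with $p\geq 2$ is feasible) with Theorem~\ref{Thm:Main1} applied to $\mathcal{P}(G)=\sum_{v\in V(G)}d^p(v)$. Your remark that full feasibility (not mere weak-feasibility) is what licenses the structural conclusion for every extremal graph is also consistent with the paper's discussion following Theorem~\ref{Thm:Matching}.
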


For paths, we shall prove a more exact result as follows.
\begin{thm}\label{Im-co}
 Let $n\geq k\geq 4$ and $t=\left\lfloor\frac{k}{2}\right\rfloor-1$.
Let $G$ be a connected
$n$-vertex $P_{k}$-free graph with the maximum degree power $\sum_{v\in V(G)}d^p(v)$ where $p\geq 2$. Then, $G$ is  $W_{n,k-1,t}$, or $W_{n,k-1,1}$.
\end{thm}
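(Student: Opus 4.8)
The plan is to combine the structural reduction already available from Theorem~\ref{dpf} and Theorem~\ref{Thm:Main2} with a convexity analysis of the degree power, now viewed as a function of the single shape parameter $s$. First I would invoke Theorem~\ref{dpf}, which guarantees that $D(G):=\sum_{v\in V(G)}d^p(v)$ is feasible for every real $p\geq 2$. Feeding this feasible parameter into Theorem~\ref{Thm:Main2} immediately forces the extremal graph $G$ to be isomorphic to some member $W_{n,k-1,s}=K_s\vee((n-k+s+1)K_1\cup K_{k-2s-1})$ with $1\leq s\leq t$. Thus the whole problem collapses to deciding, within this one-parameter family, which value of $s$ maximises the degree power; the claim is exactly that the maximiser sits at one of the two endpoints $s=1$ or $s=t$.

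The second step is to write $D(W_{n,k-1,s})$ explicitly. Partitioning the vertices into $X$ (the dominating clique, $|X|=s$, each vertex of degree $n-1$), $Y$ (the clique $K_{k-2s-1}$, each vertex of degree $k-s-2$), and $Z$ (the $n-k+s+1$ vertices joined only to $X$, each of degree $s$), I obtain
\[
g(s):=D(W_{n,k-1,s})=s(n-1)^p+(k-2s-1)(k-s-2)^p+(n-k+s+1)s^p.
\]
Before differentiating I would record that throughout the admissible range $1\leq s\leq t=\lfloor k/2\rfloor-1$ one has $k-2s-1\geq 1$ and $k-s-2\geq 1$, so all three terms are genuine and positive; this is what keeps the subsequent sign analysis clean, and it also disposes automatically of the small cases $k=4,5$ where $t=1$ and the two named endpoints coincide.

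The heart of the argument, and the step I expect to be the main obstacle, is to prove that (the real extension of) $g$ is strictly convex on $[1,t]$, since a strictly convex function restricted to the integers $\{1,\dots,t\}$ attains its maximum only at an endpoint. The linear term $s(n-1)^p$ contributes nothing to $g''$. For the last term I would expand $(n-k+s+1)s^p=(n-k+1)s^p+s^{p+1}$ and note that $n\geq k$ makes both coefficients positive, so its second derivative $(n-k+1)p(p-1)s^{p-2}+(p+1)ps^{p-1}$ is strictly positive for $s\geq 1$, $p\geq 2$. For the middle term, writing $b:=k-s-2>0$, a direct computation gives
\[
\frac{d^2}{ds^2}\big[(k-2s-1)b^{p}\big]=(p-1)b^{p-2}\big[2b+p(k-2s-1)\big]+2(p+1)b^{p-1},
\]
which is again strictly positive because $b>0$ and $k-2s-1>0$ on the admissible range. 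Summing the three contributions yields $g''>0$, so $g$ is strictly convex, and hence its maximum over the integers $1\leq s\leq t$ is attained at $s=1$ or $s=t$; therefore $G$ is $W_{n,k-1,1}$ or $W_{n,k-1,t}$, as claimed. The only care required is confined to verifying the boundary inequalities $k-2s-1\geq 1$ and $k-s-2\geq 1$ that underpin the positivity of $g''$, which I would check separately for even and odd $k$.
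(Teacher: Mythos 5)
Your proposal is correct and takes essentially the same route as the paper's own proof: reduce to the one-parameter family $W_{n,k-1,s}$ via Theorem~\ref{dpf} and Theorem~\ref{Thm:Main2}, write out $D_p(s)=s(n-1)^p+(k-2s-1)(k-s-2)^p+(n-k+s+1)s^p$, and show strict convexity in $s$ so the maximum over $1\leq s\leq t$ sits at an endpoint. Your term-by-term second-derivative computation (whose middle term expands to $4p(k-s-2)^{p-1}+p(p-1)(k-2s-1)(k-s-2)^{p-2}$) matches the paper's, and your explicit check of the sign conditions $k-2s-1\geq 1$, $k-s-2\geq 1$ on the admissible range is, if anything, slightly more careful than the paper's presentation.
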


\begin{proof}
Note that $p$-th degree power is feasible. Give the same settings as Theorem~\ref{Thm:Main2}. By Theorem~\ref{Thm:Main2}, $G$ is a member of $$\mathcal{G}^2_{n,k}=\{W_{n,k-1,s}=K_{s}\vee ((n-k+s+1)K_1\cup K_{k-2s-1}): 1\leq s\leq t\}.$$ Set $G=W_{n,k-1,s}$. Observe that $$\sum_{v\in V(G)}d^p(v)=(n-k+s+1)s^p+s(n-1)^p+(k-2s-1)(k-s-2)^p.$$ Fix $n$ and $k$. Denote $\sum_{v\in V(G)}d^p(v)$ by $D_p(s)$. We have 	\[
		\begin{array}{lll} \vspace{0.1cm}
			\frac{\mathrm{d} D_p}{\mathrm{d} s}=s^p+p(n-k+s+1)s^{p-1}+(n-1)^p-2(k-s-2)^p-p(k-2s-1)(k-s-2)^{p-1},  \\
			\frac{\mathrm{d}^{2}D_p}{\mathrm{d}s^2}=ps^{p-1}+p(p-1)(n-k+s+1)s^{p-2}+2p(k-s-2)^{p-1}+2p(k-s-2)^{p-1}\\+p(p-1)(k-2s-1)(k-s-2)^{p-2}. \\
		\end{array}
		\]\\
  Note that $\frac{\mathrm{d}^{2}D_p}{\mathrm{d}s^{2}}>0$. Thus, $D(r)$ is a strictly convex function for the domain, which concludes the result.  
\end{proof}

With the above theorem, we prove the following which tried to attack a problem of Caro and Yuster.
\begin{thm}\label{Im-coo}
Let $k\geq 4$ and $n\geq 2k$. Let $G$ be a connected $n$-vertex $P_{k}$-free graph with the maximum degree power $\sum_{v\in V(G)}d^p(v)$ where $p\geq 2$. Then, $G$ is $W_{n,k-1,t}$ where $t=\lfloor\frac{k}{2}\rfloor-1$. 
\end{thm}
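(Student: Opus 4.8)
\textbf{Proof proposal for Theorem~\ref{Im-coo}.}

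The plan is to build on Theorem~\ref{Im-co}, which already reduces the problem to a comparison between the two endpoint graphs $W_{n,k-1,1}$ and $W_{n,k-1,t}$, where $t=\lfloor\frac{k}{2}\rfloor-1$. By the strict convexity of the degree-power function $D_p(s)$ established in the proof of Theorem~\ref{Im-co}, the maximum over the interval $1\le s\le t$ is attained at one of the two endpoints. Therefore the whole task is to show that, under the hypothesis $n\ge 2k$, the endpoint $s=t$ strictly dominates the endpoint $s=1$, i.e.\ $D_p(t)>D_p(1)$, so that the unique extremal graph is $W_{n,k-1,t}$.

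The key computation is to write down both quantities explicitly and compare. Using the formula from the previous proof,
\[
D_p(s)=(n-k+s+1)\,s^{p}+s\,(n-1)^{p}+(k-2s-1)\,(k-s-2)^{p},
\]
I would evaluate $D_p(1)$ and $D_p(t)$ separately. For $s=1$ the dominant term is the single vertex of degree $n-1$ contributing $(n-1)^p$, together with a block $K_{k-3}$ on vertices of degree about $k-3$; for $s=t$ there are $t$ vertices of degree $n-1$ each contributing $(n-1)^p$, so the leading behaviour in $n$ is governed by the coefficient of $(n-1)^p$, which is $s$. Since $t\ge 1$ and $t>1$ precisely when $k\ge 6$, the term $t\,(n-1)^p$ already beats $1\cdot(n-1)^p$ by a margin of order $(t-1)(n-1)^p$, and for $n\ge 2k$ this gap should swallow the lower-order contributions coming from the clique blocks and the $s^p$ terms. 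The cleanest way to make this rigorous is to show that the discrete forward difference $D_p(s+1)-D_p(s)$ is positive for every $s$ in $[1,t-1]$ when $n\ge 2k$; combined with strict convexity this forces $D_p$ to be strictly increasing on $\{1,\dots,t\}$, pinning the maximum at $s=t$.

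The main obstacle I anticipate is the small-$k$ boundary cases rather than the asymptotics. When $k$ is small, $t$ can equal $1$ (namely for $k=4$ and $k=5$, where $\lfloor k/2\rfloor-1=1$), so the interval $\{1,\dots,t\}$ degenerates to a single point and the statement $G=W_{n,k-1,t}$ holds trivially since there is only one candidate. For $k\ge 6$ one has $t\ge 2$ and the genuine comparison is needed; here I must verify that the threshold $n\ge 2k$ is actually sufficient for the forward difference to stay positive, checking the worst case $s=1$ (where the competing clique term $(k-2s-1)(k-s-2)^p$ is largest relative to the linear-in-$n$ gain). I expect the estimate to go through with room to spare because the required bound is only $n\ge 2k$, far below the $\Omega(k^2)$ threshold of Caro and Yuster, but the careful bookkeeping of the polynomial-in-$k$ error terms against the $(n-1)^p$ gain is the step where the argument could become delicate and where I would spend the most effort.
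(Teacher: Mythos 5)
Your skeleton coincides with the paper's: invoke Theorem~\ref{Im-co} to reduce to the family $\{W_{n,k-1,s}:1\le s\le t\}$ with the maximum at an endpoint of $[1,t]$, dispose of $k=4,5$ where the two endpoint graphs coincide, and then win the comparison using $n\ge 2k$. Your reorganization of the last step --- showing the forward difference $D_p(s+1)-D_p(s)$ is positive, which by strict convexity need only be checked at $s=1$ --- is legitimate and arguably cleaner than the paper's direct computation of $D_p(t)-D_p(1)$ split by the parity of $k$. The problem is that your proposal never proves the decisive inequality; it only predicts it, and the heuristic you offer in its place is quantitatively wrong. Since everything in Theorem~\ref{Im-coo} beyond Theorem~\ref{Im-co} \emph{is} precisely this inequality at the explicit threshold $n\ge 2k$, what is missing is not a routine detail but the entire content of the result.

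Concretely, you claim the gain of order $(t-1)(n-1)^p$ "swallows the lower-order contributions coming from the clique blocks." But at $n=2k$ the clique contribution $(k-3)^{p+1}$ is \emph{not} lower order than $(n-1)^p$: for fixed $p\ge 2$ one has $(n-1)^p=(2k-1)^p=2^p\bigl(k-\tfrac12\bigr)^p$, so $(k-3)^{p+1}/(n-1)^p\to\infty$ as $k\to\infty$. The two sides are in a constant-factor race, and $n\ge 2k$ wins it only through exact constants: in the paper's endpoint comparison because the multiplicity $t-1\approx k/2$ supplies the missing factor of $k$ (the paper's bound degenerates to $(k-5)(k-3)^p>0$, resp.\ $(k-7)(k-3)^p>0$, with no room to spare in the coefficient), and in your forward-difference version because of the inequality $2^p\ge p+2$, which is an equality at $p=2$. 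For the record, your route does close: one can check that
\[
D_p(2)-D_p(1)\ \ge\ \bigl[(n-k+3)2^p-(n-k+2)\bigr]+(n-1)^p-\bigl[(k-3)^{p+1}-(k-5)(k-4)^p\bigr],
\]
that $(k-3)^{p+1}-(k-5)(k-4)^p\le (p+2)(k-3)^p$ by the mean value theorem, and that $(n-1)^p\ge(2k-1)^p>2^p(k-3)^p\ge(p+2)(k-3)^p$, so the first forward difference is strictly positive and convexity propagates this to all $s\le t-1$. But this bookkeeping, with its tight constants, is exactly the step your proposal defers with "I expect the estimate to go through with room to spare" --- and the room is not there; an execution guided by your lower-order heuristic is precisely where the argument could have collapsed.
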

\begin{proof}
Let $G_1$ be $K_{\left\lfloor\frac{k}{2}\right\rfloor-1}\vee ((n-\lceil\frac{k}{2}\rceil)K_1\cup K_{k-2\lfloor\frac{k}{2}\rfloor+1})$ and let $G_2$ be $K_{1}\vee ((n-k+2)K_1\cup K_{k-3})$. By Theorem~\ref{Im-co}, we only need to prove that the degree power of $G_1$ is more than the degree power of $G_2$ ($k\ne 4,5$). First, we assume that $k$ is even. We have that \begin{equation}\label{e1}
        \sum_{v\in V(G_1)}d^p(v)=\left(\frac{k}{2}-1\right)(n-1)^p+(n-\frac{k}{2}+1)\left(\frac{k}{2}-1\right)^p
    \end{equation} 
    and
    \begin{equation}\label{e2}
        \sum_{v\in V(G_2)}d^p(v)=n-k+2+(n-1)^p+(k-3)^{p+1}.   
    \end{equation}
    Let $D(n)= (\ref{e1})-(\ref{e2})$. If $k=4$, then we have $G_1\cong G_2$. So, we first assume that $k\geq 6$. We have
    \begin{eqnarray}
        D(n)&=&\left(\frac{k}{2}-2\right)(n-1)^p-(k-3)(k-3)^{p}+(n-\frac{k}{2}+1)\left(\frac{k}{2}-1\right)^p-n+k-2\nonumber\\
        &>& \left(\frac{k}{2}-2\right)(n-1)^p-(k-3)(k-3)^{p}\nonumber\\
        &\geq& \left(\frac{k}{2}-2\right)(2k-1)^p-(k-3)(k-3)^{p}\nonumber\\
        &=&\left(\frac{k}{2}-2\right)2^p\left(k-\frac{1}{2}\right)^p-(k-3)(k-3)^{p}\nonumber\\
        &>&(k-5)(k-3)^p\nonumber\\
        &>& 0.\nonumber
    \end{eqnarray}
 If $k$ is odd, we have that \begin{equation}\label{e3}
        \sum_{v\in V(G_1)}d^p(v)=\left(\frac{k-1}{2}-1\right)(n-1)^p+\left(n-\frac{k+1}{2}\right)\left(\frac{k-1}{2}-1\right)^p+2\left(\frac{k-1}{2}\right)^p.
    \end{equation}     
    Now, let $D(n)=(\ref{e3})-(\ref{e2})$. If $k=5$, then we have $G_1\cong G_2$. So, we assume that $k\geq 7$ is what follows. Then
    \begin{eqnarray}
D(n)&=&\left(\frac{k-1}{2}-2\right)(n-1)^p-(k-3)(k-3)^{p}+\left(n-\frac{k+1}{2}\right)\left(\frac{k-1}{2}-1\right)^p\nonumber\\
&+&2\left(\frac{k-1}{2}\right)^p-n+k-2\\
        &>& \left(\frac{k-1}{2}-2\right)(n-1)^p-(k-3)(k-3)^{p}\nonumber\\
        &\geq& \left(\frac{k-1}{2}-2\right)(2k-1)^p-(k-3)(k-3)^{p}\nonumber\\
        &=&\left(\frac{k-1}{2}-2\right)2^p\left(k-\frac{1}{2}\right)^p-(k-3)(k-3)^{p}\nonumber\\
        &>&(k-7)(k-3)^p\nonumber\\
        &>& 0.\nonumber
    \end{eqnarray}
    Thus, $ \sum_{v\in V(G_1)}d^p(v)>  \sum_{v\in V(G_2)}d^p(v)$.
\end{proof}

Recall that Caro and Yuster have proved the following result.
\begin{thm}[Caro and Yuster \cite{CY00}]
Let $k\geq 4$ and $t=\lfloor\frac{k}{2}\rfloor-1$, let $p\geq 2$ and let $n>n_0(k)$. Let $G$ be an $n$-vertex $P_{k}$-free graph with the maximum degree power $\sum_{v\in V(G)}d^p(v)$. Then, $G$ is $W_{n,k-1,t}$. Furthermore, $W_{n,k-1,t}$ is the unique extremal graph.
\end{thm}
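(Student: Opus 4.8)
The plan is to deduce this (possibly disconnected) extremal statement from the connected case already in hand, namely Theorem~\ref{Im-coo}, by proving that once $n$ exceeds a suitable threshold $n_0(k)$ any degree-power maximizer $G$ among $P_k$-free graphs must in fact be connected. Write $D_p(G)=\sum_{v\in V(G)}d^p(v)$ and recall that $D_p$ is additive over the connected components of $G$. First I would record a self-optimization property: if $C_1,\dots,C_m$ are the components of an extremal $G$, then each $C_j$ must itself maximize $D_p$ among connected $P_k$-free graphs on $|C_j|$ vertices, since otherwise replacing $C_j$ by such a maximizer would raise $D_p(G)$ without creating a $P_k$. Consequently, by Theorem~\ref{Im-coo} every component on at least $2k$ vertices equals some $W_{\ell_j,k-1,t}$, while (by Theorem~\ref{Im-co} and direct inspection) every smaller component is a $W_{\ell_j,k-1,s}$ or a clique $K_{\ell_j}$ with $\ell_j\le k-1$.

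The core of the argument is a merging/absorbing step. Since each $W_{\ell,k-1,t}$ consists of a dominating clique $K_t$ joined to an independent set (together with one extra edge $K_2$ when $k$ is odd), two components $W_{\ell_1,k-1,t}$ and $W_{\ell_2,k-1,t}$ can be combined into a single $W_{\ell_1+\ell_2,k-1,t}$ by keeping one copy of $K_t$ and demoting the second clique's vertices to the independent set. The merged graph again lies in $\mathcal{G}^2_{n,k}$, hence is $P_k$-free, and a short computation (shown here for even $k$, where $W_{\ell,k-1,t}=K_t\vee\overline{K_{\ell-t}}$) gives
\begin{equation}
D_p(W_{\ell_1+\ell_2,k-1,t})-D_p(W_{\ell_1,k-1,t})-D_p(W_{\ell_2,k-1,t})=t\big[(\ell_1+\ell_2-1)^p-(\ell_1-1)^p-(\ell_2-1)^p\big]+t\cdot t^p>0,
\end{equation}
where positivity follows from the strict superadditivity of $x\mapsto x^p$ for $p\ge 2$. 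Thus an extremal $G$ can have at most one component of size at least $2k$. The same demotion idea absorbs every remaining small component (or isolated vertex) into the independent set of the unique large component $W_{\ell,k-1,t}$: here each dominating clique vertex gains degree, yielding a gain of order $t\cdot p\cdot(\ell-1)^{p-1}$ per absorbed vertex, which for $\ell\ge 2k$ already dominates the at most $(k-2)^p$ lost by destroying the internal edges of an absorbed small clique. Iterating these strictly-increasing moves forces $G$ to collapse to a single $W_{n,k-1,t}$, contradicting extremality unless $G=W_{n,k-1,t}$ to begin with.

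To close, I would note that an extremal $G$ cannot consist solely of small components: such a graph has $D_p(G)=O_k(n)$, whereas $W_{n,k-1,t}$ is an admissible competitor with $D_p=\Theta_k(n^p)$, so for $n$ large a component of size at least $2k$ must exist and the merging argument applies. Combining this with the uniqueness already furnished by Theorem~\ref{Im-coo} for connected graphs (and the strict inequality $D_p(W_{n,k-1,t})>D_p(W_{n,k-1,1})$ established there) then shows that $W_{n,k-1,t}$ is the unique maximizer among all $P_k$-free graphs.

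I expect the main obstacle to be making the absorbing step quantitatively tight enough to produce an honest threshold $n_0(k)$: one must weigh a gain of order $n^{p-1}$ on the clique vertices against the $\Theta_k(n)$ lower-order contribution of the many degree-$t$ vertices of $W_{n,k-1,t}$, and this comparison becomes favorable only once $n$ is large relative to $k$, which is precisely the source of the hypothesis $n>n_0(k)$. A secondary technical nuisance is the odd-$k$ case, in which the extra $K_2$ of $W_{n,k-1,t}$ must be carried through every merge; the inequalities are structurally identical to the displayed one, but the bookkeeping of this pendant edge requires care.
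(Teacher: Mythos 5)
You should know at the outset that the paper does not prove this statement: it is quoted from Caro and Yuster \cite{CY00} purely as a benchmark, and the paper's own contribution is the connected version, Theorem~\ref{Im-coo}, with threshold $n\geq 2k$, after which the authors only remark that for general graphs $n_0(k)$ should be of order $k^2$. So your reduction of the general (possibly disconnected) statement to Theorem~\ref{Im-coo} is a genuinely different route from anything in the paper, and also from Caro--Yuster's original direct argument. The skeleton is sound: component-wise self-optimization is legitimate (path-freeness is inherited componentwise), your displayed merging identity for two large components is exactly right and its positivity via superadditivity of $x^p$ is correct, and the $O_k(n)$ versus $\Theta_k(n^p)$ comparison ruling out graphs with only small components works and is where the hypothesis $n>n_0(k)$ genuinely enters.

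The gap is in the absorbing step. You bound the loss per absorbed vertex by $(k-2)^p$, ``the internal edges of an absorbed small clique,'' but by your own classification a component of order between $k$ and $2k-1$ need not be a clique: it can be $W_{m,k-1,s}$, whose $s$ dominating vertices have degree $m-1$, which can be as large as $2k-2$. Against the correct per-vertex loss $(2k-2)^p$, your per-vertex gain $t\,p\,(\ell-1)^{p-1}$ with $\ell\geq 2k$ does not suffice: for $p=2$ one would need $2t(2k-1)>(2k-2)^2$, and since $t=\lfloor k/2\rfloor-1\leq (k-2)/2$ we have $2t(2k-1)\leq 2k^2-5k+2<4k^2-8k+4=(2k-2)^2$ for every $k\geq 4$. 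The absorption is in fact still net-positive, but only under aggregate rather than per-vertex accounting: the gain $t\bigl[(\ell+m-1)^p-(\ell-1)^p\bigr]\geq t\,p\,m(\ell-1)^{p-1}$ is spread over all $m\geq k$ absorbed vertices, whereas only $s\leq t$ of them suffer a loss of order $(2k-2)^p$, and a short computation (worst case $\ell=2k$, $m=2k-1$, $s=t$) confirms the sign. Alternatively, one can first use your global comparison against $W_{n,k-1,t}$ to force the unique large component to contain $n-O_k(1)$ vertices once $n>Ck^2$, after which $(\ell-1)^{p-1}$ dwarfs every loss term. Either repair, together with the odd-$k$ bookkeeping of the extra $K_2$ that you already flagged, completes the proof; as literally written, however, the absorbing inequality you assert is false, and this is precisely the quantitative point you yourself identified as the main obstacle.
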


Observe that for a connected graph, our extremal graphs, as delineated in Theorem~\ref{Im-coo}, closely align with those identified by Caro and Yuster. We determined $n_0(k)=2k$, and we posit that for all graphs, $n_0(k)$ should be $Ck^2$ for some constant $C$.

\subsection{Spectral radius}
By Theorems~\ref{Thm:Main1}, \ref{Thm:Main2}, \ref{Thm:Matching} and \ref{srf}, we conclude the following theorem immediately. 
\begin{thm}\label{sr-main}
(i) Let $n\geq k\geq 5$ and $t=\left\lfloor\frac{k-1}{2}\right\rfloor$.
If $G$ is a $2$-connected
$n$-vertex graph with $\lambda(G)>\max_{2\leq s\leq t}\{\lambda(W_{n,k,s})\}$,
then $G$ has a cycle of length at least $k$. In particular, $spex_{2-con}(n,\mathcal{C}_{\geq k})=\max_{2\leq s\leq t}\{\lambda(W_{n,k,s})\}$.\\
(ii) Let $n\geq k\geq 4$ and $t=\left\lfloor\frac{k}{2}\right\rfloor-1$.
If $G$ is a connected
$n$-vertex graph with $\lambda(G)>\max_{1\leq s\leq t}\{\lambda(W_{n,k-1,s})\},$
then $G$ has a path of order $k$. In particular, $spex_{c}(n,P_k)=\max_{1\leq s\leq t}\{\lambda(W_{n,k-1,s})\}$.\\
(iii) Let $n\geq 2k+2$. If $G$ is a connected $n$-vertex graph with $\lambda(G)>\max_{1\leq s\leq k}\{\lambda(W_{n,2k+1,s})\}$, then $G$ contains an $M_{k+1}$. In particular, $spex_{c}(n,M_{k+1})=\max_{1\leq s\leq t}\{\lambda(W_{n,2k+1,s})\}$.
\end{thm}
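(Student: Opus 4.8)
The plan is to read off all three parts as direct specializations of the structural main theorems to the feasible parameter $\mathcal{P}(G)=\lambda(G)$. By Theorem~\ref{srf} the spectral radius of a connected graph is feasible, so it satisfies both properties (I) and (II) of Definition~\ref{basic}. Consequently, whenever we fix one of the forbidden structures $\mathcal{C}_{\geq k}$, $P_k$, or $M_{k+1}$ and ask which graph in the relevant connectivity class maximizes $\lambda$ (such a maximizer exists since there are finitely many graphs of fixed order $n$), the corresponding main theorem, namely Theorem~\ref{Thm:Main1}, Theorem~\ref{Thm:Main2}, or Theorem~\ref{Thm:Matching}, already pins the maximizer down to the explicit family $\mathcal{G}^1_{n,k}$, $\mathcal{G}^2_{n,k}$, or $\mathcal{G}^3_{n,k}$.

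For part (i) I would argue as follows. Let $G$ be a $2$-connected $n$-vertex $\mathcal{C}_{\geq k}$-free graph attaining the maximum spectral radius. Since $\lambda$ is feasible, Theorem~\ref{Thm:Main1} forces $G\cong W_{n,k,s}$ for some $2\leq s\leq t$. Because each $W_{n,k,s}$ with $2\leq s\leq t$ is itself $2$-connected and $\mathcal{C}_{\geq k}$-free, the maximum of $\lambda$ over the entire class is attained inside the finite family, so $spex_{2-con}(n,\mathcal{C}_{\geq k})=\max_{2\leq s\leq t}\lambda(W_{n,k,s})$. The displayed implication is then just the contrapositive: if a $2$-connected $G$ satisfies $\lambda(G)>\max_{2\leq s\leq t}\lambda(W_{n,k,s})$, then $G$ cannot be $\mathcal{C}_{\geq k}$-free, hence it contains a cycle of length at least $k$. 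Parts (ii) and (iii) follow verbatim, replacing Theorem~\ref{Thm:Main1} by Theorem~\ref{Thm:Main2} (family $\mathcal{G}^2_{n,k}$, range $1\leq s\leq t$) and by Theorem~\ref{Thm:Matching} (family $\mathcal{G}^3_{n,k}$, range $1\leq s\leq k$), respectively. For (iii) I only invoke the $n\geq 2k+2$ branch of Theorem~\ref{Thm:Matching}, since the $n=2k+1$ case (where the extremal graph is $K_n$) is excluded by the hypothesis $n\geq 2k+2$.

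Since the genuine work has already been carried out in establishing the feasibility of $\lambda$ (Theorem~\ref{srf}) and in the three structural main theorems, there is essentially no hard step remaining; the only points needing a line of care are bookkeeping. First, one must verify that each candidate $W_{n,\cdot,s}$ genuinely lies in the admissible connectivity class, so that the supremum of $\lambda$ over the class is attained inside the finite family and the two quantities agree. Second, one should track the strict inequality in property (II) of Definition~\ref{basic}, which is what makes the contrapositive clean. Unlike the edge-count and degree-power applications treated earlier in this section, I would \emph{not} attempt to collapse $\max_{s}\lambda(W_{n,\cdot,s})$ to its boundary terms, since no convexity of $\lambda$ in the parameter $s$ is available to me; the stated formulas deliberately retain the full maximum over $s$, and this full maximum is exactly what the main theorems supply.
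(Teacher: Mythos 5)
Your proposal is correct and matches the paper's own proof, which derives Theorem~\ref{sr-main} immediately from Theorems~\ref{Thm:Main1}, \ref{Thm:Main2}, \ref{Thm:Matching} together with the feasibility of the spectral radius (Theorem~\ref{srf}); the bookkeeping you make explicit (the extremal families lying inside the admissible classes, and passing to the contrapositive) is exactly what the paper leaves implicit.
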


The following theorem gives a complete solution to a problem initially posed by Nikiforov.
\begin{thm}\label{Thm:path-spectralradius}
Let $n\geq k\geq 4$ and $t=\left\lfloor\frac{k}{2}\right\rfloor-1$.
Let $G$ be an $n$-vertex $P_{k}$-free graph with the maximum spectral radius. Then $\lambda(G)=max \{\lambda(W_{n,k-1,s}),\lambda(K_{k-1}):1\leq s\leq t\}$.
\end{thm}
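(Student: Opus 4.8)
<br>

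The goal is to prove Theorem~\ref{Thm:path-spectralradius}, which asks for the maximum spectral radius among \emph{all} (not necessarily connected) $n$-vertex $P_k$-free graphs. The plan is to reduce the disconnected case to the connected case already handled by Theorem~\ref{sr-main}(ii). First I would observe that since the spectral radius of a graph equals the maximum spectral radius among its connected components, the extremal graph $G$ achieving $spex(n,P_k)$ may be assumed, after discarding irrelevant components, to consist of a single connected component $H$ on some $m \leq n$ vertices together with $n-m$ isolated vertices, and $\lambda(G) = \lambda(H)$. Here $H$ is itself a connected $P_k$-free graph on $m$ vertices, so by Theorem~\ref{srf} (spectral radius is feasible) and Theorem~\ref{Thm:Main2} applied to $H$, either $H \in \mathcal{G}^2_{m,k}$, or $m$ is too small for $\mathcal{G}^2_{m,k}$ to be defined, in which case $H$ being $P_k$-free and edge-maximal forces $H \cong K_{k-1}$ (the largest complete graph avoiding $P_k$).

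Next I would compare the two regimes. When $m \geq k$, the connected component is some $W_{m,k-1,s}$ with $1\leq s\leq t$, and we must compare $\lambda(W_{m,k-1,s})$ across all admissible $m$ and $s$ against the alternative $\lambda(K_{k-1})$. The key monotonicity fact is that for fixed $k$ and $s$, adding isolated vertices to pass from $m$ to $n$ and then adding edges (using property (II) of feasibility via Lemma~\ref{Lem:lambda+q_addedge}) can only increase the spectral radius, so the best connected candidate on $m=n$ vertices dominates any candidate living on fewer vertices padded with isolated vertices; this shows the supremum over the $W$-family is attained at $m=n$, giving the term $\max_{1\leq s\leq t}\lambda(W_{n,k-1,s})$. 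The remaining competitor is $H\cong K_{k-1}$, whose spectral radius $k-2$ is independent of $n$; this clique sits on few vertices and is the genuinely disconnected extremal configuration that Theorem~\ref{sr-main}(ii) misses because it only treats connected graphs. Taking the maximum of these two families yields exactly $\max\{\lambda(W_{n,k-1,s}),\lambda(K_{k-1}):1\leq s\leq t\}$.

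The main obstacle I anticipate is the careful justification that no intermediate disconnected configuration beats both $K_{k-1}$ and the $W$-family. Specifically, one must rule out that a connected $P_k$-free component $W_{m,k-1,s}$ on $k\leq m<n$ vertices, sitting inside an otherwise empty graph, could have larger spectral radius than $W_{n,k-1,s}$ on the full vertex set; this follows from Lemma~\ref{Lem:lambda+q_addedge} since $W_{m,k-1,s}\cup(n-m)K_1$ is a proper spanning subgraph (up to relabeling) of $W_{n,k-1,s}$, but one must confirm these added edges never create a $P_k$, which holds because $W_{n,k-1,s}$ is itself $P_k$-free. The only place where the clique genuinely wins is for small $n$ relative to $k$, where the $W$-graphs are forced to be sparse; here a direct comparison of $\lambda(K_{k-1})=k-2$ against $\lambda(W_{n,k-1,s})$ closes the argument. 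Once these comparisons are in place, the statement follows by simply collecting the two surviving families into a single maximum.
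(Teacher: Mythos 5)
Your proposal is correct and follows essentially the same route as the paper's proof: reduce to connected components, apply Theorem~\ref{sr-main}(ii) (i.e., Theorem~\ref{Thm:Main2} together with Theorem~\ref{srf}) to the component carrying the spectral radius, use monotonicity under containment to push that component to order $n$, and keep $K_{k-1}$ with $\lambda(K_{k-1})=k-2$ as the only surviving competitor coming from components of order less than $k$. One small correction: the comparison $\lambda\bigl(W_{m,k-1,s}\cup(n-m)K_1\bigr)<\lambda(W_{n,k-1,s})$ is an instance of Lemma~\ref{Lem:lambda+q_proper} (proper subgraph of a connected graph), not of Lemma~\ref{Lem:lambda+q_addedge}, whose hypothesis requires the starting graph to be connected; also, no verification that the added edges avoid creating a $P_k$ is needed for this monotonicity step, only for confirming that $W_{n,k-1,s}$ itself is an admissible candidate.
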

\begin{proof}
    Let $G_i$ be a connected graph of order $n_i(\geq k)$ with maximum spectral radius for $i=1,2$. By Theorem \ref{sr-main}, $G_1\cong W_{n_1,k-1,s_i}$ and $G_2\cong W_{n_2,k-1,s_j}$ for some $1\leq s_i\leq t$ and $1\leq s_j\leq t$. We assume $n_1<n_2$. Observe that $W_{n_1,k-1,s_i}$ is a proper subgraph of $W_{n_2,k-1,s_i}$ and $\lambda(W_{n_2,k-1,s_j})\geq \lambda(W_{n_2,k-1,s_i})$. Thus, $\lambda(W_{n_2,k-1,s_j})> \lambda(W_{n_1,k-1,s_i})$, and it follows that $\lambda(G_1)<\lambda(G_2)$. On the other hand, $\lambda(G)\geq \lambda(K_{k-1})=k-2$ since $n\geq k$. So, if $\lambda(W_{n,k-1,s_i})\geq k-2$ for some $1\leq s_i\leq t $, then $\lambda(G)=max \{\lambda(W_{n,k-1,s})\}$ where $1\leq s\leq t$. If $\lambda(W_{n,k-1,s_i})< k-2$ for all $1\leq s_i\leq t$, we have that $\lambda(G)=k-2$ as the size of a clique in a $P_{k}$-free graph is at most $k-1$. This proves the theorem.
\end{proof}
As we mentioned before, Nikiforov \cite{N10} determined the spectral extremal graph for paths when the order of a graph is sufficiently large.
\begin{thm}[Nikiforov \cite{N10}]\label{Thm:N10}
Let $k$ be a positive integer. Let $G$ be a graph on $n\geq 2^{4k}$ vertices. 
If $\lambda(G)\geq \lambda(W_{n,k-1,t})$ where $t=\lfloor\frac{k}{2}\rfloor-1$, then $G$ contains a path of order $k$ unless $G\cong W_{n,k-1,t}$. 
\end{thm}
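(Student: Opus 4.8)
The plan is to derive this statement as a consequence of the complete solution in Theorem~\ref{Thm:path-spectralradius}, combined with a comparison of the spectral radii of the candidate extremal graphs in the regime $n\geq 2^{4k}$. Suppose $G$ is a $P_k$-free graph on $n$ vertices with $\lambda(G)\geq \lambda(W_{n,k-1,t})$; the goal is to show $G\cong W_{n,k-1,t}$. By Theorem~\ref{Thm:path-spectralradius}, every $P_k$-free graph on $n$ vertices has spectral radius at most $\max\{\lambda(W_{n,k-1,s}),\, k-2 : 1\leq s\leq t\}$. Hence it suffices to establish two facts: (a) for $n\geq 2^{4k}$ this maximum is attained only at $s=t$ and strictly exceeds $k-2$; and (b) $W_{n,k-1,t}$ is the \emph{unique} $P_k$-free graph whose spectral radius equals this maximum. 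Granting (a) and (b), the hypothesis $\lambda(G)\geq \lambda(W_{n,k-1,t})$ together with the upper bound forces $\lambda(G)=\lambda(W_{n,k-1,t})$, whence uniqueness yields $G\cong W_{n,k-1,t}$.

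For (a), I would estimate $\lambda(W_{n,k-1,s})$ via the equitable partition of $W_{n,k-1,s}=K_s\vee((n-k+s+1)K_1\cup K_{k-2s-1})$ into the $s$ dominating vertices, the independent set of size $n-k+s+1$, and the clique $K_{k-2s-1}$. The resulting $3\times 3$ quotient matrix has a dominant eigenvalue of the form $\sqrt{s\,(n-k+s+1)}\,(1+o(1))=\sqrt{sn}\,(1+o(1))$, where every correction term is bounded by a function of $k$ alone, since these come only from the $\binom{s}{2}$ internal edges of $K_s$ and from the bounded clique $K_{k-2s-1}$. Consequently $\lambda(W_{n,k-1,s})$ is strictly increasing in $s$ once $n$ is large, so the maximum over $1\leq s\leq t$ is attained uniquely at $s=t$; moreover $\lambda(W_{n,k-1,t})\geq \sqrt{tn}\,(1+o(1))$, which dwarfs $k-2$.

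For (b), I would invoke that the spectral radius is feasible (Theorem~\ref{srf}), so by Theorem~\ref{Thm:Main2} (equivalently Theorem~\ref{sr-main}(ii)) any connected $P_k$-free graph attaining the maximum spectral radius lies in $\mathcal{G}^2_{n,k}=\{W_{n,k-1,s}:1\leq s\leq t\}$. A disconnected maximizer would have a component that is itself a smaller connected $P_k$-free graph, whose spectral radius is strictly smaller by monotonicity under adding vertices and edges; hence the global maximizer is connected and lies in the family. Combined with the strict monotonicity in (a), the unique member of $\mathcal{G}^2_{n,k}$ attaining the maximum is $W_{n,k-1,t}$, giving (b).

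The main obstacle is the quantitative comparison in (a): one must separate $\lambda(W_{n,k-1,t})$ from $\lambda(W_{n,k-1,t-1})$ by more than the $O(k)$-sized lower-order corrections in the eigenvalue expansion. Since the leading gap $\sqrt{tn}-\sqrt{(t-1)n}=\Theta(\sqrt{n/t})$ already exceeds these corrections once $n$ is of order $k^3$, Nikiforov's threshold $n\geq 2^{4k}$ is comfortably large enough; the real care lies in bounding the correction terms uniformly in $s$ and $k$, rather than in the size of the threshold itself.
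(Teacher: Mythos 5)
Your proposal is correct in substance, but note first what it should be compared against: the paper never proves Nikiforov's theorem in the stated form---it cites it and then proves a strictly stronger version (threshold $n\geq 3k$ in place of $n\geq 2^{4k}$), and that proof is the paper's ``own proof'' of this statement. The two arguments share the same skeleton: reduce via Theorem~\ref{Thm:path-spectralradius} to comparing $\lambda(W_{n,k-1,s})$ over $1\leq s\leq t$ and against $\lambda(K_{k-1})=k-2$, show the maximum is attained only at $s=t$, and identify the extremal graph via feasibility of the spectral radius (Theorems~\ref{srf} and~\ref{Thm:Main2}). Where you genuinely diverge is the key monotonicity step. The paper deletes all but one vertex of the bounded clique $C$ and applies the Sun--Das inequality $\lambda^2(G)\leq\lambda^2(G-v)+2d_G(v)-1$ (Lemma~\ref{Lem:Sun-Das}) to compare each $W_{n,k-1,s}$ with the split graph $S_{n-k+2s+2,s}$, whose spectral radius has an exact formula; this yields an explicit function $f(s)$, shown increasing for $n\geq 3k$, followed by a parity analysis ($k$ even: $W_{n,k-1,t}=S_{n,t}$; $k$ odd: $W_{n,k-1,t}=S^{+}_{n,t}$). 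You instead use quotient-matrix asymptotics, $\lambda(W_{n,k-1,s})=\sqrt{sn}\,(1+o(1))$ with corrections bounded in terms of $k$, which needs $n=\Omega(k^3)$---far weaker than $3k$, but comfortably inside $2^{4k}$, so it does prove the stated theorem; it is also shorter and avoids the parity split, which is exactly what the softer estimate buys you, while the paper's sharper tool is what makes the improvement to $n\geq 3k$ possible. Two details you should still nail down: (i) the $O_k(1)$ correction bound must be made rigorous, e.g.\ by symmetrizing the $3\times 3$ quotient matrix by the cell sizes and applying Weyl's inequality, or by sandwiching $W_{n,k-1,s}$ between $S_{n-k+2s+1,s}$ and $K_{k-s-1}\vee\bigl((n-k+s+1)K_1\bigr)$; and (ii) in your disconnected-maximizer argument, a component of order $n_1<k$ must be treated separately (there $\lambda\leq n_1-1\leq k-2$), since Theorem~\ref{sr-main}(ii) applies only when $n_1\geq k$. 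On the other hand, your explicit uniqueness step, including the disconnected case, is more careful than the paper's one-line conclusion, which leaves that reduction implicit.
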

With the help of the following result, we improve Theorem \ref{Thm:N10} from $n\geq 2^{4k}$ to $n\geq 3k$ by a different method.
\begin{lem}[Sun-Das \cite{SD20}]\label{Lem:Sun-Das}
Let $G$ be an $n$-vertex graph. For any vertex $v\in V(G)$, if $d_G(v)\geq 1$ then
$$
\lambda^2(G)\leq \lambda^2(G-v)+2d_G(v)-1.
$$
\end{lem}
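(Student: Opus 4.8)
The plan is to work with a unit Perron eigenvector and reduce the inequality to a single scalar bound on the weight this eigenvector places at $v$. Write $\lambda=\lambda(G)$ and let $x\ge 0$ be the Perron eigenvector of $A(G)$ normalised by $\|x\|_2=1$ (Perron--Frobenius, assuming $G$ connected; if $G$ is disconnected one applies the bound to the component containing $v$, the other case being trivial since then $\lambda(G-v)\ge\lambda(G)$). Set $B=A(G-v)$, let $a\in\{0,1\}^{V\setminus\{v\}}$ be the indicator of $N(v)$ (so $\|a\|_2^2=d_G(v)$), and let $z$ be the restriction of $x$ to $V\setminus\{v\}$, so $\|z\|_2^2=1-x_v^2$. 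Reading the eigen-equation $A(G)x=\lambda x$ at $v$ and at the remaining vertices yields the two clean identities $a^\top z=\lambda x_v$ and $Bz=\lambda z-x_v a$ (every vertex other than $v$ simply loses its single possible edge to $v$).

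First I would turn these into an exact evaluation of a Rayleigh quotient for $G-v$. Since $\lambda^2(G-v)=\lambda_1(B^2)\ge z^\top B^2 z/\|z\|_2^2$ and
$$z^\top B^2z=\|Bz\|_2^2=\|\lambda z-x_v a\|_2^2=\lambda^2\|z\|_2^2-2\lambda x_v(a^\top z)+d_G(v)x_v^2=\lambda^2(1-x_v^2)-2\lambda^2x_v^2+d_G(v)x_v^2,$$
I obtain the key inequality $\lambda^2(G-v)(1-x_v^2)\ge \lambda^2-3\lambda^2x_v^2+d_G(v)x_v^2$. A direct rearrangement (dividing by $1-x_v^2>0$ and comparing with $\lambda^2-2d_G(v)+1$) shows that it suffices to prove the purely scalar statement
$$x_v^2\bigl(2\lambda^2+d_G(v)-1\bigr)\le 2d_G(v)-1. \qquad(\ast)$$
Thus everything reduces to controlling the Perron weight $x_v^2$ at the deleted vertex.

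For that weight, Cauchy--Schwarz over $N(v)$ together with normalisation gives the crude estimate $\lambda^2x_v^2=(a^\top z)^2\le d_G(v)\|z\|_2^2=d_G(v)(1-x_v^2)$, i.e. $x_v^2\le d_G(v)/(\lambda^2+d_G(v))$. An elementary cross-multiplication shows $d_G(v)/(\lambda^2+d_G(v))\le (2d_G(v)-1)/(2\lambda^2+d_G(v)-1)$ holds exactly when $d_G(v)^2\ge\lambda^2$, so this already establishes $(\ast)$, and hence the lemma, in the regime $d_G(v)\ge\lambda(G)$ (in particular whenever $v$ has maximum or near-maximum degree, which covers the tight instances $K_n$ and the star $K_{1,m}$ viewed from its centre).

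The main obstacle is the complementary regime $d_G(v)<\lambda(G)$, where the crude estimate is too lossy: it only uses $\sum_{u\in N(v)}x_u^2\le 1-x_v^2$, whereas when $G$ is dense away from $v$ the neighbours of $v$ carry only a small fraction of the Perron mass, so $x_v^2$ is in fact much smaller than $d_G(v)/(\lambda^2+d_G(v))$. To close this case I would feed the neighbours' own eigen-equations $\lambda x_u=\sum_{w\sim u}x_w$ back in, exploiting that each such $u$ also sends edges into the heavy part of $G$, in order to upgrade the bound on $\sum_{u\in N(v)}x_u^2$ and sharpen the estimate for $x_v^2$ to exactly what $(\ast)$ demands. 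As a calibration, the degree-one subcase collapses to the classical fact $\|x\|_\infty\le 1/\sqrt 2$ (from $\lambda\ge\sqrt{\Delta}$ and Cauchy--Schwarz), with equality read off $P_3$ from an endpoint; obtaining the matching quantitative control of $x_v^2$ uniformly across the low-degree range is the delicate step, and is where I expect the real work of the proof to lie.
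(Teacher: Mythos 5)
The paper itself offers no proof of this lemma---it is quoted verbatim from Sun--Das \cite{SD20}---so the only question is whether your argument is complete, and it is not. Your reduction is sound as far as it goes: the identities $a^\top z=\lambda x_v$ and $Bz=\lambda z-x_v a$, the Rayleigh-quotient step $\lambda^2(G-v)\ge \|Bz\|_2^2/\|z\|_2^2$, and the resulting sufficient condition $(\ast)$, namely $x_v^2\bigl(2\lambda^2+d_G(v)-1\bigr)\le 2d_G(v)-1$, are all correct, and your Cauchy--Schwarz estimate $x_v^2\le d_G(v)/(\lambda^2+d_G(v))$ does settle $(\ast)$ exactly when $d_G(v)\ge \lambda(G)$ (and, via $\|x\|_\infty\le 1/\sqrt2$, also when $d_G(v)=1$). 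But for $2\le d_G(v)<\lambda(G)$ you offer only a plan (``feed the neighbours' eigen-equations back in''), not an argument, and you acknowledge that this is where the real work lies. That regime is not a corner case: it is the generic one, since $\lambda(G)$ is at least the average degree, and it is moreover the \emph{only} regime in which this paper ever invokes the lemma---in the proof of the $n\ge 3k$ spectral theorem the lemma is applied repeatedly to vertices of the small clique $C$ of $W_{n,k-1,s}$, whose degree is about $k-s-2$, while $\lambda(W_{n,k-1,s})\ge\sqrt{s(n-k+s+1)}$ is far larger. So what you have proved is, for the purposes of this paper, the easy and unused half of the statement.

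The missing step is also genuinely delicate rather than routine. Your crude bound is tight precisely when all Perron mass sits on $N[v]$ with equal neighbour weights, and in that situation one necessarily has $\lambda\le d_G(v)$; hence in the complementary case $\lambda>d_G(v)$ one must quantify how much mass escapes $N[v]$ and convert it into exactly the multiplicative saving $\tfrac{2d-1}{2d}$ that $(\ast)$ demands. There is no slack available for a soft argument: both of your calibration examples ($K_n$, and a leaf of a star) attain equality in $(\ast)$ as well as in the lemma, so any lossy intermediate inequality in the low-degree regime will fail. Until that estimate on $x_v^2$ (or some substitute for the Rayleigh-quotient route) is actually carried out, the lemma remains unproven; as it stands the proposal is a correct reduction plus a solved special case, not a proof.
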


\begin{thm}
Let $k$ be a positive integer. Let $G$ be a graph on $n\geq 3k$ vertices. 
If $\lambda(G)\geq \lambda(W_{n,k-1,t})$ where $t=\lfloor\frac{k}{2}\rfloor-1$, then $G$ contains a path of order $k$ unless $G\cong W_{n,k-1,t}$. 
\end{thm}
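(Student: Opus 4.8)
The plan is to reduce the problem to the finite comparison of candidate extremal graphs supplied by Theorem \ref{Thm:path-spectralradius}, and then to show that for $n\ge 3k$ the graph $W_{n,k-1,t}$ strictly beats every other candidate. Concretely, since the spectral radius is feasible (Theorem \ref{srf}), Theorem \ref{Thm:path-spectralradius} tells us that any $P_k$-free $n$-vertex graph $G$ satisfies $\lambda(G)\le\max\{\lambda(W_{n,k-1,s}),\,k-2:\,1\le s\le t\}$, and, tracing the proof of that theorem together with Theorem \ref{Thm:Main2}, equality forces $G$ to be connected and isomorphic to the unique candidate attaining the maximum. Hence it suffices to prove that, for $n\ge 3k$, we have $\lambda(W_{n,k-1,t})>\lambda(W_{n,k-1,s})$ for every $1\le s\le t-1$ and $\lambda(W_{n,k-1,t})>k-2=\lambda(K_{k-1})$. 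Granting this, if $\lambda(G)\ge\lambda(W_{n,k-1,t})$ and $G\not\cong W_{n,k-1,t}$, then $G$ cannot be $P_k$-free, which is exactly the contrapositive of the assertion.

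For the comparison I would work with the equitable three-part partition of $W_{n,k-1,s}$ into the dominating clique $X$ ($|X|=s$), the clique $Y\cong K_{k-2s-1}$, and the independent set $Z$ ($|Z|=n-k+s+1$); its quotient matrix is
$$B_s=\begin{pmatrix} s-1 & k-2s-1 & n-k+s+1\\ s & k-2s-2 & 0\\ s & 0 & 0\end{pmatrix},$$
so $\lambda(W_{n,k-1,s})$ is the Perron root of $B_s$, i.e.\ the largest root of an explicit cubic $g_s$. Comparing these Perron roots across $s$ is the heart of the matter. A first useful fact is that $s\mapsto\lambda(W_{n,k-1,s})$ is not monotone but \emph{valley-shaped}: for $n\ge 3k$ it behaves like $\max\{k-s-2,\sqrt{s(n-k+s+1)}\}$, decreasing while the clique $K_{k-s-1}$ dominates and increasing once the complete bipartite part $K_{s,\,n-k+s+1}$ takes over. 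Consequently the maximum over $1\le s\le t$ is attained at an endpoint, and it remains only to beat the left endpoint $s=1$ and the clique $K_{k-1}$.

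To beat $K_{k-1}$, I would use the subgraph $K_{t,\,n-k+t+1}\subseteq W_{n,k-1,t}$ to get $\lambda(W_{n,k-1,t})\ge\sqrt{t(n-k+t+1)}$ and check that $t(n-k+t+1)>(k-2)^2$ whenever $k\ge 4$ and $n\ge 3k$ (a short case split on the parity of $k$). To beat the competitor $W_{n,k-1,1}$ — and, more generally, the small-$s$ candidates, each of which carries a dominating vertex — I would invoke Lemma \ref{Lem:Sun-Das}: deleting the dominating vertex $v$ of degree $n-1$ from $W_{n,k-1,1}$ leaves $(n-k+2)K_1\cup K_{k-3}$, so $\lambda^2(W_{n,k-1,1})\le (k-4)^2+2(n-1)-1$, and comparing this clean upper bound with the lower bound for $\lambda(W_{n,k-1,t})$ coming from the split-graph/quotient-matrix value (rather than the weaker bipartite estimate) yields $\lambda(W_{n,k-1,t})>\lambda(W_{n,k-1,1})$ once $n\ge 3k$. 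The cases $t\in\{1,2\}$ (that is, $4\le k\le 7$) involve only one or two candidates and are disposed of by evaluating the cubics directly.

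The step I expect to be the main obstacle is making the endpoint comparison \emph{uniform in $k$ at the sharp threshold $n=3k$}: the naive bounds (iterated Sun--Das on one side, the bipartite lower bound on the other) lose a multiplicative factor that is fatal for $s$ close to $t$ and for small $k$, so one must (i) justify the valley-shape reduction rigorously — for instance by a sign analysis of $g_{s+1}(\lambda)-g_s(\lambda)$ showing at most one interior minimum of $s\mapsto\lambda(W_{n,k-1,s})$ — and (ii) use the genuine Perron value of $W_{n,k-1,t}$, not a crude subgraph estimate, against the Sun--Das bound at the left endpoint. Once the strict inequalities $\lambda(W_{n,k-1,t})>\lambda(W_{n,k-1,s})$ for $1\le s\le t-1$ and $\lambda(W_{n,k-1,t})>k-2$ are established for all $n\ge 3k$, uniqueness of the extremal graph is automatic from Theorem \ref{Thm:path-spectralradius} and the feasibility of $\lambda$, completing the proof.
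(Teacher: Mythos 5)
Your first step---reducing via Theorem \ref{Thm:path-spectralradius} to a comparison among the candidates $W_{n,k-1,s}$ ($1\le s\le t$) and $K_{k-1}$---is the same as the paper's, and your endpoint estimates (the bipartite bound $\sqrt{t(n-k+t+1)}>k-2$ against $K_{k-1}$, and Sun--Das applied to the dominating vertex of $W_{n,k-1,1}$) are correct as far as they go. But the proposal has a genuine gap exactly where you flag it: the ``valley-shape'' claim, i.e.\ that $s\mapsto\lambda(W_{n,k-1,s})$ is unimodal so that its maximum over $1\le s\le t$ is attained at an endpoint, is never proved. That claim carries the entire middle range $2\le s\le t-1$; deriving it from the heuristic $\lambda(W_{n,k-1,s})\approx\max\{k-s-2,\sqrt{s(n-k+s+1)}\}$, with a suggestion to do a ``sign analysis of $g_{s+1}(\lambda)-g_s(\lambda)$,'' is not a proof, and comparing Perron roots of the cubics $g_s$ uniformly in $n$ and $k$ down to the sharp threshold $n=3k$ is precisely the hard part of the theorem. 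You yourself concede that your bounds ``lose a multiplicative factor that is fatal for $s$ close to $t$ and for small $k$,'' so what remains is a plan whose decisive inequality is conjectured rather than established.

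The paper sidesteps unimodality entirely by using Lemma \ref{Lem:Sun-Das} differently: for \emph{every} $s$ it deletes all but one vertex of the small clique $C\cong K_{k-2s-1}$, reducing $W_{n,k-1,s}$ to the split graph $S_{n-k+2s+2,s}$ whose Perron root is explicit, and iterating Sun--Das along these deletions gives the majorant
$$\lambda^2(W_{n,k-1,s})\le f(s):=\left(\tfrac{s-1+\sqrt{(s-1)^2+4(n-k+s+2)s}}{2}\right)^{2}+2\left(\tbinom{k-s-1}{2}-\tbinom{s+1}{2}\right)-(k-2s-2).$$
A short derivative computation shows $f'(s)>0$ whenever $n\ge 3k$, which disposes of all $s\le t-1$ simultaneously; one then needs only the exact identifications $W_{n,k-1,t}=S_{n,t}$ for $k$ even (where the correction term vanishes, so $f(t)=\lambda^2(S_{n,t})$) and $W_{n,k-1,t}=S^{+}_{n,t}$ for $k$ odd (where one checks $\lambda^2(S_{n,t})>f(t-1)$ directly), plus the easy comparison with $k-2$. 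If you want to rescue your route, the cleanest fix is to replace the unproved valley-shape reduction by such a monotone majorant: your quotient matrix $B_s$ is the right object, but what you need from it is not unimodality of its Perron root in $s$, only an increasing upper bound that is tight at $s=t$.
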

\begin{proof}
Let $G$ be an $n$-vertex $P_{k}$-free graph with the maximum spectral radius.
By Theorem \ref{Thm:path-spectralradius}, $\lambda(G)=max \{\lambda(W_{n,k-1,s}),\lambda(K_{k-1})\}$  where $1\leq s\leq t$. Recall that $W_{n,k-1,s}$ can be partitioned into three disjoint parts $A,B,C$, such that $A$ consists of $n-k+s+1$ isolated vertices, $B$ is a clique of order $s$, and $C$ is a clique of order $k-2s-1$; moreover, $(A, B)$ is complete bipartite and $B\cup C$ is a clique of order $k-s-1$, 
in which $V(A)\cup V(B)\cup V(C)=V(G)$. Observe that after deleting $|V(C)|-1$ vertices in $C$, $W_{n,k-1,s}$ is changed into a new graph $S_{n-k+2s+2,s}$. (Without loss of generality, assume that the only one vertex in $C$ which has not been deleted is $a$.) Furthermore, by direct computation, we have 
$$
\lambda(S_{n-k+2s+2,s})\leq \frac{s-1+\sqrt{(s-1)^2+4(n-k+s+2)s}}{2}.
$$
By Lemma \ref{Lem:Sun-Das}, we obtain
\begin{align*}
&\lambda^2(W_{n,k-1,s})\leq \lambda^2(S_{n-k+2s+2,s})+2(e(B\cup C)-e(B\cup \{a\}))-(|V(C)|-1)\\
&=\left(\frac{s-1+\sqrt{(s-1)^2+4(n-k+s+2)s}}{2}\right)^2+2\left(\binom{k-s-1}{2}-\binom{s+1}{2}\right)-(k-2s-2)\\
&:=f(s).
\end{align*}
Note that when $t=\frac{k}{2}-1$ and $k$
is even, $W_{n,k-1,t}=S_{n,\frac{k}{2}-1}$; when $t=\frac{k-1}{2}-1$ and $k$ is odd, $W_{n,k-1,t}=S^+_{n,\frac{k-1}{2}-1}$, where $S^+_{n,\frac{k-1}{2}-1}$ is a graph obtained from $S_{n,\frac{k-1}{2}-1}$ by adding an extra edge in the original independent set of $n-\frac{k-1}{2}+1$ vertices in $S_{n,\frac{k-1}{2}-1}$. Thus, for the first case, we have $\lambda(W_{n,k-1,t})=\lambda(S_{n,t})$.

Next, we shall prove that $f(s)$ is increasing as $s$ is increasing when $n\geq 3k$. By computation, we have
\begin{align*}
\frac{\mathrm{d}f(s)}{\mathrm{d}s}&=\frac{(s-1)+\sqrt{(s-1)^2+4(n-k+s+2)s}}{2}\left(1+\frac{2n-2k+5s+3}{\sqrt{(s-1)^2+4(n-k+s+2)s}}\right)+4-2k\\
&>\frac{\sqrt{(s-1)^2+4(n-k+s+2)s}}{2}\left(\frac{2n-2k+5s+3}{\sqrt{(s-1)^2+4(n-k+s+2)s}}\right)+4-2k\\
&=n-3k+\frac{5s+3}{2}+4\\
&>n-3k\geq 0.
\end{align*}
Thus, when $n\geq 3k$ and $k$ is even, $\max\{\lambda(W_{n,k-1,s}):1\leq s\leq t\}=\lambda(W_{n,k-1,t})= \lambda(S_{n,t})$ as one can check that when $k$ is even and $t=\frac{k}{2}-1$, $2(e(B\cup C)-e(B\cup \{a\}))-(|V(C)|-1)=0$.
Recall that 
$$\lambda(S_{n,t})=\frac{(t-1)+\sqrt{(t-1)^2+4(n-t)t}}{2}.$$
For this case, $\lambda(S_{n,t})>k-2$
when $n\geq 3k$ and $k$ is even. 

Now we assume $k$ is odd. For this case, $t=\frac{k-3}{2}$. Recall we have $\lambda(W_{n,k-1,t})> \lambda(S_{n,\frac{k-3}{2}})$ ($S_{n,\frac{k-3}{2}}$ is obtained from $W_{n,k-1,t}$ by deleting only one edge) and $f(s)$ is monotonically increasing for $1\leq s \leq t$.  Now we claim $\lambda^2(S_{n,\frac{k-3}{2}})> f(t-1)$.  We have $$f(t-1)=\frac{\left(t-2+\sqrt{(t-2)^2+4(n-t-2)(t-1)}\right)^2}{4}+6t+3.$$

On the other hand, we obtain
$$\lambda^2(S_{n,\frac{k-3}{2}})=\frac{\left(t-1+\sqrt{(t-1)^2+4(n-t)t}\right)^2}{4}.$$
Then,
\begin{align*}
&\frac{\left(t-1+\sqrt{(t-1)^2+4(n-t)t}\right)^2}{4}-\frac{\left(t-2+\sqrt{(t-2)^2+4(n-t-2)(t-1)}\right)^2}{4}-(6t+3)\\
&>\frac{(t-1)^2+(t-1)^2+4(n-t)t}{4}-\frac{(t-2)^2+(t-2)^2+4(n-t-2)(t-1)}{4}-(6t+3)\\
&=\frac{4n+8t-14}{4}-(6t+3)\\
&=n-4t-\frac{13}{2}=n-2k-\frac{1}{2}>0.
\end{align*}
Thus, $\max\{\lambda(W_{n,k-1,s}):1\leq s\leq t\}=\lambda(W_{n,k-1,t})$. Meantime, we have
$$\lambda(S_{n,\frac{k-3}{2}})=\frac{1}{2}\cdot \left(t-1+\sqrt{(t-1)^2+4(n-t)t}\right)>2t+1$$ for $n\geq 3t+6$.
Thus, if $\lambda(G)\geq \lambda(W_{n,k-1,t})$ where $t=\lfloor\frac{k}{2}\rfloor-1$, then $G$ contains a path of order $k$ unless $G\cong W_{n,k-1,t}$. The proof is complete.
\end{proof}
\begin{thm}\label{Thm:matching-spectralradius}
Let $G$ be an $n$-vertex $M_{k+1}$-free graph with the maximum spectral radius where $n\geq 2k+1$. Then $\lambda(G)=\max \{\lambda(K_s\vee  ((n-2k+s-1)K_1\cup K_{2k-2s+1}):1\leq s\leq k\}),\lambda(K_{2k+1})\}$ where $1\leq s\leq k$.
\end{thm}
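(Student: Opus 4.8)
The plan is to reduce the (possibly disconnected) spectral problem to the connected case resolved by Theorem~\ref{Thm:Matching} together with the feasibility of $\lambda$ established in Theorem~\ref{srf}, mirroring the strategy used for paths in Theorem~\ref{Thm:path-spectralradius}. The starting point is that the spectral radius of a graph equals the largest spectral radius over its connected components. So if $G$ is an extremal ($M_{k+1}$-free, $n$-vertex, maximum $\lambda$) graph and $C$ is a component with $\lambda(C)=\lambda(G)$, I would replace $G$ by $C\cup(n-|V(C)|)K_1$: this keeps $\lambda$ unchanged, and because the matching number is additive over components, it preserves $M_{k+1}$-freeness (deleting components only decreases the matching number, so $\nu(C)\le\nu(G)\le k$). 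Thus it suffices to maximize $\lambda(C)$ over all connected $M_{k+1}$-free graphs $C$ with $|V(C)|=m\le n$, and then pad with isolated vertices.

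Next I would split on the order $m$. If $m\le 2k+1$, then every graph of order $m$ is $M_{k+1}$-free (its matching number is at most $\lfloor m/2\rfloor\le k$), so $\lambda(C)\le\lambda(K_m)=m-1\le 2k=\lambda(K_{2k+1})$; hence all such components are dominated by $K_{2k+1}$, which is exactly the connected extremal graph of order $2k+1$ by Theorem~\ref{Thm:Matching}. If $m\ge 2k+2$, then applying Theorem~\ref{Thm:Matching} to the feasible parameter $\lambda$ forces $C\cong W_{m,2k+1,s}$ for some $1\le s\le k$.

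It remains to find the best order in the range $2k+2\le m\le n$. For each fixed $s$, the graph $W_{m,2k+1,s}$ is a proper subgraph of $W_{m+1,2k+1,s}$ (the extra vertex lies in the independent part and is joined to $K_s$), so Lemma~\ref{Lem:lambda+q_proper}(i) yields $\lambda(W_{m,2k+1,s})<\lambda(W_{m+1,2k+1,s})$; therefore the maximum over this range is attained at $m=n$. Combining the two regimes, $\lambda(G)=\max\{\lambda(K_{2k+1}),\ \max_{1\le s\le k}\lambda(W_{n,2k+1,s})\}$, which is the claimed value; when $n=2k+1$ the graphs $W_{n,2k+1,s}$ are proper subgraphs of $K_{2k+1}$, so the formula collapses to $\lambda(K_{2k+1})$, matching Theorem~\ref{Thm:Matching}.

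The main obstacle I anticipate is the reduction step, where the global nature of the matching constraint must be handled with care: unlike the $P_k$-free setting, in which each component is independently $P_k$-free, here the matching number is additive across components, so one must confirm that concentrating the graph into its spectral-radius-maximizing component and discarding the rest cannot create a larger matching. This is ensured precisely because deleting components can only decrease the matching number. Once this is in place, the case analysis and the monotonicity in $m$ follow routinely from Theorem~\ref{Thm:Matching} and Lemma~\ref{Lem:lambda+q_proper}.
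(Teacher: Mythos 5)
Your proposal is correct and follows essentially the same route as the paper: reduce to a connected component (padded with isolated vertices), invoke Theorem~\ref{Thm:Matching} with the feasibility of $\lambda$ (Theorem~\ref{srf}) to identify the connected extremal graphs, use proper-subgraph monotonicity (Lemma~\ref{Lem:lambda+q_proper}(i)) to push the order up to $n$, and finally compare against $\lambda(K_{2k+1})=2k$. If anything, your write-up is more explicit than the paper's terse argument about why the component reduction is legitimate (additivity of the matching number over components) and about the regime $m\leq 2k+1$, which the paper handles only implicitly via its ``maximum clique has order $2k+1$'' remark.
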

\begin{proof}
    Let $G_i$ be a connected graph of order $n_i(\geq 2k+1)$ with the maximum spectral radius for $i=1,2$. By Theorem \ref{sr-main}, $G_1\cong K_{s_i}\vee  ((n_1-2k_1+s_i-1)K_1\cup K_{2k_1-2s_i+1})$ and $G_2\cong K_{s_j}\vee  ((n_2-2k_2+s_j-1)K_1\cup K_{2k_2-2s_j+1})$ for some $1\leq s_i\leq k_1$ and $1\leq s_j\leq k_2$. We assume $n_1<n_2$ and $k_1<k_2$. Observe that $K_{s_i}\vee  ((n_1-2k_1+s_i-1)K_1\cup K_{2k_1-2s_i+1})$ is a proper subgraph of $K_{s_i}\vee  ((n_2-2k_2+s_i-1)K_1\cup K_{2k_2-2s_i+1})$ and $\lambda(K_{s_j}\vee  ((n_2-2k_2+s_j-1)K_1\cup K_{2k_2-2s_j+1}))\geq \lambda(K_{s_i}\vee  ((n_2-2k_2+s_i-1)K_1\cup K_{2k_2-2s_i+1}))$. Thus, $\lambda(K_{s_j}\vee  ((n_2-2k_2+s_j-1)K_1\cup K_{2k_2-2s_j+1}))> \lambda(K_{s_i}\vee  ((n_1-2k_1+s_i-1)K_1\cup K_{2k_1-2s_i+1}))$, and it follows that $\lambda(G_1)<\lambda(G_2)$. On the other hand, $\lambda(G)\geq \lambda(K_{2k+1})=2k$ since $n\geq 2k+1$. Note that the order of the maximum clique is $2k+1$. If $\max \{\lambda(K_s\vee  ((n-2k+s-1)K_1\cup K_{2k-2s+1})):1\leq s\leq k\}>\lambda(K_{2k+1})$, then $\lambda(G)=\max \{\lambda(K_s\vee  ((n-2k+s-1)K_1\cup K_{2k-2s+1})):1\leq s\leq k\}$; otherwise, $\lambda(G)=\lambda(K_{2k+1})$.  This proves the theorem.
\end{proof}
\subsection{Signless Laplacian spectral radius}

For the signless Laplacian spectral radius version, we also have the following theorems. (We omit the results related to matching here.)

\begin{thm}\label{slsr-mian}
(i) Let $n\geq k\geq 5$ and $t=\left\lfloor\frac{k-1}{2}\right\rfloor$.
If $G$ is a 2-connected
$n$-vertex graph with $q(G)>\max_{2\leq s\leq t}\{q(W_{n,k,s})\}$,
then $G$ has a cycle of length at least $k$. In particular, $sspex_{2-co}(n,\mathcal{C}_{\geq k})=\max_{2\leq s\leq t}\{q(W_{n,k,s})\}$.\\
(ii) Let $n\geq k\geq 4$ and let $t=\left\lfloor\frac{k}{2}\right\rfloor-1$.
If $G$ is a connected
$n$-vertex graph with $q(G)>\max_{1\leq s\leq t}\{q(W_{n,k-1,s})\},$
then $G$ has a path of order $k$. In particular, $sspex_{co}(n,P_k)=\max_{1\leq s\leq t}\{q(W_{n,k-1,s})\}.$
\end{thm}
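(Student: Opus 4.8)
The plan is to deduce both statements as immediate corollaries of the structural Theorems~\ref{Thm:Main1} and~\ref{Thm:Main2}, together with the feasibility of the signless Laplacian spectral radius established in Theorem~\ref{lsrf}, mirroring exactly how Theorem~\ref{sr-main} follows from the main theorems via the feasibility of $\lambda$ in Theorem~\ref{srf}. The first thing I would record is that $q(\cdot)$ is a feasible parameter on connected graphs; this is precisely Theorem~\ref{lsrf}, so no separate verification of properties (I) and (II) from Definition~\ref{basic} is needed.

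For part (i), I would fix $n\geq k\geq 5$ and consider, among all $2$-connected $n$-vertex $\mathcal{C}_{\geq k}$-free graphs, one that maximizes $q$. Since $q$ is feasible, Theorem~\ref{Thm:Main1} applies verbatim and forces this maximizer to lie in $\mathcal{G}^1_{n,k}=\{W_{n,k,s}:2\leq s\leq t\}$. Consequently the largest possible value of $q$ over this class is exactly $\max_{2\leq s\leq t}q(W_{n,k,s})$, which gives $sspex_{2-co}(n,\mathcal{C}_{\geq k})=\max_{2\leq s\leq t}q(W_{n,k,s})$. Taking the contrapositive yields the forbidden-subgraph form: any $2$-connected $G$ with $q(G)$ strictly larger than this maximum cannot be $\mathcal{C}_{\geq k}$-free, hence contains a cycle of length at least $k$.

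For part (ii), I would run the identical argument with Theorem~\ref{Thm:Main2} replacing Theorem~\ref{Thm:Main1}. A connected $n$-vertex $P_k$-free graph of maximum $q$ must, by feasibility and Theorem~\ref{Thm:Main2}, belong to $\mathcal{G}^2_{n,k}=\{W_{n,k-1,s}:1\leq s\leq t\}$, so that $sspex_{co}(n,P_k)=\max_{1\leq s\leq t}q(W_{n,k-1,s})$, and the contrapositive produces the path-containment statement.

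The argument is genuinely a corollary, so there is no substantive obstacle remaining once Theorem~\ref{lsrf} and the main theorems are in hand; the only point requiring a little care is that the maximizer of $q$ is taken \emph{within} the connected (respectively $2$-connected) class, so that the hypotheses of the structural theorems are met. This matches the very definition of $sspex_{co}$ and $sspex_{2-co}$ and is therefore automatic. I would also remark that, unlike the plain spectral-radius path problem treated in Theorem~\ref{Thm:path-spectralradius}, here the graph is assumed connected from the outset, so no separate comparison against a disconnected extremal configuration (such as a clique $K_{k-1}$ together with isolated vertices) is required, and the statements follow cleanly without an extra case analysis.
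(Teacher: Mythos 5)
Your proposal is correct and matches the paper's treatment exactly: the paper presents Theorem~\ref{slsr-mian} as an immediate consequence of the structural Theorems~\ref{Thm:Main1} and~\ref{Thm:Main2} combined with the feasibility of $q$ from Theorem~\ref{lsrf}, precisely parallel to how Theorem~\ref{sr-main} follows from Theorem~\ref{srf}. Your added observations (that the extremal graphs $W_{n,k,s}$ and $W_{n,k-1,s}$ themselves lie in the respective classes, so the extremal value equals the maximum over the family, and that connectivity is assumed so no disconnected configuration needs comparison) are exactly the points the paper leaves implicit.
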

\begin{thm}\label{Thm:path-lapspectralradius}
Let $n\geq k\geq 4$ and $t=\left\lfloor\frac{k}{2}\right\rfloor-1$.
Let $G$ be an $n$-vertex $P_{k}$-free graph with the maximum signless Laplacian spectral radius. Then $q(G)=max \{q(W_{n,k-1,s}),q(K_{k-1}): 1\leq s\leq t\}$.
\end{thm}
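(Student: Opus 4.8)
The plan is to run the same argument that proves Theorem~\ref{Thm:path-spectralradius}, with the spectral radius $\lambda$ replaced throughout by the signless Laplacian spectral radius $q$; every tool needed has already been supplied in the $q$-form. Concretely, I would start from Theorem~\ref{slsr-mian}(ii), which (via feasibility of $q$, Theorem~\ref{lsrf}) shows that the maximum of $q$ over all \emph{connected} $n$-vertex $P_k$-free graphs equals $\max_{1\le s\le t}q(W_{n,k-1,s})$ and is attained on the family $\{W_{n,k-1,s}:1\le s\le t\}$. The whole task is then to pass from the connected statement to the general, possibly disconnected, one, which is exactly where the extra term $q(K_{k-1})$ enters.

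The first step is a monotonicity-in-order observation. Since $q$ of a disconnected graph is the maximum of $q$ over its components and each component of a $P_k$-free graph is connected and $P_k$-free, it suffices to control connected $P_k$-free graphs. For $k\le m_1<m_2$, let $G_i$ be a connected $P_k$-free graph of order $m_i$ of maximum $q$; by Theorem~\ref{slsr-mian}(ii) I may take $G_i=W_{m_i,k-1,s_i}$. As $W_{m_1,k-1,s_1}$ is a proper subgraph of the connected graph $W_{m_2,k-1,s_1}$, Lemma~\ref{Lem:lambda+q_proper}(ii) yields $q(W_{m_1,k-1,s_1})<q(W_{m_2,k-1,s_1})\le q(G_2)$, so $\max_s q(W_{m,k-1,s})$ strictly increases with $m$ on $\{m\ge k\}$. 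In particular, any connected $P_k$-free graph on $m$ vertices with $k\le m\le n$ has $q\le\max_s q(W_{n,k-1,s})$.

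The main obstacle is the upper bound in the regime where the sparse join graphs are all dominated by a dense clique, i.e.\ when $\max_s q(W_{n,k-1,s})<q(K_{k-1})=2(k-2)$; this is the case that forces the disconnected extremal graph $K_{k-1}\cup(n-k+1)K_1$ and the $q(K_{k-1})$ term. The lower bound is easy: both each $W_{n,k-1,s}$ and $K_{k-1}\cup(n-k+1)K_1$ are $n$-vertex $P_k$-free graphs, so $q(G)\ge\max\{\max_s q(W_{n,k-1,s}),\,q(K_{k-1})\}$. For the upper bound I would let $C$ be a component of the extremal $G$ with $q(C)=q(G)$; it is connected and $P_k$-free. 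If $|V(C)|\ge k$, then Theorem~\ref{slsr-mian}(ii) together with the monotonicity above give $q(C)\le\max_s q(W_{|V(C)|,k-1,s})\le\max_s q(W_{n,k-1,s})$. If $|V(C)|\le k-1$, then $C$ is a subgraph of $K_{|V(C)|}\subseteq K_{k-1}$ (here one uses that a $P_k$-free graph contains no clique larger than $K_{k-1}$), so Lemma~\ref{Lem:lambda+q_proper}(ii) gives $q(C)\le q(K_{k-1})$. In both cases $q(G)\le\max\{\max_s q(W_{n,k-1,s}),q(K_{k-1})\}$, matching the lower bound and giving the claimed equality. The only genuinely new computation beyond the $\lambda$-proof is the value $q(K_{k-1})=2(k-2)$ (as $K_{k-1}$ is $(k-2)$-regular), and the bookkeeping that the short-component case is handled uniformly whether $C$ equals $K_{k-1}$ or is a proper subgraph of it.
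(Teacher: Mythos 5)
Your proposal is correct and follows essentially the same route as the paper's own proof: reduce to the connected case via Theorem~\ref{slsr-mian}(ii), establish monotonicity of the connected extremal value in the order via the proper-subgraph Lemma~\ref{Lem:lambda+q_proper}(ii), and then compare against $q(K_{k-1})=2k-4$ to account for disconnected extremal graphs. In fact your write-up is somewhat more complete than the paper's terse version, which compresses your component-by-component analysis (big components handled by the connected result plus monotonicity, small ones by containment in $K_{k-1}$) into the single remark that a clique in a $P_k$-free graph has at most $k-1$ vertices.
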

\begin{proof}
    Let $G_i$ be a connected graph of order $n_i(\geq k)$ with maximum spectral radius for $i=1,2$. By Theorem \ref{slsr-mian}, $G_1\cong W_{n_1,k-1,s_i}$ and $G_2\cong W_{n_2,k-1,s_j}$ for some $1\leq s_i\leq t$ and $1\leq s_j\leq t$. We assume $n_1<n_2$. Observe that $W_{n_1,k-1,s_i}$ is a proper subgraph of $W_{n_2,k-1,s_i}$ and $q(W_{n_2,k-1,s_j})\geq q(W_{n_2,k-1,s_i})$. Thus, $q(W_{n_2,k-1,s_j})> q(W_{n_1,k-1,s_i})$, and it follows that $q(G_1)<q(G_2)$. On the other hand, $q(G)\geq q(K_{k-1})=2k-4$ since $n\geq k$. So, if $q(W_{n_2,k-1,s_i})\geq 2k-4$ for some $1\leq s_i\leq t $, then $q(G)=max \{q(W_{n,k-1,s})\}$ where $1\leq s\leq t$. If $q(W_{n_2,k-1,s_i})< 2k-4$, we have that $q(G)=2k-4$ as the size of a clique in a $P_{k}$-free graph is at most $k-1$. This proves the theorem.
\end{proof}
\section{Proofs of main theorems}\label{pfsec}
Before providing proofs of our three main theorems, we need to undertake some preparatory work.

\subsection{Kelmans Operation and related lemmas}

We say that a pair of adjacent vertices $x$ and $y$ is a \emph{bad pair} if $N[x]\backslash N[y]\neq \emptyset$
and  $N[y]\backslash N[x]\neq \emptyset$.

\begin{definition}
Let $G$ be a graph. We say that $G'$ is a \emph{threshold graph} of $G$ if $G'$ is the resulting graph 
 of $G$ after doing KO for all bad pairs. 
\end{definition}
Evidently, if $G$ is connected, then the threshold graph $G'$ remains connected.
Let $S_{n,t}$ denote a graph formed by joining a clique of order $t$ with an independent set of order $n-t$.
Note that the maximum clique in $S_{n,t}$ has an order of $t+1$.
From the subsequent lemma, we can see that the threshold graph is well-defined.
\begin{lem}\label{Lem:Kelmans-structures}
Let $G$ be a connected graph and  $G'$ a threshold graph of $G$. Then the following statements hold:\\
(i) For any $xy\in E(G')$, $N[x]\subseteq N[y]$ or $N[y]\subseteq N[x]$;\\
(ii) Any two maximal cliques in $G'$ share at least one common vertex;\\
(iii) For any two maximal cliques $X$ and $Y$, there are no edges between $X-Y$ and $Y-X$.\\
(iv) Let $S$ be a maximum clique of $G'$. Then for any two maximal cliques $X$ and $Y$, we have $X\cap S\subseteq Y\cap S$ or $Y\cap S\subseteq X\cap S$.
\end{lem}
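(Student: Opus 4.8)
The plan is to establish part (i) first, since parts (ii)--(iv) will then follow from it by short combinatorial arguments. The content of (i) is really that a threshold graph $G'$ contains no bad pair, so the first thing I would do is argue that the defining procedure (applying KO to bad pairs until none remain) terminates and is therefore well-defined. To do this I would track the potential $\Phi(G)=\sum_{v\in V(G)}d^2(v)$. A direct computation shows that a single KO from $x$ to $y$ alters only the degrees of $x$ and $y$: writing $a=|N(x)\setminus N[y]|$, $b=|N(y)\setminus N[x]|$ and $c=|N(x)\cap N(y)|$, the pair $(d(x),d(y))=(a+c+1,\,b+c+1)$ is replaced by $(c+1,\,a+b+c+1)$, so $\Phi$ changes by exactly $2ab$. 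For a bad pair both $a\geq 1$ and $b\geq 1$, hence $\Phi$ strictly increases; since $\Phi\leq n(n-1)^2$ is bounded, the process halts after finitely many steps at a graph with no bad pair. By the definition of a bad pair, ``no bad pair'' means exactly that every edge $xy$ satisfies $N[x]\subseteq N[y]$ or $N[y]\subseteq N[x]$, which is (i).

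Next I would deduce (iii). Suppose $X,Y$ are maximal cliques and there is an edge $ab$ with $a\in X\setminus Y$ and $b\in Y\setminus X$. By (i) we may assume $N[a]\subseteq N[b]$. Since $X$ is a clique containing $a$, we get $X\subseteq N[a]\subseteq N[b]$, so $b$ is adjacent to every vertex of $X$; as $b\notin X$, the set $X\cup\{b\}$ is a strictly larger clique, contradicting the maximality of $X$. The symmetric case $N[b]\subseteq N[a]$ enlarges $Y$. This proves (iii).

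Part (iv) is then immediate from (iii): with $S$ a maximum clique and $X,Y$ maximal cliques, if $X\cap S$ and $Y\cap S$ were incomparable there would be $a\in(X\cap S)\setminus Y$ and $b\in(Y\cap S)\setminus X$; since $a,b\in S$ they are adjacent, yet $a\in X\setminus Y$ and $b\in Y\setminus X$, contradicting (iii). For (ii) I would combine (iii) with connectivity: if maximal cliques $X,Y$ were disjoint, then (iii) forces no edges between $X$ and $Y$, so every $x\in X,\ y\in Y$ are at distance at least $2$. I would pick $x\in X,\ y\in Y$ at minimum distance $d\geq 2$ and a shortest path $x=p_0,p_1,\dots,p_d=y$, then apply (i) to the edge $p_0p_1$. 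If $N[p_0]\subseteq N[p_1]$ then $X\subseteq N[p_1]$, so either $p_1\notin X$ and $X\cup\{p_1\}$ enlarges $X$, or $p_1\in X$ and $p_1$ lies closer to $y$; both contradict our choices. If $N[p_1]\subseteq N[p_0]$ then $p_2\in N[p_0]$, yielding the shorter path $p_0,p_2,\dots,p_d$, again a contradiction. Hence $X\cap Y\neq\emptyset$.

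The main obstacle is (i): the genuinely substantive step is checking that the KO procedure terminates, so that the threshold graph is well-defined and free of bad pairs. Once the monotone, bounded potential $\Phi$ is in place this becomes routine, and the structural statements (ii)--(iv) reduce to short deductions, the only extra idea being the shortest-path argument in (ii) that exploits connectivity.
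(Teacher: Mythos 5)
Your proposal is correct, and its skeleton matches the paper's: prove (i) via termination of the KO process, deduce (iii) from (i) by the clique-enlargement argument (your proof of (iii) is verbatim the paper's), and get (iv) from (iii). Two points where you diverge are worth noting, and both make your write-up more self-contained. First, for (i) the paper only remarks that ``the degree sequence is going up'' under each KO and defers the details to a citation (Lemma 3.1(1) of \cite{LN21}); your explicit potential $\Phi(G)=\sum_v d^2(v)$ with the exact increment $2ab>0$ for a bad pair is a clean, complete replacement for that reference. Second, for (ii) the paper takes a shortest $(x,y)$-path between two disjoint maximal cliques and asserts without justification that its length is at most two (this is true for threshold graphs, which have a dominating vertex, but the paper never establishes it); your argument avoids that claim entirely by choosing a minimum-distance pair, applying (i) to the first edge of the path, and using (iii) to rule out edges between the cliques --- each case yields either a larger clique, a closer pair, or a shorter path. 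So where the paper's proof of (ii) has a gap-by-omission, yours closes it, at the mild cost of proving the parts in the order (i), (iii), (iv), (ii) rather than sequentially.
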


\begin{proof}
(i) Note that after each KO, the degree sequence is going up.  Then it follows from the definition of a threshold graph. (For details, see Lemma 3.1(1) in \cite{LN21}.)
  
(ii) Suppose that $X$ and $Y$ are two disjoint maximal cliques in $G$. Then pick one vertex $x$ from $X$ and one vertex $y$ from $Y$. If $xy\in E(G')$, we have $X\cap Y\ne \emptyset$ since $x,y$ is not a bad pair. So, we may assume that there is no edge between $X$ and $Y$.  As $G'$ is connected, there is a shortest path between $x$ and $y$. Observe that the path length is at most two, and say the middle vertex $a$. Now, either $\{a\}\cup X (Y)$ is a larger clique or there is an edge between $X$ and $Y$, a contradiction.
  
(iii) We prove this by contradiction. Suppose there are two maximal cliques $X$ and $Y$ such that we can find a vertex $x\in X-Y$ and a vertex $y\in Y-X$ satisfying $xy\in G'$. Then by (i), suppose, without loss of generality, $N[x]\subseteq N[y]$, we have $X\subseteq N(y)$, a contradiction to the fact that $X$ is a maximal clique.
  
For (iv), it is not hard to check by (iii).
\end{proof}
According to the definition of KO, we can conclude the following technical result.

\begin{lem}\label{Ai-Ning}
Let $G$ be a connected graph and $G'$ a threshold graph of $G$. Let $s$ and $p$ denote the orders of the maximum clique  and 
 the second maximum clique of $G'$, respectively. 
Then after a series of EKO, $G'$ can become a new graph $G''$  which is a subgraph of $K_{p-1}\vee (K_{s-p+1}\cup (n-s)K_1)$.
Furthermore, if $G''=K_{p-1}\vee (K_{s-p+1}\cup (n-s)K_1)$, then $G'=K_{p-1}\vee (K_{s-p+1}\cup (n-s)K_1)$. 

\end{lem}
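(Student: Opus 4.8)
The plan is to describe $G'$ precisely through Lemma~\ref{Lem:Kelmans-structures}, to repair the ``non‑threshold'' defects of $G'$ by extended Kelmans Operations, and then to read off an embedding into $H:=K_{p-1}\vee(K_{s-p+1}\cup(n-s)K_1)$. First I would fix a maximum clique $S$ with $|S|=s$. For $u\notin S$, applying Lemma~\ref{Lem:Kelmans-structures}(iii) to $S$ and a maximal clique $Q_u\ni u$ gives $N(u)\cap S=Q_u\cap S$, so by Lemma~\ref{Lem:Kelmans-structures}(iv) the sets $\{N(u)\cap S:u\notin S\}$ form a chain; and since $\{u\}\cup(N(u)\cap S)$ is a clique containing $u\notin S$, it lies in a maximal clique different from $S$, whose order is at most $p$, whence $|N(u)\cap S|\le p-1$. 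As these traces form a chain, they are realizable as prefixes of one linear order $z_1,\dots,z_s$ of $S$; I set $A:=\{z_1,\dots,z_{p-1}\}$, $B:=S\setminus A$, and $C:=V(G')\setminus S$. The goal is to reach, via EKO, a graph in which $C$ is independent and joined only into $A$ while $A\cup B=S$ remains a clique, which is exactly a subgraph of $H$.

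The only way $G'$ can fail to be of this shape is through non‑adjacent pairs with incomparable neighbourhoods (Lemma~\ref{Lem:Kelmans-structures}(i) controls only adjacent pairs) and through edges inside $C$. I would therefore repeatedly apply EKO to a non‑adjacent incomparable pair $(x,y)$. Each such move leaves the number of edges unchanged and strictly increases the potential $\Phi(G)=\sum_v d_G(v)^2$: this is the $p=2$ case of the Jensen computation in the proof of Theorem~\ref{dpf}, which is strict exactly when both $N(x)\setminus N(y)$ and $N(y)\setminus N(x)$ are nonempty. Since $\Phi$ is a bounded integer, only finitely many such moves occur; checking that the operation also preserves the ``no adjacent bad pair'' property, the process halts at a graph $G''$ all of whose pairs have comparable neighbourhoods, i.e.\ a genuine threshold graph. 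A connected threshold graph embeds into $K_{p''-1}\vee(K_{s''-p''+1}\cup(n-s'')K_1)$ for its own clique orders $s'',p''$, by sending its $p''-1$ dominating vertices to the $A$‑part, the remainder of a maximum clique to the $B$‑part, and the necessarily independent remaining vertices to the $C$‑part; and that target embeds into $H$ as soon as $s''\le s$ and $p''\le p$.

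The hard part is exactly this clique control: an EKO can in general create a clique larger than any present before, so I must show that along the whole reduction the clique number never exceeds $s$ and the second clique order never exceeds $p$. I would prove the one‑step claim that if $G$ has no adjacent bad pair then, for a non‑adjacent incomparable pair $(x,y)$, the graph $G[x\to y]$ has clique number at most $\omega(G)$: a clique $K\ni y$ of $G[x\to y]$ cannot contain $x$, so $K\setminus\{y\}$ is a clique of $G$ contained in $N(x)\cup N(y)$, and applying the nesting of closed neighbourhoods (Lemma~\ref{Lem:Kelmans-structures}(i)) along the edges joining $y$ to the vertices of $K$ that were newly attached lets me replace $y$ by $x$ and conclude that $\{x\}\cup(K\setminus\{y\})$ was already a clique of $G$. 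Carrying the maximal‑clique chain of Lemma~\ref{Lem:Kelmans-structures} through the sequence then propagates the bounds on both $s$ and $p$.

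Finally, for the ``furthermore'' clause I would use that every EKO preserves the edge count, so $G''=H$ forces $e(G')=e(H)=\binom{s}{2}+(p-1)(n-s)$. Writing $e(G')=\binom{s}{2}+\sum_{u\notin S}|N(u)\cap S|+e(G'[C])$ and using both $|N(u)\cap S|\le p-1$ and the fact that an edge inside $C$ drives the two endpoints' traces down to size at most $p-2$, a short extremal computation shows that this equality is attainable only when $C$ is independent and every vertex of $C$ is joined to all of $A$; that is, only when $G'=H$.
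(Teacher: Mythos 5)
Your overall strategy---greedily applying EKO to arbitrary non-adjacent incomparable pairs until all neighbourhoods are nested, while controlling clique sizes step by step---has a genuine gap at exactly the point you dismiss as a routine check: the ``no adjacent bad pair'' property is \emph{not} preserved by EKO. Take $V(G')=\{x,y,z,u,w,d\}$ with $d$ adjacent to all other vertices and further edges $xz$, $yw$, $wu$. Every edge of $G'$ has nested closed neighbourhoods, so $G'$ is a threshold graph in the paper's sense; it is connected, $s=p=3$, $n=6$, and the target graph is $K_{2}\vee(K_{1}\cup 3K_{1})$, whose clique number is $3$. The pair $(x,y)$ is non-adjacent and incomparable ($z\in N(x)\setminus N(y)$, $w\in N(y)\setminus N(x)$), so your process may perform the EKO $x\to y$ (delete $xz$, add $zy$); in the resulting graph the \emph{adjacent} pair $(y,w)$ is bad, since $z\in N[y]\setminus N[w]$ and $u\in N[w]\setminus N[y]$. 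Thus after one legal step your invariant is gone: your one-step clique lemma, whose hypothesis is precisely this invariant, can no longer be invoked, and since your process only ever repairs non-adjacent pairs, the graph at which it halts need not have all neighbourhoods nested, i.e., need not be a ``genuine threshold graph'' at all.

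Moreover this is not a repairable bookkeeping issue---the conclusion itself fails along your process. Continuing the example, $(z,u)$ is now a non-adjacent incomparable pair ($y\in N(z)\setminus N(u)$, $w\in N(u)\setminus N(z)$); the EKO $z\to u$ deletes $zy$ and adds $yu$, creating the clique $\{y,w,u,d\}\cong K_{4}$. No non-adjacent incomparable pairs remain, so your process halts at a graph containing $K_{4}$, which is not a subgraph of $K_{2}\vee(K_{1}\cup 3K_{1})$; the clique bound $s''\le s$ you need is simply false for this run. The paper avoids this trap by never relying on intermediate graphs staying threshold-like: using the chain structure of the maximal cliques of $G'$ (Lemma~\ref{Lem:Kelmans-structures}), it fixes once and for all a $(p-1)$-set $A_1\cup T$ inside the maximum clique $X$ and performs only those EKOs that move the vertices of the other maximal cliques $Y_i$ lying outside $I$ onto that fixed set, so the target shape is maintained by construction; the ``furthermore'' clause then follows because any performed EKO visibly destroys an edge of the target. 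Your opening analysis (the traces $N(u)\cap S$ form a chain of size at most $p-1$) is correct and essentially matches the paper's setup; the repair would be to use that chain to \emph{choose} the EKOs, as the paper does, rather than to run an unconstrained greedy process.
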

\begin{proof} 
Let $X$ be a  maximum clique of $G'$ with $|X|=s$. By Lemma \ref{Lem:Kelmans-structures}(iv),
there is a maximal clique $X'$ such that for any maximal clique $Z$, $Z\cap X\subseteq X'\cap X$, we denote $X'\cap X$ by $I$. Now let $Y_1,Y_2,\ldots,Y_{\ell}$ be all maximal cliques
of $G'$ other than $X$. Notice that $V(\bigcup_{1\leq i\leq \ell}Y_i\cup X)=V(G)$ and for any $1\leq i\leq \ell$, $V(Y_i\cap X)\neq \emptyset$ and $Y_i\cap X\subseteq I$. Without loss of generality, assume that $Y_1$ is a second maximum clique with $|Y_1|=p$. 
We have $|Y_1|>|I|$.

Let $T=X\cap Y_1$ and $|T|=t$. Then $T\subseteq I$ and let $V(Y_1-T)=B_1=\{y^1_1,y^1_2,\ldots, y^1_{p-t}\}$. We choose $A_1:=\{x^1_1,x^1_2,\ldots, x^1_{p-t-1}\}$ from $X-T$ such that $V(I-T)\subseteq A_1$.  By Lemma \ref{Lem:Kelmans-structures}(iii),
there are no edges between $X-T$ and $Y_1-T$. Now, let $G^1_1=G'$ and $G^1_{i+1}=G^1_i[y^1_i,x^1_i]$ (EKO) for $1\leq i\leq p-t-1$. In $G^1_{p-t}$, $Y_1-T$ turns into an empty graph, and $G^1_{p-t}[V(X\cup Y_1)]$ is a subgraph of $K_{p-1}\vee (K_{s-p+1}\cup (p-t)K_1)$, where $G^1_{p-t}[A_1\cup V(T)]\cong K_{p-1}$, $G^1_{p-t}[V(Y_1-T)]\cong (p-t)K_1$ and $G^1_{p-t}[V(X)\backslash(A_1\cup V(T))]\cong K_{s-p+1}$. 

 Now let $G^2=G^1_{p-t}$. By the arguments above, consider $Y_2$. Let $B_2=V(Y_2-I)=\{y^2_1,y^2_2,\dots,y^2_{|Y_2-I|}\}$. Now pick $|Y_2-I|-1$ vertices from $A_1$ which are not contained in $Y_2\cap I$ to make up $A_2$.
Let $G^2_1=G^2$ and $G^2_{i+1}=G^2_i[y^2_i,x^2_i]$ (EKO) for $1\leq i\leq |Y_2-I|-1$. In $G^2_{|Y_2-I|}$, $Y_2-I$ turns into an empty graph, $G^2_{|Y_2-I|}[V(X\cup Y_1\cup Y_2)]$ is a subgraph of $K_{p-1}\vee (K_{s-p+1}\cup (|Y_1\cup Y_2-I|)K_1)$. When it comes to $Y_i$, let $G^i=G^{i-1}_{|Y_{i-1}-I|}$ and $B_i=V(Y_i-I)=\{y^i_1,y^i_2,\dots,y^i_{|Y_i-I|}\}$. Pick $|Y_i-I|-1$ vertices from $A_1$ which are not contained in $Y_i\cap I$ to make up $A_i$. Let $G^i_1=G^i$ and $G^i_{j+1}=G^i_j[y^i_j,x^i_j]$ (EKO) for $1\leq j\leq |Y_i-I|-1$. In $G^i_{|Y_i-I|}$, $Y_i-I$ turns into an empty graph, $G^i_{|Y_i-I|}[V(X\cup Y_1\cup Y_2\cup \dots\cup Y_i)]$ is a subgraph of $K_{p-1}\vee (K_{s-p+1}\cup (|Y_1\cup Y_2\cup\dots\cup Y_i-I|)K_1)$. Keep doing the process until we get $G^{\ell}$. We have that $G^l$ is a subgraph of $K_{p-1}\vee (K_{s-p+1}\cup (|Y_1\cup Y_2\cup \dots\cup Y_l-I|)K_1)=K_{p-1}\vee (K_{s-p+1}\cup (n-s)K_1)$. 
  
If we do at least one EKO, then there exists $1\leq i\leq l$ such that 
there is no edge between $y^i_1$ and $A_1\backslash V(Y_i\cap I)$. Hence $G''$ is a proper subgraph of $K_{p-1}\vee (K_{s-p+1}\cup (n-s)K_1)$. If $G''=K_{p-1}\vee (K_{s-p+1}\cup (n-s)K_1)$, then by the arguments above, we don't do any EKO on $G'$, and so $G'=K_{p-1}\vee (K_{s-p+1}\cup (n-s)K_1)$. 
\end{proof}

Note that $K_{p-1}\vee (K_{s-p+1}\cup (n-s)K_1)$ is a subgraph of $S_{n,s-1}$. The following lemma can be deduced directly.

\begin{lem}\label{koless}
Let $G$ be a connected graph and $G'$ a threshold graph of $G$. Let $X$ be a maximum clique of $G'$ with $|X|=s$. Then $G'=S_{n,s-1}$ or after a series of EKO, $G'$ becomes a proper subgraph of $S_{n,s-1}$. 
\end{lem}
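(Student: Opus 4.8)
The plan is to deduce this directly from Lemma~\ref{Ai-Ning} together with the observation recorded just above the lemma, namely that $M:=K_{p-1}\vee (K_{s-p+1}\cup (n-s)K_1)$ is a subgraph of $S_{n,s-1}$, where $p$ denotes the order of a second maximum clique of $G'$. If $G'$ is itself complete there is only one maximal clique, but then $s=n$ and $S_{n,s-1}=K_n=G'$, so the first alternative holds trivially; hence I assume throughout that $G'$ has at least two maximal cliques, so that $p$ is well defined and $p\le s$.

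First I would make the containment $M\subseteq S_{n,s-1}$ explicit, since it drives the whole argument. Writing $S_{n,s-1}=K_{s-1}\vee (n-s+1)K_1$, I place the clique $K_{p-1}$ of $M$ together with $s-p$ of the $s-p+1$ vertices of the clique $K_{s-p+1}$ inside the clique part $K_{s-1}$, and I send the single remaining vertex of $K_{s-p+1}$ together with the $n-s$ isolated vertices of $M$ into the independent part of size $n-s+1$. A quick check shows every edge of $M$ (inside $K_{p-1}$, inside $K_{s-p+1}$, and between $K_{p-1}$ and everything else) is present in $S_{n,s-1}$ under this identification, because the clique-to-independent part of $S_{n,s-1}$ is complete while $M$ has no edges between its $K_{s-p+1}$-clique and its isolated vertices. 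This is precisely the remark preceding the lemma.

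Next I would invoke Lemma~\ref{Ai-Ning}: a series of EKO transforms $G'$ into a graph $G''$ with $G''\subseteq M$, and hence $G''\subseteq S_{n,s-1}$. Now I split into two cases according to whether this containment is strict. If $G''$ is a proper subgraph of $S_{n,s-1}$, then the very same series of EKO witnesses the second alternative of the lemma (the series may be empty, in which case $G'=G''$ is itself a proper subgraph of $S_{n,s-1}$). If instead $G''=S_{n,s-1}$, then from $e(G'')\le e(M)\le e(S_{n,s-1})$ and $e(G'')=e(S_{n,s-1})$ I obtain $e(G'')=e(M)=e(S_{n,s-1})$, and combined with the edge-set containments $G''\subseteq M\subseteq S_{n,s-1}$ this forces $M=S_{n,s-1}$ and $G''=M$; the ``furthermore'' clause of Lemma~\ref{Ai-Ning} then yields $G'=M=S_{n,s-1}$, which is the first alternative.

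The statement is essentially a corollary, so I do not expect a serious obstacle; the only point requiring care is the equality case. One must resist concluding $G'=S_{n,s-1}$ by hand from $G''=S_{n,s-1}$: the legitimate route is the squeeze $G''\subseteq M\subseteq S_{n,s-1}$ to pin down $G''=M$, and then the ``furthermore'' part of Lemma~\ref{Ai-Ning}, which says exactly that $G''=M$ can occur only when no EKO was performed, whence $G'=M$. Carrying out the edge-set containments on a common vertex labelling, rather than merely up to isomorphism, is what makes the squeeze rigorous.
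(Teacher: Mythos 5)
Your proposal is correct and follows essentially the same route as the paper, which derives Lemma~\ref{koless} directly from Lemma~\ref{Ai-Ning} together with the observation that $K_{p-1}\vee (K_{s-p+1}\cup (n-s)K_1)\subseteq S_{n,s-1}$. Your write-up is in fact more careful than the paper's one-line deduction: you verify the containment explicitly, treat the complete-graph case (where no second maximum clique exists) separately, and handle the equality case rigorously via the squeeze $G''\subseteq M\subseteq S_{n,s-1}$ and the ``furthermore'' clause of Lemma~\ref{Ai-Ning}.
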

To substantiate some of our main results, we may require the assistance of the following lemmas.
\begin{lem}[Gao and Hou \cite{GH19}]\label{gao2019}
Let $G$ be a graph and $uv\in E(G)$, and $G':=G[u\rightarrow v]$. Then $c(G')\leq c(G)$.
\end{lem}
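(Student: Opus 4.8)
The final statement to prove is Lemma~\ref{gao2019}: for $uv\in E(G)$ and $G'=G[u\to v]$, we have $c(G')\leq c(G)$, i.e.\ the Kelmans Operation does not increase the circumference.

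\textbf{Overall approach.} The plan is to show that every cycle in $G'$ can be ``lifted'' back to a cycle of the same length in $G$. Take a longest cycle $C'$ in $G'$ with $|C'|=c(G')$; I will construct a cycle $C$ in $G$ with $|C|=|C'|$, which forces $c(G)\geq c(G')$. Recall that under KO from $u$ to $v$, the only edges that differ between $G$ and $G'$ are those incident to $u$ or $v$: some former neighbors $w\in N_G(u)\setminus N_G[v]$ lose their edge to $u$ and gain an edge to $v$, while edges not touching $\{u,v\}$ are untouched. Hence a cycle $C'$ that avoids both $u$ and $v$, or that uses only edges present in $G$, is already a cycle in $G$ and there is nothing to do. The work is confined to cycles of $G'$ that traverse one of the \emph{new} edges $vw$ with $w\in N_G(u)\setminus N_G(v)$.

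\textbf{Key steps.} First I would record the edge-difference structure precisely: $E(G')\setminus E(G)\subseteq\{vw: w\in N_G(u)\setminus N_G(v)\}$ and $E(G)\setminus E(G')\subseteq\{uw: w\in N_G(u)\setminus N_G[v]\}$, and crucially $uv\in E(G)\cap E(G')$ since $uv\in E(G)$. Second, I would split into cases according to how many of $u,v$ lie on $C'$ and which new edges $C'$ uses. If $C'$ uses no new edge, $C'\subseteq G$ and we are done. If $v$ lies on $C'$ and $C'$ uses new edges at $v$, the idea is to reroute: for a new edge $vw$ (so $w$ was adjacent to $u$ in $G$), I try to replace the relevant segment through $v$ by a segment through $u$, using that $w\in N_G(u)$ and $uv\in E(G)$. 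The delicate subcase is when $C'$ uses \emph{two} new edges $w_1v$ and $vw_2$ consecutively at $v$; here I would swap $v$ for $u$ on the cycle (legitimate because $uw_1,uw_2\in E(G)$) and, if $u$ was already used elsewhere on $C'$, splice the two passages together through the common edge $uv$ so that the combined closed walk remains a single cycle of the same length. The bookkeeping is to verify that the resulting object is a genuine cycle in $G$ (no repeated vertices, all edges present in $G$) of length exactly $|C'|$.

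\textbf{Main obstacle.} The hard part will be the case where both $u$ and $v$ already lie on $C'$ and $C'$ uses new edges incident to $v$: naively substituting $u$ for $v$ creates a repeated vertex, and I must instead surgically recombine the two arcs of $C'$ into one cycle using the edge $uv$ without losing length or creating a chord-only shortcut. Handling this splicing cleanly---and confirming that in every configuration the new edges can be traded for genuine $G$-edges at $u$ while keeping the cycle both simple and of full length---is where the real argument lies; the remaining cases reduce to direct substitution and are routine.
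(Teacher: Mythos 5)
The paper itself gives no proof of Lemma~\ref{gao2019} --- it is quoted directly from Gao--Hou \cite{GH19} --- so your proposal has to stand on its own; the closest thing in the paper is its proof of the companion path statement, Lemma~\ref{A-N2}. Your overall plan (lift a longest cycle $C'$ of $G'$ back to $G$, splitting into cases according to which of $u,v$ lie on $C'$ and which new edges at $v$ it uses) is the right one, and your easy cases are correct: if $C'$ uses no new edge it lives in $G$; if $u\notin V(C')$ you can either insert $u$ (one new edge) or substitute $u$ for $v$ (two new edges). But the case you yourself flag as ``where the real argument lies'' is left unresolved, and the one concrete mechanism you propose for it fails. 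Take $C'=u\,a\,(\mathrm{arc}_1)\,w_1\,v\,w_2\,(\mathrm{arc}_2)\,b\,u$ with both new edges $w_1v,vw_2$ used and $u$ on the cycle. Swapping $v$ for $u$ gives a figure-eight, two cycles meeting at $u$, and these \emph{cannot} be spliced ``through the common edge $uv$'': since $w_1,w_2\notin N_G(v)$, the vertices $w_1$ and $w_2$ have only two usable $G$-edges each (their arc edge and their edge to $u$), so any lifted cycle is forced to use both $uw_1$ and $uw_2$; these exhaust $u$'s two cycle edges, so $uv$ can never appear in the lifted cycle. Following your recipe literally is a dead end.

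What actually closes this case is a structural fact about the Kelmans operation that your write-up never states: the edges deleted at $u$ are exactly $\{uw:w\in N_G(u)\setminus N_G[v]\}$, hence
$N_{G'}(u)\setminus\{v\}\subseteq N_G(u)\cap N_G(v)$,
i.e.\ every $G'$-neighbor of $u$ other than $v$ is a \emph{common} neighbor of $u$ and $v$ in $G$. Consequently the neighbors $a,b$ of $u$ on $C'$ satisfy $va,vb\in E(G)$, and the correct splice is
$v\,a\,(\mathrm{arc}_1)\,w_1\,u\,w_2\,(\mathrm{arc}_2)\,b\,v$,
a cycle in $G$ on exactly the vertex set $V(C')$, hence of the same length. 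The same fact is needed in your ``one new edge'' case when $u$ already lies on $C'$ and $uv\notin E(C')$ (reroute as $v\,a\,(\mathrm{arc}_1)\,w\,u\,b\,(\mathrm{arc}_2)\,z\,v$, where $z$ is the old neighbor of $v$ on $C'$), because there too naive substitution would repeat $u$, and there too the edge $uv$ turns out to be unusable. This is precisely the trick the paper uses in proving Lemma~\ref{A-N2}, where the two neighbors of $u$ on the path are observed to lie in $N(u)\cap N(v)$ so that $ud$ can be traded for $vd$. Without this observation your hard cases remain open; the gap is genuine, though repairable along the lines above.
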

\begin{lem}\label{A-N2}
Let $G$ be a graph and $uv\in E(G)$, and $G':=G[u\rightarrow v]$. If there is a path of order $k$ in $G'$, then there is a path of order at least $k$ in $G$.
\end{lem}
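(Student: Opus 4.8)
\textbf{Proof proposal for Lemma~\ref{A-N2}.}
The plan is to track what the Kelmans Operation $G'=G[u\to v]$ does to a fixed path of order $k$ in $G'$ and to reconstruct from it a path of order at least $k$ in $G$. Recall that $G'$ and $G$ differ only in the edges incident to $u$ and $v$: the operation deletes every edge $wu$ with $w\in N_G(u)\setminus N_G[v]$ and adds the edge $wv$ for each such $w$. Consequently every edge of $G'$ is either already an edge of $G$, or is one of the newly created edges $wv$ whose ``ancestor'' $wu$ is an edge of $G$. So the only obstruction to a $G'$-path being a $G$-path is the presence of these transferred edges at the vertex $v$.

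First I would fix a path $P=x_1x_2\cdots x_k$ of order $k$ in $G'$ and ask how many of its edges are genuinely new (i.e. not present in $G$). Any edge of $P$ not incident to $v$ already lies in $G$, since the operation changes only edges at $u$ and $v$. Thus the new edges of $P$ are exactly those incident to $v$, and since $v$ has at most two neighbours on a path, there are at most two such edges. The heart of the argument is the case analysis on these edges: for a new edge $x_iv$ on the path (with $x_i\in N_G(u)\setminus N_G[v]$, so that $x_iu\in E(G)$ but $x_iv\notin E(G)$), I want to reroute the path through $u$ instead of $v$, using the fact that $x_iu\in E(G)$.

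The main technical point to handle carefully is the interaction between $u$ and $v$ when both appear on $P$, and whether $u$ even lies on $P$. I would organise the cases by the position of $v$ on $P$ (interior versus endpoint) and by the two edges of $P$ at $v$. When both path-edges at $v$ are old, $P$ already sits in $G$ and we are done. When one or both are new, the corresponding neighbours are attached to $u$ in $G$, and I would attempt to substitute $u$ for $v$ or splice a short detour $\cdots x_i\,u\cdots$ into the path; because $u$ and $v$ are themselves adjacent in $G$ (as $uv\in E(G)$ for a KO) one can often absorb both endpoints of the transferred edges by passing through the edge $uv$. The goal in each case is a walk in $G$ that visits at least $k$ distinct vertices, which then contains a path of order at least $k$.

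The hard part will be ensuring the rerouted structure remains a \emph{path} (no repeated vertices) and still has order at least $k$, particularly in the subcase where $u$ already lies on $P$ and both edges at $v$ are new, so that naively replacing $v$ by $u$ would create a repetition. Here I expect to exploit Lemma~\ref{Lem:Kelmans-structures} or the explicit neighbourhood relations $N_G(u)\setminus N_G[v]$ versus $N_G[v]$ to show the detour can be chosen disjoint from the rest of $P$, or else to argue directly that $v$ (whose new $G'$-neighbours are all $G$-neighbours of $u$) can be deleted and replaced by $u$ with at most a reordering of the endpoints. Once every case yields a path of order at least $k$ in $G$, the lemma follows.
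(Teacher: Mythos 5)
Your framework is the same as the paper's --- classify the (at most two) transferred edges of $P$ at $v$ and reroute through $u$ --- and your easy cases are fine: if no edge of $P$ at $v$ is transferred then $P$ lies in $G$, and if a transferred edge $av$ lies on $P$ with $u\notin V(P)$ then replacing $av$ by the detour $a\,u\,v$ (using $au,uv\in E(G)$) even gains a vertex. But you have misidentified the hard case, and the case that is actually hard is left with no working argument. The subcase you flag --- $u\in V(P)$ with both edges of $P$ at $v$ transferred --- is in fact easy: since $N_{G'}(u)=N_G(u)\cap N_G[v]$, every path-neighbour of $u$ is a $G$-neighbour of $v$ (or $v$ itself), while every transferred neighbour of $v$ is a $G$-neighbour of $u$; hence \emph{exchanging the positions} of $u$ and $v$ on $P$ (a swap, not a deletion) yields a $G$-path of the same order. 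This swap disposes of every situation in which no edge of $P$ at $v$ goes to a vertex of $N_G(v)\setminus N_G(u)$.

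The genuinely hard case is: $u\in V(P)$, and $v$ is interior with one transferred edge $va$, $a\in N_G(u)\setminus N_G[v]$, and one old edge $vb$, $b\in N_G(v)\setminus N_G(u)$. Here both of your fallback ideas fail: substituting $u$ for $v$ (or deleting $v$ ``with a reordering of the endpoints'') kills the edge $vb$, since $ub\notin E(G)$; and a detour through $u$ cannot be spliced in, because $u$ already carries its own path-edges. What is needed --- and what the paper's proof does --- is a rerouting that keeps \emph{both} $u$ and $v$ on the path: delete $av$ together with one path-edge $ud$ of $u$, and add $au$ and $vd$; in effect the segment of $P$ between $u$ and $v$ is reversed, and this is legal precisely because $d\in N_{G'}(u)\subseteq N_G[v]$ guarantees $vd\in E(G)$ (when $u$ is an endpoint of $P$, one uses the edge $uv$ itself in place of $vd$). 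Even then one must delete the edge at $u$ on the correct side: if $a$ lies on the segment of $P$ strictly between $u$ and $v$, removing the edge of $u$ pointing away from $v$ produces a cycle plus a path rather than a path, and one must instead remove the edge of $u$ pointing towards $v$ and add the edge from $v$ to that neighbour (a point which, incidentally, the paper's own write-up also glosses over). Your sketch contains neither the containment $N_{G'}(u)\subseteq N_G[v]$ used in this way nor the segment-reversal construction, so as it stands the decisive case remains open.
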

\begin{proof}
    Let $P'=x_1x_2\dots x_k$ be a path of order $k$ in $G'$. If $P'$ does not contain an edge $va$ where $a\in N_G(u)\backslash N_G(v)$, then $P'$ is also a path in $G$, we are done. So, we may assume $P'$ contains such an edge $va$. If $u\notin P'$, then we can replace $va$ with $au$ and $uv$ of $P'$ in $G$ to get a longer path, and we are done. If $P'$ does not contain an edge $vb$ where $b\in N_G(v)\backslash N_G(u)$, we can easily swap $u$ and $v$ in $P'$ to get a new path $P$ in $G$, observe that $|P|=|P'|$, we are done. So, we assume such an edge $vb$ exists in $P'$. Now we complete this proof by considering the following two cases:\\
    \noindent{Case 1: $d_{P'}(u)=1$.}\\
    Let $P=P'-\{av\}+\{au\}$, then $P$ is a path of the same order as $P'$ in $G$.\\
    \noindent{Case 2: $d_{P'}(u)=2$.}\\
    Let $c$ and $d$ be the two neighbors of $u$ in $P'$, then $u,v \in N(u)\cap N(v)$. Without loss of generality, we assume $u=x_i$ and $v=x_j$ such that $i<j$ and $c=x_{i+1}$. Now let $P=P'-\{av, ud\}+\{au, vd\}$, then $P$ is a path of the same order as $P'$ in $G$.
\end{proof}
\begin{lem}[Li and Ning \cite{LN21}]\label{LiNing}
Let $G$ be a graph, and let $x,y,u$ be distinct vertices of $G$, and $v\in N(u)$ (possibly $v\in \{x,y\}$). Let $G':=G[u\to v].$ If $G'$ has an $(x,y)$-path of length at least $k$, then so does $G$.
\end{lem}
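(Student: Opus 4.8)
The plan is to start from an $(x,y)$-path $P'$ in $G'$ of length at least $k$ and surgically convert it into an $(x,y)$-path of $G$ of length at least $k$, exploiting the fact that $G$ and $G'$ differ very little. Writing $A=N_G(u)\setminus N_G[v]$, $B=N_G(v)\setminus N_G[u]$ and $C=N_G(u)\cap N_G(v)$, the Kelmans operation only deletes the edges $\{ua:a\in A\}$ and adds the edges $\{va:a\in A\}$; every other pair has the same adjacency in $G$ and $G'$. In particular the only edges of $P'$ that may fail to be edges of $G$ are of the form $va$ with $a\in A$, and since all such ``new'' edges are incident to $v$, at most two of them occur on $P'$. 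If none occurs, then $P'\subseteq G$ and we are done, so from now on I assume $P'$ uses at least one new edge $va$.

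First I would dispose of the case $u\notin V(P')$. Here $u$ is a free vertex adjacent in $G$ to $v$ and to every $a\in A$, so each new edge can be repaired by a detour through $u$: a single new edge $va$ is replaced by the two $G$-edges $v,u,a$ (inserting $u$ and raising the length by one), while if $v$ is internal with both path-neighbours $a,a'\in A$ the segment $a,v,a'$ is replaced by $a,u,a'$ (swapping $v$ out for $u$ and keeping the length). Since $u\neq x,y$, these moves preserve the endpoints and do not decrease the length, giving the desired path in $G$.

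The substantial case is $u\in V(P')$; because $u\neq x,y$ the vertex $u$ is internal, and in $G'$ its only neighbours are $C\cup\{v\}$, so its two path-neighbours lie there. I would split according to whether the edge $uv$ lies on $P'$. If it does, then around $u$ the path reads $c,u,v$ with $c\in C$, and the new edge forces $v$ to be internal with its other neighbour $a\in A$; the local exchange $c,u,v,a\mapsto c,v,u,a$ is legal in $G$ (using $cv,vu,ua\in E(G)$) and fixes everything. If $uv\notin P'$, both neighbours of $u$ are common neighbours $c_1,c_2\in C$. When $v$ is internal and uses no edge to $B$, the global relabelling $\sigma$ that exchanges $u$ and $v$ carries $P'$ to a genuine $(x,y)$-path of $G$, since the only edges $\sigma$ could spoil are those of the form $vb$ with $b\in B$, and the endpoints are fixed because $u,v\notin\{x,y\}$.

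The hard part, and the main obstacle, is the remaining configuration under $uv\notin P'$: namely when $v$ is an endpoint (so $v\in\{x,y\}$) or $v$ is internal and incident on $P'$ to some $b\in B$. Here neither inserting $u$ nor the clean swap $\sigma$ works, since the swap would break the edge $vb$ or move the endpoint. My remedy is path-reversal surgery built on the $G$-path $a,u,v,b$ (all of $ua,uv,vb$ and, from $C$, the edges $vc_i,uc_i$ live in $G$): deleting $u,v$ from $P'$ splits it into pieces whose ends are among $a,b,c_1,c_2,x,y$, and by reversing the appropriate piece one can reconnect them, threading $u$ and $v$ back in via these $G$-edges, into a single $(x,y)$-path on exactly the vertex set of $P'$, hence of the same length. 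Verifying that some reversal always reconnects the pieces correctly, across the few possible orderings of $u$ and $v$ and of their neighbours along $P'$, is the crux of the argument; once that bookkeeping is done the length is automatically at least $k$, because every move either preserves the vertex set of $P'$ or adds the single vertex $u$.
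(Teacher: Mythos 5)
Your proposal is not yet a proof: the configuration you yourself label ``the crux'' --- $uv\notin E(P')$, $u$ internal between $c_1,c_2\in C$, and $v$ either an endpoint or internal with a path-edge $vb$, $b\in B$ --- is disposed of with the unverified assertion that ``some reversal always reconnects the pieces correctly,'' and no reconnection is ever exhibited. That configuration is precisely where the difficulty of this endpoint-preserving lemma lives (it is what separates it from the free-endpoint Lemma~\ref{A-N2}), so as written the hardest case is simply asserted. For context, the paper itself never proves this statement --- it is quoted from Li--Ning \cite{LN21} --- so the standard to meet is a self-contained argument, and yours stops one step short. Everything before that point is correct: the reduction to a path $P'$ using a new edge $va$ with $a\in A$, the case $u\notin V(P')$, the case $uv\in E(P')$, and the swap when $uv\notin E(P')$ with $v$ internal and no path-edge into $B$ are all handled soundly.

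The good news is that your strategy does close the gap, and the bookkeeping is short. Since $A$, $B$, $C$ are pairwise disjoint, the segments $c_1\,u\,c_2$ and $a\,v\,b$ (or $v\,a$ when $v$ is an endpoint) are vertex-disjoint, so up to relabelling $c_1\leftrightarrow c_2$ and reversing $P'$ only three patterns occur. (1) If $v=x$, write $P'=v\,a\,Q_1\,c_1\,u\,c_2\,Q_2\,y$; then $v\,c_1\,\overline{Q_1}\,a\,u\,c_2\,Q_2\,y$ is an $(x,y)$-path of $G$, where $\overline{Q}$ denotes $Q$ traversed backwards. (2) If $P'=x\,Q_1\,c_1\,u\,c_2\,Q_2\,a\,v\,b\,Q_3\,y$, take $x\,Q_1\,c_1\,u\,a\,\overline{Q_2}\,c_2\,v\,b\,Q_3\,y$. (3) If $P'=x\,Q_1\,c_1\,u\,c_2\,Q_2\,b\,v\,a\,Q_3\,y$, take $x\,Q_1\,c_1\,v\,b\,\overline{Q_2}\,c_2\,u\,a\,Q_3\,y$. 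Each replacement uses only the $G$-edges $uc_i$, $vc_i$, $ua$, $vb$ together with edges of $P'$ not incident to $\{u,v\}$, fixes both endpoints, and preserves the vertex set and hence the length, which stays at least $k$. Writing out these three reroutings (plus the observation that the two segments cannot interleave) is exactly what is needed to turn your sketch into a complete proof.
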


\subsection{Kopylov's operation and related lemmas}
Our proofs need Kopylov's operation along with some lemmas presented below.

\begin{definition}
[$\alpha$-disintegration of a graph \cite{K77}]
Let $G$ be a graph and $\alpha$ be a natural number. Delete all
vertices of degree at most $\alpha$ from $G$; for the resulting graph $G'$,
we again delete all vertices of degree at most  $ \alpha$ from $G'$. We keep
running this process until finally get a graph, denoted by $H(G;\alpha)$, such
that all vertices are of degree larger than $\alpha$.
\end{definition}

\begin{lem}[Kopylov \cite{K77}]\label{Lem_Kopylov}
Let $G$ be a 2-connected $n$-vertex graph with a path $P$ of $m$
edges with endpoints $x$ and $y$. For $v\in V(G)$, let $d_P(v)=|N(v)\cap V (P)|$. Then $G$ contains
a cycle of length at least $\min\{m+1,d_P(x)+d_P(y)\}$.
\end{lem}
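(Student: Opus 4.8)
The plan is to prove this by a crossing (rotation) argument on the given path, splitting according to whether $d_P(x)+d_P(y)$ exceeds $m$ or not. Write $P=v_0v_1\cdots v_m$ with $v_0=x$ and $v_m=y$, and encode the endpoint-neighbourhoods on $P$ by the index sets
$$A=\{\,i:\ v_0v_i\in E(G)\,\}\subseteq\{1,\dots,m\},\qquad B=\{\,i:\ v_mv_{i-1}\in E(G)\,\}\subseteq\{1,\dots,m\},$$
so that $|A|=d_P(x)$ and $|B|=d_P(y)$, both living inside an index set of size $m$.

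First I would dispose of the regime $d_P(x)+d_P(y)\ge m+1$. Here $|A|+|B|>m$, so by pigeonhole $A\cap B\ne\emptyset$; choosing $i\in A\cap B$ gives simultaneously $v_0v_i\in E(G)$ and $v_mv_{i-1}\in E(G)$, and then $v_0v_1\cdots v_{i-1}\,v_mv_{m-1}\cdots v_i\,v_0$ is a cycle through every vertex of $P$, of length $m+1=\min\{m+1,d_P(x)+d_P(y)\}$. This regime uses no connectivity assumption at all.

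The substance lies in the regime $d_P(x)+d_P(y)\le m$, where the target length is $d_P(x)+d_P(y)$ and a single crossing index as above need not exist: the neighbour clusters of $x$ and of $y$ may be separated along $P$. It is precisely here that $2$-connectivity is indispensable, since otherwise $x$ and $y$ can sit in distinct blocks glued at a cut vertex (for instance two triangles sharing a single vertex, with $P$ running through all five vertices: then $d_P(x)+d_P(y)=4=m$ yet the longest cycle has length $3$). My plan is to set $a=\max A$ and $b=\min\{\,j:\ v_mv_j\in E(G)\,\}$, form the two end cycles $v_0v_1\cdots v_a\,v_0$ and $v_b\cdots v_{m}\,v_b$ (whose lengths are at least $d_P(x)+1$ and $d_P(y)+1$, since $a\ge d_P(x)$ and $m-b\ge d_P(y)$), and then use $2$-connectivity, via a fan/Menger-type argument, to supply a connection bridging the gap between the two clusters. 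Splicing the appropriate path-segments along this bridge merges the retained neighbours of both endpoints into one cycle, whose vertex count is then seen to be at least $|A|+|B|=d_P(x)+d_P(y)$.

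The main obstacle I anticipate is exactly this last construction: guaranteeing that the bridge produced by $2$-connectivity attaches to the two end-segments so that the resulting closed walk is a genuine cycle with no repeated vertex, and still passes through at least $d_P(x)+d_P(y)$ vertices rather than short-circuiting across the skipped middle portion $v_{b},\dots,v_{a}$ and thereby discarding neighbours. The delicate bookkeeping is to track which neighbours of $x$ and of $y$ are kept and to choose the bridge endpoints extremally (the farthest neighbour of $x$ and the nearest neighbour of $y$ it can reach), so that no counted neighbour is lost; Menger's theorem together with the fact that the threshold/connectivity structure here is preserved are the tools I would use to finish.
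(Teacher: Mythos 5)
The paper never proves this lemma; it is imported verbatim from Kopylov's 1977 paper (reference \cite{K77}), so there is no internal proof to compare against and your attempt must stand on its own. Your first regime, $d_P(x)+d_P(y)\ge m+1$, is correct and complete: the pigeonhole on the index sets $A$ and $B$ forces a crossing index $i$, and the cycle $v_0v_1\cdots v_{i-1}v_mv_{m-1}\cdots v_iv_0$ has length $m+1$. That is the standard rotation/crossing argument and needs no connectivity, as you note.

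The problem is that the second regime, $d_P(x)+d_P(y)\le m$, is the entire substance of the lemma, and what you offer there is a plan with the key step missing --- a gap you yourself flag. Two concrete objections. First, your picture of ``two separated clusters with a gap $v_b,\dots,v_a$'' is not forced by the absence of a crossing: with $N(y)\cap V(P)\ni v_2$ and $N(x)\cap V(P)\supseteq\{v_1,v_5\}$ there is no crossing, yet $b<a$ and the two end cycles $v_0\cdots v_av_0$ and $v_b\cdots v_mv_b$ overlap heavily, so the case structure of your argument already breaks. Second, even when the two end cycles are disjoint, Menger/fan arguments hand you two vertex-disjoint connecting paths but give you no control over \emph{where} they attach; merging two disjoint cycles of lengths $\ell_1,\ell_2$ through two disjoint paths retains only one arc of each cycle, hence in the worst case only about $\lceil \ell_1/2\rceil+\lceil \ell_2/2\rceil+2$ vertices, i.e.\ roughly $\tfrac{1}{2}\bigl(d_P(x)+d_P(y)\bigr)$ rather than $d_P(x)+d_P(y)$. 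Declaring that you will ``choose the bridge endpoints extremally'' is assuming exactly the control that 2-connectivity does not provide; this is why Kopylov's actual proof is not a naive bridging argument but a more delicate analysis of how the neighbours of $x$ and $y$ sit along $P$. As it stands, the proposal proves only the easy half of the lemma.
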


\begin{lem}\label{Ning-Ai}
Let $\Gamma$ be a connected $n$-vertex graph with two vertex-disjoint paths, say $F=\{P_1,P_2\}$, in which
$v(P_1)+v(P_2)=p$ and $x,y$ are end-vertices of $P_1,P_2$, respectively.
For $v\in V(G)$, let $d_F(v)=|N(v)\cap (V(P_1)\cup V(P_2))|$. Then $G$ contains
a path of order at least $\min\{p,d_F(x)+d_F(y)+1\}$.
\end{lem}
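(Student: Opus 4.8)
The plan is to prove this path analogue of Kopylov's cycle lemma (Lemma~\ref{Lem_Kopylov}) by a direct rotation/crossing argument carried out entirely on the vertex set $V(F)=V(P_1)\cup V(P_2)$, rather than by reducing to Lemma~\ref{Lem_Kopylov}. A reduction is tempting --- adjoin an auxiliary vertex $w$ joined to the two far endpoints of $P_1,P_2$ so that the merged path acquires $x,y$ as its endpoints and $m=p$ edges --- but Kopylov's cycle bound $d(x)+d(y)$ then yields, after deleting $w$, a path of order only $d_F(x)+d_F(y)$, which is short of the target by exactly one whenever $d_F(x)+d_F(y)<p$. This off-by-one (together with the fact that $\Gamma$ is only assumed connected, not $2$-connected) is precisely why a self-contained argument is needed.

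First I would merge the two paths into a single linear order. Writing $P_1$ from its far endpoint toward $x$ and then $P_2$ from $y$ toward its far endpoint, I obtain a sequence $z_1,\ldots,z_p$ in which every consecutive pair is an edge of $F$ except possibly the central pair $z_sz_{s+1}=xy$, where $s=v(P_1)$, $z_s=x$ and $z_{s+1}=y$. If $xy\in E(\Gamma)$, then $z_1\cdots z_p$ is already a path of order $p$ and we are done, so assume $xy\notin E(\Gamma)$. Set $A=\{\,i: xz_i\in E(\Gamma)\,\}$ and $B=\{\,i: yz_i\in E(\Gamma)\,\}$, so that $|A|=d_F(x)$ and $|B|=d_F(y)$, and note the built-in path-neighbours $z_{s-1}\in A$ and $z_{s+2}\in B$ when the corresponding path has order at least $2$.

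The heart of the proof is a crossing dichotomy. Call an index $i$ a \emph{crossing} if $z_iz_{i+1}\in E(\Gamma)$ (automatic unless $i=s$) and one of $x,y$ is adjacent to $z_i$ while the other is adjacent to $z_{i+1}$. If a crossing exists, say with $i<s$, $xz_i,yz_{i+1}\in E(\Gamma)$, then the walk $z_1\cdots z_i\,x\,z_{s-1}\cdots z_{i+1}\,y\,z_{s+2}\cdots z_p$ visits every $z_j$ exactly once and never uses the forbidden edge $xy$; hence $\Gamma$ contains a path of order $p\ge\min\{p,d_F(x)+d_F(y)+1\}$. The symmetric reroute (via the involution $z_j\mapsto z_{p+1-j}$, $x\leftrightarrow y$) settles a crossing with $i\ge s+1$ and the opposite adjacency orientation. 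If, on the other hand, \emph{no} crossing exists, I would exploit the non-crossing condition to lay the neighbours of $x$ and of $y$ along the sequence without conflict and stitch them, together with the two pivots $x,y$, into a single path; the absence of crossings forces $A$ and $B$ to interleave in a controlled fashion, and a greedy traversal anchored at $z_{s-1}$ and $z_{s+2}$ then produces a path on exactly $|A|+|B|+1=d_F(x)+d_F(y)+1$ vertices.

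I expect the no-crossing step to be the main obstacle, for it is exactly where a naive reduction loses a vertex: isolating the additive ``$+1$'' --- so that a path of order $d_F(x)+d_F(y)+1$, and not merely $d_F(x)+d_F(y)$, is guaranteed --- requires careful bookkeeping of how the two neighbourhoods, their possible common elements, and the missing edge $xy$ interact. The degenerate regimes I would dispose of separately at the outset: $v(P_1)=1$ or $v(P_2)=1$, the case $d_F(x)+d_F(y)+1\ge p$ (where one only needs the Hamilton path of $F$ produced by a crossing), and the trivial part of the minimum, which follows from connectivity of $\Gamma$.
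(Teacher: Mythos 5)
There is a genuine gap: your entire argument in the no-crossing case is an unproven assertion. The crossing case (producing a Hamilton path of $V(P_1)\cup V(P_2)$ by rerouting through $x$ and $y$) is the easy half, and your construction there is fine for the orientation $xz_i,\,yz_{i+1}\in E(\Gamma)$; but when no crossing exists, claiming that ``the absence of crossings forces $A$ and $B$ to interleave in a controlled fashion, and a greedy traversal anchored at $z_{s-1}$ and $z_{s+2}$ then produces a path on exactly $d_F(x)+d_F(y)+1$ vertices'' is not a proof --- it is precisely the statement to be proved. This no-crossing case is where Kopylov's own Lemma~\ref{Lem_Kopylov} does all of its real work (absence of crossings gives the \emph{counting} bound $d(x)+d(y)\le m$, but \emph{constructing} a cycle of length $d(x)+d(y)$ from it is a genuinely nontrivial argument), and in your two-path setting the extra ``$+1$'' you need makes it no easier. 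You acknowledge this is ``the main obstacle'' and then do not overcome it, so the proposal is incomplete exactly at its crux.

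Moreover, your reason for abandoning the reduction route rests on considering the wrong reduction. You only examine attaching an auxiliary vertex $w$ joined to the two \emph{far} endpoints, which indeed is not $2$-connected in general and loses one vertex. The paper instead takes the full join $G:=\Gamma\vee\{z\}$, i.e., $z$ adjacent to \emph{every} vertex of $\Gamma$, and threads $P:=P_1x'zy'P_2$ through $z$. This simultaneously fixes both of your objections: $G$ is automatically $2$-connected, and since $z\in V(P)$ is adjacent to $x$ and $y$, the path degrees become $d_P(x)=d_F(x)+1$ and $d_P(y)=d_F(y)+1$. Kopylov's lemma then yields a cycle of length at least $\min\{p+1,\,d_F(x)+d_F(y)+2\}$, and deleting $z$ from that cycle (if it lies on it) costs exactly one vertex, landing on $\min\{p,\,d_F(x)+d_F(y)+1\}$ with no off-by-one loss. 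So the clean three-line reduction you dismissed is available; if you insist on a self-contained rotation argument instead, you must actually carry out the no-crossing construction rather than gesture at it.
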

\begin{proof}
Add a new vertex $z$ and let $G:=\Gamma\vee \{z\}$. Since $\Gamma$ is connected, $G$ is $2$-connected.
Let $x'$ be the other end-vertex of $P_1$ and $y'$ the other end-vertex of $P_2$.
Let $P:=P_1x'zy'P_2$. Then $P$ is a path of order $p+1$. Moreover, $d_{P}(x)=|N_{G}(x)\cap V(P)|=d_F(x)+1$
and $d_P(y)=d_F(y)+1$. By Lemma \ref{Lem_Kopylov}, there is a cycle of length at least $\min\{p+1,d_F(x)+d_F(y)+2\}$,
say $C$. If $C$ contains the vertex $z$, then $P'=C-\{z\}$ is a path in $\Gamma$ with order at least $\min\{p,d_F(x)+d_F(y)+1\}$.
If $C$ contains no $z$, then deleting any edge of $C$ gives a path in $\Gamma$ with order
at least $\min\{p+1,d_F(x)+d_F(y)+2\}$. This proves the lemma.
\end{proof}
\subsection{Proofs}
Now, we are ready to provide the proofs. 
\begin{proof}[Proof of Theorem \ref{Thm:Main1}]
We prove the theorem by contradiction. Let $G$ be an $n$-vertex $2$-connected graph containing no $\mathcal{C}_{\geq k}$ with the maximum $\mathcal{P}(G)$ but $G\notin \mathcal{G}^1_{n,k}$. 
Since $\mathcal{P}(G)$ is maximum, by Property (II), $G$ has the maximal number of edges. So, adding any new edge (i.e., joining any two non-adjacent vertices) will increase the value of $\mathcal{P}(G)$. Thus, $G$ is edge-maximal,
 and adding any new edge creates a cycle of length at least $k$ in the
resulting graph. We state it in another form.
\begin{claim}
For any two non-adjacent vertices $x,y\in V(G)$, there is a path with $x,y$ as two
end-vertices in $G$ of order at least $k$.
\end{claim}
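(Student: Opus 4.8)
The claim asserts that whenever $x,y$ are non-adjacent in the edge-maximal graph $G$, there is an $(x,y)$-path of order at least $k$. The plan is to exploit edge-maximality directly: since $G$ is edge-maximal with respect to being $\mathcal{C}_{\geq k}$-free, adding the missing edge $xy$ must create a cycle $C$ of length at least $k$ in $G+xy$. The key observation is that any such newly created long cycle must use the edge $xy$, because $G$ itself contains no cycle of length at least $k$. So I would first argue that $C$ contains the edge $xy$; otherwise $C$ would already be a cycle of length at least $k$ living entirely in $G$, contradicting that $G$ is $\mathcal{C}_{\geq k}$-free.

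Once we know $xy\in E(C)$ and $|C|\geq k$, the finish is immediate: deleting the edge $xy$ from $C$ leaves a path in $G$ whose endpoints are precisely $x$ and $y$ and whose order equals $|C|\geq k$. This path lies entirely in $G$ since $C-xy$ uses no other new edges. Hence $G$ has an $(x,y)$-path of order at least $k$, as required.

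The only subtlety — and the step I would be most careful with — is justifying that $G+xy$ genuinely contains a cycle of length at least $k$. This rests on the edge-maximality established just before the claim: because $\mathcal{P}(G)$ is maximum and $\mathcal{P}$ is feasible, Property (II) forces $G$ to have the maximum possible number of edges among $2$-connected $n$-vertex $\mathcal{C}_{\geq k}$-free graphs, so $G$ cannot be a proper spanning subgraph of a larger such graph. Consequently $G+xy$ is no longer $\mathcal{C}_{\geq k}$-free, which is exactly the assertion that a cycle of length at least $k$ appears. I would state this cleanly and note that the two non-adjacent vertices $x,y$ are arbitrary, so the conclusion holds for every non-adjacent pair. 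No case analysis or computation is needed; the main conceptual point is simply translating ``adding an edge creates a long cycle'' into ``the long cycle must pass through that edge, hence gives a long path between its endpoints.''
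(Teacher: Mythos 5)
Your proof is correct and is essentially the paper's own argument: the paper derives edge-maximality from Property (II), notes that adding any missing edge $xy$ creates a cycle of length at least $k$, and the claim is just the restatement of this fact obtained by observing the new cycle must use $xy$ (since $G$ is $\mathcal{C}_{\geq k}$-free) and deleting that edge yields the desired $(x,y)$-path. You spell out details the paper leaves implicit, but the route is the same.
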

For any two adjacent vertices $x,y\in V(G)$, if neither $N_G(x)\subset N_G(y)$ nor $N_G(y)\subset N_G(x)$, we use the KO to get a new graph $G_{xy}=G[x\rightarrow y]$ or $G_{yx}=G[y\rightarrow x]$. By Property (I),
we have $\mathcal{P}(G_{xy})\geq \mathcal{P}(G)$
and $\mathcal{P}(G_{yx})\geq \mathcal{P}(G)$. 

After a series of KO, the procedure will stop and result in a threshold graph, denoted by $\Gamma$. In the following,
let $G:=G_0, G_1, G_2,\dots, G_h:=\Gamma$ be a sequence of graphs, such that $G_{i+1}=G_i[u_i,v_i]$, where $u_iv_i\in E(G_i)$
and $G_h$ is the threshold graph of $G$.
Hence $\mathcal{P}(\Gamma)\geq \mathcal{P}(G)$.
\begin{claim}\label{nocut}
    $G\cong\Gamma$.
\end{claim}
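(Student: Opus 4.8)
The claim asserts that $G\cong\Gamma$, meaning the sequence of Kelmans Operations (KO) that transforms $G$ into its threshold graph $\Gamma$ in fact does nothing—no bad pair ever existed, so $G$ was already a threshold graph. My plan is to argue by contradiction: suppose at least one KO is performed, so some $G_{i+1}=G_i[u_i\to v_i]$ genuinely differs from $G_i$. The strategy is to show that such a step would destroy the extremal/feasibility structure, either by creating a long cycle (contradicting $\mathcal{C}_{\geq k}$-freeness) or by violating the maximality of $\mathcal{P}(G)$ together with the edge-maximality established just above the claim.

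First I would record the two key facts already in hand: by Property (I) of feasibility we have the monotone chain $\mathcal{P}(G)\le\mathcal{P}(G_1)\le\cdots\le\mathcal{P}(\Gamma)$, and since $G$ was chosen to \emph{maximize} $\mathcal{P}$ over $2$-connected $\mathcal{C}_{\geq k}$-free $n$-vertex graphs, equality must hold throughout \emph{provided each intermediate graph stays in the admissible class}. The crucial point to verify is that KO preserves both $2$-connectedness and $\mathcal{C}_{\geq k}$-freeness; the latter is exactly Lemma~\ref{gao2019}, which gives $c(G_{i+1})\le c(G_i)<k$, so every $G_i$ is still $\mathcal{C}_{\geq k}$-free. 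For $2$-connectedness one checks that KO cannot create a cut-vertex (KO only moves edges from $u$ onto $v$, and connectivity through $v$ is strengthened); I would cite or reprove the standard fact that KO preserves $2$-connectedness. Hence each $G_i$ lies in the admissible class, forcing $\mathcal{P}(G_i)=\mathcal{P}(G)$ for all $i$, and in particular $\mathcal{P}(\Gamma)=\mathcal{P}(G)$.

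Now comes the heart of the argument. Since $G$ is edge-maximal (adding any non-edge creates a $\mathcal{C}_{\geq k}$), and KO preserves the number of edges while preserving the admissible class, $\Gamma$ is also edge-maximal and $\mathcal{P}(\Gamma)=\mathcal{P}(G)$ is maximum. If $G\not\cong\Gamma$, then at least one genuine KO was applied, so some vertex $u_i$ had a neighbor $w\in N_{G_i}(u_i)\setminus N_{G_i}[v_i]$ whose edge $wu_i$ was deleted and replaced by $wv_i$. The plan is to exploit the Claim stated just before this one: for the pair of vertices that become non-adjacent after the operation (or for a suitable non-adjacent pair in $\Gamma$ arising from the moved edge), there must be an $(x,y)$-path of order at least $k$ in $G$. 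But Lemma~\ref{LiNing} says precisely that long $(x,y)$-paths pull back along KO: if $\Gamma$ has such a path then so does $G$, and conversely the structure of the threshold graph $\Gamma$ must already contain the configuration. I would trace the edge that was genuinely moved and show it produces, in $\Gamma$ or in $G$, a path of order $\ge k$ that closes to a cycle of length $\ge k$—contradicting $\mathcal{C}_{\geq k}$-freeness via Lemma~\ref{Lem_Kopylov} applied to the relevant $2$-connected structure.

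The main obstacle I anticipate is the bookkeeping in this last step: cleanly identifying which pair of vertices to feed into the Claim so that the resulting $(x,y)$-path, transported back to $G$ by Lemma~\ref{LiNing} and then closed using $2$-connectedness and Lemma~\ref{Lem_Kopylov}, genuinely yields a cycle of length at least $k$. The subtlety is that a single KO need not immediately create a long cycle in $G_{i+1}$ itself (indeed Lemma~\ref{gao2019} guarantees the opposite), so the contradiction cannot come from $G_{i+1}$ directly; instead it must come from combining the edge-maximality of $G$ (which provides long paths between \emph{specified} non-adjacent pairs) with the path-lifting lemma. Getting the endpoints $x,y$ aligned between the ``non-adjacent pair with a long path'' supplied by the Claim and the ``moved edge'' of the KO is the delicate matching I expect to require the most care; once that alignment is set up, the cycle of length $\ge k$ and hence the contradiction should follow routinely.
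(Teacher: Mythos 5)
Your proposal has a genuine gap at its load-bearing step. The entire chain of deductions you set up --- each intermediate $G_i$ stays in the admissible class, hence $\mathcal{P}(G)=\mathcal{P}(G_1)=\cdots=\mathcal{P}(\Gamma)$, hence $\Gamma$ is itself edge-maximal and extremal --- rests on the assertion that a Kelmans Operation preserves $2$-connectedness. That assertion is false: for $C_4=v_1v_2v_3v_4$, the pair $v_1,v_2$ is a bad pair, and $C_4[v_1\to v_2]$ is the triangle $v_2v_3v_4$ with pendant edge $v_1v_2$, which has a cut vertex. Indeed KO drives a graph toward a threshold graph, and threshold graphs routinely have cut vertices; this is exactly why the paper does \emph{not} place $\Gamma$ or the intermediate graphs in the class of $2$-connected $\mathcal{C}_{\geq k}$-free graphs. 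On the contrary, the $2$-connectedness of $\Gamma$ is deduced \emph{from} Claim~\ref{nocut} and is only invoked in the later claims (``by Claim~\ref{nocut}, $\Gamma$ is $2$-connected''). Your route is therefore circular: you need admissibility of the intermediate graphs to transfer extremality and edge-maximality to $\Gamma$, but admissibility ($2$-connectedness) is precisely what is unavailable before the claim is proved. What feasibility gives unconditionally is only $\mathcal{P}(\Gamma)\geq\mathcal{P}(G)$, together with $\mathcal{C}_{\geq k}$-freeness of every $G_i$ via Lemma~\ref{gao2019}.

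Second, the step you defer (``once that alignment is set up \dots should follow routinely'') is exactly where the paper does its work, and its closing device is absent from your sketch. The paper fixes $x,y$ lying in different maximal cliques of $\Gamma$, so that $xy\notin E(\Gamma)$ by Lemma~\ref{Lem:Kelmans-structures}(iii), and splits on whether some KO step used precisely the pair $\{x,y\}$. If yes, taking the \emph{last} such step $j$, the edge $xy$ survives into $G_{j+1}$; Lemma~\ref{LiNing} pulls an $(x,y)$-path of length at least $k-1$ back to $G_{j+1}$, the edge $xy$ closes it into a cycle of length at least $k$, and Lemma~\ref{gao2019} transfers this cycle back to $G$, a contradiction. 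If no, Lemma~\ref{LiNing} pulls such a path back to $G$ itself, so the edge-maximality established in the paper's Claim 1 forces $xy\notin E(G)$. Running this over all such pairs shows every non-edge of $\Gamma$ is a non-edge of $G$, i.e.\ $E(G)\subseteq E(\Gamma)$; since KO preserves the number of edges, $E(G)=E(\Gamma)$ and $G=\Gamma$. Note the conclusion is reached by comparing edge sets and counting, not by showing ``any genuine KO yields a contradiction''; your proposal never produces this mechanism, and without it (and with the false $2$-connectedness claim) it does not close.
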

\pf As $n\geq k$, we assert there are at least two maximal cliques in $\Gamma$. Let $X_i$ and $X_j$ be any two maximal cliques. For any vertex $x\in X_i\backslash X_j$ and $y\in X_j\backslash X_i$, we have $xy\notin E(\Gamma)$. Recall that $G_{i+1}=G_i[u_i,v_i]$, where $u_iv_i\in E(G)$
and $G_h$ is the threshold graph of $G$. 

If there is a $j\in[h-1]$ such that $\{u_j,v_j\}=\{x,y\}$, then pick
the largest $j$ such that $G_{j+1}=G_j[x,y]$ or $G_{j+1}=G_j[y,x]$. Set $G_{j+1}$ as a new $G_0$, then there is a path $P_{xy}$ between $x$ and $y$ of length at least $k-1$ in $G_{j+1}$ by Lemma \ref{LiNing}. Note that $xy\in E(G_{j+1})$, hence $P_{xy}+xy$ is a cycle of length at least $k$ in $G_{j+1}$. Now applying Lemma~\ref{gao2019}, we have that there is a cycle of length at least $k$ in $G$ as well, a contradiction.
 
Now for any $i$, $|\{u_i,v_i\}\cap\{x,y\}|\leq 1$, and so there is also an $(x,y)$-path of length at least $k-1$ in $G$ by Lemma \ref{LiNing}. If $xy\in E(G)$, we are done since $P_{xy}+xy$ is a cycle of length at least $k$, a contradiction. So, we may assume $xy\notin E(G)$. Then we have $G=\Gamma$ as Kelmans Operation keeps the number of edges.
\qee

\begin{claim}
Let $H=H(\Gamma;t)$. Then $H$ is not empty.
\end{claim}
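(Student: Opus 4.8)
The plan is to argue by contradiction: suppose the $t$-disintegration $H=H(\Gamma;t)$ is empty, where $t=\lfloor(k-1)/2\rfloor$, and derive a contradiction with the maximality of $\mathcal{P}(G)=\mathcal{P}(\Gamma)$ together with the standing assumption $G\notin\mathcal{G}^1_{n,k}$. (Recall from Claim~\ref{nocut} that $\Gamma=G$, so $\Gamma$ is $2$-connected, $\mathcal{C}_{\geq k}$-free, and a threshold graph.) The crucial first observation is that a large clique cannot be destroyed by the disintegration. Let $X$ be a maximum clique of $\Gamma$ with $|X|=s$. If $s\geq t+2$, then every vertex of $X$ has at least $s-1\geq t+1$ neighbours inside $X$; considering the first vertex of $X$ that the process would delete, at that moment all of $X$ is still present, so this vertex still has degree $\geq t+1>t$ and cannot be removed. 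Hence no vertex of $X$ is ever deleted, forcing $X\subseteq H$ and contradicting $H=\emptyset$. Therefore $s\leq t+1$.

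Next I would pin $s$ from below and invoke the threshold structure. Since $\Gamma$ is $2$-connected with $n\geq k\geq 5$, its maximum clique has order $s\geq 3$: if $s\leq 2$ then $\Gamma$ is triangle-free, and by the nested-neighbourhood property (Lemma~\ref{Lem:Kelmans-structures}(i)) every edge of a triangle-free threshold graph has an endpoint of degree $1$, impossible in a $2$-connected graph. Thus $3\leq s\leq t+1$, so $s-1\in[2,t]$, a legitimate index for $\mathcal{G}^1_{n,k}$. Now apply Lemma~\ref{koless} to $\Gamma$: either $\Gamma=S_{n,s-1}$, or a sequence of EKO turns $\Gamma$ into a proper subgraph $\Gamma''$ of $S_{n,s-1}$. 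In the latter case property (I) of Definition~\ref{basic} gives $\mathcal{P}(\Gamma)\leq\mathcal{P}(\Gamma'')$, while passing from $\Gamma''$ to $S_{n,s-1}$ by adding the missing edges (each step legitimate since both endpoints lie in $V(\Gamma'')$) and using property (II) repeatedly gives $\mathcal{P}(\Gamma'')<\mathcal{P}(S_{n,s-1})$. In all cases $\mathcal{P}(\Gamma)\leq\mathcal{P}(S_{n,s-1})$, with equality only when $\Gamma=S_{n,s-1}$.

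Finally I would compare with the extremal family. The key structural remark is that $S_{n,s-1}=K_{s-1}\vee(n-s+1)K_1$ is a subgraph of $W_{n,k,s-1}=K_{s-1}\vee((n-k+s-1)K_1\cup K_{k-2s+2})$, since both have a dominating clique $K_{s-1}$ over the same number $n-s+1$ of remaining vertices and $W_{n,k,s-1}$ merely adds the edges of a $K_{k-2s+2}$ among them. Adding these edges and applying property (II) yields $\mathcal{P}(S_{n,s-1})\leq\mathcal{P}(W_{n,k,s-1})$, with equality exactly when $k-2s+2\leq 1$, i.e.\ when $S_{n,s-1}=W_{n,k,s-1}$. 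Chaining the inequalities gives $\mathcal{P}(\Gamma)\leq\mathcal{P}(W_{n,k,s-1})$; but $W_{n,k,s-1}\in\mathcal{G}^1_{n,k}$ is itself $2$-connected and $\mathcal{C}_{\geq k}$-free, so the maximality of $\mathcal{P}(G)$ forces equality throughout, which in turn forces $\Gamma=S_{n,s-1}=W_{n,k,s-1}$. Then $G=\Gamma\in\mathcal{G}^1_{n,k}$, contradicting $G\notin\mathcal{G}^1_{n,k}$.

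The step I expect to be the main obstacle is the clique-survival argument at the outset and, more delicately, the bookkeeping in the feasibility chain: one must ensure the EKO reduction of Lemma~\ref{koless} is compatible with property (I) so that $\mathcal{P}$ never drops, and that the two ``add the missing edges'' steps genuinely land inside $\mathcal{G}^1_{n,k}$ for the index $s-1\in[2,t]$. This includes the parity-sensitive borderline $k$ odd with $s=t+1$, where $S_{n,s-1}$ already coincides with $W_{n,k,t}=S_{n,t}$, so the contradiction comes directly from $\Gamma\in\mathcal{G}^1_{n,k}$ rather than from a strict inequality.
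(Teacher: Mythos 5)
Your proposal is correct and follows essentially the same route as the paper: maximum clique survival under $t$-disintegration gives $s\leq t+1$, then Lemma~\ref{koless} reduces $\Gamma$ to (a subgraph of) $S_{n,s-1}$, and feasibility plus maximality of $\mathcal{P}(G)$ yields the contradiction. In fact your write-up is tighter than the paper's at two points the paper glosses over: you justify $s\geq 3$ (needed so that the comparison graphs are $2$-connected and hence legitimate competitors), and you correctly route the equality case into the extremal family by comparing with $W_{n,k,s-1}\in\mathcal{G}^1_{n,k}$ and separating the even/odd parity of $k$, whereas the paper simply asserts $G=S_{n,s-1}=W_{n,k-1,s-1}$ and calls this a contradiction to $G\notin\mathcal{G}^1_{n,k}$ --- an identification that is accurate only when $k$ is odd and $s=t+1$.
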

\pf
Suppose to the contrary that $H=\emptyset$. 
Choose a maximum clique in $\Gamma$, and denote it by $X$.   

Let $|X|=s$ and $|Y|=n-s$. Recall that $\Gamma[X]=K_s$. Since $H=\emptyset$,
$s\leq t+1$. Indeed, if $s\geq t+2$, then there is a $K_{t+2}$-clique in $\Gamma$.
After all $t$-disintegrations of $\Gamma$, the $K_{t+2}$-clique is still a $K_{t+2}$-clique
in $H$, contradicting the fact that $H=\emptyset$. This shows that $s\leq t+1$.
Assume after applying a series of EKO to $\Gamma$, $\Gamma$ becomes a proper subgraph of $S_{n, s-1}$. Notice that the largest cycle in $S_{n, s-1}$ is of length at most $2s-2$ and $2s-2\leq 2t\leq k-1$. Then there is a contradiction to the fact that $\mathcal{P}(G)$ is maximum since $S_{n, s-1}$ is $\mathcal{C}_{\geq k}$-free graph.  Thus, by Claim~\ref{nocut} and Lemma~\ref{koless}, $G=S_{n, s-1}=W_{n,k-1,s-1}$, which is a contradiction to the assumption that $G\notin\mathcal{G}^1_{n,k}$. \qee

\begin{claim}\label{Claim:H-clique}
$H$ is a clique.
\end{claim}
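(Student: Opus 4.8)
The plan is to show that the subgraph $H=H(\Gamma;t)$, which we now know to be nonempty, must in fact be a single clique. Since $\Gamma$ is a threshold graph, I would exploit its rigid clique structure from Lemma~\ref{Lem:Kelmans-structures}. First I would recall that $H$ is obtained from $\Gamma$ by repeatedly deleting vertices of degree at most $t$, so every vertex of $H$ has degree larger than $t$ \emph{in} $H$; in particular $H$ has at least $t+2$ vertices and contains a maximum clique $X$ of $\Gamma$ restricted to $H$. The key inherited property is that $H$, being an induced subgraph of a threshold graph, is itself a threshold graph: for any edge $xy$ of $H$ we still have $N_H[x]\subseteq N_H[y]$ or $N_H[y]\subseteq N_H[x]$ (nesting is preserved under taking induced subgraphs on a common vertex set, and the disintegration deletes symmetrically). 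Hence Lemma~\ref{Lem:Kelmans-structures} applies to $H$ as well.

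The heart of the argument is a proof by contradiction: suppose $H$ is \emph{not} a clique. Then $H$ contains at least two distinct maximal cliques $X$ and $Y$. By Lemma~\ref{Lem:Kelmans-structures}(ii) they share a common vertex, and by part (iii) there are no edges between $X-Y$ and $Y-X$. Pick $x\in X-Y$ and $y\in Y-X$; these are non-adjacent in $H$, hence non-adjacent in $G\cong\Gamma$ by Claim~\ref{nocut}. Now I would invoke Claim~1: there is an $(x,y)$-path of order at least $k$ in $G$. The goal is to convert this into a cycle of length at least $k$ inside $G$, contradicting the hypothesis that $G$ is $\mathcal{C}_{\geq k}$-free. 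The natural route is to build two vertex-disjoint paths inside the \emph{high-degree} part $H$ joining suitable endpoints and then apply Lemma~\ref{Ning-Ai} (or Lemma~\ref{Lem_Kopylov} after adding an apex vertex), using that every vertex of $H$ has degree at least $t+1$ in $H$ and that $|X|+|Y|$ is large. Concretely, since $x$ and $y$ each have more than $t$ neighbors inside their respective cliques, a path $P$ realizing two disjoint segments through $X$ and $Y$ will have $d_F(x)+d_F(y)+1\geq 2(t+1)+1>k-1$, forcing a path (and then a cycle) of order at least $k$.

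The step I expect to be the main obstacle is producing the two vertex-disjoint paths with the correct endpoints and controlling the quantity $d_F(x)+d_F(y)+1$ so that it meets or exceeds $k$. The difficulty is that $t=\lfloor (k-1)/2\rfloor$ gives only $2t+1 \ge k-1$ rather than $\ge k$ when $k$ is odd, so the bound from Lemma~\ref{Ning-Ai} is tight and one must use the shared vertex of $X$ and $Y$ (guaranteed by Lemma~\ref{Lem:Kelmans-structures}(ii)) to squeeze out one extra vertex on the path. I would therefore route the two disjoint paths so that they together cover $X\cup Y$ with the common vertex bridging them, making the combined order at least $|X|+|Y|\ge k$ directly, and then apply Lemma~\ref{Ning-Ai} to upgrade this to a long path through the rest of $G$; the resulting cycle of length $\ge k$ contradicts $\mathcal{C}_{\geq k}$-freeness. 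This contradiction forces $H$ to have a unique maximal clique, i.e.\ $H$ is a clique, completing the claim.
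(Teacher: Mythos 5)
Your setup (threshold structure inherited by $H$, two maximal cliques, a nonadjacent pair $x\in X-Y$, $y\in Y-X$, and the $(x,y)$-path of order at least $k$ from Claim 1) is sound, but the argument has a genuine gap at exactly the point you flagged: you never validly produce a \emph{cycle} of length at least $k$ in $G$. Lemma~\ref{Ning-Ai} outputs only a long \emph{path}, and a long path does not contradict $\mathcal{C}_{\geq k}$-freeness; the apex-vertex variant fails for the same reason, since a long cycle in $\Gamma\vee K_1$ may pass through the apex and collapses to a mere path in $\Gamma$. What is needed, and what the paper does, is to apply Kopylov's Lemma~\ref{Lem_Kopylov} to a single $(x,y)$-path $P$ of order at least $k$, which yields a cycle of length at least $\min\{k,\,d_P(x)+d_P(y)\}$; the whole difficulty is then to guarantee $d_P(x)+d_P(y)\geq 2(t+1)\geq k$, i.e.\ that all of $N_H(x)$ and $N_H(y)$ lie \emph{on} $P$. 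The paper secures this with an extremal choice that your proposal is missing: among all nonadjacent pairs in $V(H)$, pick $x,y$ whose longest connecting path is as long as possible; then any $z\in N_H(x)\setminus V(P)$ either closes a cycle of length at least $k+1$ (if $zy\in E(\Gamma)$) or gives the nonadjacent pair $z,y$ joined by the strictly longer path obtained by prepending the edge $zx$ to $P$, contradicting the choice. Once $d_P(x)+d_P(y)\geq 2(t+1)\geq k$ is in hand, Kopylov's lemma gives the cycle in $\Gamma$, and Lemma~\ref{gao2019} transfers it back to $G$.

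Your clique-based substitute does not repair this. First, $d_H(x)>t$ does not imply that $x$ has more than $t$ neighbors \emph{inside} $X$ (that requires a separate nesting argument and a particular choice of $x$ within $X$); second, even granting large cliques, neighbors inside $X\cup Y$ are useless unless they are shown to lie on the specific path fed to Kopylov's lemma, which is precisely what the extremal choice accomplishes and what routing "two disjoint segments through $X$ and $Y$" does not. Finally, the parity worry is misdiagnosed: it is $k$ even (not odd) for which $2t+1=k-1$, and in the correct argument no tightness issue arises at all, since the relevant quantity is $d_P(x)+d_P(y)\geq 2t+2\geq k$.
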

\pf
We shall show that $H$ is a clique. Suppose $x,y \in V(H)$ are not adjacent in $H$. Then $x$ and $y$ are not adjacent in $\Gamma$ either. As $G$ is edge-maximal, $|E(G)|=|E(\Gamma)|$ and $\mathcal{P}(\Gamma+xy)>\mathcal{P}(\Gamma)$, we know $\Gamma+xy$ contains a cycle of length at least $k$.
Thus, there is an $(x,y)$-path of length at least $k-1$
in $\Gamma$.

By Claim~\ref{nocut}, $\Gamma$ is $2$-connected.
Without loss of generality, we choose $x,y\in V(H)$ with $xy\notin E(H)$ such that the length of a longest $(x,y)$-path is the maximum among all possible pairs $x,y$, say $P$, with these two vertices as end-vertices in $\Gamma$.
We claim $N_H(x) \subseteq V(P)$ and $N_H(y) \subseteq V(P)$. Suppose $z \in N_H(x) $ and $z \not \in V(P)$. If $yz \in E(G)$, then we get a cycle of order at least $k+1$, a contradiction. If $y,z$ are nonadjacent, then we get two new vertices $y,z$ such that there is a longer path $yz+P$ with these two vertices as end-vertices, also a contradiction. The same argument also holds for $y$. 
  
By Lemma~\ref{Lem_Kopylov}, we get a
cycle of length at least $\min \{k-1+1,d_H(x)+d_H(y) \}\geq\min \{k,2(t+1)\} \geq k$ in $\Gamma$. According to Lemma~\ref{gao2019}, there is a cycle of length at least $k$ in $G$ as well, a contradiction.
 This proves the claim.
\qee

\begin{claim}\label{cl3}
$H$ is a clique with the maximum size in $\Gamma$.
\end{claim}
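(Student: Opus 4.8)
The plan is to show that $H$ coincides with a maximum clique of $\Gamma$. Since the preceding claim already gives that $H$ is a clique, write $|H|=m$. I would first extract a lower bound on $m$ from the disintegration itself: by the definition of $H=H(\Gamma;t)$, every vertex that survives the $t$-disintegration has degree larger than $t$ \emph{in} $H$, and because $H$ is a clique this degree equals $m-1$. Hence $m-1>t$, i.e. $m\geq t+2$. In particular, if $s$ denotes the maximum clique size in $\Gamma$, then $s\geq m\geq t+2$.

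Next I would fix a maximum clique $X$ of $\Gamma$ with $|X|=s$ and argue that $X$ is preserved entirely by the disintegration, that is $X\subseteq V(H)$. The key is a \emph{first-removal} argument: suppose for contradiction that some vertex of $X$ is deleted during the $t$-disintegration, and let $v$ be the first such vertex to be removed. At the step immediately before $v$ is deleted, every vertex of $X\setminus\{v\}$ is still present in the current graph, so the degree of $v$ at that step is at least $|X|-1=s-1\geq t+1>t$. This contradicts the rule that only vertices of degree at most $t$ are removed. Therefore no vertex of $X$ is ever deleted, and $X\subseteq V(H)$.

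Finally, combining $X\subseteq V(H)$ with the fact that $H$ is a clique gives $|H|\geq |X|=s$; conversely $s$ is the maximum clique size in $\Gamma$ and $H$ is a clique, so $|H|\leq s$. Hence $|H|=s$ and in fact $H=X$, so $H$ is a clique of maximum size in $\Gamma$. The only mildly delicate point is the first-removal step, where one must be careful that the relevant degrees are measured in the graph current at each stage of the disintegration rather than in $\Gamma$ itself; once this bookkeeping is set up correctly, the inequality $s-1>t$ does all the work and the two size comparisons are immediate.
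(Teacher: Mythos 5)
Your proof is correct and uses essentially the same idea as the paper's: a clique whose size exceeds $t+1$ survives the $t$-disintegration intact, because at the first moment any of its vertices could be removed the rest of the clique is still present, forcing its degree above $t$. The only cosmetic difference is that you argue directly with a maximum clique $X$ and conclude $X\subseteq V(H)$, whereas the paper supposes a clique $H'$ with $|H'|>|H|$ and derives the contradiction $H'\subseteq H$; the content is identical.
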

\pf
Suppose that there exists another clique, say $H'$ in $\Gamma$, such that $|H'|>|H|$. Then for any vertex $v\in V(H')$, $d_{H'}(v)\geq |H'|-1\geq |H|\geq t+2$.
As $H'$ is a clique in $\Gamma$, any vertex in $H'$ cannot be deleted in $H(\Gamma;t)$, and hence $H'\subseteq H$,
contradicting the fact that $|H'|>|H|$ . This proves the claim.
\qee

\begin{claim}\label{claim-inequality}
Let $r=|V(H)|$. Then $t+2\leq r\leq k-2$, and so $2\leq k-r\leq t.$
\end{claim}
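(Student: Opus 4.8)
The plan is to prove the two inequalities $t+2\le r$ and $r\le k-2$ separately, and then deduce $2\le k-r\le t$ by arithmetic. For the lower bound, I would invoke the defining property of the $t$-disintegration: every vertex of $H=H(\Gamma;t)$ has degree larger than $t$ \emph{within} $H$. Since Claim~\ref{Claim:H-clique} says $H$ is a clique, each of its $r$ vertices has degree exactly $r-1$ in $H$, so $r-1>t$, i.e.\ $r\ge t+2$. This step is immediate.

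For the upper bound $r\le k-2$, I would argue by contradiction: assume $r\ge k-1$ and build a cycle of length at least $k$ in $G\cong\Gamma$ (Claim~\ref{nocut}), contradicting that it is $\mathcal{C}_{\ge k}$-free. The structural inputs are that $\Gamma$ is $2$-connected and that $H$ is a \emph{proper} subgraph (there are at least two maximal cliques, so $V(\Gamma)\setminus V(H)\ne\emptyset$). Picking any external vertex $w\notin V(H)$ and applying the fan version of Menger's theorem to $w$ and the set $V(H)$ (note $|V(H)|=r\ge 2$), I obtain two paths from $w$ to $V(H)$ meeting only at $w$, ending at two \emph{distinct} vertices $x,y\in V(H)$ and otherwise avoiding $V(H)$. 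Gluing them gives an $(x,y)$-path $Q$ whose internal vertices all lie outside $H$ and number at least one. Because $H$ is a clique, it has a Hamiltonian $(x,y)$-path through all $r$ of its vertices; concatenating this with $Q$ closes a cycle through every vertex of $H$ together with the internal vertices of $Q$, hence of length at least $r+1$. If $r\ge k-1$ this cycle has length at least $k$, the desired contradiction. (One could instead feed a suitable path into Lemma~\ref{Lem_Kopylov}, but the explicit ear seems cleaner and avoids the degree bookkeeping there.)

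The hard part is exactly the construction of the ear $Q$: I must guarantee that its two endpoints are \emph{distinct} vertices of $H$ and that at least one genuinely external vertex lies between them, since this is precisely what forces the cycle length strictly above $r$. This is where $2$-connectivity enters, and it is the only place it is used in this claim; the fan lemma packages it correctly, so the obstacle is conceptual rather than computational.

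Finally, combining $t+2\le r\le k-2$ yields $k-r\ge 2$ at once, and $k-r\le k-t-2$. To get $k-r\le t$ it remains to check $k-t-2\le t$, i.e.\ $k\le 2t+2$; this holds since $t=\left\lfloor\frac{k-1}{2}\right\rfloor\ge\frac{k-2}{2}$ gives $2t+2\ge k$. Hence $2\le k-r\le t$, as claimed.
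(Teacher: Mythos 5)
Your proof is correct and follows essentially the same route as the paper: the lower bound $r\ge t+2$ from the disintegration degree condition plus Claim~\ref{Claim:H-clique}, and the upper bound via $2$-connectivity (Claim~\ref{nocut}) and a fan of two disjoint paths from an external vertex into the clique $H$, closing a cycle of length at least $r+1\ge k$. The only cosmetic difference is that you treat $r\ge k-1$ uniformly with the ear construction (justifying the external vertex via the two-maximal-cliques observation), whereas the paper dispatches $r\ge k$ immediately from the Hamilton cycle of the clique and reserves the fan argument for $r=k-1$.
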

\pf
As $H=H(\Gamma;t)$ is a clique, $r \geq t+2$. We claim that $r\leq k-2$. If $r\geq k$, then there is a cycle of length at least $k$, a contradiction.
Thus, we may assume that $r=k-1$. By Claim~\ref{nocut}, $\Gamma$ is $2$-connected, so there is a cycle of length at least $k$ in $\Gamma$, as for each vertex not in $H$, say $a$, there are two disjoint paths between $a$ and $H$. 
Then $c(G)\geq k$ by Lemma~\ref{gao2019}, a contradiction. 
So, $k-r\leq k-t-2=k-\left\lfloor\frac{k-1}{2}\right\rfloor-2\leq \left\lfloor\frac{k-1}{2}\right\rfloor=t$.
This proves the claim.
\qee

\begin{claim}\label{claim-convex}
Let $H'=H(\Gamma;k-r)$. Then $H \neq H'$.
\end{claim}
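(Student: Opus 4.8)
The plan is to argue by contradiction: assume $H=H'$ and show this forces $\Gamma$ to be a member of $\mathcal{G}^1_{n,k}$ (or to be strictly dominated in $\mathcal{P}$ by one), contradicting $G=\Gamma\notin\mathcal{G}^1_{n,k}$. Write $s:=k-r$; by Claim~\ref{claim-inequality} we have $2\le s\le t$, and since $s=k-r\le t$ the $(k-r)$-disintegration deletes no more vertices than the $t$-disintegration, so $H=H(\Gamma;t)\subseteq H(\Gamma;k-r)=H'$ holds automatically. Thus proving $H\neq H'$ amounts to ruling out $H=H'$. The key degree consequence of $H=H'$ that I would first record is that every vertex $v\in V(\Gamma)\setminus V(H)$ satisfies $|N(v)\cap V(H)|\le s$: the vertices of $H$ all survive into $H'=H$, so if $v$ had more than $s=k-r$ neighbours inside $H$, its degree would exceed $k-r$ at every stage of the $(k-r)$-disintegration, forcing $v\in H'=H$, a contradiction.

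Next I would exploit that, by Claim~\ref{nocut}, $\Gamma=G$ is a threshold graph and $H$ is a maximum clique of order $r$. For a threshold graph the complement of a maximum clique is independent and the open neighbourhoods of its vertices form a chain under inclusion (this is the nested-neighbourhood structure behind Lemma~\ref{Lem:Kelmans-structures}). Hence $V(\Gamma)\setminus V(H)$ is independent, and taking a vertex $v^\ast$ of largest degree there, every $v\in V(\Gamma)\setminus V(H)$ satisfies $N(v)\subseteq N(v^\ast)=:J\subseteq V(H)$ with $|J|\le s$. Consequently $\Gamma$ consists of the clique $H\cong K_r=K_s\vee K_{k-2s}$ together with the independent set $V(\Gamma)\setminus V(H)$, each of whose vertices is joined only to a subset of the fixed $s$-set $J\subseteq V(H)$. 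Identifying $J$ with the dominating clique $K_s$ of $W_{n,k,s}=K_s\vee((n-k+s)K_1\cup K_{k-2s})$, this says precisely that $\Gamma\subseteq W_{n,k,s}$ on the same vertex set; and since $2\le s\le t$ we have $W_{n,k,s}\in\mathcal{G}^1_{n,k}$.

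Finally I would close with feasibility. If $\Gamma=W_{n,k,s}$ then $G\in\mathcal{G}^1_{n,k}$, contradicting the standing assumption. Otherwise $\Gamma$ is a proper spanning subgraph of $W_{n,k,s}$, so adding the missing edges one at a time and applying Property~(II) of Definition~\ref{basic} repeatedly gives $\mathcal{P}(\Gamma)<\mathcal{P}(W_{n,k,s})$; but $W_{n,k,s}$ is a $2$-connected $\mathcal{C}_{\ge k}$-free graph on $n$ vertices (its circumference is $k-1$), hence a competitor, so $\mathcal{P}(W_{n,k,s})\le\mathcal{P}(G)=\mathcal{P}(\Gamma)$, a contradiction. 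Either way $H=H'$ is impossible, which proves the claim.

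I expect the main obstacle to be the structural step of the second paragraph: converting the purely numerical bound $|N(v)\cap V(H)|\le s$ into the global statement that all off-clique vertices attach inside a single common $s$-set and induce no edges among themselves. This is exactly where the threshold (Kelmans) structure of $\Gamma$ must be used in full, and one must check that the particular maximum clique $H=H(\Gamma;t)$ has independent complement; the membership $W_{n,k,s}\in\mathcal{G}^1_{n,k}$ also hinges on the exact range $2\le k-r\le t$ from Claim~\ref{claim-inequality}. If one prefers to avoid the threshold normal form, the same conclusion follows from Lemma~\ref{Ai-Ning}: a series of EKO's carries $\Gamma$ into a subgraph of $K_{p-1}\vee(K_{r-p+1}\cup(n-r)K_1)$, where the above degree bound forces the second-largest clique order $p\le s+1$, so the target is contained in $W_{n,k,s}$; Property~(I) keeps $\mathcal{P}$ non-decreasing along the EKO's and the same feasibility comparison then applies.
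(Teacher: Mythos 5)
Your opening reductions are fine: $H\subseteq H'$ is automatic since $k-r\le t$, and the bound $|N(v)\cap V(H)|\le k-r$ for every $v\notin V(H)$ is a correct consequence of $H=H'$. The genuine gap is the structural step in your second paragraph. The graph $\Gamma$ is a ``threshold graph'' only in the paper's sense, namely the result of Kelmans operations applied until no bad pair remains, and such graphs are \emph{not} classical threshold graphs: they may contain an induced $2K_2$, the complement of a maximum clique need not be independent, and the neighbourhoods of the outside vertices need not nest inside a common $s$-subset of $V(H)$ (Lemma~\ref{Lem:Kelmans-structures}(i) gives nested closed neighbourhoods only for \emph{adjacent} pairs). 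Concretely, take $\Gamma=K_2\vee(K_4\cup K_2\cup K_1)$ with $n=9$, $k=9$, so $t=4$: it is $2$-connected, $\mathcal{C}_{\geq 9}$-free, has no bad pairs, $H=H(\Gamma;4)$ is the $6$-clique, $r=6$, $k-r=3$, and $H'=H(\Gamma;3)=H$, so every hypothesis in force at this point of the proof holds; yet the two vertices of the outside $K_2$ are adjacent to each other, so $V(\Gamma)\setminus V(H)$ is not independent, there is no common set $J\subseteq V(H)$ capturing all outside neighbourhoods, and one checks that $\Gamma$ is not a subgraph of $W_{9,9,s}$ for any $2\le s\le 4$ on the same vertex set. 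Hence the containment ``$\Gamma\subseteq W_{n,k,s}$'', on which your feasibility comparison rests, simply fails. The EKO step is not an optional alternative but exactly what repairs this: it \emph{changes} the graph (absorbing such outside edges into the dominating set) while Property (I) of Definition~\ref{basic} keeps $\mathcal{P}$ non-decreasing.

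Your closing fallback is, in substance, the paper's actual proof: bound the second-largest maximal clique by $k-r+1$, apply Lemma~\ref{Ai-Ning} to carry $\Gamma$ by EKOs into a subgraph of $K_{k-r}\vee((n-r)K_1\cup K_{2r-k})=W_{n,k,k-r}\in\mathcal{G}^1_{n,k}$ (membership by Claim~\ref{claim-inequality}), and finish either by Property (II) when the result is a proper subgraph (contradicting maximality of $\mathcal{P}(G)$), or, in the equality case, by the ``Furthermore'' part of Lemma~\ref{Ai-Ning} together with Claim~\ref{nocut} to get $G\cong\Gamma\cong W_{n,k,k-r}\in\mathcal{G}^1_{n,k}$, contradicting the choice of $G$. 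One correction even there: your bound $|N(v)\cap V(H)|\le s$ does not by itself force the second-largest maximal clique $C$ to have order $p\le s+1$, since it says nothing about $|C\setminus H|$. What is needed is the deletion-time degree: the \emph{first} vertex of $C\setminus H$ removed during the $(k-r)$-disintegration still sees all of $C$ minus itself, so $p-1\le k-r$. With that substitution, your final paragraph becomes a complete and correct proof, essentially identical to the paper's.
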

\pf
Suppose $H=H'$. Note that each vertex from $V(G)\setminus V(H')$ has degree at most $k-r$. So the size of the second largest maximal clique is at most $k-r+1$. By Lemma~\ref{Ai-Ning} and Claim 2, after applying a series of EKO to $\Gamma$, $\Gamma$ becomes a proper subgraph of $K_{k-r}\vee ((n-r)K_1\cup K_{2r-k})\in\mathcal{G}^1_{n,k}$ or $G=\Gamma=K_{k-r}\vee ((n-r)K_1\cup K_{2r-k})$, then
$\mathcal{P}(\Gamma)< \mathcal{P}(K_{k-r}\vee ((n-r)K_1\cup K_{2r-k})),$ or $G\in\mathcal{G}^1_{n,k}$,
a contradiction.  This proves the claim.
\qee

\begin{claim}
$G$ contains a cycle of length at least $k$.
\end{claim}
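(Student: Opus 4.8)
The plan is to produce, inside the $2$-connected threshold graph $\Gamma\cong G$ (recall $G\cong\Gamma$ by Claim~\ref{nocut}), a path $P$ between two carefully chosen vertices $x,w$ whose endpoint degrees along $P$ sum to at least $k$ and which has at least $k-1$ edges; then Kopylov's Lemma~\ref{Lem_Kopylov} immediately yields a cycle of length at least $\min\{m+1,\,d_P(x)+d_P(w)\}\ge k$ in $\Gamma\cong G$, contradicting that $G$ is $\mathcal{C}_{\ge k}$-free. Throughout I will use that, since $G$ is edge-maximal with maximum $\mathcal{P}(G)$ among $\mathcal{C}_{\ge k}$-free graphs, adjoining any non-edge $ab$ creates a cycle of length at least $k$ that must use $ab$; hence every non-adjacent pair $a,b$ is joined in $\Gamma$ by a path with at least $k-1$ edges. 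Thus the length requirement $m\ge k-1$ is automatic, and the whole difficulty concentrates in forcing the degree sum to reach $k$.

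That degree sum is the obstacle. Recall from Claims~\ref{claim-inequality} and~\ref{claim-convex} that $H\subsetneq H'$, that every vertex of $H'=H(\Gamma;k-r)$ has degree at least $k-r+1$ inside $H'$, and that $H$ is a maximum clique of order $r$ (Claim~\ref{cl3}). The naive move of taking both endpoints in $H'$ only guarantees $d_P(x)+d_P(y)\ge 2(k-r+1)$, which is below $k$ once $r$ is large, so it fails in exactly the regime we need. My key idea is to split the budget asymmetrically: take one endpoint $x$ in the clique $H$ and the other endpoint $w$ in $H'\setminus H$. A vertex of $H$ lying on a path covering all of $H$ contributes $r-1$ to its $P$-degree, while a vertex of $H'\setminus H$ whose $H'$-neighbourhood lies on $P$ contributes $k-r+1$; the two contributions sum to precisely $k$.

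First I would fix any $w\in V(H')\setminus V(H)$ and note that $w$ cannot be adjacent to all of $H$, since $H\cup\{w\}$ would then be a clique of order $r+1$, contradicting the maximality of $H$ (Claim~\ref{cl3}); hence some $x\in V(H)$ satisfies $xw\notin E(\Gamma)$. I would then choose, among all non-adjacent pairs having one vertex in $H$ and the other in $H'\setminus H$, a pair $x,w$ whose longest joining path is as long as possible, and fix such a longest path $P$. The heart of the argument is a two-part extremality analysis of $P$, parallel to the one in Claim~\ref{Claim:H-clique}. For the first part, if some $h\in V(H)\setminus V(P)$ existed then $h$ is adjacent to $x$ (both lie in the clique $H$); if $hw\in E(\Gamma)$ the cycle $xPwhx$ has length at least $k+1$, while if $hw\notin E(\Gamma)$ the path $hxPw$ is strictly longer for the admissible pair $(h,w)$, both impossible. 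Hence $V(H)\subseteq V(P)$ and $d_P(x)\ge r-1$. For the second part, if some $z\in N_{H'}(w)\setminus V(P)$ existed then $z\notin V(H)$ (otherwise $z$ would be adjacent to $x$); if $zx\in E(\Gamma)$ the cycle $xPwzx$ has length at least $k+1$, while if $zx\notin E(\Gamma)$ the path $xPwz$ is strictly longer for the admissible pair $(x,z)$, again impossible. Hence $N_{H'}(w)\subseteq V(P)$ and $d_P(w)\ge k-r+1$.

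Combining, $d_P(x)+d_P(w)\ge (r-1)+(k-r+1)=k$ while $P$ has at least $k-1$ edges, so Lemma~\ref{Lem_Kopylov} applied to the $2$-connected graph $\Gamma$ delivers a cycle of length at least $k$, the contradiction that finishes Theorem~\ref{Thm:Main1}. I expect the main technical care to lie in the second extremality step: one must check that the escaping neighbour $z$ lands in $H'\setminus H$, so that the pair $(x,z)$ is genuinely admissible for the single maximization we set up; this is the point that keeps the asymmetric split of the degree budget consistent with one extremal choice of the pair.
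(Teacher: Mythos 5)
Your proposal is correct and follows essentially the same route as the paper: pick nonadjacent $x\in V(H)$, $w\in V(H')\setminus V(H)$ maximizing the longest joining path, show $N_H(x)\cup N_{H'}(w)\subseteq V(P)$ by the extremality of the pair, and apply Lemma~\ref{Lem_Kopylov} to get a cycle of length at least $\min\{k,(r-1)+(k-r+1)\}=k$. In fact you supply details the paper leaves implicit, namely the existence of such a nonadjacent pair (via maximality of the clique $H$) and the case analysis ensuring the escaping neighbour $z$ yields an admissible pair.
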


Thus $V(H)  \subset V(H')$ and $H \neq H'$. We select $x \in V(H) $ and $y \in V(H') \setminus V(H)$ such that $x$ and $y$ are nonadjacent and the longest path between them in $\Gamma$ contains the largest number of edges among all such pairs. Let $P$ be a longest path between $x$ and $y$, we can assert that the length of $P$ is at least $k-1$ since otherwise $\mathcal{P}(\Gamma+xy)>\mathcal{P}(\Gamma)$. Now, we claim $N_H(x) \subseteq V(P)$ and $N_{H'}(y) \subseteq V(P)$. By Claim~\ref{nocut}, $\Gamma$ is $2$-connected. Then Lemma~\ref{Lem_Kopylov} implies that there is a cycle in $\Gamma$ with length at least $\min \{k, d_P(x)+d_P(y)\} \geq \min \{k,r-1+k-r+1\}=k$. By Lemma~\ref{gao2019}, there is a cycle of length at least $k$ in $G$. We get a contradiction.
\end{proof}

A basic fact in graph theory is the following: A graph $G$ contains a path of order $k$ if and only if $G\vee K_1$ contains a cycle of order at least $k+1$. However, we cannot deduce Theorem~\ref{Thm:Main2} from Theorem \ref{Thm:Main1} with the aid of this idea. Some further discussions can be found in our last section.

\begin{proof}[Proof of Theorem \ref{Thm:Main2}]
We prove the theorem by contradiction. Let $G$ be an $n$-vertex connected graph containing no $P_{k}$ with the maximum $\mathcal{P}(G)$ but $G\notin \mathcal{G}^2_{n,k}$. 
Since $\mathcal{P}(G)$ is maximum, by Property (II), $G$ has the maximal number of edges. Adding any new edge (i.e., joining any two non-adjacent vertices) will increase the value of $\mathcal{P}(G)$. Thus, $G$ is edge-maximal. We have the following.
\setcounter{claim}{0}
\begin{claim}
For any two non-adjacent vertices $x,y\in V(G)$, there are two vertex-disjoint paths $P_1=P(x)$ and $P_2=P(y)$,
in which one has $x$ as an end-vertex and the other has $y$ as an end-vertex, and $v(P_1)+v(P_2)\geq k$.
\end{claim}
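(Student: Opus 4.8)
The plan is to mirror the structure of the just-completed proof of Theorem~\ref{Thm:Main1}, translating each cycle-based argument into its path analogue. We begin by proving the stated Claim: since $\mathcal{P}(G)$ is maximum, Property~(II) forces $G$ to be edge-maximal, so adding any missing edge $xy$ creates a copy of $P_k$. Such a path of order $k$ in $G+xy$ must use the new edge $xy$ (otherwise it already lies in $G$, contradicting $P_k$-freeness). Deleting $xy$ from this path splits it into (at most) two subpaths, one ending at $x$ and one ending at $y$, whose orders sum to at least $k$. If $xy$ is an endpoint-edge of the path, one of the two subpaths may be a single vertex, but the total order is still at least $k$, which gives exactly the two vertex-disjoint paths $P_1=P(x)$ and $P_2=P(y)$ with $v(P_1)+v(P_2)\geq k$ required by the Claim.

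With the Claim in hand, the plan is to run the Kelmans-operation reduction exactly as before. First I would apply KO to all bad pairs to pass to a threshold graph $\Gamma$, using Property~(I) to guarantee $\mathcal{P}(\Gamma)\geq\mathcal{P}(G)$, and then prove a path-analogue of Claim~\ref{nocut} asserting $G\cong\Gamma$. Here the key replacement is Lemma~\ref{A-N2} (for the single-step KO, showing a $P_k$ in $G'$ lifts to a path of order at least $k$ in $G$) together with Lemma~\ref{LiNing} (for the multi-step reduction, lifting an $(x,y)$-path through the whole KO sequence). The argument is that if $x,y$ lie in distinct maximal cliques of $\Gamma$, the Claim produces long vertex-disjoint paths in $\Gamma$, and these lift back to a $P_k$ in $G$ via these two lemmas, a contradiction unless the relevant pair was never operated on, forcing $G=\Gamma$.

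Next I would set $H=H(\Gamma;t)$ with $t=\lfloor k/2\rfloor-1$ and replay the disintegration analysis: show $H$ is nonempty (else $\Gamma$ reduces via EKO and Lemma~\ref{koless} to a subgraph of some $S_{n,s-1}$, which is $P_k$-free only if it already lies in $\mathcal{G}^2_{n,k}$, a contradiction); show $H$ is a maximum clique of order $r$ with $t+2\leq r\leq k-2$; and compare against $H'=H(\Gamma;k-1-r)$. The decisive structural input is Lemma~\ref{Ning-Ai}, the path-version of Kopylov's Lemma~\ref{Lem_Kopylov}: given two vertex-disjoint paths covering $p$ vertices with endpoints $x,y$, one obtains a path of order at least $\min\{p,d_F(x)+d_F(y)+1\}$. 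I would apply this to a maximum-length pair of vertex-disjoint paths anchored at a nonadjacent $x\in V(H)$, $y\in V(H')\setminus V(H)$, showing $N(x)$ and $N(y)$ are confined to these paths, so the degree sum is large enough to force a path of order at least $k$ in $\Gamma$, hence in $G$ by Lemma~\ref{A-N2} (iterated along the KO sequence), contradicting $P_k$-freeness. Finally, when $H=H'$, Lemma~\ref{Ai-Ning} reduces $\Gamma$ to a subgraph of $K_{k-1-r}\vee((n-r)K_1\cup K_{2r-k+1})\in\mathcal{G}^2_{n,k}$, and feasibility (II) forces equality, placing $G$ in $\mathcal{G}^2_{n,k}$.

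The main obstacle I anticipate is bookkeeping the index shift between the cycle and path settings: the circumference parameter $k$ in Theorem~\ref{Thm:Main1} corresponds to $k-1$ in the path setting (reflecting the ``$P_k$ iff $(G\vee K_1)$ has a cycle of order $\geq k+1$'' heuristic), so the disintegration threshold, the clique-size bounds, and the target family $W_{n,k-1,s}$ all carry a consistent off-by-one that must be tracked carefully. The second delicate point is that the path-lifting lemmas (Lemmas~\ref{A-N2} and \ref{LiNing}) require that the relevant path can be rerouted through $u$ and $v$, so I must verify the endpoint and interior cases of the KO exactly as in the proof of Lemma~\ref{A-N2}; unlike the cycle case, where Lemma~\ref{gao2019} gives a clean monotonicity $c(G')\leq c(G)$, the path analogue needs the per-step lifting argument applied along the whole sequence $G_0,\dots,G_h$, and ensuring no reduction step secretly destroys a long path is where the care lies.
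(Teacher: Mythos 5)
Your first paragraph proves the Claim exactly as the paper does (the paper states it as an immediate consequence of edge-maximality): by Property (II) the graph $G$ is edge-maximal, so $G+xy$ contains a copy of $P_k$, which must use the new edge $xy$, and deleting $xy$ from that path yields two vertex-disjoint paths in $G$ ending at $x$ and $y$ respectively, with orders summing to at least $k$ (including the degenerate case where one path is a single vertex). The rest of your proposal sketches the remainder of the proof of Theorem~\ref{Thm:Main2} rather than this Claim, but the Claim itself is correctly and completely established by the same argument the paper intends.
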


For any two adjacent vertices $x,y\in V(G)$, if neither $N_G(x)\subset N_G(y)$ nor $N_G(y)\subset N_G(x)$, we use the KO to get a new graph $G_{xy}=G[x\rightarrow y]$ or $G_{yx}=G[y\rightarrow x]$. By Property (I),
we have $\mathcal{P}(G_{xy})\geq \mathcal{P}(G)$
and $\mathcal{P}(G_{yx})\geq \mathcal{P}(G)$. 

After a series of KO, the procedure will stop and result in a threshold graph, denoted by $\Gamma$. In the following,
let $G:=G_0, G_1, G_2,\dots, G_h:=\Gamma$ be a sequence of graphs, such that $G_{i+1}=G_i[u_i,v_i]$, where $u_iv_i\in E(G)$
and $G_h$ is the threshold graph of $G$.
Hence $\mathcal{P}(\Gamma)\geq \mathcal{P}(G)$.
\begin{claim}\label{2.2}
If  $\Gamma\in\mathcal{G}^2_{n,k}$, then $G\cong\Gamma$.
\end{claim}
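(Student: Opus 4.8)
The plan is to imitate the argument of Claim~\ref{nocut} in the proof of Theorem~\ref{Thm:Main1}, but with the cycle tools replaced by their path analogues (Lemmas~\ref{A-N2}, \ref{LiNing} and \ref{Ning-Ai}), and to exploit the fact that the hypothesis $\Gamma\in\mathcal{G}^2_{n,k}$ pins down $\Gamma=W_{n,k-1,s}=K_s\vee((n-k+s+1)K_1\cup K_{k-2s-1})$ completely. First I would record the two structural facts that drive everything. Since $G$ is $P_k$-free, Lemma~\ref{A-N2} shows that a single Kelmans Operation cannot create a path of order $k$, so $\Gamma$ is $P_k$-free as well; and since $\mathcal{P}(\Gamma)\geq\mathcal{P}(G)$ while $G$ is the maximiser over connected $P_k$-free graphs, we get $\mathcal{P}(\Gamma)=\mathcal{P}(G)$, and then Property~(II) forces $\Gamma$ to be edge-maximal $P_k$-free. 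In particular the first claim of the present proof applies verbatim to $\Gamma$: for every non-adjacent pair $x,y$ of $\Gamma$ there are two vertex-disjoint paths $P(x),P(y)$, ending at $x$ and $y$ respectively, with $v(P(x))+v(P(y))\geq k$. Finally, Kelmans Operations preserve the edge count, so $e(G)=e(\Gamma)$; hence it suffices to prove that every non-edge of $\Gamma$ is also a non-edge of $G$, for then the non-edge sets coincide and $G=\Gamma$.

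Next I would fix a non-edge $xy$ of $\Gamma$ and use the explicit shape of $W_{n,k-1,s}$. Writing $X=K_s$ (the dominating clique), $A$ for the independent part of size $n-k+s+1$, and $C=K_{k-2s-1}$, every non-edge of $\Gamma$ is either a pair inside $A$ or a pair $\{a,c\}$ with $a\in A$ and $c\in C$. For each type I would exhibit the two disjoint paths directly: by traversing all of $C$ and all of $X$ while alternating the remaining vertices of $X$ with vertices of $A$, one obtains a path of order $k-1$ ending at one of $x,y$, while the other endpoint is left as the trivial path of order $1$; the (hypothetical) edge $xy$ then merges them into a $P_k$. This is exactly where the conditional hypothesis is needed: the single-path reasoning of Claim~\ref{nocut} is unavailable here because a longest $(x,y)$-path in $\Gamma$ only has order $k-1$, so one must work with the two-path configuration that the explicit structure supplies.

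I would then split on whether the pair $\{x,y\}$ is ever the operated pair in the sequence $G=G_0,\dots,G_h=\Gamma$. If some step has operated pair $\{u_j,v_j\}=\{x,y\}$, take the largest such $j$, restart the sequence at $G_{j+1}$, and note that $xy\in E(G_{j+1})$ while no later step has $\{x,y\}$ as its pair. Merging the two disjoint paths with this edge yields a path of order $\geq k$ in $G_{j+1}$, and Lemma~\ref{A-N2}, applied step by step along $G_0,\dots,G_{j+1}$, pushes it back to a path of order $\geq k$ in $G$, contradicting $P_k$-freeness. In the complementary case $|\{u_i,v_i\}\cap\{x,y\}|\leq 1$ for all $i$, so Lemma~\ref{LiNing} transfers the relevant connection between $x$ and $y$ back to $G$; if we had $xy\in E(G)$ we would again complete a $P_k$ in $G$, a contradiction, forcing $xy\notin E(G)$. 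Running this over all non-edges gives $E(G)=E(\Gamma)$, hence $G\cong\Gamma$.

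The step I expect to be the main obstacle is the backward transfer in the first case. Lemmas~\ref{LiNing} and \ref{A-N2} are phrased for a single path or a single $(x,y)$-path and, in the case of Lemma~\ref{LiNing}, require the source vertex of the operation to be distinct from the two tracked endpoints, whereas here I must move a \emph{pair} of vertex-disjoint paths (equivalently, a $P_k$ using the edge $xy$) backward through operations whose source may be $x$ or $y$ and which may even destroy the edge $xy$ before reaching $\Gamma$. I would circumvent this by first merging the two disjoint paths with the edge $xy\in E(G_{j+1})$ into one genuine path of order $\geq k$ \emph{inside} $G_{j+1}$, so that only Lemma~\ref{A-N2} — which transfers an entire path and imposes no condition on the source vertex — is needed for the remaining passage down to $G$. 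Verifying that such a merged path really lives in $G_{j+1}$, using that $\{x,y\}$ is not re-operated after step $j$ together with the rigid structure of $W_{n,k-1,s}$, is the delicate point of the argument.
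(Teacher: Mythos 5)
Your reduction (KO preserves edge count, so it suffices to show every non-edge of $\Gamma$ is a non-edge of $G$) and your in-$\Gamma$ constructions of the path configurations are fine, but the core step of your argument is exactly the one you leave unproved, and it is a genuine gap, not a technicality. In Case A you assert that merging ``the two disjoint paths'' with the edge $xy\in E(G_{j+1})$ yields a path of order at least $k$ in $G_{j+1}$; but those two paths were built inside $\Gamma=G_h$ using the explicit structure of $W_{n,k-1,s}$, not inside $G_{j+1}$, and you have no tool to carry them backward through the operations $G_{j+1},\dots,G_h$. Lemma~\ref{A-N2} transfers a single path but destroys all endpoint information (the path it produces in the earlier graph need not end at $x$ nor avoid $y$), while Lemma~\ref{LiNing} transfers only a single $(x,y)$-path and additionally requires the \emph{source} $u_i$ of each intermediate operation to be distinct from $x$ and $y$ --- a condition that your Case~B hypothesis $|\{u_i,v_i\}\cap\{x,y\}|\leq 1$ does not guarantee (it allows $u_i=x$, $v_i\notin\{x,y\}$). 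No lemma in the paper moves a pair of vertex-disjoint paths with prescribed ends backward through a Kelmans Operation, and you acknowledge this yourself (``the delicate point'') without closing it. Case B has the same hole: the ``relevant connection'' you need is again the two-path configuration; a single $(x,y)$-path of order $k-1$ together with $xy\in E(G)$ only gives a cycle of length $k-1$, and converting that into a $P_k$ requires a further argument you do not supply.

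The paper avoids every such long-range transfer by a purely local, one-step induction, and this is the idea your proposal is missing. It examines only the last operation $\Gamma=G_{h-1}[u\to v]$: since $G_{h-1}$ and $\Gamma$ differ only in edges incident to the operated pair, and since the definition of KO forces $N_\Gamma(u)\subset N_\Gamma(v)$, the positions of $u,v$ in $W_{n,k-1,s}$ are pinned down ($v\in B$, $u\in A\cup C$). Assuming $G_{h-1}\not\cong\Gamma$, the paper then chooses suitable vertices such as $a\in N_{G_{h-1}}(v)\cap A$ and $b\in N_{G_{h-1}}(u)\cap C$ and builds a path of order $k$ \emph{entirely inside} $G_{h-1}$, contradicting the fact that $G_{h-1}$ is $P_k$-free (which does follow from Lemma~\ref{A-N2}, since $G$ is $P_k$-free). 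Hence $G_{h-1}\cong\Gamma\in\mathcal{G}^2_{n,k}$, and repeating the same one-step argument down the sequence gives $G\cong G_1\cong\cdots\cong G_{h-1}\cong\Gamma$. In short: where you attempt a global backward transfer for which no lemma exists, the paper replaces it by a structural analysis of a single operation, where everything happens in one graph and no transfer is needed.
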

\pf  Suppose $\Gamma=W_{n,k-1,s}=K_{s}\vee ((n-k+s+1)K_1\cup K_{k-2s-1})$, where $1\leq s\leq t$. We partition $V(W_{n,k-1,s})$   into three disjoint parts $A,B,C$, such that $A$ consists of $n-k+s+1$ isolated vertices, $B$ is a clique of order $s$, and $C$ is a clique of order $k-2s-1$; moreover, $(A, B)$ is complete bipartite and $B\cup C$ is a clique of order $k-s-1$. 
  
Now, we consider $G_{h-1}$. Recall that $\Gamma=G_{h-1}[u_{h-1}, v_{h-1}]$. For simplicity, we denote $u_{h-1}$ by $u$ and $v_{h-1}$ by $v$. 
By the definition of KO, we have $N_\Gamma(u)\subset N_\Gamma(v)$ and $uv\in E(\Gamma)$.  Notice that $N_{\Gamma}(u)=N_{\Gamma}(v)$ when $u,v\in B$ or $u,v\in C$. Thus $v\in B$ and $u\in A\cup C$. Suppose $G_{h-1}\not\cong \Gamma$. Then 
$N_{G_{h-1}}(u)\cap (A\cup C)\neq\emptyset$,  
$N_{G_{h-1}}(v)\cap (A\cup C)\neq\emptyset$ and $A\cup C\subset N_{G_{h-1}}(u)\cup N_{G_{h-1}}(v)$.

Suppose that $v\in B$ and $u\in A$. We have 
 $B-v \subseteq N_{G_{h-1}}(u)\cap N_{G_{h-1}}(v)$. Suppose $N_{G_{h-1}}(v)\cap C=\emptyset$ or $N_{G_{h-1}}(u)\cap A=\emptyset$.
 Let $a\in N_{G_{h-1}}(v)\cap A$ and 
$b\in N_{G_{h-1}}(u)\cap C$. Let $G_{u,v,a}$ be the graph obtained by deleting the three vertices $u,v,a$ from $G_{h-1}$. Since $G_{u,v,a}\cong K_{s-1}\vee ((n-k+s-1)K_1\cup K_{k-2s-1})$, there is a path $P$ of order $k-3$ ending at $b$ in $G_{u,v,a}$.
We can extend $P$ from $b$ by adding $bu$, $uv$, and $va$ such that the order of $P$ now is $k$, a contradiction. The case $N_{G_{h-1}}(u)\cap C=\emptyset$ or $N_{G_{h-1}}(v)\cap A=\emptyset$ is similar, now consider $N_{G_{h-1}}(x)\cap A\neq\emptyset$ and  
 $N_{G_{h-1}}(x)\cap C\neq\emptyset$, where $x\in\{u,v\}$. Without loss of generality assume $|N_{G_{h-1}}(u)\cap A|\geq2$.
 Let $a\in N_{G_{h-1}}(v)\cap A$ and 
$b\in N_{G_{h-1}}(v)\cap C$. Let $G_{v}$ be the graph obtained by deleting $v$ from $G_{h-1}$. Notice that there are two disjoint paths $P_1$ and $P_2$ in 
$G_{v}$, where
 $P_1$ is a path of order $2s$ ending at $a$ and $P_2$ is a path of order $k-2s-1$ ending at $b$.
We can extend $P_1$  and $P_2$ 
 from $a$ and $b$, respectively, by adding $av$ and $vb$ such that the order of the new path is $k$, a contradiction.
 
Suppose that $v\in B$ and $u\in C$.
We have 
$(B\cup C)-\{u,v\} \subseteq N_{G_{h-1}}(u)\cap N_{G_{h-1}}(v)$.
Since $G_{h-1}\not\cong \Gamma$, $N_{G_{h-1}}(u)\cap A\neq\emptyset$ and  
$N_{G_{h-1}}(v)\cap A\neq\emptyset$.
Without loss of generality assume $|N_{G_{h-1}}(u)\cap A|\geq2$.
 Let $a\in N_{G_{h-1}}(v)\cap A$ and 
$b\in N_{G_{h-1}}(u)\cap A \backslash \{a\}$.
Let $G_{v,b}$ be the graph obtained by deleting the two vertices $v,b$ from $G_{h-1}$. Notice that  there are two disjoint paths $P_1$ and $P_2$ in 
$G_{v,b}$, where
 $P_1$ is a path of order $2s-1$ ending at $a$ and $P_2$ is a path of order $k-2s-1$ ending at $u\in C$.
We can extend $P_1$ and $P_2$ 
 from $a$ and $u,y$ by adding $av$, $ub$ such that the order of the new path is $k$, a contradiction.

So $G_{h-1}\cong \Gamma$. Hence  $G\cong G_1\cong \cdots \cong G_{h-1}\cong \Gamma$.
\qee

In the following, let $H=H(\Gamma;t)$.
\begin{claim}
$H$ is not empty.
\end{claim}
\pf
Suppose to the contrary that $H=\emptyset$. 
Choose a maximum clique in $\Gamma$, and denote it by $X$.   

Let $|X|=s$ and $|Y|=n-s$. Recall that $\Gamma[X]=K_s$. Since $H=\emptyset$,
$s\leq t+1$. Indeed, if $s\geq t+2$, then there is a $K_{t+2}$-clique in $\Gamma$.
After all $t$-disintegrations of $\Gamma$, the $K_{t+2}$-clique is still a $K_{t+2}$-clique
in $H$, contradicting the fact that $H=\emptyset$. This proves that $s\leq t+1$. 
By Lemma~\ref{koless}, we have $\Gamma=S_{n, s-1}$ or after applying a series of EKO to $\Gamma$, $\Gamma$ becomes a graph $\Gamma'$ which is a proper subgraph of $S_{n, s-1}$.
Note that $S_{n, t}$ is $P_{k}$-free because the longest path is of order at most $2t+1$ and $2t+1\leq k-1$. 
Since $S_{n, s-1}(s\leq t)$ and $\Gamma'$ is a proper subgraph of  $S_{n, t}$, we have  $\Gamma=S_{n, t}$; otherwise it contradicts the fact that $\mathcal{P}(G)$ is maximum. If $\Gamma=S_{n,t}$ and $k$ is odd, then $\Gamma$ is a proper subgraph of $S^+_{n,t}$, it contradicts that $\mathcal{P}(G)$ is maximum. If $\Gamma=S_{n,t}$ and $k$ is even, by Claim~\ref{2.2}, we have $G\cong\Gamma$, then a contradiction to the assumption that $G\notin \mathcal{G}^2_{n,k}$.
\qee  

\begin{claim}\label{Claim:H-clique-2}
$H$ is a clique.
\end{claim}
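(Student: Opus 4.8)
The plan is to argue by contradiction, mirroring the corresponding claim (Claim~\ref{Claim:H-clique}) in the proof of Theorem~\ref{Thm:Main1}, but replacing the single-path/cycle tool of Lemma~\ref{Lem_Kopylov} with the two-disjoint-paths tool of Lemma~\ref{Ning-Ai}; note that this matches the hypotheses, since here $\Gamma$ is only connected rather than $2$-connected. First I would record the two facts I will lean on. Since $\mathcal{P}(\Gamma)=\mathcal{P}(G)$ is maximum, $\Gamma$ is itself edge-maximal and $P_k$-free, the latter because a $P_k$ in $\Gamma$ would pull back, through the sequence of KOs and Lemma~\ref{A-N2}, to a path of order at least $k$ in $G$. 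Moreover, by the definition of the $t$-disintegration, every vertex of $H=H(\Gamma;t)$ has degree at least $t+1$ inside $H$.

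Suppose $H$ is not a clique, so there are non-adjacent $x,y\in V(H)$ (hence non-adjacent in $\Gamma$). By edge-maximality $\Gamma+xy$ contains a $P_k$, and this path must use the edge $xy$; deleting that edge splits it into two vertex-disjoint paths of $\Gamma$ having $x$ and $y$ as respective endpoints and with orders summing to $k$. This is exactly the content of the first claim, now applied to $\Gamma$. Among all non-adjacent pairs in $V(H)$ together with all pairs of vertex-disjoint paths $P_1=P(x)$, $P_2=P(y)$ having $x,y$ as endpoints, I would then fix a configuration maximizing $p:=v(P_1)+v(P_2)$; by the previous sentence $p\geq k$.

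The heart of the argument, and the step I expect to be the main obstacle, is to show that for this extremal configuration $N_H(x)\subseteq V(P_1)\cup V(P_2)$ and, symmetrically, $N_H(y)\subseteq V(P_1)\cup V(P_2)$. Suppose some $z\in N_H(x)$ lies outside $V(P_1)\cup V(P_2)$. If $zy\in E(\Gamma)$, then concatenating $P_1$ traversed to end at $x$, the edge $xz$, the edge $zy$, and $P_2$ produces a single path of order $p+1\geq k+1$ in $\Gamma$, already a contradiction. If $zy\notin E(\Gamma)$, then $z,y$ is a non-adjacent pair in $V(H)$ witnessed by the vertex-disjoint paths $zP_1$ (of order $v(P_1)+1$) and $P_2$, of total order $p+1>p$, contradicting the maximality of $p$. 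The symmetric dichotomy applied to $y$ finishes both inclusions. Some care is needed to verify vertex-disjointness throughout and to check that the newly formed pairs still lie in $V(H)$, which holds because $N_H(\cdot)\subseteq V(H)$.

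Finally I would invoke Lemma~\ref{Ning-Ai} on the connected graph $\Gamma$. The inclusions just proved give $d_F(x)=|N_\Gamma(x)\cap(V(P_1)\cup V(P_2))|\geq d_H(x)\geq t+1$ and likewise $d_F(y)\geq t+1$, so $d_F(x)+d_F(y)+1\geq 2t+3$. Hence $\Gamma$ contains a path of order at least $\min\{p,\,2t+3\}$. Since $t=\lfloor k/2\rfloor-1$ yields $2t+3\geq k$ for both parities of $k$, and $p\geq k$, this is a path of order at least $k$ in $\Gamma$, which by Lemma~\ref{A-N2} produces a $P_k$ in $G$, the desired contradiction. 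Therefore $H$ must be a clique.
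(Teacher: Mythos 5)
Your proof is correct and follows essentially the same route as the paper's: choose a non-adjacent pair $x,y\in V(H)$ together with vertex-disjoint $x$- and $y$-paths maximizing the total order, show $N_H(x)$ and $N_H(y)$ lie on the paths via the same adjacent/non-adjacent dichotomy, and finish with Lemma~\ref{Ning-Ai} using $d_H(x),d_H(y)\geq t+1$ and $2t+3\geq k$. The only difference is that you explicitly justify transferring Claim~1 from $G$ to $\Gamma$ (via edge-maximality of $\Gamma$ and Lemma~\ref{A-N2}), a point the paper's write-up glosses over; this is a welcome clarification, not a different argument.
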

\pf
We shall show that $H$ is a clique. Suppose $x,y \in V(H)$ are not adjacent in $H$.  By Claim 1, there are an $x$-path and a $y$-path such that the sum of their orders is at least $k$.  We choose such $x,y\in V(H)$ that an $x$-path $P_1$ and a $y$-path $P_2$ satisfies that $P_1$ and $P_2$ are vertex-disjoint and $|P_1|+|P_2|$
is maximum in $H$. We claim $N_H(x) \subset V(P_1\cup P_2)$ and $N_H(y) \subset V(P_1\cup P_2)$. Suppose $z \in N_H(x) $ and $z \not \in V(P_1\cup P_2)$. If $\{y,z\} \in E(G)$, then we get a path of order at least $k+1$, a contradiction. If $y$ and $z$ are nonadjacent, it is a contradiction to the choice of $P_1$. The same argument also holds for $y$. By Lemma~\ref{Ning-Ai}, we get a path of order at least $\min \{k,d_H(x)+d_H(y)+1 \}\geq \min \{k,2(t+1)+1\} \geq k$, a contradiction. This proves Claim \ref{Claim:H-clique-2}.
\qee
  
By a similar argument to the one for the cycle above, we can get the following claim. 
\begin{claim}
    $H$ is a clique with the maximum size in $\Gamma$.
\end{claim}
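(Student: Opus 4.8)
The plan is to argue by contradiction, in close parallel with Claim~\ref{cl3} from the proof of Theorem~\ref{Thm:Main1}. Suppose there were another clique $H'$ in $\Gamma$ with $|H'|>|H|$. The first preparatory step is to record a lower bound on $|H|$: since $H=H(\Gamma;t)$ is obtained by repeatedly deleting vertices of degree at most $t$, every vertex that survives has degree strictly greater than $t$ in the final graph. As $H$ is a clique by Claim~\ref{Claim:H-clique-2}, each of its vertices has degree exactly $|H|-1$ inside $H$, so $|H|-1>t$, i.e.\ $|H|\geq t+2$.

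The core of the argument is then to show that no vertex of $H'$ can ever be deleted during the $t$-disintegration, whence $H'\subseteq H(\Gamma;t)=H$. For any $v\in V(H')$ one has $d_{H'}(v)=|H'|-1\geq |H|\geq t+2>t$. Because $H'$ is a clique, its vertices protect one another round by round: at the first deletion step every vertex of $H'$ still has at least $|H'|-1>t$ neighbours (all lying in $H'$), so none is removed; and inductively, as long as no vertex of $H'$ has yet been deleted, each retains degree at least $|H'|-1>t$ and therefore survives the current round. Consequently $H'\subseteq H$, contradicting $|H'|>|H|$, and this contradiction establishes that $H$ is a clique of maximum size in $\Gamma$.

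I do not expect a genuine obstacle here; the statement is the path-analogue of the already proved Claim~\ref{cl3}, and the mechanism is identical. The only points demanding a little care are the preliminary bound $|H|\geq t+2$, which is exactly what makes the strict inequality $|H'|-1>t$ available, and the inductive observation that the vertices of a clique never lose enough mutual degree to fall to $t$ or below during disintegration. Both are routine once stated precisely.
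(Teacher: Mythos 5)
Your proof is correct and follows exactly the route the paper intends: the paper proves this claim by citing the argument of Claim~\ref{cl3} from the cycle case, which is precisely your contradiction argument that any larger clique $H'$ would have all degrees $|H'|-1\geq |H|\geq t+2>t$ and hence survive the $t$-disintegration, forcing $H'\subseteq H$. Your added details (the bound $|H|\geq t+2$ and the induction showing clique vertices protect one another through the disintegration rounds) are just explicit versions of what the paper leaves implicit.
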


\begin{claim}\label{claim-inequality-2}
Let $r-1=|V(H)|$. Then $t+3\leq r \leq k-1$. So, $1\leq k-r\leq t$.
\end{claim}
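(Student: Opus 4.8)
The plan is to bound $|V(H)| = r-1$ on both sides, following the template of the cycle argument in Claim~\ref{claim-inequality} but replacing its cycle-closing step by a one-vertex path extension. For the lower bound I would invoke Claim~\ref{Claim:H-clique-2}, which gives that $H = H(\Gamma;t)$ is a clique. Since by construction every vertex surviving the $t$-disintegration has degree strictly greater than $t$ in the final graph $H$, and $H \cong K_{r-1}$ so that each of its vertices has degree $r-2$, we get $r-2 \geq t+1$, that is $r \geq t+3$ (equivalently $|V(H)| \geq t+2$).

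For the upper bound $r \leq k-1$, equivalently $|V(H)| \leq k-2$, I would argue by contradiction, recalling first that $\Gamma$ is $P_k$-free: the Kelmans operations taking $G$ to $\Gamma$ cannot create a path longer than the longest path of $G$ by Lemma~\ref{A-N2}, so a $P_k$ inside $\Gamma$ would pull back to a $P_k$ in $G$. If $|V(H)| \geq k$, the clique $H$ already contains a path on $k$ vertices, a contradiction. So suppose $|V(H)| = k-1$. Since $n \geq k > k-1 = |V(H)|$, the set $V(\Gamma) \setminus V(H)$ is nonempty, and because $\Gamma$ is connected there is an edge $wa$ with $w \in V(H)$ and $a \notin V(H)$. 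The clique $H$ has a Hamiltonian path $w = v_1 v_2 \cdots v_{k-1}$ of order $k-1$ starting at $w$, and prepending $a$ along $aw$ yields the path $a v_1 v_2 \cdots v_{k-1}$ of order $k$ in $\Gamma$, again a contradiction. Hence $|V(H)| \leq k-2$, i.e. $r \leq k-1$.

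Combining $t+3 \leq r \leq k-1$ then gives $k-r \geq 1$ immediately, while $k-r \leq k-(t+3) = k-t-3$. Substituting $t = \lfloor k/2\rfloor - 1$ shows that $k-t-3$ equals $t$ when $k$ is odd and $t-1$ when $k$ is even, so in all cases $k-r \leq t$, which yields the asserted $1 \leq k-r \leq t$.

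The only delicate point is the case $|V(H)| = k-1$ in the upper bound. In the cycle analogue one needed $2$-connectivity and a pair of internally disjoint paths from an external vertex into $H$ in order to close a cycle of length at least $k$; here the corresponding task is strictly easier, since extending a Hamiltonian path of the clique by a single boundary edge already produces $P_k$, so the mere connectivity of $\Gamma$ suffices. I would therefore take care only to justify cleanly, from $n \geq k$ and the connectivity of $\Gamma$, the existence of the external vertex $a$ and of the boundary edge $wa$.
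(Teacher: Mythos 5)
Your proof is correct and follows essentially the same route as the paper's: the lower bound $r\geq t+3$ from the clique structure and the degree threshold of the $t$-disintegration, the upper bound by extending a Hamiltonian path of the clique $H$ through one boundary edge guaranteed by connectivity of $\Gamma$, the pull-back to $G$ via Lemma~\ref{A-N2}, and the final arithmetic with $t=\lfloor k/2\rfloor-1$. The only difference is that you spell out the case split $|V(H)|\geq k$ versus $|V(H)|=k-1$ and the parity check, which the paper leaves implicit.
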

\pf
As $H=H(\Gamma;t)$ is a clique, $r \geq t+3$. We claim that $r\leq k-1$.
Suppose that $r\geq k$. There is a path of order $k-1$ in $H$. Furthermore, there is a $P_k$ in $\Gamma$ since $\Gamma$ is connected, then there is a $P_k$ in $G$, a contradiction. So, $k-r\leq k-t-3=k-\left\lfloor\frac{k}{2}\right\rfloor-2\leq \left\lfloor\frac{k}{2}\right\rfloor-1=t$.
This proves Claim \ref{claim-inequality-2}.
\qee

\begin{claim}\label{claim-convex-2}
Let $H'=H(\Gamma;k-r)$. Then $H \neq H'$.
\end{claim}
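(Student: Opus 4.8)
The plan is to argue by contradiction, in close parallel with the corresponding step (Claim~\ref{claim-convex}) in the proof of Theorem~\ref{Thm:Main1}. First I would record the trivial inclusion: since $k-r\le t$ by Claim~\ref{claim-inequality-2}, deleting vertices of degree at most $k-r$ is a weaker operation than deleting those of degree at most $t$, so by the usual monotonicity of iterated low-degree deletion the $(k-r)$-disintegration retains at least as many vertices as the $t$-disintegration; hence $V(H)\subseteq V(H')$, and it suffices to rule out $H=H'$. So I would assume $H=H'$ and aim to force $\Gamma$ (possibly after some EKO) into the family $\mathcal{G}^2_{n,k}$, which contradicts either the maximality of $\mathcal{P}(G)$ or the standing assumption $G\notin\mathcal{G}^2_{n,k}$.

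The decisive step is to bound the order $p$ of the second largest maximal clique of $\Gamma$ by $p\le k-r+1$, and this is the only place where the hypothesis $H=H'$ is genuinely used. Writing $X=H$ for the maximum clique (so $|X|=r-1$ by the preceding claim), suppose some maximal clique $Q\ne X$ had $|Q|\ge k-r+2$. Then every vertex of $Q$ has degree at least $|Q|-1\ge k-r+1>k-r$, and this stays true in every intermediate graph of the $(k-r)$-disintegration as long as $Q$ is intact; by induction on the deletion order no vertex of $Q$ is ever removed, so $Q\subseteq H'=H=X$, contradicting the maximality of $Q$. Hence $p\le k-r+1$.

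With $p\le k-r+1$ established, I would invoke Lemma~\ref{Ai-Ning} with maximum clique order $s=r-1$: after a series of EKO, $\Gamma$ becomes a subgraph of $K_{p-1}\vee(K_{s-p+1}\cup(n-s)K_1)$. A short bookkeeping check, using that $(p-1)+(s-p+1)=r-1=(k-r)+(2r-k-1)$ is preserved and that $p-1\le k-r$, shows this graph embeds as a subgraph of $K_{k-r}\vee((n-r+1)K_1\cup K_{2r-k-1})=W_{n,k-1,k-r}$, which lies in $\mathcal{G}^2_{n,k}$ precisely because $1\le k-r\le t$ (Claim~\ref{claim-inequality-2}). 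To conclude I would split into two cases exactly as in Claim~\ref{claim-convex}: if at least one EKO is performed (equivalently, the resulting graph is a proper subgraph of $W_{n,k-1,k-r}$), then feasibility — Property~(I) for the EKO steps together with repeated Property~(II) for the missing edges, all of whose endpoints already lie in $V(G)$ — yields $\mathcal{P}(G)\le\mathcal{P}(\Gamma)<\mathcal{P}(W_{n,k-1,k-r})$, contradicting maximality since $W_{n,k-1,k-r}$ is a connected $n$-vertex $P_k$-free graph; and if no EKO is needed, then the ``furthermore'' clause of Lemma~\ref{Ai-Ning} forces $\Gamma=W_{n,k-1,k-r}\in\mathcal{G}^2_{n,k}$, so Claim~\ref{2.2} gives $G\cong\Gamma\in\mathcal{G}^2_{n,k}$, again a contradiction.

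I expect the clique-order bound $p\le k-r+1$ to be the conceptual crux, as it is the sole point that exploits $H=H'$; the remaining work is routine, namely verifying the subgraph embedding $K_{p-1}\vee(K_{s-p+1}\cup(n-s)K_1)\subseteq W_{n,k-1,k-r}$ and checking the index range, both of which follow directly from Claim~\ref{claim-inequality-2}, Claim~\ref{2.2}, and Lemma~\ref{Ai-Ning}.
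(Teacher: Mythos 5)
Your proof is correct and follows essentially the same route as the paper: assume $H=H'$, bound the second largest maximal clique of $\Gamma$ by $k-r+1$, apply Lemma~\ref{Ai-Ning} to embed (after EKO) into $K_{k-r}\vee((n-r+1)K_1\cup K_{2r-k-1})=W_{n,k-1,k-r}$, and conclude via feasibility in the proper-subgraph case and via the ``furthermore'' clause plus Claim~\ref{2.2} in the equality case. The only difference is cosmetic: you justify the clique bound by a clique-survival argument under $(k-r)$-disintegration, which is in fact a more careful justification than the paper's one-line degree assertion, but the logical skeleton is identical.
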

\pf
Suppose $H=H'$. Notice that each vertex from $V(G)\setminus V(H')$ has degree at most $k-r$ in $\Gamma$. So the size of the second largest maximal clique is at most $k-r+1$. By Lemma~\ref{Ai-Ning} and Claim~\ref{2.2}, $\Gamma$ is a proper subgraph of $K_{k-r}\vee ((n-r+1)K_1\cup K_{2r-k-1})$, then
$\mathcal{P}(\Gamma)< \mathcal{P}(K_{k-r}\vee ((n-r+1)K_1\cup K_{2r-k-1})),$
a contradiction.  This proves the claim.
\qee

Next, we show that
$G$ contains a path of order at least $k$. By Claim \ref{claim-convex-2}, $V(H)  \subset V(H')$ and $H \neq H'$. We select $x \in V(H) $ and $y \in V(H') \setminus V(H)$ such that $x$ and $y$ are nonadjacent. By Claim 1, there are an $x$-path and a $y$-path such that the sum of their orders is at least $k$.  We choose such $x,y\in V(H)$ that an $x$-path $P_1$ and a $y$-path $P_2$ satisfies that $P_1$ and $P_2$ are vertex-disjoint and $|P_1|+|P_2|$
is maximum in $H$. We claim $N_H(x) \subset V(P_1\cup P_2)$ and $N_{H'}(y) \subset V(P_1\cup P_2)$ by a similar discussion in Claim $3$. By Lemma~\ref{Ning-Ai}, we get a path of order at least $\min \{k, d_H(x)+d_{H'}(y)\} \geq \min \{k,r-2+k-r+2\}=k$, a contradiction. This proves the theorem.
\end{proof}

To prove Theorem \ref{Thm:Matching}, we need the following lemma.
\begin{lem}[Bondy-Chv\'atal \cite{BC75}]\label{bc}
Let $G$ be a graph on $n$ vertices. For any two nonadjacent vertices $u, v\in V(G)$, if whenever
$\mu(G+uv)=k+1$ and $d_G(u)+d_G(v)\geq 2k+1$, then $\mu(G)=k+1$.
\end{lem}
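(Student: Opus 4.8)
The plan is to prove the statement by contradiction via an augmenting argument on a maximum matching. Since $G$ is a subgraph of $G+uv$ we always have $\mu(G)\le \mu(G+uv)=k+1$, so it suffices to rule out $\mu(G)\le k$ and conclude $\mu(G)=k+1$. Assume then that $\mu(G)\le k$, and fix a maximum matching $M'$ of $G+uv$ with $|M'|=k+1$. If $M'$ avoided the edge $uv$ it would already be a matching of size $k+1$ lying entirely in $G$, contradicting $\mu(G)\le k$; hence $M'$ must use $uv$. Deleting this edge leaves a matching $M:=M'-uv$ of size $k$ in $G$ that covers neither $u$ nor $v$.

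First I would record the structural consequence of the maximality of $M$ in $G$: every $G$-neighbour of $u$ and every $G$-neighbour of $v$ must be covered by $M$, for an uncovered neighbour $w$ of $u$ (resp.\ $v$) would let us adjoin $uw$ (resp.\ $vw$) to obtain a matching of size $k+1$ in $G$. Since $u$ and $v$ are nonadjacent, it follows that $N_G(u)\cup N_G(v)$ is contained in the $2k$-element vertex set $V(M)$ spanned by the $k$ edges of $M$.

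The heart of the argument is a pigeonhole count across the edges of $M$. Writing these edges as $a_ib_i$ for $1\le i\le k$, the covering property together with the degree hypothesis gives
\[
\sum_{i=1}^{k}\bigl(|N_G(u)\cap\{a_i,b_i\}|+|N_G(v)\cap\{a_i,b_i\}|\bigr)=d_G(u)+d_G(v)\ge 2k+1,
\]
so some edge $a_ib_i$ satisfies $|N_G(u)\cap\{a_i,b_i\}|+|N_G(v)\cap\{a_i,b_i\}|\ge 3$. On such an edge one of $u,v$ is adjacent to both endpoints while the other is adjacent to at least one endpoint, which (after possibly swapping the roles of $u,v$ and of $a_i,b_i$) yields $ua_i\in E(G)$ and $vb_i\in E(G)$. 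I would then augment: because $u,v\notin V(M)$, the two edges $ua_i$ and $vb_i$ are vertex-disjoint from each other and from $M-\{a_ib_i\}$, so $M-\{a_ib_i\}+\{ua_i,vb_i\}$ is a matching of size $k+1$ in $G$, contradicting $\mu(G)\le k$.

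I expect the only delicate point to be the pigeonhole step: the crucial feature is that a combined count of at least $3$ on a single matching edge genuinely produces a \emph{crossing} pair $ua_i,vb_i$ rather than two edges sharing one endpoint. This is exactly where the strict bound $2k+1>2k=|V(M)|$ is used, since a combined count of only $2$ could be realized by $u$ and $v$ both attaching to the same endpoint of some $a_ib_i$, which admits no augmentation; a count of $3$ forces one of $u,v$ to meet both endpoints and thereby eliminates this degeneracy. Once the covering property of $N_G(u)\cup N_G(v)$ is in hand, verifying the crossing pair in each of the cases $(2,1)$, $(1,2)$, $(2,2)$ is a short finite check.
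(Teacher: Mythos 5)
Your proof is correct. Note that the paper itself gives no proof of this lemma at all: it is imported as a black box with a citation to Bondy--Chv\'atal \cite{BC75}, so there is nothing internal to compare against; your argument is in fact the classical closure argument from that source. All the steps check out: a maximum matching $M'$ of $G+uv$ must use $uv$ (else $\mu(G)=k+1$ already), so $M=M'-uv$ is a size-$k$ matching of $G$ missing both $u$ and $v$; maximality of $M$ in $G$ forces $N_G(u)\cup N_G(v)\subseteq V(M)$ (nonadjacency of $u,v$ is exactly what rules out $v\in N_G(u)$ here); the count $d_G(u)+d_G(v)\geq 2k+1$ over the $k$ matching edges produces an edge $a_ib_i$ with combined incidence at least $3$, which forces one of $u,v$ to see both endpoints and hence yields a crossing pair $ua_i,vb_i\in E(G)$; and since $u,v\notin V(M)$, the swap $M-\{a_ib_i\}+\{ua_i,vb_i\}$ is a genuine matching of size $k+1$ in $G$, the desired contradiction. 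Your closing remark correctly identifies the only delicate point: a combined count of $2$ on an edge (both $u$ and $v$ attached to the same endpoint) admits no augmentation, which is precisely why the hypothesis $2k+1>|V(M)|$ cannot be weakened to $2k$.
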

Though the following proof looks similar to the above one, we give the details as there are several differences somewhere that are important. 
\begin{proof}[Proof of Theorem \ref{Thm:Matching}]
The case $n=2k+1$ is trivial, so we only consider the case $n\geq 2k+2$. We prove the theorem by contradiction. Let $G$ be an $n$-vertex connected graph containing no $M_{k+1}$ with the maximum $\mathcal{P}(G)$ but $G\notin \mathcal{G}^3_{n,k}$. 
Since $\mathcal{P}(G)$ is maximum, by Property (II), $G$ has the maximal number of edges. Adding any new edge (i.e., joining any two non-adjacent vertices) will increase the value of $\mathcal{P}(G)$. Thus, $G$ is edge-maximal. We have the following.

\setcounter{claim}{0}
\begin{claim}
For any two non-adjacent vertices $x,y\in V(G)$, $G+xy$ has a matching of size $k+1$,
i.e., $\mu(G+xy)=k+1$.
\end{claim}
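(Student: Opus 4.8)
The plan is to derive this claim immediately from the edge-maximality of $G$ established just above, combined with the elementary fact that inserting a single edge can raise the matching number by at most one. The crucial point is that $G$ is chosen to maximize the feasible parameter $\mathcal{P}$ over \emph{all} connected $M_{k+1}$-free $n$-vertex graphs; and since $G$ is connected, $G+xy$ is connected for every pair of non-adjacent vertices $x,y$. So adding an edge keeps us inside the class of connected $n$-vertex graphs, and the only way to avoid contradicting maximality is for the new graph to leave the $M_{k+1}$-free family.

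First I would establish the lower bound $\mu(G+xy)\geq k+1$. For any non-adjacent $x,y\in V(G)$, Property (II) of feasibility gives $\mathcal{P}(G+xy)>\mathcal{P}(G)$. If $G+xy$ were still $M_{k+1}$-free, then $G+xy$ would be a connected, $M_{k+1}$-free, $n$-vertex graph with strictly larger parameter value, contradicting the choice of $G$ as a maximizer. Hence $G+xy$ must contain a matching of $k+1$ independent edges, that is, $\mu(G+xy)\geq k+1$.

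Next I would supply the matching upper bound. Because $G$ itself contains no $M_{k+1}$, we have $\mu(G)\leq k$; and since adding one edge to any graph increases the size of a maximum matching by at most one, $\mu(G+xy)\leq \mu(G)+1\leq k+1$. Combining the two inequalities forces $\mu(G+xy)=k+1$ exactly, which is the claim (and, as a byproduct, pins down $\mu(G)=k$).

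I do not anticipate any genuine obstacle here: the statement is purely a bookkeeping consequence of how $G$ was selected, and the only ingredients are Property (II), the connectivity of $G+xy$, and the ``one edge changes the matching number by at most one'' bound. The substantive machinery that follows---reducing $G$ to a threshold graph through Kelmans operations and then invoking Lemma~\ref{bc}---\emph{uses} this claim as its starting hypothesis rather than being needed to prove it, so this step can be dispatched cleanly before the real structural analysis begins.
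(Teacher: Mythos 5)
Your proof is correct and takes essentially the same route as the paper: the paper also derives this claim directly from the edge-maximality of $G$ forced by Property (II) (any added edge strictly increases $\mathcal{P}$, so $G+xy$ cannot remain $M_{k+1}$-free), with the exact value $\mu(G+xy)=k+1$ following from $\mu(G)\leq k$ and the fact that one edge raises the matching number by at most one. Your explicit statement of the upper bound is a harmless elaboration of what the paper leaves implicit.
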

For any two adjacent vertices $x,y\in V(G)$, if neither  $N_G(x)\subset N_G(y)$ nor $N_G(y)\subset N_G(x)$, we use KO to get a new graph $G_{xy}=G[x\rightarrow y]$ or $G_{yx}=G[y\rightarrow x]$. By Property (I),
we have $\mathcal{P}(G_{xy})\geq \mathcal{P}(G)$
and $\mathcal{P}(G_{yx})\geq \mathcal{P}(G)$. 
After a series of Kelmans Operations, the procedure will stop and result in a threshold graph, denoted by $\Gamma$. In the following,
let $G:=G_0, G_1, G_2,\dots, G_h:=\Gamma$ be a sequence of graphs, such that $G_{i+1}=G_i[u_i,v_i]$, where $u_iv_i\in E(G)$
and $G_h$ is the threshold graph of $G$.
Hence $\mathcal{P}(\Gamma)\geq \mathcal{P}(G)$.

\begin{claim}\label{3.2}
    If  $\Gamma\in\mathcal{G}^3_{n,k}$, then $G\cong\Gamma$.
\end{claim}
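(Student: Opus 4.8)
The plan is to copy the scheme of Claim~\ref{2.2}: strip off the last Kelmans operation, prove it was either vacuous or a mere relabelling, and then run the same argument down the chain $G=G_0,\dots,G_h=\Gamma$. Write $\Gamma=K_s\vee((n-2k+s-1)K_1\cup K_{2k-2s+1})$ and split $V(\Gamma)=A\cup B\cup C$, with $A$ the independent set of $n-2k+s-1$ vertices, $B=K_s$ joined to everything, and $C=K_{2k-2s+1}$ with $B\cup C$ a clique. With $\Gamma=G_{h-1}[u\to v]$, the relation $N_\Gamma(u)\subset N_\Gamma(v)$ together with $uv\in E(\Gamma)$ forces (exactly as in Claim~\ref{2.2}, since vertices of a common part have equal neighbourhoods and $A,C$ are non-adjacent) that $v\in B$ and $u\in A\cup C$. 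Let $T=N_{G_{h-1}}(u)\setminus N_{G_{h-1}}[v]$ be the set transferred by the operation; then in $G_{h-1}$ the vertex $u$ is adjacent to $T$ and $v$ is adjacent to everything except $T$, while all other adjacencies agree with $\Gamma$.

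The benchmark is $\nu(\Gamma)=k$, realized by matching $B$ into $A$ (possible since $|A|=n-2k+s-1\ge s+1$, using $n\ge 2k+2$) and taking a near-perfect matching of the odd clique $C$. Assume $G_{h-1}\not\cong\Gamma$, so $T\neq\emptyset$. First I would dispose of the \emph{saturated} case: if $T$ is as large as possible (namely $T=A$ when $u\in C$, and $T=(A\setminus\{u\})\cup C$ when $u\in A$), then $u$ becomes universal and $v$ drops to degree $s$, and collecting the $s$ universal vertices, the clique of order $2k-2s+1$, and the independent remainder exhibits $G_{h-1}$ as a relabelled copy of $\Gamma$, contradicting $G_{h-1}\not\cong\Gamma$. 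In the remaining regime $T$ is a proper nonempty subset and I would augment the size-$k$ matching by the transferred edge. For $u\in C$ (so $\emptyset\neq T\subsetneq A$): pick $w\in T$ and $a_0\in A\setminus T$, and take $uw$, the edge $va_0$, a matching of $B\setminus\{v\}$ into $A\setminus\{w,a_0\}$, and a perfect matching of $C\setminus\{u\}$, for $1+1+(s-1)+(k-s)=k+1$ edges. The cases $u\in A$ are analogous: if some $w\in T$ lies in $C$ one routes $u$ into $C$ through $uw$ and perfectly matches $C\setminus\{w\}$; if $T\subseteq A$ one uses $uw$ with $w\in A$, matches the other $B$-vertices into $A$, and sends $v$ into the clique $C$. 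In every case the slack $|A|\ge s+1$ is exactly what supplies the extra edge, so $\nu(G_{h-1})\ge k+1$.

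To turn this into a contradiction I would use that a single Kelmans operation cannot increase the matching number, whence $\nu(G)\ge\nu(G_1)\ge\cdots\ge\nu(G_{h-1})\ge k+1$, contradicting that $G$ is $M_{k+1}$-free. This monotonicity is immediate: a $(k{+}1)$-matching of $G_i[u_i\to v_i]$ uses at most one transferred edge $v_iw$ (they all meet $v_i$), and that edge reroutes inside $G_i$ to $u_iw$ if $u_i$ is unmatched, or to the pair $\{u_iw,v_iz\}$ if $u_i$ is matched to $z$ (here $z\in N_{G_i}[v_i]$, so $v_iz\in E(G_i)$), preserving the size; alternatively one may invoke Lemma~\ref{bc}. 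Therefore $G_{h-1}\cong\Gamma$. Feeding $G_{h-1}\cong\Gamma\in\mathcal{G}^3$ back into the same argument applied to $G_{h-2}[u_{h-2}\to v_{h-2}]=G_{h-1}$ gives $G_{h-2}\cong\Gamma$, and iterating yields $G\cong G_1\cong\cdots\cong\Gamma$, proving the claim.

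The main obstacle is the matching construction of the second paragraph: organizing the subcases by the location of $T$ and, crucially, separating the saturated configurations (where $u$ and $v$ merely swap roles and $G_{h-1}$ is secretly isomorphic to $\Gamma$) from the generic ones (where the slack produces $M_{k+1}$); getting this dichotomy wrong would manufacture a false contradiction. One must also check that each matching edge genuinely survives in $G_{h-1}$ — since $G_{h-1}$ and $\Gamma$ differ only at the incidences of $u$ and $v$, the partner of $v$ has to be chosen outside $T$ — and the tightest instance is $n=2k+2$, where $M_{k+1}$ is a perfect matching and $|A|=s+1$ leaves the least room.
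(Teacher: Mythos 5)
You follow the paper's own route: strip the last Kelmans operation $\Gamma=G_{h-1}[u\to v]$, force $v\in B$ and $u\in A\cup C$, build an $M_{k+1}$ in $G_{h-1}$ unless it is a relabelled copy of $\Gamma$, use monotonicity of the matching number under KO to contradict $M_{k+1}$-freeness of $G$, and iterate down the chain. Two things you do are genuine improvements on the paper's write-up: you actually prove the monotonicity $\nu(G_i[u_i\to v_i])\leq\nu(G_i)$, which the paper uses silently, and you isolate the saturated configurations in which $G_{h-1}\cong\Gamma$ with $u$ and $v$ exchanging roles. The latter is necessary, not decorative: the paper's first subcase tacitly needs a vertex $a\in(N_{G_{h-1}}(v)\cap A)\setminus\{u\}$, which exists precisely because the saturated case has been ruled out. (Minor slip there: when $u\in C$ and $T=A$, the vertex $v$ ends up with degree $2k-s$, not $s$ --- it takes over the role of a $C$-vertex --- but your structural conclusion ``relabelled copy of $\Gamma$'' is still correct.)

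The genuine gap is in your generic case for $u\in A$. Your subcase ``some $w\in T$ lies in $C$: route $u$ into $C$ through $uw$ and perfectly match $C\setminus\{w\}$'' fails when additionally $A\setminus\{u\}\subseteq T$ (and $C\not\subseteq T$, else $T$ is saturated), i.e.\ when $T=(A\setminus\{u\})\cup C'$ with $C'$ a nonempty proper subset of $C$; this $T$ is nonempty and non-saturated, and it is not covered by your other subcase ``$T\subseteq A$'' either. After taking $uw$ (with $w\in C'$) and a perfect matching of $C\setminus\{w\}$, the unmatched vertices are $B\cup(A\setminus\{u\})$, and in $G_{h-1}$ every edge among them meets $B\setminus\{v\}$: each vertex of $A\setminus\{u\}$ lies in $T$ and so has lost its edge to $v$, while $v$'s surviving neighbours $u$ and $C\setminus C'$ are already used. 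Hence at most $s-1$ further edges can be added, and any completion of your construction caps at $(1+k-s)+(s-1)=k$ edges. A concrete instance: $k=2$, $s=1$, $n=6$, $A=\{u,a\}$, $B=\{v\}$, $C=\{c_1,c_2,c_3\}$, $T=\{a,c_1\}$. The configuration does contain an $M_{k+1}$, but the roles must be reversed: take $ua'$ with $a'\in T\cap A$, $vc$ with $c\in C\setminus T$, match $B\setminus\{v\}$ into $A\setminus\{u,a'\}$ (here the slack $|A|\geq s+1$ is used), and perfectly match $C\setminus\{c\}$, giving $1+1+(s-1)+(k-s)=k+1$ edges. This is exactly the paper's remaining subcase (the edges $au$ and $bv$ with $a\in N_{G_{h-1}}(u)\cap A$ and $b\in N_{G_{h-1}}(v)\cap C$). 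Add this third construction and your proof is complete.
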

\pf  Suppose $\Gamma=W_{n,2k+1,s}=K_{s}\vee ((n-2k+s-1)K_1\cup K_{2k-2s+1})$, where $1\leq s\leq k$. We partition $V(W_{n,2k+1,s})$  into three disjoint parts $A,B,C$, such that $A$ consists of $n-2k+s-1$ isolated vertices, $B$ is a clique of order $s$, and $C$ is a clique of order $2k-2s+1$; moreover, $(A, B)$ is complete bipartite and $B\cup C$ is a clique of order $2k-s+1$. 
  
Now, we consider $G_{h-1}$. Recall that $\Gamma=G_{h-1}[u_{h-1}, v_{h-1}]$. For simplicity, we denote $u_{h-1}$ by $u$ and $v_{h-1}$ by $v$. 
By the definition of KO, we have $N_\Gamma(u)\subset N_\Gamma(v)$ and $uv\in E(\Gamma)$.  Notice that $N_{\Gamma}(u)=N_{\Gamma}(v)$ when $u,v\in B$ or $u,v\in C$. Thus $v\in B$ and $u\in A\cup C$. Suppose $G_{h-1}\not\cong \Gamma$. Then 
$N_{G_{h-1}}(u)\cap (A\cup C)\neq\emptyset$,  
$N_{G_{h-1}}(v)\cap (A\cup C)\neq\emptyset$ and $A\cup C\subset N_{G_{h-1}}(u)\cup N_{G_{h-1}}(v)$.

Suppose that $v\in B$ and $u\in A$. We have 
 $B-v \subseteq N_{G_{h-1}}(u)\cap N_{G_{h-1}}(v)$. Suppose $N_{G_{h-1}}(v)\cap C=\emptyset$ or $N_{G_{h-1}}(u)\cap A=\emptyset$.
 Let $a\in N_{G_{h-1}}(v)\cap A$ and 
$b\in N_{G_{h-1}}(u)\cap C$. Let $G_{u,v,a,b}$ be the graph obtained by deleting the four vertices $u,v,a,b$ from $G_{h-1}$. Since $G_{u,v,a,b}\cong K_{s-1}\vee ((n-2k+s-3)K_1\cup K_{2k-2s})$, there is an $M_{k-1}$ in $G_{u,v,a,b}$ as $n-2k+s-3-(s-1)=n-2k-2\geq 0$.
We can extend $M_{k-1}$ to $M_{k+1}$ by adding $bu$ and $va$, a contradiction. The case $N_{G_{h-1}}(u)\cap C=\emptyset$ or $N_{G_{h-1}}(v)\cap A=\emptyset$ is similar, now consider $N_{G_{h-1}}(x)\cap A\neq\emptyset$ and  
 $N_{G_{h-1}}(x)\cap C\neq\emptyset$, where $x\in\{u,v\}$.
 Let $a\in N_{G_{h-1}}(u)\cap A$ and 
$b\in N_{G_{h-1}}(v)\cap C$. Let $G_{a,b,u,v}$ be the graph obtained by deleting $a,b,u,v$ from $G_{h-1}$. Note that there is an $M_{k-1}$ in $G_{a,b,u,v}$. We can extend $M_{k-1}$ to $M_{k+1}$ by adding $au$ and $bv$, a contradiction.
 
Suppose that $v\in B$ and $u\in C$.
We have 
$(B\cup C)-\{u,v\} \subseteq N_{G_{h-1}}(u)\cap N_{G_{h-1}}(v)$.
Since $G_{h-1}\not\cong \Gamma$, $N_{G_{h-1}}(u)\cap A\neq\emptyset$ and  
$N_{G_{h-1}}(v)\cap A\neq\emptyset$.
Without loss of generality assume $|N_{G_{h-1}}(u)\cap A|\geq2$.
 Let $a\in N_{G_{h-1}}(v)\cap A$ and 
$b\in N_{G_{h-1}}(u)\cap (A \backslash \{a\})$.
Let $G_{b,u}$ be the graph obtained by deleting $b,u$ from $G_{h-1}$. There is an $M_k$ in $G_{h-1}$.
We can extend $M_k$ to $M_{k+1}$ by adding $bu$, a contradiction.

So $G_{h-1}\cong \Gamma$. Hence  $G\cong G_1\cong \cdots \cong G_{h-1}\cong \Gamma$.
\qee

Let $H=H(\Gamma;k)$.
\begin{claim}
$H$ is not empty.
\end{claim}
\pf
Suppose to the contrary that $H=\emptyset$. 
Choose a maximum clique in $\Gamma$, and denote it by $X$.   

Let $|X|=s$ and $|Y|=n-s$. Recall that $\Gamma[X]=K_s$. Since $H=\emptyset$,
$s\leq k+1$. Indeed, if $s\geq k+2$, then there is a $K_{k+2}$-clique in $\Gamma$.
After all $k$-disintegrations of $\Gamma$, the $K_{k+2}$-clique is still a $K_{k+2}$-clique in $H$, contradicting the fact that $H=\emptyset$. This proves that $s\leq k+1$.

 By Lemma~\ref{koless}, we have $\Gamma=S_{n, s-1}$ or after applying a series of EKO to $\Gamma$, $\Gamma$ becomes a graph $\Gamma'$ which is a proper subgraph of $S_{n, s-1}$.
Note that $S_{n, s-1}$ is $M_{k+1}$-free because the matching number is $s-1\leq k$. 
Since $S_{n, s-1}(s\leq k+1)$ and $\Gamma'$ is a proper subgraph of  $S_{n, k}$, we have $\Gamma=S_{n, k}$; otherwise it contradicts the fact that $\mathcal{P}(G)$ is maximum.
By Claim~\ref{3.2}, we have $G\cong S_{n,k}$, which contradicts the assumption that $G\notin \mathcal{G}^3_{n,k}$. 
 This proves the claim.
\qee

\begin{claim}\label{Claim:H-clique-3}
$H$ is a clique.
\end{claim}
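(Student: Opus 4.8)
The plan is to argue by contradiction, reusing the template of Claims~\ref{Claim:H-clique} and \ref{Claim:H-clique-2} but replacing the Kopylov/Ning--Ai long-path machinery by the Bondy--Chv\'atal closure lemma (Lemma~\ref{bc}), which is exactly the ``important difference'' in the matching setting. Suppose $x,y\in V(H)$ with $xy\notin E(H)$; since $H=H(\Gamma;k)$ is obtained from $\Gamma$ by $k$-disintegration, we also have $xy\notin E(\Gamma)$. Every vertex surviving a $k$-disintegration has degree at least $k+1$ in the resulting graph, so $d_H(x),d_H(y)\ge k+1$ and hence $d_{\Gamma}(x)+d_{\Gamma}(y)\ge d_H(x)+d_H(y)\ge 2(k+1)>2k+1$. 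Thus the degree hypothesis required by Lemma~\ref{bc} is automatically satisfied for the pair $x,y$; the work is entirely in controlling matching numbers of $\Gamma$ and $\Gamma+xy$.

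The first substantive step, and the one I expect to be the main obstacle, is to record that $\Gamma$ itself is $M_{k+1}$-free. This is the matching analogue of Lemma~\ref{gao2019} and Lemma~\ref{A-N2}, but it is not already packaged as a cited lemma in the excerpt, so I would verify by hand that a single Kelmans Operation cannot increase the matching number, i.e.\ $\nu(G[u\to v])\le \nu(G)$ whenever $uv\in E(G)$ (here $\nu$ is the matching number, written $\mu$ in Lemma~\ref{bc}). Given a maximum matching $M'$ of $G':=G[u\to v]$, at most one edge of $M'$ is a \emph{new} edge, necessarily of the form $wv$ with $w\in N_G(u)\setminus N_G(v)$, since all new edges meet $v$. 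If $u$ is unsaturated by $M'$, reroute $wv$ to $wu\in E(G)$; if $u$ is saturated, say by an edge $uz$ with $z\in N_G(u)\cap N_G(v)$, replace $\{wv,uz\}$ by $\{wu,zv\}$, both of which lie in $E(G)$. Either way one obtains a matching of $G$ of the same size, so $\nu$ does not increase along the chain $G=G_0,\dots,G_h=\Gamma$, and therefore $\Gamma$ is $M_{k+1}$-free. As a by-product this also gives $\mathcal{P}(\Gamma)=\mathcal{P}(G)$, since $\mathcal{P}(\Gamma)\ge\mathcal{P}(G)$ by Property~(I) while $\Gamma$ is a connected $M_{k+1}$-free graph and $G$ is extremal.

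With $\Gamma$ known to be $M_{k+1}$-free, the contradiction follows quickly. By Property~(II) of Definition~\ref{basic}, $\mathcal{P}(\Gamma+xy)>\mathcal{P}(\Gamma)=\mathcal{P}(G)$; since $\mathcal{P}(G)$ is maximum over all connected $M_{k+1}$-free graphs and $\Gamma+xy$ is connected, the graph $\Gamma+xy$ must contain $M_{k+1}$. Because $\Gamma$ is $M_{k+1}$-free and adding one edge raises the matching number by at most one, this forces $\nu(\Gamma+xy)=k+1$. Now Lemma~\ref{bc}, applied to the nonadjacent pair $x,y$ with $d_{\Gamma}(x)+d_{\Gamma}(y)\ge 2k+1$ and $\nu(\Gamma+xy)=k+1$, yields $\nu(\Gamma)=k+1$, i.e.\ $\Gamma$ contains $M_{k+1}$, contradicting the previous paragraph. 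Hence no nonadjacent pair exists in $V(H)$, and $H$ is a clique.
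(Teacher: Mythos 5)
Your proposal is correct and follows essentially the same route as the paper: both argue by contradiction on a nonadjacent pair $x,y\in V(H)$, use the degree bound $d_H(x)+d_H(y)\geq 2k+2$ coming from the $k$-disintegration, invoke maximality of $\mathcal{P}(G)$ to force $\mu(\Gamma+xy)=k+1$, and then apply the Bondy--Chv\'atal lemma (Lemma~\ref{bc}) to conclude $\mu(\Gamma)\geq k+1$, contradicting that $\Gamma$ is $M_{k+1}$-free. The one place you go beyond the paper is in explicitly proving that a Kelmans Operation cannot increase the matching number (your rerouting argument with $\{wv,uz\}\mapsto\{wu,zv\}$); the paper uses this fact silently in the assertions ``Since $G$ is $M_{k+1}$-free, $\Gamma$ is $M_{k+1}$-free'' and ``$\mu(G)\geq\mu(\Gamma)$'', so your verification fills a detail the paper leaves implicit.
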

\pf
We shall show that $H$ is a clique. Suppose $x,y \in V(H)$ are not adjacent in $H$. Then $x$ and $y$ are not adjacent in $\Gamma$. Since $G$ is $M_{k+1}$-free, $\Gamma$ is $M_{k+1}$-free. Since $\mathcal{P}(G)$ is maximum and $\mathcal{P}(\Gamma+xy)>\mathcal{P}(G)$, we have $\mu(\Gamma+xy)\geq k+1$. Note that $d_H(x)+d_H(y)\geq 2k+2$. By Lemma~\ref{bc}, $\mu(\Gamma)\geq k+1$. As $\mu(G)\geq \mu(\Gamma)\geq k+1$, a contradiction which proves Claim \ref{Claim:H-clique-3}.
\qee

\begin{claim}\label{cl3-3}
$H$ is a clique with the maximum size in $\Gamma$.
\end{claim}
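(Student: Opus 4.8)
The plan is to mirror the argument used for the cycle case in Claim~\ref{cl3}, replacing the disintegration parameter $t$ there by the matching parameter $k$ here. The starting point is what the previous claims already give us: by Claim~\ref{Claim:H-clique-3} the set $H=H(\Gamma;k)$ is a clique, and by the very definition of the $k$-disintegration every vertex surviving in $H$ has degree larger than $k$ in $\Gamma$. Since $H$ is a clique, each of its vertices has degree exactly $|V(H)|-1$ \emph{inside} $H$, so the bound $|V(H)|-1>k$ forces $|V(H)|\geq k+2$. I would record this as the first observation, as it calibrates the size of $H$ against the threshold $k$.

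Next I would argue by contradiction: suppose there is some clique $H'$ in $\Gamma$ with $|V(H')|>|V(H)|$. The crux is to show that no vertex of $H'$ is ever removed during the $k$-disintegration. This is the ``first deletion'' argument. Track the iterative deletion process that produces $H(\Gamma;k)$, and suppose toward a contradiction that some vertex of $H'$ is deleted; consider the \emph{first} such vertex $v\in V(H')$ to be removed. At the moment $v$ is deleted, all other vertices of $H'$ are still present, so $v$ retains all of its $|V(H')|-1$ neighbours within $H'$, whence its current degree is at least $|V(H')|-1\geq |V(H)|\geq k+2>k$. But a vertex is deleted only when its current degree is at most $k$, a contradiction. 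Hence every vertex of $H'$ survives, i.e.\ $V(H')\subseteq V(H)$.

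Finally, $V(H')\subseteq V(H)$ yields $|V(H')|\leq |V(H)|$, contradicting the assumption $|V(H')|>|V(H)|$. Therefore no clique strictly larger than $H$ exists in $\Gamma$, which is exactly the assertion that $H$ is a clique of maximum size in $\Gamma$.

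I do not expect a genuine obstacle here, since the statement is essentially identical in form to Claim~\ref{cl3} and the only substantive input — that survivors of the $k$-disintegration have degree exceeding $k$ — is built into the disintegration definition. The one point requiring care, rather than difficulty, is the bookkeeping of the iterative deletion: one must phrase the ``first vertex of $H'$ to be deleted'' argument correctly, because a single inequality on degrees in $\Gamma$ is not quite enough on its own (a vertex's degree can drop as its neighbours are removed). Isolating the first deletion among $V(H')$ is what makes the clean degree bound $|V(H')|-1>k$ applicable at the decisive moment.
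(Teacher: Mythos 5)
Your proposal is correct and follows essentially the same route as the paper: the paper likewise notes $d_{H'}(v)\geq |H'|-1\geq |H|\geq k+2$ and concludes that no vertex of the larger clique $H'$ can be deleted during the $k$-disintegration, forcing $H'\subseteq H$ and a contradiction; your ``first deletion'' bookkeeping merely makes explicit the step the paper states informally. One trivial wording slip: the disintegration guarantees each surviving vertex has degree larger than $k$ \emph{in $H$} (not merely in $\Gamma$), which is exactly what you need to conclude $|V(H)|-1>k$.
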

\pf
Suppose that there exists the other clique, say $H'$ in $\Gamma$ such that $|H'|>|H|$. Then for any vertex $v\in V(H')$, $d_{H'}(v)\geq |H'|-1\geq |H|\geq k+2$.
As $H'$ is a clique in $\Gamma$, any vertex in $H'$ cannot be deleted in $H(\Gamma;k)$, and hence $H'\subseteq H$,
contradicting the fact that $|H'|>|H|$ . This proves the claim.
\qee

Let $r=|V(H)|$.

\begin{claim}\label{claim-inequality-3}
 Then $k+2\leq r\leq 2k$, and so $1\leq 2k+1-r\leq k-1.$
\end{claim}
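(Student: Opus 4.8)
The plan is to establish the two bounds of Claim \ref{claim-inequality-3} separately, both exploiting that $H=H(\Gamma;k)$ is a maximum clique of $\Gamma$, as secured by Claims \ref{Claim:H-clique-3} and \ref{cl3-3}.

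For the lower bound $r\geq k+2$ I would read off the degree condition built into the $k$-disintegration: by definition of $H(\Gamma;k)$, every vertex that survives has degree larger than $k$ in the final graph $H$. Since $H$ is a clique on $r$ vertices, each such vertex has degree exactly $r-1$ in $H$, so $r-1>k$ and hence $r\geq k+2$ at once.

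For the upper bound $r\leq 2k$ I would argue by contradiction, assuming $r\geq 2k+1$. The first observation is that $\Gamma$ is itself $M_{k+1}$-free: since $G$ is $M_{k+1}$-free and matching number does not increase under Kelmans Operation (the monotonicity $\mu(\Gamma)\leq\mu(G)\leq k$ already invoked in Claim \ref{Claim:H-clique-3}), we get $\mu(\Gamma)\leq k$. The strategy is then to produce a matching of size $k+1$ in $\Gamma$, which contradicts this. Because $r\geq 2k+1$, the clique $H$ contains a subclique $H'$ on exactly $2k+1$ vertices; and because $n\geq 2k+2>|V(H')|$, there is a vertex $a\in V(\Gamma)\setminus V(H')$, which has a neighbor $b$ since $\Gamma$ is connected. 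The key step is to extend a near-perfect matching of $H'$ across the edge $ab$: if $b\in V(H')$, then $H'-b\cong K_{2k}$ has a perfect matching of $k$ edges, vertex-disjoint from $ab$, giving $M_{k+1}$; if $b\notin V(H')$, any maximum matching of $H'\cong K_{2k+1}$ consists of $k$ edges and misses a vertex, so it too is disjoint from $ab$, again giving $M_{k+1}$. Either way $\mu(\Gamma)\geq k+1$, the desired contradiction, so $r\leq 2k$. The final assertion $1\leq 2k+1-r\leq k-1$ is then immediate from $k+2\leq r\leq 2k$.

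The main obstacle is the boundary case $r=2k+1$: here $K_{2k+1}$ on its own has matching number only $k$, so the bound cannot be extracted from the clique in isolation, and one genuinely needs the extra vertex furnished by $n\geq 2k+2$ together with the connectivity of $\Gamma$ to pick up the $(k+1)$-st edge. This is the delicate point, in contrast to the easy case $r\geq 2k+2$, where $K_{2k+2}\subseteq H$ already contains $M_{k+1}$ outright; treating both through a single fixed $(2k+1)$-clique $H'$ is what unifies the two and keeps the argument clean.
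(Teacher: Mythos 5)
Your proof is correct and follows essentially the same approach as the paper: the lower bound $r\geq k+2$ comes from the degree condition of the $k$-disintegration applied to the clique $H$, and the upper bound comes from extending a near-perfect matching of a $(2k+1)$-clique by one edge to an outside vertex, using $n\geq 2k+2$ and the connectivity of $\Gamma$, contradicting $\mu(\Gamma)\leq k$. The only difference is cosmetic: the paper treats $r=2k+1$ and $r\geq 2k+2$ as separate cases (the latter yielding $M_{k+1}$ inside $H$ directly), whereas you unify them through a fixed $(2k+1)$-subclique.
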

\pf
As $H=H(\Gamma;k)$ is a clique, $r \geq k+2$. 
Suppose that $r=2k+1$, then there is an $M_k$ in $H$. As $n\geq 2k+2$ and $\Gamma$ is connected, there is an $M_{k+1}$ in $\Gamma$, and so is $G$, a contradiction. If $r\geq 2k+2$, there is an $M_{k+1}$ in $H$, so is $G$. It is a contradiction. Now, we have $k+2\leq r\leq 2k$. 
This proves Claim \ref{claim-inequality-3}.
\qee

\begin{claim}\label{claim-convex-3}
Let $H'=H(\Gamma;2k+1-r)$. Then $H \neq H'$.
\end{claim}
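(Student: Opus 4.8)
The plan is to mirror the proofs of Claim~\ref{claim-convex} and Claim~\ref{claim-convex-2}, adjusting the index bookkeeping from cycles/paths to matchings. Suppose for contradiction that $H=H'$, where $H=H(\Gamma;k)$ has order $r$ and $H'=H(\Gamma;2k+1-r)$. First I would extract a bound on the second largest maximal clique: exactly as in Claim~\ref{claim-convex-2}, the equality $H=H'$ forces every vertex of $V(G)\setminus V(H')$ to have degree at most $2k+1-r$ in $\Gamma$, so any maximal clique other than $H$ must meet $V(G)\setminus V(H)$ and hence has order at most $2k-r+2$.

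Next I would feed this into Lemma~\ref{Ai-Ning} with the maximum clique $H$ (of order $r$) and the second maximum clique (of order $p\le 2k-r+2$). The lemma produces, after a series of EKO, a graph $G''$ with $\mathcal{P}(G'')\ge\mathcal{P}(\Gamma)$ that is a subgraph of $K_{p-1}\vee(K_{r-p+1}\cup(n-r)K_1)$. The target extremal graph is $W_{n,2k+1,2k+1-r}=K_{2k+1-r}\vee((n-r)K_1\cup K_{2r-2k-1})$; since Claim~\ref{claim-inequality-3} gives $k+2\le r\le 2k$, the index $s:=2k+1-r$ satisfies $1\le s\le k-1$, so this graph lies in $\mathcal{G}^3_{n,k}$ and is $M_{k+1}$-free by construction. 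Both $K_{p-1}\vee(K_{r-p+1}\cup(n-r)K_1)$ and the target share the same core $K_r$, differing only in how the $n-r$ outer vertices attach to that core; since $p-1\le 2k+1-r$, the hub of the former embeds into the hub of the latter, so $G''$ is a subgraph of $W_{n,2k+1,2k+1-r}$.

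Finally I would run the dichotomy. If $G''$ is a proper subgraph of $W_{n,2k+1,2k+1-r}$ (which is forced whenever $p<2k+2-r$, and also whenever at least one EKO is actually performed), then $\mathcal{P}(\Gamma)\le\mathcal{P}(G'')<\mathcal{P}(W_{n,2k+1,2k+1-r})$ by feasibility, contradicting that $G$ (with $\mathcal{P}(G)\le\mathcal{P}(\Gamma)$) maximizes $\mathcal{P}$ over connected $M_{k+1}$-free graphs on $n$ vertices. Otherwise $G''=W_{n,2k+1,2k+1-r}$ exactly, whereupon the ``furthermore'' clause of Lemma~\ref{Ai-Ning} forces $\Gamma=W_{n,2k+1,2k+1-r}\in\mathcal{G}^3_{n,k}$, and then Claim~\ref{3.2} yields $G\cong\Gamma\in\mathcal{G}^3_{n,k}$, contradicting $G\notin\mathcal{G}^3_{n,k}$. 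Either branch gives a contradiction, which proves $H\neq H'$.

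The hard part will be the bookkeeping that pins the target down to exactly $W_{n,2k+1,2k+1-r}$ with a legitimate index $s=2k+1-r\in[1,k]$, together with cleanly separating the proper-subgraph case (handled by feasibility and maximality of $\mathcal{P}(G)$) from the equality case (handled by Lemma~\ref{Ai-Ning} and Claim~\ref{3.2}); the containment of the two $K_r$-core graphs is then routine once the hub sizes $p-1$ and $2k+1-r$ are compared.
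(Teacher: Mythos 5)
Your proposal is correct and follows essentially the same route as the paper's own proof: assume $H=H'$, bound the second largest maximal clique by $2k+2-r$, invoke Lemma~\ref{Ai-Ning} to land inside $K_{2k+1-r}\vee((n-r)K_1\cup K_{2r-2k-1})\in\mathcal{G}^3_{n,k}$, and contradict either the maximality of $\mathcal{P}(G)$ (proper-subgraph case) or the assumption $G\notin\mathcal{G}^3_{n,k}$ via Claim~\ref{3.2} (equality case). In fact you spell out the index check $s=2k+1-r\in[1,k-1]$ and the proper-subgraph/equality dichotomy more explicitly than the paper's rather terse write-up, but the underlying argument is identical.
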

\pf
Suppose $H=H'$. Notice that each vertex from $V(G)\setminus V(H')$ has degree at most $2k+1-r$ in $\Gamma$. So the size of the second largest maximal clique is at most $2k+2-r$. By Lemma~\ref{Ai-Ning} and Claim~\ref{3.2}, $\Gamma$ is a proper subgraph of $K_{2k-r+1}\vee ((n-r)K_1\cup K_{2r-2k-1})$, then
$$
\mathcal{P}(\Gamma)< \mathcal{P}(K_{2k-r+1}\vee ((n-r)K_1\cup K_{2r-2k-1})),
$$ 
a contradiction. This proves the claim.
\qee

Finally, we claim that
$G$ contains a $M_{k+1}$.
Note that $V(H) \subset V(H')$ and $H \neq H'$. We select $x \in V(H) $ and $y \in V(H') \setminus V(H)$ such that $x$ and $y$ are nonadjacent. Note that Claim 1 is also true if we replace $G$ with $\Gamma$. So by Claim 1, there is a $M_{k+1}$ in $\Gamma+xy$. Observe that $d_\Gamma(x)+d_\Gamma(y)\geq r-1+2k-r+2=2k+1$, thus there is an $M_{k+1}$ in $\Gamma$ by Lemma~\ref{bc}, and so is $G$, a contradiction. This proves the theorem.
\end{proof}
\section{Concluding remarks}
\begin{enumerate}
    \item[1.]
    As remarked in \cite{K77}, the classical Theorem \ref{K77} can imply Theorem \ref{K77-2}. One may wonder whether we can deduce Theorem \ref{Thm:Main2} from Theorem \ref{Thm:Main1} or not. Indeed, we can consider the general problem: Let $G_1,G_2$ be two graphs and let $\mathcal{C}(G_1)$, $\mathcal{C}(G)$ be feasible parameters. Suppose $\mathcal{C}(G_1)>\mathcal{C}(G_2)$. Is it always true that $\mathcal{C}(G_1\vee K_1)>\mathcal{C}(G_2\vee K_1)$? The answer is negative. In fact, it is false when we consider just a problem under the spectral radius condition. Consider the following example: let $G_1=K_3\vee 5K_1$ and $G_2=K_1\vee(K_5+2K_1)$.
    Then $\lambda(G_1)=5<\lambda(G_2)=5.0695$,
    but $\lambda(G_1\vee K_1)=\lambda(K_4\vee 5K_1)=6.2170>\lambda(G_2\vee K_1)=\lambda(K_2\vee(K_5+2K_1))=6.1970$. The following problem is still wide open.
\begin{prob}
Let $G_1,G_2$ be two graphs with $\lambda(G_1)\geq \lambda(G_2)$. Determine which graphs $G_1,G_2$ satisfying the property that $\lambda(G_1\vee K_1)\geq \lambda(G_2\vee K_1)$.
\end{prob}
\item[2.] Theorem~\ref{Thm:Main1} tells us how a feasible graph parameter behaves under the constraints of the length of circumference for $2$-connected graphs. It is natural to ask what if for a connected graph or a general graph.
\item[3.] Compared with the results for a feasible graph parameter, when $\mathcal{P}$ is weakly feasible, we can just determine the extremal values but the extremal graphs. Can the corresponding extremal graphs also be determined by a more accurate discussion?
\end{enumerate}


\end{document}